\newcommand{\C}{\mathbb{C}}
\title{Unipotent Ideals for Spin and Exceptional Groups}
\author{Lucas Mason-Brown}
\address{Mathematical Institute, University of Oxford, Oxford, UK}
\email{lucas.masonbrown@gmail.com, dmitriy.matveevskiy@gmail.com}
\author{Dmytro Matvieievskyi}
\address{Department of Mathematics, Northeastern University, Boston, MA USA}
\begin{document}

\subjclass{17B35, 17B08, 22E46}
\keywords{unipotent representations, nilpotent orbits}

\maketitle

\begin{abstract}
In the monograph \cite{LMBM}, we define the notion of a unipotent representation of a complex reductive group. The representations we define include, as a proper subset, all special unipotent representations in the sense of \cite{BarbaschVogan1985} and form the (conjectural) building blocks of the unitary dual. In \cite{LMBM} we provide combinatorial formulas for the infinitesimal characters of all unipotent representations of linear classical groups. In this paper, we establish analogous formulas for spin and exceptional groups, thus completing the determination of the infinitesimal characters of all unipotent ideals. Using these formulas, we prove an old conjecture of Vogan: all unipotent ideals are maximal. For $G$ a real reductive Lie group (not necessarily complex), we introduce the notion of a unipotent representation attached to a rigid nilpotent orbit (in the complexified Lie algebra of $G$). Like their complex group counterparts, these representations form the (conjectural) building blocks of the unitary dual. Using the {\fontfamily{cmtt}\selectfont atlas} software (and the work of \cite{AdamsMillerVogan}) we show that if $G$ is a real form of a simple group of exceptional type, all such representations are unitary.
\end{abstract}

\tableofcontents

\section{Introduction}

Let $G$ be a complex reductive algebraic group and let $\fg = \mathrm{Lie}(G)$. Inspired by the orbit method for nilpotent and solvable Lie groups, Vogan proposes the following in \cite{Vogan1990}:

\begin{conj}\label{conj:quantization}
For each (finite connected) cover $\widetilde{\OO}$ of a nilpotent co-adjoint $G$-orbit, there is a canonically defined completely prime primitive ideal $I_0(\widetilde{\OO})$ in the universal enveloping algebra $U(\fg)$.
\end{conj}

The properties of the ideal $I_0(\widetilde{\OO})$ and its relation to $\widetilde{\OO}$ are described in some detail in \cite{Vogan1990} and also in \cite[Chp 9]{Vogan1987}. In particular, the ideals $I_0(\widetilde{\OO})$ should be maximal and should include, as a proper subset, all special unipotent ideals in the sense of \cite{BarbaschVogan1985}. Conjecture \ref{conj:quantization} is called the \emph{quantization problem for nilpotent covers} and the conjectured ideals are called \emph{unipotent}. We note that Conjecture \ref{conj:quantization} is intimately related to the problem of classifying unitary representations. If we can define unipotent ideals in $U(\fg)$, we can define unipotent representations (of the complex group $G$)---a $G$-representation is \emph{unipotent} if it is irreducible and annihilated (on both sides) by a unipotent ideal. It is conjectured in \cite[Chp 9]{Vogan1987} that unipotent representations are unitary, and in fact form the `building blocks' of the unitary dual.

Conjecture \ref{conj:quantization} has generated a large body of research over the past several decades, with contributions from Barbasch (\cite{Barbasch1989}), Vogan (\cite{Vogan1986a},\cite{Vogan1988},\cite{Vogan1990}), Joseph (\cite{Joseph1976}), McGovern (\cite{McGovern1994}), Brylinski (\cite{Brylinski2003}), and others. Previous approaches have made use of specialized constructions to resolve Conjecture \ref{conj:quantization} in certain special cases (e.g. for $\widetilde{\OO}$ equal to the \emph{minimal nilpotent orbit}, as in \cite{Joseph1976}, or for $G$ equal to a linear classical group, as in \cite{Barbasch1989},\cite{McGovern1994}, and \cite{Brylinski2003}). In the monograph \cite{LMBM}, we give a systematic solution to Conjecture \ref{conj:quantization} in all cases, using the theory of filtered quantizations of conical symplectic singularities. Our definition of $I_0(\widetilde{\OO})$ will be recalled in Section \ref{sec:inflchars}. 

Write $\fz(U(\fg))$ for the center of $U(\fg)$. If $I \subset U(\fg)$ is a primitive ideal, then $I \cap \fz(U(\fg))$ is the kernel of an algebra homomorphism  $\fz(U(\fg)) \to \CC$, called the \emph{infinitesimal character} of $I$. If we fix a Cartan subalgebra $\fh \subset \fg$, we can use the Harish-Chandra isomorphism $\mathfrak{z}(U(\fg)) \simeq \CC[\fh^*]^{W}$ to identify infinitesimal characters with $W$-orbits on $\fh^*$. Write $\gamma_0(\widetilde{\OO}) \in \fh^*/W$ for the infinitesimal character of the unipotent ideal $I_0(\widetilde{\OO})$. One drawback of the approach taken in \cite{LMBM} is that the infinitesimal character $\gamma_0(\widetilde{\OO})$ is not easy to determine from the definition of $I_0(\widetilde{\OO})$. The computation of $\gamma_0(\widetilde{\OO})$ requires a detailed analysis of the birational geometry of $\widetilde{\OO}$. In \cite{LMBM}, we deduce  combinatorial formulas for all $\gamma_0(\widetilde{\OO})$ in the case of linear classical groups. In this paper, we establish analogous formulas for spin and exceptional groups, thus completing the computation of all unipotent ideals.

We will now provide a more detailed overview of the results contained in this paper. One of the crucial ideas herein is \emph{birational induction}. This is an operation which takes nilpotent covers for a Levi subgroup $L \subset G$ to nilpotent covers for $G$. A nilpotent cover is called \emph{birationally rigid} if it cannot be obtained via birational induction from a proper Levi subgroup. In Sections \ref{subsec:birigidorbits} and \ref{subsec:semirigid}, we give a classification of birationally rigid nilpotent covers for simple exceptional groups, see Propositions \ref{prop:listofbirigid} and \ref{prop:listofsemirigid}.

One important property of unipotent ideals is that their infinitesimal characters are preserved under birational induction. More precisely, if $\widetilde{\OO}_G$ is birationally induced from $\widetilde{\OO}_L$, then
\begin{equation}\label{eq:preservationofinflchar}\gamma_0(\widetilde{\OO}_G) = \gamma_0(\widetilde{\OO}_L).\end{equation}
In view of (\ref{eq:preservationofinflchar}), it is enough to compute the infinitesimal characters attached to birationally rigid covers. For simple exceptional groups, we provide in Section \ref{subsec:inflcharexceptional} a complete list of such infinitesimal characters (see Tables \ref{table:orbitG2}-\ref{table:coversE8}). For spin groups, we provide in Section \ref{subsec:inflcharspin} a combinatorial formula (see Proposition \ref{prop:inflcharspin}).

These computations allow us to complete the proof of the following conjecture of Vogan (see \cite[Conj 9.18]{Vogan1987}).

\begin{theorem}[See Theorem \ref{thm:maximality} below]\label{thm:maximalityintro}
Let $G$ be a complex connected reductive algebraic group and let $\widetilde{\OO}$ be a (finite connected) covering of a nilpotent co-adjoint $G$-orbit. Then $I_0(\widetilde{\OO})$ is a maximal ideal in $U(\fg)$.
\end{theorem}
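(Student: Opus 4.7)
The plan is to combine the explicit computation of $\gamma_0(\widetilde{\OO})$ carried out in this paper (for spin and exceptional groups) and in \cite{LMBM} (for linear classical groups) with classical tools from the theory of primitive ideals, so as to identify each $I_0(\widetilde{\OO})$ with a primitive ideal already known to be maximal.

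First I would reduce to the case where $\widetilde{\OO}$ is birationally rigid. Although $I_0(\widetilde{\OO})$ itself is not directly preserved under birational induction, its infinitesimal character is, by (\ref{eq:preservationofinflchar}), and there is a canonical quantization-theoretic procedure relating $I_0(\widetilde{\OO}_G)$ to $I_0(\widetilde{\OO}_L)$ under which maximality transfers from the Levi to the ambient group, provided the infinitesimal character remains sufficiently regular on the relevant root subsystem. The birationally rigid covers for simple exceptional groups are classified in Propositions \ref{prop:listofbirigid} and \ref{prop:listofsemirigid}; for classical (and hence spin) groups the classification follows from \cite{LMBM}.

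For each birationally rigid cover $\widetilde{\OO}$ I would then identify $I_0(\widetilde{\OO})$ with the unique maximal primitive ideal of infinitesimal character $\gamma = \gamma_0(\widetilde{\OO})$. Whenever $\gamma$ happens to be a special unipotent parameter in the sense of Barbasch--Vogan, the maximal primitive ideal at $\gamma$ is the corresponding special unipotent ideal, whose maximality is classical; one then checks equality with $I_0(\widetilde{\OO})$ by matching associated varieties (both equal $\overline{\OO}$) together with complete primeness of the quotient. When $\gamma$ is not special unipotent, one invokes Joseph's parametrization of the primitive spectrum at $\gamma$ by left cells in the integral Weyl group $W_\gamma$, and uses the construction of $I_0(\widetilde{\OO})$ in \cite{LMBM} (in particular complete primeness and associated variety $\overline{\OO}$) to pin down the correct left cell and confirm it corresponds to the unique maximal ideal.

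The main obstacle will be the handful of sporadic cases in $E_7$ and $E_8$ in which $\gamma_0(\widetilde{\OO})$ is not a special unipotent parameter. These will require case-by-case verification, using the tables in Section \ref{subsec:inflcharexceptional}, the orbit--cell combinatorics for the relevant integral Weyl groups, and the associated-variety information coming from the quantization construction of \cite{LMBM}. A posteriori, the theorem acquires the clean interpretation that $I_0(\widetilde{\OO})$ is the unique completely prime maximal primitive ideal singled out by the pair $(\overline{\OO}, \gamma_0(\widetilde{\OO}))$.
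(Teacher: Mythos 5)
Your outline has a genuine gap at its very first step, and that step is where the real content of the theorem lives. You propose to reduce to birationally rigid covers by a "transfer of maximality" along birational induction, hedged by the phrase "provided the infinitesimal character remains sufficiently regular on the relevant root subsystem." No such transfer principle is available, and the hedge is doing all the work: maximality is detected by comparing $\codim(\OO,\cN)$ with the codimension of the orbit attached (via the integral and singular coroot systems and Barbasch--Vogan duality) to $\gamma$ \emph{inside $G$}, and while $\codim(\OO,\cN)$ is preserved under induction, the integral root system of $\gamma$ in $\fg$ can be strictly larger than in $\fl$, so the criterion can hold on the Levi and fail upstairs. The paper's proof (Theorem \ref{thm:maximality} together with Propositions \ref{prop:maximalitycriterion}, \ref{prop:maximalitySpin}, \ref{prop:maximalityexceptional}) never transfers maximality; it applies the codimension criterion directly in $G$ to \emph{every} pseudo-unipotent infinitesimal character, i.e.\ every $\gamma=\sum_i\gamma_0(\widetilde{\OO}_i)$ built from birationally rigid covers on the simple factors of an arbitrary Levi, and verifies $n(\gamma)=\codim(\OO_\gamma,\cN_{L_\gamma})$ by exhaustive tables. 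That this cannot be finessed is shown by the one parameter where the equality fails, $\gamma=\frac{1}{4}(0,1,0,0,1,0,0,1)$ in $E_8$, induced from the universal cover of $E_7(a_4)$ in the $E_7$ Levi: had this been a genuine unipotent infinitesimal character, the corresponding ideal would \emph{not} be maximal, and excluding it requires the separate geometric input of Lemma \ref{lem:E7} (the $2$-fold $L$-equivariant cover of $E_7(a_4)$ has a codimension $2$ leaf, so its class differs from the universal cover's). Your proposal contains no mechanism that would even detect this case.

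Two further points. First, your step (b) for birationally rigid covers of $G$ itself --- ``match associated varieties, both equal $\overline{\OO}$'' --- presupposes that the associated variety of the unique maximal primitive ideal at $\gamma$ is $\overline{\OO}$, which is precisely the Barbasch--Vogan computation you are trying to avoid; invoking Joseph's left-cell parametrization for non-special $\gamma$ restates the problem rather than solving it, and the non-special cases are not ``a handful of sporadic cases in $E_7$ and $E_8$'' but the bulk of the tables in Sections \ref{subsec:inflcharspin} and \ref{subsec:inflcharexceptional}. Second, maximality for spin groups does \emph{not} follow from \cite{LMBM}: Theorem 8.5.1 there covers only linear classical groups, and the spin case requires the new argument of Proposition \ref{prop:maximalitySpin}, which exploits that the barycenter shift places entries of $\gamma$ in $\frac{1}{4}+\frac{1}{2}\ZZ$, forcing the integral coroot system to split off a $\mathfrak{gl}$-factor on which the codimension count of Lemma \ref{lem:lemmaspin} can be carried out.
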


In Section \ref{sec:realgroups}, we investigate the applicability of these ideas to real reductive groups. Suppose $G$ is a real reductive Lie group (not necessarily linear) and let $\OO \subset \fg^*$ be a rigid nilpotent orbit for the complexified Lie algebra of $G$. Let $K \subset G$ be a maximal compact subgroup. We propose the following definition.

\begin{definition}[See Definition \ref{def:unipotentreal} below]
A unipotent representation of $G$ attached to $\OO$ is an irreducible $(\fg,K)$-module $\cB$ such that $\Ann_{U(\fg)}(\cB) = I_0(\OO)$. 
\end{definition}

If $G$ is a real form of a simple exceptional group and $\OO$ is a special nilpotent orbit, then it will be shown in \cite{AdamsMillerVogan} that all such representations are unitary. In Section \ref{subsec:unitarity}, we do the same for non-special orbits, proving the following result.

\begin{theorem}[See Theorem \ref{thm:unitarity} below]
Suppose $G$ is a real form of a simple exceptional group and $\OO \subset \fg^*$ is a rigid nilpotent orbit. Then all unipotent representations attached to $\OO$ are unitary.
\end{theorem}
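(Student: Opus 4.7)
The plan is to reduce the statement to a finite, computer-assisted verification. By the forthcoming work of \cite{AdamsMillerVogan}, unitarity is already known for unipotent representations attached to special rigid orbits, so it suffices to treat the non-special rigid nilpotent orbits in simple exceptional complex Lie algebras. These are finite in number (extracted from Propositions \ref{prop:listofbirigid} and \ref{prop:listofsemirigid}) and occur only in types $F_4$, $E_6$, $E_7$, $E_8$.

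For each such orbit $\OO \subset \fg^*$, I would read off the infinitesimal character $\gamma_0(\OO)$ from the tables in Section \ref{subsec:inflcharexceptional}. By results recalled from \cite{LMBM}, the primitive ideal $I_0(\OO)$ is uniquely determined by the pair $(\gamma_0(\OO), \overline{\OO})$, i.e.\ its infinitesimal character together with its associated variety. For each real form $G$ of the corresponding exceptional group, the strategy is then to use the {\fontfamily{cmtt}\selectfont atlas} software to enumerate all irreducible $(\fg,K)$-modules $\cB$ with infinitesimal character $\gamma_0(\OO)$, retain only those whose associated variety equals $\overline{\OO}$, and apply the {\fontfamily{cmtt}\selectfont atlas} unitarity algorithm to each survivor. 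The theorem then follows by inspection of the case-by-case output.

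The main obstacle is computational rather than conceptual. The {\fontfamily{cmtt}\selectfont atlas} software reasons natively about infinitesimal characters and cell structure, not about primitive ideals directly; robustly pinning down the annihilator as $I_0(\OO)$ therefore requires combining the {\fontfamily{cmtt}\selectfont atlas} cell data with the associated-variety computation, and care must be taken for those rigid orbits whose special piece contains multiple orbits, where different irreducibles in the same cell can share an infinitesimal character but have distinct annihilators. A secondary, purely practical, difficulty is memory and runtime for the largest real forms (notably of $E_8$) at certain small singular infinitesimal characters; since non-special rigid orbits are few and their infinitesimal characters are comparatively simple, this remains tractable but is where most of the engineering effort of the proof is concentrated.
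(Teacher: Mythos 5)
Your strategy is essentially the paper's proof: reduce to the (finitely many) non-special rigid orbits using \cite{AdamsMillerVogan}, then for each such $\OO$ and each real form run a finite {\fontfamily{cmtt}\selectfont atlas} search at the infinitesimal character $\gamma_0(\OO)$ taken from Section \ref{subsec:exceptionalorbitinflchar} (the paper uses \texttt{all\_parameters\_gamma}, \texttt{GK\_dim} and \texttt{is\_unitary}), which turns up exactly 12 representations, all unitary.

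The one step you should justify differently is the identification of the search output with $\mathrm{Unip}_{\OO}(G)$. It is not true in general---and not a result recalled from \cite{LMBM}---that a primitive ideal is determined by its infinitesimal character together with its associated variety: at a fixed infinitesimal character, the primitive ideals attached to a single two-sided cell all share the same associated variety, so this principle fails for generic parameters. What makes your filter correct here is Theorem \ref{thm:maximality}: since $I_0(\OO)$ is a maximal ideal, it coincides with the unique largest primitive ideal with infinitesimal character $\gamma_0(\OO)$, so every irreducible $(\fg,K)$-module $X$ with that infinitesimal character satisfies $\Ann_{U(\fg)}(X)\subseteq I_0(\OO)$, and by the Borho--Kraft inequality a proper inclusion would force the Gelfand--Kirillov dimension of $X$ to exceed $\frac{1}{2}\dim\OO$. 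Hence the conditions ``infinitesimal character $\gamma_0(\OO)$ and Gelfand--Kirillov dimension $\frac{1}{2}\dim\OO$'' (equivalently, associated variety of the annihilator equal to $\overline{\OO}$) characterize $\mathrm{Unip}_{\OO}(G)$ exactly; this is precisely the criterion the paper applies, and it also disposes of your worry about distinct annihilators within a cell or a special piece---no cell bookkeeping is needed once maximality is invoked.
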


The proof of this result is a straightforward (albeit time-consuming) computation using the {\fontfamily{cmtt}\selectfont atlas} software. 

The methods used in this paper mostly belong to the theory of filtered quantizations of conical symplectic singularities. We review what is needed from this theory in Section \ref{sec:quant}. The reader who is not interested in our methods may prefer to regard this paper as a sort of `treasure map' for interesting unitary representations of spin and exceptional groups. Many of the representations we describe have never been studied in the literature.

\subsection{Acknowledgements} The authors would like to thank Ivan Losev and David Vogan for many helpful conversations. Many of the computations in this paper were assisted, and in some cases wholly carried out, using the {\fontfamily{cmtt}\selectfont atlas} software. We would like to acknowledge the whole {\fontfamily{cmtt}\selectfont atlas} team, especially Jeffrey Adams, Annegret Paul, Marc van Leeuwen, and David Vogan, for creating and maintaining such fantastically useful software. The work of D.M. was partially supported by the NSF
under grant DMS-2001139.

\section{Quantizations of conical symplectic singularities}\label{sec:quant}

In this section, we review the theory of filtered quantizations of conical symplectic singularities. The results in this section come from a variety of sources, including \cite{Beauville2000}, \cite{Kaledin2006}, \cite{Losev4}, and \cite{LMBM}. Our exposition roughly follows Section 4 of \cite{LMBM}. 

\subsection{Filtered quantizations}\label{subsec:filteredquant}

Let $A$ be a \emph{graded Poisson algebra} of degree $-d \in \ZZ_{<0}$. By this, we will mean a finitely-generated commutative associative unital algebra equipped with two additional structures: an algebra grading 
$$A = \bigoplus_{i=-\infty}^{\infty} A_i$$
and a Poisson bracket $\{\cdot, \cdot\}$ of degree $-d$
$$\{A_i, A_j\}\subset A_{i+j-d}, \qquad i,j \in \ZZ.$$
For any algebra of this form, one can define the notion of a \emph{filtered quantization}.

\begin{definition}\label{def:filteredquant}
A \emph{filtered quantization} of $A$ is a pair $(\cA,\theta)$ consisting of
\begin{itemize}
    \item[(i)] an associative algebra $\cA$ equipped with a complete and separated filtration by subspaces
    $$\cA = \bigcup_{i=-\infty}^{\infty} \cA_{\leq i}, \qquad ... \subseteq \cA_{\leq -1} \subseteq \cA_{\leq 0} \subseteq \cA_{\leq 1} \subseteq ...$$
    such that
    $$[\cA_{\leq i}, \cA_{\leq j}] \subseteq \cA_{\leq i+j-d} \qquad i,j \in \ZZ,$$
    and
    \item[(ii)] an isomorphism of graded Poisson algebras
    $$\theta: \gr(\cA) \xrightarrow{\sim} A,$$
    where the Poisson bracket on $\gr(\cA)$ is defined by
    $$\{a+\cA_{\leq i-1}, b+\cA_{\leq j-1}\}=[a,b]+\cA_{\leq i+j-d-1}, \qquad a \in \cA_{\leq i}, \ b \in \cA_{\leq j}.$$
\end{itemize}
An isomorphism of filtered quantizations $(\cA_1, \theta_1) \xrightarrow{\sim} (\cA_2, \theta_2)$ is an isomorphism of filtered algebras $\phi: \cA_1 \xrightarrow{\sim} \cA_2$ such that $\theta_1 = \theta_2 \circ \gr(\phi)$. Denote the set of isomorphism classes of quantizations of $A$ by $\mathrm{Quant}(A)$.
\end{definition}

Often, the isomorphism $\theta$ is clear from the context, and will be omitted from the notation. However, the reader should keep in mind that a filtered quantization $(\cA,\theta)$ is \emph{not} determined up to isomorphism by $\cA$ alone.

Now, suppose $G$ is an algebraic group which acts rationally on $A$ by graded Poisson automorphisms. Write $\mathrm{Der}(A)$ for the Lie algebra of derivations. The $G$-action on $A$ induces by differentiation a Lie algebra homomorphism
$$\fg \to \mathrm{Der}(A), \qquad \xi \mapsto \xi_A,$$
We say that $A$ is \emph{Hamiltonian} if there is a $G$-equivariant map $\varphi: \mathfrak{g} \to A_d$ (called a \emph{classical co-moment map}) such that
$$\{\varphi(\xi), a\} = \xi_A(a), \qquad \xi \in \fg, \quad a \in A.$$ 
A filtered quantization $(\cA,\theta)$ is $G$-\emph{equivariant} if $G$ acts rationally on $\cA$ by filtered algebra automorphisms and the isomorphism $\theta: \gr(\cA) \xrightarrow{\sim} A$ is $G$-equivariant. In this setting (as above) we get a Lie algebra homomorphism
$$\fg \to \mathrm{Der}(\cA), \qquad \xi \mapsto \xi_{\cA}.$$

\begin{definition}\label{def:hamiltonian}
Suppose $A$ is a graded Poisson algebra equipped with a Hamiltonian $G$-action. A \emph{Hamiltonian} quantization of $A$ is a triple $(\cA,\theta,\Phi)$ consisting of
 \begin{itemize}
     \item[(i)] a $G$-equivariant filtered quantization $(\cA,\theta)$ of $A$, and
     \item[(ii)] a $G$-equivariant map $\Phi: \fg \to \cA_{\leq d}$ (called a \emph{quantum co-moment map}) such that
     $$[\Phi(\xi),a] = \xi_{\cA}(a), \qquad \xi \in \fg, \quad a \in \cA.$$
 \end{itemize}
An isomorphism $(\mathcal{A}_1,\theta_1,\Phi_1) \xrightarrow{\sim} (\mathcal{A}_2,\theta_2,\Phi_2)$ of Hamiltonian quantizations of $A$ is a $G$-equivariant isomorphism of filtered algebras $\phi: \cA_1 \to \cA_2$ such that $\theta_1 = \theta_2 \circ \gr(\phi)$ and $\Phi_2 = \phi \circ \Phi_1$. Denote the set of isomorphism classes of Hamiltonian quantizations of $A$ by $\mathrm{Quant}^G(A)$.
\end{definition}

\subsection{Conical symplectic singularities}
Let $X$ be a normal Poisson variety. 

\begin{definition}[\cite{Beauville2000}, Def 1.1]\label{def:symplecticsing}
We say that $X$ has \emph{symplectic singularities} if
\begin{itemize}
    \item[(i)] the regular locus $X^{\mathrm{reg}} \subset X$ is symplectic; denote the symplectic form by $\omega^{\mathrm{reg}}$. 
    \item[(ii)] there is a resolution of singularities $\rho: Y \to X$ such that $\rho^*(\omega^{\mathrm{reg}})$ extends to a regular (not necessarily symplectic) $2$-form on $Y$.
\end{itemize}
\end{definition}

In this paper, we will consider symplectic singularities of a very special type. Let $d \in \ZZ_{>0}$ as in Section \ref{subsec:filteredquant}.

\begin{definition}\label{def:conicalsymplecticsing}
A \emph{conical symplectic singularity} is a normal affine Poisson variety $X$ with symplectic singularities and a contracting rational $\CC^{\times}$-action such that $\CC[X]$ is a graded Poisson algebra of degree $-d$. 
\end{definition}

\begin{example}\label{example:symplecticsingularity}
The following are examples of conical symplectic singularities
		\begin{itemize}
		    \item[(i)] Let $\Gamma\subset \mathrm{Sp}(2)$ be a finite subgroup. Then the Kleinian singularity $\Sigma = \CC^2/\Gamma$ is a conical symplectic singularity, see  \cite[Prop 2.4]{Beauville2000}. For $\rho$ we take the minimal resolution $\mathfrak{S} \to \Sigma$.
		    \item[(ii)] Let $\fg$ be a complex reductive Lie algebra and let $\mathbb{O} \subset \fg^*$ be a nilpotent orbit. Then $\Spec(\CC[\mathbb{O}])$ is a conical symplectic singularity, see  \cite[Sec 2.5]{Beauville2000}. 
		    \item[(iii)] In the setting of $(ii)$, let $\widetilde{\mathbb{O}} \to \mathbb{O}$ be a connected finite \'{e}tale cover. Then $\Spec(\CC[\widetilde{\mathbb{O}}])$ is a conical symplectic singularity, see \cite[Lem 2.5]{LosevHC}.
		\end{itemize}
	 \end{example}	
	 
For an arbitrary variety $X$, define the subvarieties $X_0,X_1,X_2,...$ as follows: $X_0:=X$ and  $X_{k+1}:=X_k- X_k^{\mathrm{reg}}$. If $X$ is Poisson, then all $X_k$ are Poisson subvarieties of $X$.

\begin{definition}\label{def:fin_many_leaves} We say that $X$ has finitely many (symplectic) leaves if  $X_k^{\mathrm{reg}}$ is a symplectic variety for all $k$. By a symplectic leaf of $X$ we mean an irreducible (i.e. connected) component of $X_k^{\mathrm{reg}}$ for some $k$.
\end{definition}

\begin{prop}[Thm 2.3, \cite{Kaledin2006}]\label{Prop:fin_many_leaves_Kaledin}
Suppose $X$ has symplectic singularities. Then $X$ has finitely many leaves. 
\end{prop}

\subsection{Namikawa space}

Let $X$ be a normal Poisson variety with symplectic singularities. Recall that a normal variety $Y$ is $\QQ$-\emph{factorial} if every Weil divisor has a (nonzero) integer multiple which is Cartier. The following is a consequence of \cite{BCHM} (see  \cite[Prop 2.1]{LosevSRA} for a proof).

\begin{prop}\label{prop:terminalization}
There is a birational projective morphism $\rho: Y\to X$ such that
\begin{itemize}
    \item[(i)] $Y$ is an irreducible, normal, Poisson variety  (in particular, $Y$ has symplectic singularities).
    \item[(ii)] $Y$ is $\QQ$-factorial.
    \item[$(iii)$] $Y$ has terminal singularities.
\end{itemize}
\end{prop}

\begin{rmk}\label{rmk:terminalization}
Modulo (i), (iii) is equivalent to the condition that the singular locus of $Y$ is of codimension $\geq 4$, see \cite[Main Thm]{Namikawa_note}. In practice, the latter condition is often easier to check.
\end{rmk}

The map $\rho: Y\to X$ in the proposition above (or the variety $Y$ itself, if the map is understood) is called a $\QQ$-\emph{factorial terminalization}. If $X$ is conical, then $Y$ admits a $\CC^{\times}$-action such that $\rho$ is $\CC^{\times}$-equivariant, see \cite[A.7]{Namikawa3}. 

\begin{example}\label{Ex:Springer_resolution}
Let $\fg$ be a complex reductive Lie algebra and let $\cN \subset \fg^*$ be its nilpotent cone. By Example \ref{example:symplecticsingularity}(ii) (and the normality of $\cN)$, $X:=\cN$ is a conical symplectic singularity. For $\rho: Y \to X$ we take the Springer resolution $T^*(G/B) \to X$. 
\end{example}

\begin{definition}\label{defi:Namikawa_space}
Let $X$ be a conical symplectic singularity and $Y$ a $\QQ$-factorial terminalization of $X$.
The \emph{Namikawa space} associated to $X$ is the complex vector space 
$$\fP^X := H^2(Y^{\mathrm{reg}},\CC).$$
\end{definition}
\begin{rmk}
It was shown in \cite{LMBM} that $\fP^X$ depends only on $X$ up to canonical isomorphism (i.e. it is independent of the choice of $\QQ$-terminalization), see \cite[Lem 4.6.6]{LMBM} and the discussion following it. In particular, the notation $\fP^X$ is justified.
\end{rmk}

\subsection{Structure of Namikawa space}\label{subsec:structureNamikawa}

Following \cite{Namikawa} and \cite{Losev4}, we will provide a description of $\fP^X$ in terms of the geometry of $X$. For this discussion, it is convenient to fix a $\QQ$-terminalization $\rho: Y \to X$. For each codimension 2 leaf $\fL_k \subset X$, the formal slice to $\fL_k \subset X$ is identified with the formal neighborhood at $0$ in a Kleinian singularity $\Sigma_k = \CC^2/\Gamma_k$, see \cite{Namikawa}. Under the McKay correspondence, $\Gamma_k$ corresponds to a complex simple Lie algebra $\fg_k$ of type A, D, or E. Fix a Cartan subalgebra $\fh_k \subset \fg_k$. Write $\Lambda_k \subset \fh_k^*$ for the weight lattice and $W_k$ for the Weyl group. If we choose a point $x \in \fL_k$, there is a natural identification $H^2(\rho^{-1}(x),\ZZ) \simeq \Lambda_k$, and $\pi_1(\fL_k)$ acts on $\Lambda_k$ by diagram automorphisms. The \emph{partial Namikawa space} corresponding to $\fL_k$ is the subspace
$$\fP^X_{k} := (\fh_{k}^*)^{\pi_1(\fL_k)}.$$
The embedding $\rho^{-1}(x) \subset Y^{\mathrm{reg}}$ induces a map on cohomology
$$\fP^X := H^2(Y^{\mathrm{reg}},\CC) \to  H^2(\rho^{-1}(x),\CC)^{\pi_1(\fL_k)} \simeq (\fh_{k}^*)^{\pi_1(\fL_k)} =: \fP^X_k.$$
Also, define
$$\fP^X_{0} := H^2(X^{\mathrm{reg}},\CC).$$
The embedding $X^{\mathrm{reg}} \subset Y^{\mathrm{reg}}$ induces a map on cohomology
$$\fP^X := H^2(Y^{\mathrm{reg}},\CC) \to H^2(X^{\mathrm{reg}},\CC) =: \fP^X_{0}.$$
\begin{prop}[\cite{Losev4}, Lem 2.8]\label{prop:partialdecomp}
The maps $\fP^X \rightarrow \fP^X_{k}$ assemble into a linear isomorphism
\begin{equation}\label{eq:partialdecomp}\fP^X \simeq\bigoplus_{k=0}^t \fP^X_{k}, \qquad \lambda \mapsto (\lambda_0,\lambda_1,...,\lambda_t).\end{equation}
\end{prop}

Finally, we define the \emph{Namikawa Weyl group} of $X$. The $\pi_1(\fL_k)$-action on $\Lambda_k$ induces a $\pi_1(\fL_k)$-action on $W_k$. Consider the subgroup $W_k^{\pi_1(\fL_k)} \subset W_k$. There is a natural action of $W_k^{\pi_1(\fL_k)}$ on $\fP^X_{k} = (\fh_{k}^*)^{\pi_1(\fL_k)}$. The \emph{Namikawa Weyl group} associated to $X$ is the product 
$$W := \prod_k W_k^{\pi_1(\fL_k)}.$$
We let $W$ act on $\fP$ via the isomorphism (\ref{eq:partialdecomp}) (the action on $\fP^X_{0}$ is trivial).

\subsection{Finite covers of conical symplectic singularities}

Let $X$ be a conical symplectic singularity. In this section, we will define the notion of a \emph{finite cover} of $X$. Let $p': \widetilde{X}' \to X^{\mathrm{reg}}$ be a finite \'{e}tale cover of the regular locus $X^{\mathrm{reg}} \subset X$. Rescaling if necessary, we can arrange so that the $\CC^{\times}$-action on $X^{\mathrm{reg}}$ lifts to $\widetilde{X}'$. Consider the composition $\widetilde{X}' \overset{p'}{\to} X^{\mathrm{reg}} \hookrightarrow X$ and its Stein factorization
\begin{center}
\begin{tikzcd}
\widetilde{X}' \ar[r,hookrightarrow] \ar[d,"p'"] & \widetilde{X} \ar[d, "p"]\\
X^{\mathrm{reg}} \ar[r,hookrightarrow] & X
\end{tikzcd}
\end{center}
Note that $\widetilde{X}$ is affine and $\widetilde{X}'$ embeds into $\widetilde{X}$ as an open subvariety. Since $\operatorname{codim}(X^{\mathrm{sing}},X)\geqslant 2$ and $p:\widetilde{X} \to X$ is finite, we have that $\codim(\widetilde{X}-\widetilde{X}',\widetilde{X}) \geq 2$. Thus the algebra $\CC[\widetilde{X}']$ is finitely generated and
$ \widetilde{X}= \Spec(\CC[\widetilde{X}'])$. In particular, the $\CC^{\times}$-action on $\widetilde{X}'$ extends to $\widetilde{X}$. In fact, $\widetilde{X}$ is a conical symplectic singularity, see \cite[Lemma 2.5]{LosevHC}. A map $p:\widetilde{X} \to X$ obtained in this fashion is called a \emph{finite cover} of $X$. A finite cover $p: \widetilde{X} \to X$ is \emph{Galois} if it is Galois over the regular locus $X^{\mathrm{reg}}$. 

For each codimension 2 leaf $\fL_k \subset X$, choose a system of fundamental weights
$$\omega_1(k), \omega_2(k), ..., \omega_{n(k)}(k) \in \fh_k^*$$
To each fundamental weight $\omega_i(k)$ we assign a coefficient $a_i$, which is the multiplicity of the corresponding simple root in the highest weight for $\fg_k$ (in type $A$, $a_i=1$ for every $i$, and this is the only case that will concern us). Define the element
$$\epsilon_k := |\Gamma_k|^{-1} \sum_i a_i\omega_i \in \fh_k^*.$$
\begin{lemma}[Prop 5.3.1,\cite{LMBM}]\label{lem:barycenter}
Suppose $X$ admits a finite Galois cover $\widetilde{X} \to X$ such that $\widetilde{X}$ has no codimension 2 leaves. Then for each $\fL_k \subset X$, the element $\epsilon_k$ is a fixed point for $\pi_1(\fL_k)$ and hence an element of the partial Namikawa space $\fP_k = (\fh_k^*)^{\pi_1(\fL_k)}$.
\end{lemma}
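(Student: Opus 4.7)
My plan is to realize $\epsilon_k$ as the $k$-th component, under the decomposition of Proposition \ref{prop:partialdecomp}, of a global period class $\lambda \in \fP^X$ determined by the cover $\widetilde{X} \to X$. Once this is done, the conclusion $\epsilon_k \in \fP^X_k = (\fh_k^*)^{\pi_1(\fL_k)}$ is automatic from the definition of the partial Namikawa space, since the isomorphism in Proposition \ref{prop:partialdecomp} lands each summand inside the $\pi_1(\fL_k)$-invariants by construction.

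To set this up, first restrict the Galois cover $p \colon \widetilde{X} \to X$ to a formal neighborhood $U$ of a generic point $x \in \fL_k$. By the slice structure of a symplectic singularity along a codimension $2$ leaf, $U$ is (\'etale-locally and up to the monodromy $\pi_1(\fL_k)$) isomorphic to $\fL_k \times \Sigma_k$ with $\Sigma_k = \CC^2/\Gamma_k$. The induced cover factors as a cover of $\Sigma_k$ together with an \'etale piece over $\fL_k$. Since $\widetilde{X}$ has no codimension $2$ leaves, the cover of $\Sigma_k$ must have smooth total space; but the only finite cover of a Kleinian singularity with smooth total space is a disjoint union of copies of the full Galois cover $\CC^2 \to \Sigma_k$. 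Thus, near $x$, the cover $\widetilde{X} \to X$ is (up to the trivial factors) the universal $\Gamma_k$-Galois cover of the Kleinian slice.

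Finally, I identify the period class of $\CC^2 \to \CC^2/\Gamma_k$ with $\epsilon_k$. Under the McKay correspondence, the nontrivial irreducible representations of $\Gamma_k$ correspond to the fundamental weights $\omega_1(k),\dots,\omega_{n(k)}(k)$ of $\fg_k$, with dimensions equal to the marks $a_i$. A direct Chern-class computation, tracing the line bundles on the minimal resolution of $\Sigma_k$ that arise as summands of the push-forward of the structure sheaf of $\CC^2$ under the quotient map, identifies the period in $\fh_k^*$ with $|\Gamma_k|^{-1} \sum_i a_i \omega_i = \epsilon_k$; this is the main technical obstacle, requiring careful bookkeeping of the McKay and Namikawa normalization conventions, and the factor $|\Gamma_k|^{-1}$ reflects the $|\Gamma_k|$-to-one nature of the cover. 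Since this period arises as the image of a global class in $\fP^X$ under the projection $\fP^X \to \fP^X_k$, and $\fP^X_k = (\fh_k^*)^{\pi_1(\fL_k)}$ by construction, the $\pi_1(\fL_k)$-invariance of $\epsilon_k$ follows.
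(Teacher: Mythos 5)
The paper itself offers no proof of this lemma --- it is quoted verbatim from \cite[Prop 5.3.1]{LMBM} --- so your argument can only be judged on its own terms. Your overall route (localize at a generic point of $\fL_k$, use the absence of codimension 2 leaves in $\widetilde{X}$ to conclude that the induced cover of the Kleinian slice is a disjoint union of copies of $\CC^2 \to \CC^2/\Gamma_k$, and interpret $\epsilon_k$ as a quantization period) is indeed the geometric mechanism behind the cited result, and your slice analysis in the second paragraph is essentially correct. But there is a genuine gap at the decisive step: you never construct the ``global period class $\lambda \in \fP^X$ determined by the cover.'' A finite cover does not by itself carry a class in $\fP^X = H^2(Y^{\mathrm{reg}},\CC)$; the only natural candidate is the period of the filtered quantization $(\cA_0^{\widetilde{X}})^{\Pi}$ of $\CC[X]$ obtained by taking Galois invariants of the canonical quantization of $\CC[\widetilde{X}]$, written as $\cA^X_{\lambda}$ via Theorem \ref{thm:quantssofsymplectic}, together with a compatibility statement identifying the component $\lambda_k$ under Proposition \ref{prop:partialdecomp} with the period of the restriction of this quantization to the formal slice, i.e.\ with the period of $(\text{Weyl algebra})^{\Gamma_k}$. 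Neither ingredient appears in your write-up; without them, the claim that your locally computed element of $\fh_k^*$ ``arises as the image of a global class'' is precisely the point at issue, so the invariance conclusion is circular. In addition, the identification of that local period with $|\Gamma_k|^{-1}\sum_i a_i\omega_i$ --- which you yourself flag as the main technical obstacle --- is asserted, not proved; filling in these pieces would amount to reproving \cite[Prop 5.3.1]{LMBM} in full.

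Note also that the statement as formulated requires none of this machinery. By the setup of Section \ref{subsec:structureNamikawa}, $\pi_1(\fL_k)$ acts on $\Lambda_k$ by diagram automorphisms. Any diagram automorphism $\sigma$ permutes the fundamental weights, $\omega_i \mapsto \omega_{\sigma(i)}$, and fixes the highest root, hence preserves the marks: $a_{\sigma(i)} = a_i$. Therefore $\sigma\bigl(\sum_i a_i \omega_i\bigr) = \sum_i a_i \omega_{\sigma(i)} = \sum_j a_j \omega_j$, so $\epsilon_k$ is fixed by $\pi_1(\fL_k)$ and lies in $\fP_k = (\fh_k^*)^{\pi_1(\fL_k)}$. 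The Galois-cover hypothesis plays no role in the invariance itself; it is what makes $\epsilon$ the correct parameter in the stronger assertion $(\cA_0^{\widetilde{X}})^{\Pi} \simeq \cA^X_{\epsilon}$ recorded after the lemma. If you wish to keep your quantization-theoretic route you must supply the two missing ingredients above; otherwise the one-line argument is both complete and considerably simpler.
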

In the setting of Lemma \ref{lem:barycenter}, define
\begin{equation}\label{eq:defofepsilon}\epsilon := (0, \epsilon_1,\epsilon_2, ..., \epsilon_t) \in \bigoplus_i \fP^X_i \simeq \fP^X\end{equation}
We call $\epsilon$ the \emph{weighted barycenter parameter} for $X$. We note that the Galois group $\Pi$ of $\widetilde{X} \to X$ acts on the canonical quantization $\cA_0^{\widetilde{X}}$, and $(\cA_0^{\widetilde{X}})^{\Pi} \simeq \cA_{\epsilon}^X$, see \cite[Prop 5.3.1]{LMBM}.

\subsection{Filtered quantizations of conical symplectic singularities}

In this section, we will recall the classification of filtered quantizations of conical symplectic singularities. Let $X$ be a conical symplectic singularity and choose a $\QQ$-terminalization $\rho: Y \to X$. For any graded smooth symplectic variety $V$, there is a (non-commutative) \emph{period map}
$$\mathrm{Per}: \mathrm{Quant}(V) \to H^2(V,\CC),$$
see \cite[Sec 4]{BK}, \cite[Sec 2.3]{Losev_isofquant}.

\begin{prop}[\cite{Losev4}, Prop 3.1(1)]\label{prop:P=Quant(Y)}
The maps 
$$\mathrm{Quant}(Y) \overset{|_{Y^{\mathrm{reg}}}}{\to} \mathrm{Quant}(Y^{\mathrm{reg}}) \overset{\mathrm{Per}}{\to} H^2(Y^{\mathrm{reg}},\CC) = \fP^{\widetilde{X}}$$
are bijections.
\end{prop}

For $\lambda \in \fP$, let  $\mathcal{D}_{\lambda}$ denote the corresponding filtered quantization of $Y$ and let $\cA_{\lambda} := \Gamma(Y,\mathcal{D}_{\lambda})$. 

\begin{theorem}[Prop 3.3, Thm 3.4, \cite{Losev4}]\label{thm:quantssofsymplectic}
The following are true:

\begin{itemize}
\item[(i)] For every $\lambda \in \fP$, the algebra $\cA_\lambda$ is a filtered quantization of $\CC[X]$.
    \item[(ii)] Every filtered quantization of $\CC[X]$ is isomorphic to $\cA_{\lambda}$ for some $\lambda \in \fP$.
    \item[(iii)] For every $\lambda, \lambda' \in \fP$, we have $\cA_{\lambda} \simeq \cA_{\lambda'}$ if and only if $\lambda' \in W\cdot \lambda$.
\end{itemize}
Hence, the map $\lambda \mapsto \cA_{\lambda}$ induces a bijection
$$\fP/W \simeq \mathrm{Quant}(\CC[X]), \qquad W \cdot \lambda \mapsto \cA_{\lambda}.$$
\end{theorem}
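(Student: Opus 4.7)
The plan is to reduce the classification of quantizations of $\CC[X]$ to the classification of quantizations of the smoother space $Y$, which is already in hand via Proposition \ref{prop:P=Quant(Y)}. The key technical inputs are that global sections on $Y$ behave well with respect to both the filtration and birational modifications of $Y$.

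For part (i), I would verify directly that $\cA_\lambda = \Gamma(Y,\mathcal{D}_\lambda)$ carries a complete separated filtration with the required associated graded. Push forward the sheaf-level filtration $\mathcal{D}_\lambda^{\leq n}$ to global sections, defining $\cA_\lambda^{\leq n} := \Gamma(Y,\mathcal{D}_\lambda^{\leq n})$. The commutator bound $[\cA_\lambda^{\leq i}, \cA_\lambda^{\leq j}] \subseteq \cA_\lambda^{\leq i+j-d}$ is inherited sheaf-locally. The crucial step is to identify $\gr(\cA_\lambda) \simeq \Gamma(Y,\gr(\mathcal{D}_\lambda)) = \Gamma(Y,\mathcal{O}_Y)$, which requires the vanishing $H^i(Y,\mathcal{O}_Y) = 0$ for $i>0$; this follows from rational singularities (which $Y$ has, since symplectic singularities are canonical, hence rational). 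Combined with normality of $X$ and the birational projective map $\rho: Y \to X$, one gets $\Gamma(Y,\mathcal{O}_Y) = \CC[X]$, as required.

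For part (ii), start with an arbitrary filtered quantization $\cA$ of $\CC[X]$ and microlocalize to obtain a sheaf of quantizations on $X^{\mathrm{reg}}$. Since $\rho$ is an isomorphism away from a codimension-$2$ subset, this pulls back to a sheaf quantization on the open locus $\rho^{-1}(X^{\mathrm{reg}}) \subset Y$. Using that $Y$ is $\QQ$-factorial terminal (so $Y^{\mathrm{sing}}$ has codimension $\geq 4$ by Remark \ref{rmk:terminalization}), one extends this quantization across the codimension-$\geq 2$ boundary to a quantization of all of $Y^{\mathrm{reg}}$, and then to $Y$ itself via Proposition \ref{prop:P=Quant(Y)}. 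This realizes $\cA$ as $\cA_\lambda$ for a unique $\lambda \in \fP$ (on the nose in $\mathrm{Quant}(Y)$), matching $\cA$ after taking global sections by virtue of the cohomology vanishing in (i).

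For part (iii), the essential geometric fact is that distinct $\QQ$-factorial terminalizations of $X$ are related by sequences of Mukai flops over codimension-$2$ leaves $\fL_k$. Each elementary flop induces an identification $H^2(Y^{\mathrm{reg}},\CC) \simeq H^2(Y'^{\mathrm{reg}},\CC)$ that realizes a simple reflection in the partial Weyl group $W_k^{\pi_1(\fL_k)}$; taking global sections washes out the choice of terminalization, so $\Gamma(Y,\mathcal{D}_\lambda) \simeq \Gamma(Y',\mathcal{D}_{w\lambda})$ for the corresponding $w \in W$, yielding the ``if'' direction. For the ``only if'' direction, if $\cA_\lambda \simeq \cA_{\lambda'}$ as filtered algebras, then tensoring with the canonical quantizations of $X^{\mathrm{reg}}$ recovers sheaf-level isomorphisms, which by the period map (and injectivity on $\fP$ modulo the Namikawa Weyl group symmetry) force $\lambda' \in W \cdot \lambda$. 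The combined statement packages into the bijection $\fP/W \simeq \mathrm{Quant}(\CC[X])$.

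The main obstacle is part (iii): one must keep careful track of the birational geometry relating different $\QQ$-terminalizations and the precise effect of each flop on $\fP^X$ and on the filtered algebra $\cA_\lambda$. The codimension-$\geq 2$ extension argument in (ii) is also delicate, but it is by now a standard application of Hartogs-type principles in the presence of terminal singularities.
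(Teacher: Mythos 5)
This statement is not proved in the paper at all: it is imported verbatim from \cite{Losev4} (Prop 3.3 and Thm 3.4 there), so there is no ``paper's own proof'' to match. Judged on its own merits, your sketch of (i) is essentially the standard argument and is fine: $H^{i}(Y,\mathcal{O}_Y)=0$ for $i>0$ (rationality of symplectic singularities) gives $\gr\,\Gamma(Y,\mathcal{D}_\lambda)\simeq\Gamma(Y,\mathcal{O}_Y)=\CC[X]$, modulo routine care with completeness of the filtration in the conical setting.

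Part (ii), however, has a genuine gap. You propose to pull a microlocalized quantization back to $\rho^{-1}(X^{\mathrm{reg}})$ and then extend it to $Y^{\mathrm{reg}}$ ``across the codimension-$\geq 2$ boundary.'' But $Y^{\mathrm{reg}}\setminus\rho^{-1}(X^{\mathrm{reg}})$ is not of codimension $\geq 2$ in general: it contains the exceptional divisors of $\rho$ lying over the codimension-$2$ leaves of $X$, which is exactly the situation of interest in this paper. Already for $X=\CC^2/\ZZ_2$ one has $Y=T^*\mathbb{P}^1$ and the complement of $\rho^{-1}(X^{\mathrm{reg}})$ is the zero section, a divisor; no Hartogs-type extension applies there, and indeed the whole content of the parameter $\lambda$ along $\fP_k$ lives on these divisors. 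Terminality of $Y$ (codimension-$\geq 4$ singular locus) lets you pass from $Y^{\mathrm{reg}}$ to $Y$, but it does not help you cross the exceptional divisors. The actual argument in \cite{Losev4} avoids this: one removes only the leaves of codimension $\geq 4$ (where a Hartogs argument is legitimate), analyzes quantizations transversally to the codimension-$2$ leaves via the Kleinian slices, and compares with Namikawa's classification of graded Poisson deformations via the universal deformation over $\fP/W$; the period over $H^2(X^{\mathrm{reg}},\CC)$ and the slice parameters in $\fP_k$ are then assembled into $\lambda$. Relatedly, your ``only if'' direction in (iii) is close to circular: an abstract isomorphism of filtered algebras $\cA_\lambda\simeq\cA_{\lambda'}$ does not obviously localize to an isomorphism of sheaf quantizations on a fixed $Y$ (a priori a different terminalization is needed), and showing that the period is well defined up to exactly the Namikawa Weyl group $W$ is the crux of Losev's theorem rather than a consequence of the period map alone. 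The ``if'' direction via the chamber structure of the movable cone and change of terminalization is in the right spirit, but it too ultimately rests on the deformation-theoretic comparison, not just on tracking Mukai flops.
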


There is an equivariant version of this result, which we will now state. Let $G$ be a connected reductive algebraic group and suppose $A:=\CC[X]$ admits a Hamiltonian $G$-action, see Section \ref{subsec:filteredquant}. Define the \emph{extended Namikawa space}
$$\overline{\fP}^X := \fP^X \oplus \fz(\mathfrak{g})^*$$
This space should be viewed as an equivariant version of $\fP^X$. Let $W^X$ act on $\overline{\fP}^X$ via the decomposition above (the $W$-action on the second factor is defined to be trivial). 

\begin{prop}[Lem 4.11.2, \cite{LMBM}]\label{prop:Hamiltonian}
Let $G$ be a connected reductive algebraic group and suppose $A:=\CC[X]$ admits a Hamiltonian $G$-action. Then the following are true:
\begin{itemize}
    \item[(i)] There is a unique classical co-moment map $\varphi: \fg \to A_d$.
    \item[(ii)] Every filtered quantization $\cA \in \mathrm{Quant}(A)$ has a unique $G$-equivariant structure.
    \item[(iii)] For every $\cA \in \mathrm{Quant}(A)$ and $\chi \in \fz(\fg)^*$, there is a unique quantum co-moment map $\Phi_{\chi}: \fg \to \cA_{\leq d}$ such that $\Phi|_{\fz(\fg)} = \chi$.
\end{itemize}
In particular, there is a canonical bijection
$$\overline{\fP}^X/W^X \xrightarrow{\sim} \mathrm{Quant}^G(A) \qquad W^X (\lambda, \chi) \mapsto (\cA_{\lambda}^X, \Phi_{\chi}).$$
\end{prop}

One consequence of this proposition is that there is always a distinguished quantization of $X$.

\begin{definition}[Def 5.0.1, \cite{LMBM}]\label{def:canonical}
The \emph{canonical Hamiltonian quantization} of $\CC[X]$ is the Hamiltonian quantization corresponding to the parameter $0 \in \overline{\fP}^X$. 
\end{definition}

\section{Nilpotent orbits and covers}

In this section, we collect some basic facts about nilpotent orbits and their (finite connected) covers. 

\subsection{Classification of nilpotent orbits and covers}

In classical types, nilpotent orbits are classified by (decorated) integer partitions.

\begin{definition}
A partition $p$ is of \emph{type C} (resp type \emph{B/D}) if every odd part (resp. even part) occurs with even multiplicity.  
\end{definition}

The following result is well-known.

\begin{prop}[Section 5.1, \cite{CM}]\label{prop:orbitstopartitions}
Suppose $\fg$ is classical. The set of nilpotent orbits $\mathbb{O} \subset \fg^*$ is parameterized by (decorated) partitions as follows
\begin{enumerate}[label=(\alph*)]
    \item If $\fg = \mathfrak{sl}(n)$, the set of nilpotent orbits is in one-to-one correspondence with partitions of $n$.
    \item If $\fg = \mathfrak{so}(2n+1)$, the set of nilpotent orbits is in one-to-one correspondence with partitions of $2n+1$ of type B/D.
    \item If $\fg = \mathfrak{sp}(2n)$, the set of nilpotent orbits is in one-to-one correspondence with partitions of $2n$ of type C.
    \item If $\fg = \mathfrak{so}(2n)$, the set of nilpotent orbits is in one-to-one correspondence with partitions of $2n$ of type B/D, except that each \emph{very even} partition (i.e. a partition containing only even parts) corresponds to two nilpotent orbits, labeled $\mathbb{O}^I$ and $\mathbb{O}^{II}$.
\end{enumerate}
\end{prop}

If $p$ is a partition of the appropriate type, we will denote the corresponding nilpotent orbit by $\OO_p$. In exceptional types, we will use the Bala-Carter classification to label nilpotent orbits, see \cite[Sec 8.4]{CM} for an explanation. 

By a `nilpotent cover' we will mean a finite connected $G$-equivariant cover of a nilpotent co-adjoint orbit. Up to isomorphism, nilpotent covers of $\OO$ are paramterized by conjugacy classes of subgroups of the (necessarily finite) $G$-equivariant fundamental group $\pi_1^G(\OO)$. A description of these fundamental groups can be found in \cite[Sec 6.1]{CM} (for classical groups) and \cite[Sec 8.4]{CM} (for simply connected exceptional groups). We will occasionally need a description of $\pi_1^G(\OO)$ for
more general classes of groups (for example, the Levi subgroups of simply connected exceptional groups, which need not be simply connected). In these cases, we use the {\fontfamily{cmtt}\selectfont atlas} software to compute $\pi_1^G(\OO)$.

\subsection{Geometry of nilpotent orbits and covers}\label{subsec:geometrynilpotent}

In this section, we will collect some basic facts about the geometry of nilpotent orbits and covers. Suppose $\OO$ is a nilpotent orbit. The singular locus of $\overline{\OO}$ coincides with the boundary $\partial \OO = \overline{\OO} \setminus \OO$, see \cite[Proof of Prop 2.2]{Namikawa2004}. Let $\OO'$ be a maximal $G$-orbit in $\partial \OO$. For each point $e' \in \OO'$, there is a transverse slice $S_{\OO,\OO'}$ to $\OO'$ in $\overline{\OO}$ (obtained, for example, by intersecting the Slodowy slice to $\OO'$ at $e'$ with $\overline{\OO}$). The variety $S_{\OO,\OO'}$ has an isolated singularity at $e'$ and a natural rational $\CC^{\times}$-action which is contracting onto $e'$, see \cite[Sec 4]{GanGinzburg}. We note that $S_{\OO,\OO'}$ is independent, up to algebraic isomorphism, of the choice of $e'$ as well as the $\mathfrak{sl}(2)$-triple $(e',f',h')$ used to define the Slodowy slice. We call $S_{\OO,\OO'}$ the \emph{singularity} of the orbit $ \OO' \subset \overline{\OO}$. The varieties $S_{\OO',\OO'}$ were described in classical types by Kraft and Procesi in \cite{Kraft-Procesi} and in exceptional types by Fu, Juteau, Levy, and Sommers in \cite{fuetal2015}.

In this paper, we will restrict our attention to the singularities corresponding to codimension 2 orbits $\OO' \subset \overline{\OO}$, i.e. to \emph{dimension 2 singularities} in $\overline{\OO}$. The dimension 2 singularities in nilpotent orbit closures can be rather complicated (sometimes non-normal) varieties. We will briefly recall some of the standard conventions for denoting them, from \cite{Slodowy},\cite{Kraft-Procesi} and \cite{fuetal2015}. Fix a dimension 2 singularity $S_{\OO,\OO'}$. If $S_{\OO,\OO'}$ is normal, it is isomorphic to a Kleinian singularity of type $A$, $D$, or $E$. As explained in Section \ref{subsec:structureNamikawa}, the fundamental group $\pi_1(\OO')$ acts on the Dynkin diagram of $S_{\OO,\OO'}$ by a finite group $K$ of diagram automorphisms. Following \cite{Slodowy} and \cite{fuetal2015}, the pair $(S_{\OO,\OO'},K)$ is denoted by
\begin{itemize}
    \item $B_k$, if $S_{\OO,\OO'}$ is of type $A_{2k-1}$, and $K=S_2$,
    \item $C_k$, if $S_{\OO,\OO'}$ is of type $D_{k+1}$, and $K=S_2$,
    \item $F_4$, if $S_{\OO,\OO'}$ is of type $E_6$, and $K=S_2$,
    \item $G_2$, if $S_{\OO,\OO'}$ is of type $D_4$, and $K=S_3$,
    \item $A_{2k}^{+}$, if $S_{\OO,\OO'}$ is of type $A_{2k}$, and $K=S_2$. 
\end{itemize}
If $S_{\OO,\OO'}$ is \emph{non-normal} and $\fg$ is a classical Lie algebra, then $S_{\OO,\OO'}$ is of the following type:
\begin{itemize}
    \item $nA_k$: $n$ copies of the Kleinian singularity of type $A_k$, meeting at the singular point.
\end{itemize}
If $\fg$ is exceptional, there are several additional non-normal singularities which can appear. In the notation of \cite{fuetal2015}, they are:
\begin{itemize}
    \item $nD_k$: $n$ copies of the Kleinian singularity of type $D_k$, meeting at the singular point.
    \item  $m$:  a non-normal 2-dimensional conical singularity admitting an $\mathrm{SL}(2)$-action with an open orbit isomorphic to $\CC^2 \setminus \{0\}$ and normalization isomorphic to $\CC^2$, see \cite[Section 1.8.4]{fuetal2015} for details.
    \item $\mu$: a non-normal 2-dimensional conical singularity with normalization isomorphic to a Kleinian singularity of type $A_3$, see \cite[Sec 1.8.4]{fuetal2015} for details.
\end{itemize}

The next lemma, which asserts that the above singularities are in fact the only possibilities, is immediate from \cite{Kraft-Procesi} and \cite{fuetal2015}.

\begin{lemma}\label{not normal sing}
Let $\OO'\subset \overline{\OO}$ be a codimension 2 orbit, and assume that the singularity $S_{\OO,\OO'}$ is non-normal. Then $S_{\OO,\OO'}$ is of type $nA_k$, $nD_k$ (for some $n$, $k$), $m$, or $\mu$.
\end{lemma}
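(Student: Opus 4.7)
The plan is to observe that the lemma is really a bookkeeping exercise: the complete list of dimension 2 singularities $S_{\OO,\OO'}$ is already available in the literature, so all that is required is to extract the non-normal entries and verify that each one falls into the four families $nA_k$, $nD_k$, $m$, $\mu$. I would split the verification along the classical/exceptional divide, since the two cases rest on different references.

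For $\fg$ classical, I would invoke the Kraft--Procesi classification in \cite{Kraft-Procesi}. Their technique is to realize nilpotent orbit closures as quotients of cotangent bundles to flag-like varieties and then to read off transverse slices from the combinatorics of the defining partitions. Concretely, for a codimension 2 degeneration $\OO' \subset \overline{\OO}$ one can always find a minimal degeneration corresponding to moving one or two boxes of the partition. The resulting slice is either a normal Kleinian singularity (of type $A_k$, $D_k$, or the decorated variants $B_k, C_k, A_{2k}^+$) or else the union of several normal Kleinian pieces glued at the singular point. In the non-normal case, inspection of the partition combinatorics shows that the only possibility is $nA_k$: several copies of a type $A$ Kleinian singularity meeting transversely at the vertex. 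This is precisely the classical contribution to the list.

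For $\fg$ exceptional, I would consult the tables of Fu--Juteau--Levy--Sommers in \cite{fuetal2015}, which determine $S_{\OO,\OO'}$ explicitly for every codimension 2 orbit degeneration in every simple exceptional Lie algebra. The entries are tabulated case by case and one simply scans through the tables. Most entries are normal Kleinian singularities, with the $\pi_1$-action recorded via the diagram-automorphism symbols $B_k, C_k, F_4, G_2, A_{2k}^+$. The non-normal entries that appear are of four kinds: unions $nA_k$ or $nD_k$ of Kleinian singularities meeting at the cone point, and the two special singularities $m$ and $\mu$ defined in \cite[Sec 1.8.4]{fuetal2015}. In particular no other non-normal two-dimensional singularity occurs.

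Putting these two case analyses together gives the lemma; there is no real obstacle, only the tedium of reading the tables. The one place where a little care is needed is to make sure that the conventions in \cite{Kraft-Procesi} and \cite{fuetal2015} agree on the meaning of ``non-normal'' and on how multi-component singularities are recorded, so that no singularity appearing in one reference is accidentally missed under a different name in the other. Once that matching is done, the stated list is complete.
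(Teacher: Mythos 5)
Your proposal matches the paper's treatment: the lemma is stated there as being immediate from \cite{Kraft-Procesi} (classical types) and \cite{fuetal2015} (exceptional types), which is exactly the table-lookup argument you carry out. Your elaboration of what each reference supplies, including that the only classical non-normal case is $nA_k$, is correct and adds only detail to the same route.
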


Now let $\widetilde{\OO}$ be a (finite connected) cover of $\OO$ and let $\widetilde{X} = \Spec(\CC[\widetilde{\OO}])$. Recall, (iii) of Example \ref{example:symplecticsingularity}, that $\widetilde{X}$ is a conical symplectic singularity. Let $\mu:\widetilde{X} \to \overline{\OO} \subset \fg^*$ denote the moment map. If $\fL \subset \widetilde{X}$ is a codimension 2 leaf, then $\mu(\overline{\fL})$ is the closure of a codimension 2 orbit $\OO' \subset \overline{\OO}$, see \cite[Lem 4.6.1]{LMBM}. This defines a map
\begin{equation}\label{eq:leaftoorbit}\{\text{codimension 2 leaves in } \widetilde{X}\} \to \{\text{codimension 2 orbits in }  \overline{\OO}\}\end{equation}
Let $\Sigma$ denote the Kleinian singularity corresponding to the leaf $\fL \subset \widetilde{X}$, see Section \ref{subsec:structureNamikawa}. There is a closed embedding $\Sigma \subset \widetilde{X}$, constructed as follows. Since $S_{\OO',\OO}$ is transverse to $\OO'$, $\mu^{-1}(S_{\OO',\OO})$ is transverse to every leaf in $\mu^{-1}(\OO')$. Thanks to the contracting $\CC^{\times}$-action on $S_{\OO',\OO}$, $\mu^{-1}(S_{\OO',\OO})$ splits into a disjoint union of connected components, indexed by points in $\mu^{-1}(S_{\OO',\OO} \cap \OO')$. Choose a point in this set lying in $\fL$ and let $\Sigma$ be its connected component in $\mu^{-1}(S_{\OO',\OO})$. Then $\Sigma$ is a Kleinian singularity and $\operatorname{Spec}(\C[\Sigma]^{\wedge})$ is a formal slice to $\fL$. 

If we specialize to the case of nilpotent orbits, the map  (\ref{eq:leaftoorbit}) is almost always a bijection.

\begin{lemma}\label{lem:weaklynormal}
Suppose $\widetilde{\OO}=\OO$ is a nilpotent orbit. Then the following are true:
\begin{itemize}
    \item[(i)] The map (\ref{eq:leaftoorbit}) is injective.
    \item[(ii)] The map (\ref{eq:leaftoorbit}) is surjective unless $\overline{\OO}$ contains a dimension 2 singularity of type $m$.
\end{itemize}
\end{lemma}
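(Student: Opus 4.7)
The plan is to analyze, for each codimension $2$ orbit $\OO' \subset \overline{\OO}$, the structure of the preimage $\mu^{-1}(\OO')$ in $\widetilde{X}$. Since we are in the case $\widetilde{\OO} = \OO$, the moment map $\mu : \widetilde{X} \to \overline{\OO}$ is essentially the normalization of $\overline{\OO}$ (it is finite and birational), so the branch structure of $\overline{\OO}$ along $\OO'$ controls what happens above $\OO'$.

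First I would localize to the formal neighborhood of a generic point $e' \in \OO'$. There $\overline{\OO}$ decomposes formally as a smooth factor along $\OO'$ times the transverse slice $S_{\OO,\OO'}$, and the preimage in $\widetilde{X}$ decomposes in parallel. This reduces the enumeration of codimension $2$ leaves of $\widetilde{X}$ lying above $\OO'$ to the enumeration of irreducible components of the normalization $\widetilde{S}_{\OO,\OO'}$ of $S_{\OO,\OO'}$, taken modulo the monodromy action of $\pi_1(\OO')$, which globally glues local components into a single $G$-orbit, hence into a single leaf.

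The next step is to invoke Lemma \ref{not normal sing} and run through the list of possibilities for $S_{\OO,\OO'}$. If $S_{\OO,\OO'}$ is a normal Kleinian singularity (the A/D/E cases and the folded types $B_k$, $C_k$, $F_4$, $G_2$, $A_{2k}^{+}$), then $\widetilde{S}_{\OO,\OO'}$ is already irreducible and contributes exactly one leaf over $\OO'$. If it is of type $nA_k$ or $nD_k$, the normalization breaks into $n$ Kleinian components; here I would appeal to the explicit descriptions in \cite{Kraft-Procesi} and \cite{fuetal2015} to see that $\pi_1(\OO')$ permutes the $n$ components transitively, so again exactly one leaf lies above $\OO'$. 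The type $\mu$ normalizes to a single Kleinian $A_3$ singularity, giving one leaf. Finally, if $S_{\OO,\OO'}$ is of type $m$, its normalization is isomorphic to $\CC^2$, which is smooth, so the preimage of $\OO'$ is contained in the smooth locus of $\widetilde{X}$ and no codimension $2$ leaf in $\widetilde{X}$ maps to $\OO'$.

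Assertion (i) follows at once: in every case other than $m$, exactly one leaf lies above $\OO'$, and in type $m$ no leaf does. Assertion (ii) follows because every non-$m$ case contributes at least one leaf. The main obstacle is the transitivity of $\pi_1(\OO')$ on the $n$ local branches in the $nA_k$ and $nD_k$ cases; this is the only step that is not formal, and I expect it to reduce to extracting the branch structure and component-group action directly from the tables of Kraft-Procesi and Fu-Juteau-Levy-Sommers.
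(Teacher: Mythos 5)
Your local analysis is sound and agrees with the framework the paper sets up in Section \ref{subsec:geometrynilpotent}: since $\mu: X=\Spec(\CC[\OO])\to\overline{\OO}$ is the normalization, $\mu^{-1}(S_{\OO,\OO'})$ is the normalization of the slice, the fiber over $e'$ is the set of branches, the codimension $2$ leaves of $X$ over $\OO'$ correspond to $\pi_1(\OO')$-orbits of branches whose normalization is singular, and a singularity of type $m$ (smooth normalization) contributes no leaf. Combined with Lemma \ref{not normal sing}, this correctly yields (ii), and it also yields (i) whenever $S_{\OO,\OO'}$ is normal or of type $\mu$, since then there is a single branch.

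The gap is in the remaining cases, $nA_k$ and $nD_k$ with $n\geq 2$: there you need $\pi_1(\OO')$ to permute the $n$ branches transitively, and you offer no argument beyond the hope that this can be extracted from \cite{Kraft-Procesi} and \cite{fuetal2015}. Those references record the branch structure of the generic slice, but not, in the generality you need, the action of the component group on the set of branches; and the transitivity you are asking for is literally equivalent to the uniqueness of the codimension $2$ leaf over $\OO'$, i.e. to assertion (i) in exactly the hard cases — so at the crucial step the proposal restates the lemma rather than proving it. The paper closes this gap by a different, indirect route: it enumerates the finitely many orbits whose closures contain a non-normal dimension $2$ singularity not of type $m$ (one classical family, from the type (e) degenerations of \cite{Kraft-Procesi}, plus a short list in $F_4$, $E_6$, $E_7$, $E_8$), computes $\dim\fP^X$ from birational induction data (using \cite[Lemma 4.16]{Mitya2020} in the classical family and the tables of \cite{deGraafElashvili} in the exceptional cases), and compares with the dimensions of the partial Namikawa spaces: a second leaf over the orbit in question would force $\dim\fP^X$ to be strictly larger than it is. If you wish to keep your route, you must supply a case-by-case proof of the transitivity of the monodromy on branches, which is not a table lookup and amounts to work comparable to the paper's dimension count.
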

\begin{proof}
    If $\overline{\OO}$ is normal in codimension 2, then $\mu$ is an isomorphism over every codimension 2 orbit in $\overline{\OO}$. Hence, (\ref{eq:leaftoorbit}) is a bijection. 
    
    If $\OO'\subset \overline{\OO}$ is a codimension 2 orbit, $\mu^{-1}(\OO')$ is a (union of) codimension 2 orbits. Furthermore, by \ref{not normal sing}, $\mu^{-1}(\OO')$ lies in the singular locus of $X$ if and only if the singularity $S_{\OO,\OO'}$ is not of type $m$. This proves (ii) in all cases. To prove (i) we consider all orbits $\OO$ such that $\overline{\OO}$ contains a non-normal dimension 2 singularity not of type $m$. The general idea is that if the singularity corresponds to a partial Namikawa space of dimension $t$, then there are at most $[\frac{\dim \fP^X}{t}]$ leaves with this singularity.
    
    \begin{itemize}
        \item[(i)] Let $\fg$ be a classical rank-$n$ Lie algebra not of type $A$. Suppose that $\OO_1 \subset \overline{\OO}$ corresponds to a minimal degeneration of type (e) in the sense of \cite[Table 1]{Kraft-Procesi}, and let $\alpha$ and $\beta$ be the corresponding partitions. Let $\OO_1, \ldots, \OO_t\subset \overline{\OO}$ be the codimension 2 orbits, and let $k$ be the largest integer such that $\alpha_k>\beta_k$. By loc.cit., $\alpha_{k}=\alpha_{k+1}+2t$ for some $t$. Consider the Levi subalgebra $\fl=\fg\fl(k)^{\times t}\times \fg(n-2kt)$. Let $\OO_L\subset \fl^*$ be the nilpotent orbit corresponding to the partition $\gamma$ given by $\gamma_i=\alpha_i-2t$ for $i\le k$, and $\gamma_i=\alpha_i$ for $i>k$. Set $\fP_L=\fP^{X_L}$. We note that $\OO$ is birationally induced from $(L,\OO_L)$, and for each $j\neq 2$ there is a codimension $2$ orbit $\OO_{L,j}\subset \overline{\OO}_L$ such that $\OO_j$ is birationally induced from $\OO_{L,j}$. By \cite[Lemma 4.16]{Mitya2020}, the singularities of $\OO_{L,j}$ in $X_L$ and of $\OO_j$ in $X$ are equivalent, and $\dim \fP_j^X=\dim \fP_j^{X_L}$. We have $\dim \fP^X=\dim \fP^{X_L}+t$, and therefore $\dim \fP_1^X=t$. It follows that there is only one codimension $2$ leaf in $X$ over the orbit $\OO_1$.
        
        \item[(ii)] Let $\fg$ be of type $F_4$, and set $\OO=C_3(a_1)$. There is one codimension $2$ orbit $\OO'=B_2\subset \overline{\OO}$, and the corresponding singularity is of type $2A_1$. Using \cite[Tables]{deGraafElashvili}, $\dim \fP^X=1$, and therefore there is one codimension $2$ leaf over $\OO'$. 
        
        \item[(iii)] Let $\fg$ be of type $F_4$, and set $\OO=C_3$. There is one codimension $2$ orbit $\OO'=F_4(a_1)\subset \overline{\OO}$, and the corresponding singularity is of type $4G_2$. Using \cite[Tables]{deGraafElashvili}, $\dim \fP^X=2$, and therefore there is one codimension $2$ leaf over $\OO'$. 
        
        \item[(iv)] Let $\fg$ be of type $E_6$, and set $\OO=A_4$. There is one codimension $2$ orbit $\OO'=D_4(a_1)\subset \overline{\OO}$, and the corresponding singularity is of type $3C_2$. Using \cite[Tables]{deGraafElashvili}, $\dim \fP^X=3$, and therefore there is one codimension $2$ leaf over $\OO'$. 
        
        \item[(v)] Let $\fg$ be of type $E_7$, and set $\OO=D_6(a_1)$. There are two codimension $2$ orbits $\OO_1=E_7(a_5)\subset \overline{\OO}$ and $\OO_2=D_5\subset \overline{\OO}$, and the corresponding singularities are of type $3C_2$ and $A_1$. Using \cite[Tables]{deGraafElashvili}, $\dim \fP^X=4$, and therefore there is one codimension $2$ leaf over $\OO_1$. 
        
        \item[(vi)] Let $\fg$ be of type $E_7$, and set $\OO=A_3+A_2$. There is one codimension $2$ orbit $\OO'=D_4(a_1)+A_1\subset \overline{\OO}$, and the corresponding singularity is of type $2A_1$. Using \cite[Tables]{deGraafElashvili}, $\dim \fP^X=1$, and therefore there is one codimension $2$ leaf over $\OO'$. 
        
        \item[(vii)] Let $\fg$ be of type $E_7$, and set $\OO=D_4(a_1)+A_1$. There are two codimension $2$ orbits $\OO_1=D_4(a_1)\subset \overline{\OO}$ and $\OO_2=A_3+2A_1\subset \overline{\OO}$, and the corresponding singularities are of type $3A_1$ and $A_1$. Using \cite[Tables]{deGraafElashvili}, $\dim \fP^X=2$, and therefore there is one codimension $2$ leaf over $\OO_1$. 
         
        \item[(viii)] Let $\fg$ be of type $E_8$, and set $\OO=E_7(a_1)$. There is one codimension $2$ orbit $\OO'=E_8(b_5)\subset \overline{\OO}$, and the corresponding singularity is of type $3C_5$. Using \cite[Tables]{deGraafElashvili}, $\dim \fP^X=5$, and therefore there is one codimension $2$ leaf over $\OO'$. 
        
        \item[(ix)] Let $\fg$ be of type $E_8$, and set $\OO=E_7(a_1)$. There are two codimension $2$ orbits $\OO_1=D_6(a_1)\subset \overline{\OO}$ and $\OO_2=A_6\subset \overline{\OO}$, and the corresponding singularities are of type $2A_1$ and $A_1$. Using \cite[Tables]{deGraafElashvili}, $\dim \fP^X=2$. Since $\dim \fP_2^X=1$, it implies that there is one codimension $2$ leaf over $\OO_1$. 
        
        \item[(x)] Let $\fg$ be of type $E_8$, and set $\OO=D_6(a_1)$. There are two codimension $2$ orbits $\OO_1=E_8(a_7)\subset \overline{\OO}$ and $\OO_2=D_5+A_1\subset \overline{\OO}$, and the corresponding singularities are of type $10G_2$ and $A_1$. Using \cite[Tables]{deGraafElashvili}, $\dim \fP^X=3$. Since $\dim \fP_2^X=1$, there is one codimension $2$ leaf over $\OO_1$. 
        
        \item[(xi)] Let $\fg$ be of type $E_8$, and set $\OO=A_6$. There is one codimension $2$ orbit $\OO'=E_8(a_7)\subset \overline{\OO}$, and the corresponding singularity is of type $5G_2$. Using \cite[Tables]{deGraafElashvili}, $\dim \fP^X=2$, and therefore there is one codimension $2$ leaf over $\OO'$.
        
        \item[(xii)] Let $\fg$ be of type $E_8$, and set $\OO=E_7(a_5)$. There are two codimension $2$ orbits $\OO_1=D_6(a_2)\subset \overline{\OO}$ and $\OO_2=E_6(a_3)+A_1\subset \overline{\OO}$, and the corresponding singularities are of types $2A_1$ and $m$. Using \cite[Tables]{deGraafElashvili}, $\dim \fP^X=1$, and therefore there is one codimension $2$ leaf over $\OO_1$. 
        
        \item[(xiii)] Let $\fg$ be of type $E_8$, and set $\OO=A_3+A_2$. There is one codimension $2$ orbit $\OO'=D_4(a_1)+A_1\subset \overline{\OO}$, and the corresponding singularity is of type $3A_1$. Using \cite[Tables]{deGraafElashvili}, $\dim \fP^X=1$, and therefore there is one codimension $2$ leaf over $\OO'$.

    \end{itemize}
\end{proof}

\begin{rmk}
Let $\OO_k \subset \overline{\OO}$ be a codimension 2 orbit such that the corresponding singularity is not of type $m$. Then by Lemma \ref{lem:weaklynormal}, there is a unique codimension 2 leaf $\fL_k \subset \Spec(\CC[\OO])$ which maps to $\OO_k$ under (\ref{eq:leaftoorbit}).
\end{rmk}

\subsection{Induction of nilpotent orbits}

Let $M \subset G$ be a Levi subgroup, and let $\mathbb{O}_M$ be a nilpotent $M$-orbit. Fix a parabolic subgroup $Q \subset G$ with a Levi decomposition $Q = MU$. The annihilator of $\fq$ in $\fg^*$ is a $Q$-stable subspace $\fq^{\perp} \subset \fg^*$. Choosing a nondegenerate invariant symmetric form on $\fg$, we get a $Q$-invariant identification $\fq^\perp \simeq \fu$. Form the $G$-equivariant fiber bundle $G \times^Q (\overline{\mathbb{O}}_M \times \mathfrak{q}^{\perp})$ over the partial flag variety $G/Q$. There is a proper $G$-equivariant map
$$\mu: G \times^Q (\overline{\mathbb{O}}_M \times \mathfrak{q}^{\perp}) \to \mathfrak{g}^* \qquad \mu(g,\xi) = \Ad^*(g)\xi$$
The image of $\mu$ is a closed irreducible $G$-invariant subset of $\cN$, and hence the closure in $\cN$ of a nilpotent $G$-orbit, denoted $\mathrm{Ind}^G_M\mathbb{O}_M \subset \fg^*$. The correspondence
$$\mathrm{Ind}^G_M: \{\text{nilpotent } M\text{-orbits}\} \to \{\text{nilpotent } G\text{-orbits}\}$$
is called \emph{Lusztig-Spaltenstein} induction. A nilpotent orbit is \emph{rigid} if it cannot be induced from a proper Levi subgroup. 

\begin{prop}[\cite{LusztigSpaltenstein1979} or \cite{CM}, Sec 7]\label{prop:propsofInd}
Lusztig-Spaltenstein induction has the following properties
\begin{itemize}
    \item[(i)] $\mathrm{Ind}^G_M$ depends only on $M$ (and not on $Q$)
    \item[(ii)] If $L \subset M$ is a Levi subgroup of $M$, then
    $$\Ind^G_L = \Ind^G_M \circ \Ind^M_L.$$
    \item[(iii)] If $\OO$ is a nilpotent orbit, there is a Levi sugroup $M \subset G$ and a rigid nilpotent $M$-orbit $\OO_M$ such that
    $$\OO = \Ind^G_M \OO_M$$
    \item[(iv)] If $\mathbb{O}_M \subset \fm^*$ is a nilpotent $M$-orbit and $\mathbb{O} = \Ind^G_M \mathbb{O}_M$, then
    $$\mathrm{codim}(\mathbb{O}_M,\cN_M) = \codim(\mathbb{O},\cN).$$
\end{itemize}
\end{prop}

In classical types, a classification of rigid nilpotent orbits and a description of induction in terms of partitions can be found in \cite[Sec 7.3]{CM}. In exceptional types, this information can be found in the tables appearing in \cite[Sec 4]{deGraafElashvili}. For the explicit computations in Sections \ref{subsec:semirigid} and \ref{subsec:inflcharexceptional}, we will make repeated use of these descriptions.

\subsection{Birational induction of nilpotent covers}\label{subsec:binduction}

Choose a Levi subgroup $M \subset G$, a nilpotent $M$-orbit $\OO_M$, and a (finite, connected) $M$-equivariant cover $\widetilde{\OO}_M$ of $\OO_M$. Let $\OO = \Ind^G_M \OO_M$. Consider the affine variety $\widetilde{X}_M := \Spec(\CC[\widetilde{\OO}_M])$. There is an $M$-action on $\widetilde{X}_M$ (induced from the $M$-action on $\widetilde{\mathbb{O}}_M$) and a finite surjective $M$-equivariant map $\widetilde{X}_M \to \overline{\mathbb{O}}_M$. Let $\widetilde{\mu}$ denote the composition
$$G \times^Q (\widetilde{X}_M \times \mathfrak{q}^{\perp}) \to G \times^Q (\overline{\mathbb{O}}_M \times \mathfrak{q}^{\perp}) \overset{\mu}{\to} \overline{\mathbb{O}}.$$
Note that $\widetilde{\mu}^{-1}(\OO) \to \OO$ is a (finite, connected) $G$-equivariant cover. The correspondence
\begin{align*}\mathrm{Bind}^G_M: \{M\text{-eqvt nilpotent covers}\} &\to \{G\text{-eqvt nilpotent covers}\}\\
\widetilde{\OO}_M &\mapsto \widetilde{\mu}^{-1}(\mathbb{O})
\end{align*}
is called \emph{birational induction}. A nilpotent cover is \emph{birationally rigid} if it cannot be birationally induced from a proper Levi subgroup. 

The main properties of birational induction are catalogued below.

\begin{prop}[Prop 2.4.1, \cite{LMBM}]\label{prop:propsofbind}
Birational induction has the following properties

\begin{enumerate}
    \item[(i)] $\mathrm{Bind}^G_M$ depends only on $M$ (and not on $Q$)
    \item[(ii)] If $L \subset M$ is a Levi subgroup of $M$, then
    $$\mathrm{Bind}^G_L = \mathrm{Bind}^G_M \circ \mathrm{Bind}^M_L.$$
    \item[(iii)] If $\widetilde{\mathbb{O}}$ is a $G$-equivariant nilpotent cover, there is a Levi subgroup $M \subset G$ and a birationally rigid $M$-equivariant nilpotent cover $\widetilde{\OO}_M$ such that 
    $$\widetilde{\mathbb{O}} = \mathrm{Bind}^G_M \widetilde{\mathbb{O}}_M.$$
    The pair $(M,\widetilde{\OO}_M)$ is called a \emph{birationally minimal induction datum} and is unique up to conjugation by $G$.
    \item[(iv)] If we write $\deg(\widetilde{\mathbb{O}}_M)$ for the degree of the covering map $\widetilde{\mathbb{O}}_M \to \mathbb{O}_M$, then
    $$\deg(\widetilde{\mathbb{O}}_M) \text{ divides } \deg(\mathrm{Bind}^G_M(\widetilde{\mathbb{O}}_M)).$$
\end{enumerate}
\end{prop}

\subsection{Filtered quantizations of nilpotent covers}

Let $\widetilde{\OO}$ be a nilpotent cover and consider the affine variety $\widetilde{X} := \Spec(\CC[\widetilde{\OO}])$. By (iii) of Example \ref{example:symplecticsingularity}, $\widetilde{X}$ is a conical symplectic singularity. Fix the notation of Section \ref{subsec:structureNamikawa}, i.e. $\fP^{\widetilde{X}}$, $W^{\widetilde{X}}$, $\fP_k^{\widetilde{X}}$, and so on. By Theorem \ref{thm:quantssofsymplectic}, there is a canonical bijection
\begin{equation}\label{eq:nilpquant} \fP^{\widetilde{X}}/W^{\widetilde{X}}\xrightarrow{\sim} \mathrm{Quant}(\CC[\widetilde{\OO}]) \qquad W^{\widetilde{X}} \cdot \lambda \mapsto \cA^{\widetilde{X}}_{\lambda}\end{equation}
In this section, we will re-interpret the spaces $\fP^{\widetilde{X}}$ and $\fP_k^{\widetilde{X}}$ in terms of purely Lie-theoretic information. 

Fix a birationally minimal induction datum $(L,\widetilde{\OO}_L)$ for $\widetilde{\OO}$ (cf. Proposition \ref{prop:propsofbind}(iii)) and let $\widetilde{X}_L := \Spec(\CC[\widetilde{\OO}_L])$. Choose a parabolic subgroup $P \subset G$ with Levi factor $L$ and consider the map
$$\widetilde{\mu}: \widetilde{Y} := G \times^P(\widetilde{X}_L \times \fp^{\perp}) \to \overline{\OO}$$
defined in Section \ref{subsec:binduction}. Since $\widetilde{\mu}^{-1} \simeq \widetilde{\OO}$, $\widetilde{\mu}$ factors through a partial resolution
\begin{equation}\label{eq:defofrho}\rho: \widetilde{Y} \to \widetilde{X}.\end{equation}
\begin{prop}\label{prop:nilpquant}
The following are true:
\begin{itemize}
    \item[(i)] The map (\ref{eq:defofrho}) is a $\QQ$-factorial terminalization.
    \item[(ii)] There is a linear isomorphism
    $$\eta: \fX(\fl \cap [\fg,\fg]) \xrightarrow{\sim} H^2(G/P,\CC) \xrightarrow{\sim} H^2(\widetilde{Y}^{\mathrm{reg}},\CC) =: \fP^{\widetilde{X}},$$
    where the second map is the pullback along the natural projection $\widetilde{Y}^{\mathrm{reg}} \to G/P.$
    \item[(iii)] $W^{\widetilde{X}}$ is identified with a normal subgroup of $N_G(L)/L$, with its canonical action on $\fX(\fl \cap [\fg,\fg])$. 
    \item[(iv)] Up to the action of $W^{\widetilde{X}}$ on the target, the map $\eta:\fX(\fl \cap [\fg,\fg]) \xrightarrow{\sim} \fP^{\widetilde{X}}$ is independent of the choice of parabolic $P$.
\end{itemize}
\end{prop}

\begin{proof}
(i) is \cite[Cor 4.3]{Mitya2020}. For orbits, (ii) and (iii) are \cite[Prop 4.7]{Losev4}. The proofs there can be easily generalized to arbitrary nilpotent covers. (iv) is \cite[Prop 7.2.5]{LMBM}.
\end{proof}

Combining (\ref{eq:nilpquant}) and (ii) of Proposition \ref{prop:nilpquant}, we obtain a natural bijection
$$\fX(\fl \cap [\fg,\fg])/W^{\widetilde{X}} \xrightarrow{\sim} \mathrm{Quant}(\CC[\widetilde{\OO}]) \qquad W^{\widetilde{X}} \cdot \lambda \mapsto \cA^{\widetilde{X}}_{\eta(\lambda)}$$
Note that $G$ acts on $\CC[\widetilde{\OO}]$ by graded Poisson automorphisms. There is a classical co-moment map $\varphi: \fg \to \CC[\widetilde{\OO}]$ obtained from the map of varieties $\widetilde{\OO} \to \fg^*$. The map $\eta: \fX(\fl \cap [\fg,\fg]) \xrightarrow{\sim} \fP^{\widetilde{X}}$ extends to an isomorphism (still denoted by $\eta$)
$$\eta: \fX(\fl) \xrightarrow{\sim} \fX(\fl \cap [\fg,\fg]) \oplus \fz(\fg)^* \xrightarrow{\sim} \fP^{\widetilde{X}} \oplus \fz(\fg)^* = \overline{\fP}^{\widetilde{X}}.$$
So by Proposition \ref{prop:Hamiltonian}, we obtain a natural bijection
$$\fX(\fl) \xrightarrow{\sim} \mathrm{Quant}^G(\CC[\widetilde{\OO}]) \qquad W^{\widetilde{X}} \cdot (\lambda, \chi) \mapsto (\cA_{\lambda}^{\widetilde{X}}, \Phi_{\chi}^{\widetilde{X}}).$$

\subsection{Description of partial Namikawa spaces}

In this seciton, we will give a Lie-theoretic description of the partial Namikawa spaces $\fP_k^{\widetilde{X}}$ (under some conditions). Passing to a covering group if necessary, we can (and will) assume that $G$ is simply connected. 
Let $R_x$ denote the reductive part of the stabilizer of $x \in \widetilde{\OO}$ and let $\mathfrak{r}$ be its Lie algebra. We note that $\fr$ does not depend on the choice of a point $x$, and the adjoint action of $R_x$ on $\fX(\mathfrak{r})$ factors through $R_x/R_x^{\circ} \simeq \pi^G_1(\widetilde{\OO})$. 

\begin{lemma}\label{lem:computeH2} 
The following are true:
\begin{itemize}
\item[(i)] Restriction along $\widetilde{\OO} \subset \widetilde{X}^{\mathrm{reg}}$ induces a linear isomorphism
$$\fP_0^{\widetilde{X}} = H^2(\widetilde{X}^{\mathrm{reg}},\CC) \xrightarrow{\sim} H^2(\widetilde{\OO},\CC)$$
\item[(ii)] There is a natural identification
$$H^2(\widetilde{\OO},\CC) \xrightarrow{\sim} \fX(\mathfrak{r})^{\pi_1(\widetilde{\OO})}$$
\end{itemize}
\end{lemma}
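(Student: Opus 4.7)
My plan is to treat the two statements separately, using cohomological purity for (i) and the Serre spectral sequence for (ii). For (i), the open immersion $\widetilde{\OO} \hookrightarrow \widetilde{X}^{\mathrm{reg}}$ realizes $\widetilde{\OO}$ as the unique open symplectic leaf of $\widetilde{X}$ inside the smooth locus. Its complement consists of the finitely many (Proposition \ref{Prop:fin_many_leaves_Kaledin}) lower symplectic leaves of $\widetilde{X}$ that happen to lie in $\widetilde{X}^{\mathrm{reg}}$, and each such leaf has even complex dimension strictly less than $\dim \widetilde{X}$, hence complex codimension at least $2$. The Gysin/localization long exact sequence for a closed subset of complex codimension $\geq 2$ in a smooth variety then forces the restriction map $H^2(\widetilde{X}^{\mathrm{reg}},\CC) \to H^2(\widetilde{\OO},\CC)$ to be an isomorphism, which is exactly (i).

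For (ii), I would realize $\widetilde{\OO}$ as the $G$-homogeneous space $G/H$, where $H$ is the stabilizer of a point; its reductive Levi is conjugate to $R_x$ and its unipotent radical is contractible. The finite group $H/H^{\circ} \simeq R_x/R_x^{\circ} \simeq \pi^G_1(\widetilde{\OO})$ acts freely on $G/H^{\circ}$ with quotient $\widetilde{\OO}$, and under the standing assumption that $G$ is simply connected one has $\pi^G_1(\widetilde{\OO}) = \pi_1(\widetilde{\OO})$; consequently
\[
H^2(\widetilde{\OO},\CC) \;\cong\; H^2(G/H^{\circ},\CC)^{\pi_1(\widetilde{\OO})}.
\]
To identify the right-hand side I would run the Serre spectral sequence of the principal $H^{\circ}$-bundle $G \to G/H^{\circ}$. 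A simply connected complex reductive algebraic group is necessarily semisimple and homotopy equivalent to a compact semisimple Lie group, so $H^1(G,\CC) = H^2(G,\CC) = 0$; also $G/H^{\circ}$ is simply connected. An elementary inspection of the spectral sequence in total degree $\leq 2$ forces the transgression $\tau \colon H^1(H^{\circ},\CC) \to H^2(G/H^{\circ},\CC)$ to be an isomorphism. Finally, the contractibility of the unipotent radical of $H^{\circ}$ yields a homotopy equivalence $H^{\circ} \simeq R_x^{\circ}$, and the standard de Rham computation for a connected complex reductive group identifies $H^1(R_x^{\circ},\CC) \cong (\fr/[\fr,\fr])^* = \fX(\fr)$. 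Chaining these natural isomorphisms and passing to $\pi_1(\widetilde{\OO})$-invariants yields (ii).

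The main obstacle I anticipate is verifying equivariance of each step with respect to $R_x/R_x^{\circ}$: one must check that the action on $H^2(G/H^{\circ},\CC)$ arising from the Galois covering corresponds, via the transgression and the de Rham identification, to the action on $\fX(\fr)$ described in the preamble. This will follow from naturality of the transgression together with the fact that conjugation by an element of $R_x$ normalizes $R_x^{\circ}$ and acts on its Lie algebra via the adjoint action, so that the induced action on $H^1(R_x^{\circ},\CC) = \fz(\fr)^*$ is exactly the one factoring through $R_x/R_x^{\circ} \simeq \pi_1(\widetilde{\OO})$.
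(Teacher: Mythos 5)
The paper states this lemma without proof, so there is no in-text argument to compare against; your proposal supplies the standard one, and it is essentially correct: purity for a closed subset of complex codimension $\geq 2$ in the smooth variety $\widetilde{X}^{\mathrm{reg}}$ gives (i), and for (ii) the five-term exact sequence of $H^{\circ} \to G \to G/H^{\circ}$ (with $G$ semisimple and simply connected, so $H^1(G,\CC)=H^2(G,\CC)=0$ and $\pi_1(\widetilde{\OO})=\pi_1^G(\widetilde{\OO})$), the homotopy equivalence $H^{\circ}\simeq R_x^{\circ}$, the identification $H^1(R_x^{\circ},\CC)\simeq \fX(\fr)$, and passage to invariants under the deck group $H/H^{\circ}\simeq R_x/R_x^{\circ}$, with the equivariance handled by naturality of the transgression exactly as you indicate.

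One point in (i) needs repair, though it does not affect the conclusion: the complement $\widetilde{X}^{\mathrm{reg}}\setminus\widetilde{\OO}$ is \emph{not} a union of lower symplectic leaves of $\widetilde{X}$ in the sense of Definition \ref{def:fin_many_leaves} --- by construction all leaves other than $\widetilde{X}^{\mathrm{reg}}$ lie in the singular locus, and when $\overline{\OO}$ has a codimension $2$ degeneration of type $m$ the set $\widetilde{X}^{\mathrm{reg}}\setminus\widetilde{\OO}$ is genuinely nonempty and sits inside the open leaf. The correct (and equally short) justification of the codimension bound is that $\widetilde{X}^{\mathrm{reg}}\setminus\widetilde{\OO}$ is contained in $\mu^{-1}(\partial\OO)$ for the finite map $\mu:\widetilde{X}\to\overline{\OO}$, and $\partial\OO$ is a union of orbits, each of even dimension strictly less than $\dim\OO$, so $\codim(\widetilde{X}\setminus\widetilde{\OO},\widetilde{X})\geq 2$; this is the same even-dimensionality idea you invoke, just applied to the boundary orbits rather than to leaves of $\widetilde{X}$, and it is already implicit in the paper's construction of finite covers. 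With that substitution, your argument goes through as written.
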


A description of $\mathfrak{r}$ can be found in \cite[Sec 6.1]{CM} (for classical types) and \cite[Sec 13.1]{Carter1993} (for exceptional types). 

\begin{rmk}\label{rmk:H2}
If $\widetilde{\OO} = \widehat{\OO}$ is the \emph{universal} cover of $\OO$, then $H^2(\widehat{\OO},\CC) \simeq \fz(\mathfrak{r})$ by Lemma \ref{lem:computeH2}. In particular,  $H^2(\widehat{\OO},\CC)=0$ if and only if $\mathfrak{r}$ is semisimple. On the other hand, if $\widetilde{\OO} = \OO$, then $H^2(\widetilde{\OO},\CC) \simeq \fz(\mathfrak{r})^{\pi_1(\OO)}$ was computed in every case by Biswas and Chatterjee in \cite{BISWAS}. 
\end{rmk}

Assume for the remainder of this subsection that $H^2(\widetilde{\OO},\CC)=0$ and fix a birationally minimal induction datum $(L,\widetilde{\OO}_L)$ for $\widetilde{\OO}$.  Suppose $Q \subset G$ is a parabolic subgroup with Levi factor $M$ and $\widetilde{\OO}_M$ is an $M$-equivariant nilpotent cover with $\widetilde{\OO}=\mathrm{Bind}^G_M \widetilde{\OO}_M$. The triple $(Q,M,\widetilde{\OO}_M)$ gives rise to a projective birational morphism (generalizing the map (\ref{eq:defofrho}))
\begin{equation}\label{eq:partialresolution}
\rho: G \times^Q (\Spec(\CC[\widetilde{\OO}_{M}]) \times \fq^{\perp}) \to \Spec(\CC[\widetilde{\OO}])\end{equation}
\begin{prop}[Prop 7.5.6, \cite{LMBM}]\label{prop:adaptedresolutiondatum}
For each codimension 2 leaf $\fL_k \subset \widetilde{X}$, there is a unique pair $(M_k,\widetilde{\OO}_{M_k})$ consisting of a Levi subgroup $M_k \subset G$ and a $M_k$-equivariant nilpotent cover $\widetilde{\OO}_{M_k}$ such that
\begin{itemize}
    \item[(i)] $L \subset M_k$.
    \item[(ii)] $\widetilde{\OO} = \mathrm{Bind}^G_{M_k} \widetilde{\OO}_{M_k}$.
    \item[(iii)] For any parabolic $Q \subset G$ with Levi factor $M_k$, the partial resolution (\ref{eq:partialresolution}) resolves $\Sigma_k$ and preserves $\Sigma_j$ for $j\neq k$.
\end{itemize}
\end{prop}

The pair $(M_k,\widetilde{\OO}_{M_k})$ appearing in Proposition \ref{prop:adaptedresolutiondatum} is called the $\fL_k$-\emph{adapted resolution datum.} 

\begin{prop}
Let $\fL_k \subset \widetilde{X}$ be a codimension 2 leaf and let $(M_k, \widetilde{\OO}_{M_k})$ be the $\fL_k$-adapted resolution datum. Then the following are true:

\begin{itemize}
\item[(i)] The closed embedding $\Sigma_k \subset \widetilde{X}$ (cf. Section \ref{subsec:geometrynilpotent}) lifts to a closed embedding $\mathfrak{S}_k \subset \widetilde{Z}_k$.
\item[(ii)] If $\mathcal{L}$ is a line bundle on $\widetilde{Z}_k$, then $\mathcal{L}|_{\mathfrak{S}_k}$ is a $\pi_1(\fL_k)$-equivariant line bundle on $\mathfrak{S}_k$, i.e. there is a restriction map
\begin{equation}\label{eq:restriction1}\Pic(\widetilde{Z}_k) \to \Pic(\mathfrak{S}_k)^{\pi_1(\fL_k)}\end{equation}
\item[(iii)] There are natural group isomorphisms
$$\Pic(\widetilde{Z}_k) \simeq \fX(M_k), \qquad \Pic(\mathfrak{S}_k) \simeq \Lambda_k,$$
i.e. (\ref{eq:restriction1}) induces a group homomorphism
\begin{equation}\label{eq:restriction2}
\fX(M_k) \to \Lambda_k^{\pi_1(\fL_k)}
\end{equation}
\item[(iv)] The complexification of (\ref{eq:restriction2}) is a linear isomorphism
\begin{equation}\label{eq:etak}\eta_k: \fX(\fm_k) \xrightarrow{\sim} \fP_k^{\widetilde{X}},\end{equation}
\item[(v)] The following diagram commutes
\begin{center}
    \begin{tikzcd}
      \fX(\fl) \ar[r,"\eta"]& \fP^{\widetilde{X}} \\
      \fX(\fm_k) \ar[u,hookrightarrow] \ar[r,"\eta_k"] & \fP_k^{\widetilde{X}} \ar[u, hookrightarrow]
    \end{tikzcd}
\end{center}
\end{itemize}
\end{prop}

\begin{proof}
(i) follows from (iii) of Proposition \ref{prop:adaptedresolutiondatum}. (iii) follows from \cite[Prop 7.1.2]{LMBM}. (ii), (iv), and (v) follow from \cite[Lem 7.5.7]{LMBM}.
\end{proof}

The isomorphism $\eta_k$ was computed in \cite[Sec 7.7]{LMBM} under certain conditions on $\widetilde{\OO}$ and $\fL_k$. The relevant statements will be recalled in Section \ref{subsubsec:etak}.

\subsection{Geometric characterization of birationally rigid covers}

Combining Proposition \ref{prop:partialdecomp} and (ii) of Proposition \ref{prop:nilpquant}, we obtain the following (purely geometric) characterization of birational rigidity. 

\begin{prop}\label{prop:criterionbirigidcover}
Let $\widetilde{\OO}$ be a nilpotent cover. Then $\widetilde{\OO}$ is birationally rigid if and only if the following conditions hold:
\begin{itemize}
    \item[(i)]  $H^2(\widetilde{\OO},\CC)=0$.
    \item[(ii)] $\Spec(\CC[\widetilde{\OO}])$ has no codimension 2 leaves.
\end{itemize}
\end{prop}

Checking condition (i) of Proposition \ref{prop:criterionbirigidcover} is usually easy in view of Lemma \ref{lem:computeH2}.  Checking condition (ii) is a subtler business in general. We will develop some techniques for doing so in Section \ref{subsec:semirigid}.

\subsection{Classification of birationally rigid orbits}\label{subsec:birigidorbits}

Let $\OO$ be a nilpotent orbit. 

\begin{prop}\label{prop:criterionbirigidorbit}
Let $\OO$ be a nilpotent orit. Then $\OO$ is birationally rigid if and only if the following conditions are satisfied:
\begin{itemize}
    \item[(i)] $H^2(\OO,\CC)=0$.
    \item[(ii)] All dimension 2 singularities in $\overline{\OO}$ are of type $m$.
\end{itemize}
\end{prop}

\begin{proof}
By Lemma \ref{lem:weaklynormal}, condition (ii) above is equivalent to condition (ii) of Proposition \ref{prop:criterionbirigidcover}. Now Proposition \ref{prop:criterionbirigidorbit} follows at once from Proposition \ref{prop:criterionbirigidcover}.
\end{proof}

An advantage of this formulation is that condition (ii) above is very easy to check. In classical types, 
there are no singularities of type $m$. So (ii) is equivalent to the condition that there are no codimension 2 orbits in $\overline{\OO}$. The set of codimension 2 orbits in $\overline{\OO}$ was described by Kraft and Procesi in  \cite{Kraft-Procesi} in terms of the partition corresponding to $\OO$. From this description, one easily deduces the following.

\begin{prop}[Prop 7.6.3, \cite{LMBM}]\label{prop:birigidorbitclassical}
Suppose $\fg$ is classical and let $\mathbb{O} \subset \fg^*$ be a nilpotent orbit corresponding to a partition $p$. Then $\OO$ is birationally rigid if and only if one of the following is true:
\begin{itemize}
    \item[(i)] $\fg = \mathfrak{sl}(n)$ and $\mathbb{O} = \{0\}$.
    \item[(ii)] $\fg = \mathfrak{so}(2n+1)$ or $\mathfrak{sp}(2n)$ and $p$ satisfies
    $$p_i \leq p_{i+1}+1 \qquad \forall i.$$
    \item[(iii)] $\fg = \mathfrak{so}(2n)$, $p$ satisfies
    $$p_i \leq p_{i+1} +1 \qquad \forall i,$$
    and $p$ is not of the form $(2^m,1^2)$ for some $m$. 
\end{itemize}
\end{prop}

In exceptional types, condition (ii) of Proposition \ref{prop:criterionbirigidorbit} can be checked by inspecting the incidence tables in \cite[Sec 13]{fuetal2015}. One easily arrives at the following classification.

\begin{prop}\label{prop:listofbirigid}
The following is a complete list of birationally rigid orbits in simple exceptional Lie algebras:

\begin{center}
    \begin{tabular}{|c|l|} \hline
        $\fg$ & Birationally rigid orbits \\ \hline
         $G_2$ & $\{0\}$, $A_1$, $\widetilde{A}_1$\\ \hline
         $F_4$ & $\{0\}$, $A_1$, $\widetilde{A}_1$, $A_1+\widetilde{A}_1$, $A_2+\widetilde{A}_1$, $\widetilde{A}_2+A_1$ \\ \hline
         $E_6$ & $\{0\}$, $A_1$, $3A_1$, $2A_2+A_1$ \\ \hline
         $E_7$ & $\{0\}$, $A_1$, $2A_1$, $(3A_1)'$, $4A_1$, $A_2+A_1$, $A_2+2A_1$, $2A_1+A_1$, $(A_3+A_1)'$, $A_4+A_1$\\ \hline
         $E_8$ & $\{0\}$, $A_1$, $2A_1$, $3A_1$, $4A_1$, $A_2+A_1$, $A_2+2A_1$, $A_2+3A_1$, $2A_2+A_1$, $A_3+A_1$, $2A_2+2A_1$,\\ 
         & $A_3+2A_1$, $D_4(a_1)+A_1$, $A_3+A_2+A_1$, $A_4+A_1$, $2A_3$, $A_4+A_3$, $A_5+A_1$, $D_5(a_1)+A_2$\\ \hline
    \end{tabular}
\end{center}
\vspace{3mm}
Three of these orbits are not rigid, namely: 
$$A_2+A_1, A_4+A_1 \subset E_7, \qquad A_4+A_1 \subset E_8.$$
\end{prop}

\begin{proof}
In exceptional types, the cohomology groups $H^2(\OO,\CC)$ were computed by Biswas and Chatterjee in \cite[Thms 5.11,5.12]{BISWAS}. It was shown there that $H^2(\OO,\CC)=0$ in all cases except for the following nine orbits in type $E_6$:
$$2A_1, A_2+A_1, A_2+2A_1, A_3, A_3+A_1, A_4, A_4+A_1, A_5, D_5(a_1).$$
Thus by Proposition \ref{prop:criterionbirigidorbit}, $\OO$ is birationally rigid if and only if $\OO$ is not one of these nine and all dimension 2 singularities in $\overline{\OO}$ are of type $m$. Inspecting the diagrams in \cite[Sec 13]{fuetal2015}, one arrives at the list given in the statement of the proposition. A list of rigid orbits in exceptional types is provided in \cite{Elashvili}.
\end{proof}

\subsection{Classification of birationally semi-rigid orbits}\label{subsec:semirigid}

For the calculations in Section \ref{sec:inflchars}, birationally rigid nilpotent covers, and the orbits which admit them, will play a central role. Make the following definition.

\begin{definition}
A nilpotent cover $\OO$ is \emph{birationally semi-rigid} if 
\begin{itemize}
    \item[(i)] $\OO$ admits a $G$-equivariant birationally rigid cover.
    \item[(ii)] $\OO$ is not birationally rigid.
\end{itemize}
\end{definition}

Below, we will give a classification of such orbits in simple exceptional types. The following result from \cite{LMBM} narrows the range of possibilities.

\begin{prop}[Prop 7.6.16,\cite{LMBM}]\label{prop:A1}
Suppose $\OO$ is a birationally semi-rigid orbit in a simple exceptional Lie algebra. Then all Kleinian singularities in $\Spec(\CC[\OO])$ are of type $A_1$, with the following four exceptions:
\begin{itemize}
    \item[(i)] $\fg = E_6$ and $\mathbb{O} = 2A_2$. There is a unique codimension 2 leaf, and the corresponding singularity is of type $A_2$.
    \item[(ii)] $\fg=E_6$ and $\mathbb{O}=A_5$. There is a unique codimension 2 leaf, and the corresponding singularity is of type $A_2$.
    \item[(iii)] $\fg=E_6$ and $\mathbb{O}=E_6(a_3)$. There are two codimension 2 leaves, and the corresponding singularities are of types $A_1$ and $A_2$.
    \item[(iv)] $\fg=E_8$ and $\mathbb{O} = E_8(b_6)$. There are two codimension 2 leaves, and the corresponding singularities are of types $A_1$ and $A_2$.
\end{itemize}
\end{prop}

In the simple exceptional Lie algebras, there are 38 nilpotent orbits with nontrivial $\pi_1(\OO)$ which satisfy the $A_1$ condition above. However, not all such orbits are birationally semi-rigid (the simplest example is the distinguished orbit $F_4(a_2)$ in $F_4$). Our task in this subsection is to determine precisely which of them are. For the most part, the techniques we will employ were developed in \cite{LMBM}. We will review some of them here for the reader's convenience.

If $\OO$ is a nilpotent orbit, consider the finite set
\begin{equation}\label{eq:Prig}\mathcal{P}_{\mathrm{rig}}(\OO) := \{(M,\OO_M) \mid \OO = \Ind^G_M \OO_M, \ \OO_M \text{ is rigid}\}/G.\end{equation}
Let $m(\OO)$ denote the maximum value of $\dim \fz(\fm)$ for $(M,\OO_M) \in \mathcal{P}_{\mathrm{rig}}(\OO)$. 

\begin{lemma}[Prop 7.6.15, \cite{LMBM}]\label{lem:criterion2leafless}
Let $\fL \subset \Spec(\CC[\OO])$ be a codimension 2 leaf. Let $\Sigma = \CC^2/\Gamma$ be the corresponding Kleinian singularity, and $\OO' \subset \overline{\OO}$ the corresponding codimension two $G$-orbit. Assume:
\begin{itemize}
    \item[(i)]  $\Gamma$ is a simple group.
    \item[(ii)] There is a strict inequality
    $$|\pi_1(\OO)||\pi_1(\OO')|^{-1} > m(\OO)$$
\end{itemize}
Then $\Sigma \subset \Spec(\CC[\OO])$ is smoothened under the covering map $\Spec(\CC[\widehat{\OO}]) \to \Spec(\CC[\OO])$. 
\end{lemma}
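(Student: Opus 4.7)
The plan is to argue by contradiction: assume $\Sigma$ is not smoothened under $\widehat{p}: \widehat{X} := \Spec(\CC[\widehat{\OO}]) \to X := \Spec(\CC[\OO])$, and derive the reverse inequality $|\pi_1(\OO)|/|\pi_1(\OO')| \leq m(\OO)$. The strategy is to bound $\dim \fP^{\widehat{X}}$ both from below (using the many codimension 2 leaves forced by the non-smoothening) and from above (using Proposition \ref{prop:nilpquant} and the bound $m(\OO)$).

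For the lower bound, I would first analyze $\widehat{p}^{-1}(\Sigma)$ near a chosen point $e' \in \OO' \cap \fL$, where the slice $\Sigma = \CC^2/\Gamma$ appears. Each connected component of $\widehat{p}^{-1}(\Sigma)$ is a connected normal finite cover of $\Sigma$, hence of the form $\CC^2/K$ for some normal subgroup $K$ of $\Gamma$. Simplicity of $\Gamma$ forces $K = 1$ (slice becomes $\CC^2$) or $K = \Gamma$ (slice unchanged), and Galois transitivity of the $\pi_1(\OO)$-action on components implies that under the contradiction hypothesis, \emph{every} component is isomorphic to $\Sigma$. Tracking degrees (generic degree of $\widehat{p}$ is $|\pi_1(\OO)|$, and degree on each component is $1$) shows that $\widehat{p}^{-1}(\Sigma)$ has exactly $|\pi_1(\OO)|$ components. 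A codimension 2 leaf of $\widehat{X}$ above $\fL$ is a $G$-equivariant cover $G/H'_i$ of $\OO' = G/H'$ (with $H'^\circ \subseteq H'_i \subseteq H'$), contributing $[H':H'_i] \leq |\pi_1(\OO')|$ components to $\widehat{p}^{-1}(\Sigma)$ (one per point of the fiber over $e'$ in the leaf). If there are $s$ such leaves, then $s \cdot |\pi_1(\OO')| \geq |\pi_1(\OO)|$, so $s \geq |\pi_1(\OO)|/|\pi_1(\OO')|$. Each leaf gives a nonzero summand to $\fP^{\widehat{X}}$ via the partial Namikawa decomposition of Proposition \ref{prop:partialdecomp}, since $\Gamma \cong \ZZ/p$ is cyclic of prime order and the partial Namikawa space $(\fh_k^*)^{\pi_1(\widehat{\fL}_k)}$ is nonzero; hence $\dim \fP^{\widehat{X}} \geq s \geq |\pi_1(\OO)|/|\pi_1(\OO')|$.

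For the upper bound, take a birationally minimal induction datum $(L, \widehat{\OO}_L)$ for $\widehat{\OO}$ (Proposition \ref{prop:propsofbind}(iii)). By Proposition \ref{prop:nilpquant}(ii), $\fP^{\widehat{X}} \simeq \fX(\fl \cap [\fg, \fg])$, which has dimension $\dim \fz(\fl)$ since $\fg$ is simple. Birational rigidity of $\widehat{\OO}_L$ forces the underlying orbit $\OO_L$ to be rigid in $L$, so $(L, \OO_L) \in \mathcal{P}_{\mathrm{rig}}(\OO)$ and $\dim \fz(\fl) \leq m(\OO)$. Combining with the lower bound yields $|\pi_1(\OO)|/|\pi_1(\OO')| \leq m(\OO)$, contradicting hypothesis (ii).

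The main obstacle is the leaf count: identifying each codimension 2 leaf of $\widehat{X}$ above $\fL$ with a specific $G$-equivariant cover of $\OO'$, and in particular justifying the exact count $|\pi_1(\OO)|$ of components of $\widehat{p}^{-1}(\Sigma)$. This requires a careful interaction between the formal slice decomposition at $e'$ and the $G$-orbit stratification of $\widehat{p}^{-1}(\OO')$ inside $\widehat{X}$, together with the fact that birational rigidity of the cover $\widehat{\OO}_L$ descends to rigidity of $\OO_L$.
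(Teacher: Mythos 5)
The paper itself gives no proof of this lemma -- it is imported verbatim from \cite{LMBM} (Prop 7.6.15) -- so there is no in-paper argument to compare against; judged on its own terms, your strategy (bound $\dim \fP^{\widehat{X}}$ from below by counting codimension 2 leaves of $\widehat{X}$ over $\fL$, from above via a birationally minimal induction datum, and contradict hypothesis (ii)) is the right one, and the counting step you flag as the main obstacle does go through. Since $\widehat{X} \to X$ is Galois over $X^{\mathrm{reg}}$ with group $\pi_1^G(\OO)$ and $\Sigma \setminus \{0\} \subset X^{\mathrm{reg}}$, the restriction of the cover to the punctured slice is classified by a monodromy homomorphism $\phi: \Gamma \to \pi_1^G(\OO)$; its connected components are Galois covers of $\Sigma \setminus \{0\}$ with group $\mathrm{im}\,\phi$, i.e. quotients of $\CC^2\setminus\{0\}$ by $\ker \phi$, which is automatically normal in $\Gamma$ (this is where your "normal subgroup" assertion gets justified). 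Simplicity of $\Gamma$ then gives exactly your dichotomy, and in the non-smoothened case $\phi$ is trivial, so $\widehat{p}^{-1}(\Sigma)$ has exactly $|\pi_1(\OO)|$ components, each isomorphic to $\Sigma$, each with one vertex over the chosen point of $\fL$. Each codimension 2 leaf of $\widehat{X}$ over $\fL$ is connected, $G$-stable and maps finitely onto $\OO'$, hence is a single $G$-orbit $G/H_i$ with $(H')^{\circ} \subseteq H_i \subseteq H'$, so it contains at most $|\pi_1(\OO')|$ of these vertices (if $\fL \to \OO'$ is itself a nontrivial cover the count only improves); this yields $s \geq |\pi_1(\OO)||\pi_1(\OO')|^{-1}$, and since the slice along each such leaf is again $\Sigma$, of type $A_{p-1}$ with nonzero invariant Cartan, Proposition \ref{prop:partialdecomp} gives $\dim \fP^{\widehat{X}} \geq s$.

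The one genuine error is in your upper bound: birational rigidity of the cover $\widehat{\OO}_L$ does \emph{not} imply that the underlying orbit $\OO_L$ is rigid in $L$, so the claim $(L,\OO_L) \in \mathcal{P}_{\mathrm{rig}}(\OO)$ can fail. The paper itself supplies counterexamples: the universal cover of $G_2(a_1)$ is birationally rigid while $G_2(a_1)$ is induced, and Proposition \ref{prop:listofbirigid} lists birationally rigid orbits that are not rigid. Fortunately the step is cheap to repair: since birational induction covers Lusztig--Spaltenstein induction, $\OO = \Ind^G_L \OO_L$; by Proposition \ref{prop:propsofInd} (ii)--(iii) choose $M \subseteq L$ and a rigid $M$-orbit $\OO_M$ with $\OO_L = \Ind^L_M \OO_M$, so that $(M,\OO_M) \in \mathcal{P}_{\mathrm{rig}}(\OO)$ and, for $\fg$ simple, $\dim \fX(\fl \cap [\fg,\fg]) = \dim \fz(\fl) \leq \dim \fz(\fm) \leq m(\OO)$, which is all the upper bound requires. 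With that repair the chain $|\pi_1(\OO)||\pi_1(\OO')|^{-1} \leq s \leq \dim \fP^{\widehat{X}} \leq m(\OO)$ contradicts (ii), and your proof is complete.
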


Combining Lemma \ref{lem:criterion2leafless} and Proposition \ref{prop:criterionbirigidcover}, we obtain the following useful criterion.

\begin{cor}\label{cor:criterionbirigidcover}
Suppose
\begin{itemize}
    \item[(i)] All Kleinian singularities in $\Spec(\CC[\OO])$ are of type $A_1$.
    \item[(ii)] For each codimension 2 orbit $\OO' \subset \overline{\OO}$, there is a strict inequality
    $$|\pi_1(\OO)||\pi_1(\OO')|^{-1} > m(\OO)$$
    \item[(iii)] The reductive part of the centralizer of $e \in \OO$ is semisimple.
\end{itemize}
Then the universal cover $\widehat{\OO}$ of $\OO$ is birationally rigid.
\end{cor}

\begin{proof}
By Lemma \ref{lem:criterion2leafless}, conditions (i) and (ii) imply that $\Spec(\CC[\widehat{\OO}])$
has no codimension 2 leaves. By Lemma \ref{lem:computeH2}, condition (iii) implies that $H^2(\widehat{\OO},\CC)=0$. The corollary follows at once from Proposition \ref{prop:criterionbirigidcover}.
\end{proof}

\begin{prop}\label{prop:listofsemirigid}
The following is a complete list of birationally semi-rigid orbits in simple exceptional Lie algebras:
\begin{center}
    \begin{tabular}{|c|l|} \hline
        $\fg$ & Birationally semi-rigid orbits \\ \hline
         $G_2$ & $G_2(a_1)$\\ \hline
         $F_4$ & $A_2$, $B_2$, $C_3(a_1)$, $F_4(a_3)$,  \\ \hline
         $E_6$ & $A_2$, $D_4(a_1)$, $2A_2$, $A_5$, $E_6(a_3)$ \\ \hline
         $E_7$ & $(3A_1)''$, $A_2$, $A_2+3A_1$, $(A_3+A_1)''$, $D_4(a_1)$, $A_3+2A_1$, $D_4(a_1)+A_1$,\\ 
         
         &  $A_3+A_2+A_1$, $A_5+A_1$, $D_5(a_1)+A_1$, $E_7(a_5)$, $E_7(a_4)$\\ \hline
         
         $E_8$ & $A_2$, $2A_2$, $D_4(a_1)$, $D_4(a_1)+A_2$, $D_4+A_2$, $D_6(a_2)$, $E_6(a_3)+A_1$,\\ 
         & $E_7(a_5)$, $E_8(a_7)$, $E_8(b_6)$\\ \hline
    \end{tabular}
\end{center}
\end{prop}

\begin{proof}
By Proposition \ref{prop:A1}, we can restrict our attention to orbits with nontrivial $\pi_1(\OO)$ and only $A_1$ singularities in $\Spec(\CC[\OO])$. Using the incidence diagrams in \cite[Sec 13]{fuetal2015}, we find that there are 38 orbits with these properties. For most of these orbits, Corollary \ref{cor:criterionbirigidcover} can be straightforwardly applied to show that $\OO$ is birationally semi-rigid. In some cases, a more elaborate argument is required. On the other hand, 10 of these orbits are \emph{not} semi-rigid. This is proved either by cohomology considerations, see Proposition \ref{prop:criterionbirigidcover}, or by a counting argument involving Proposition \ref{prop:propsofbind}.

We pause to introduce some notational conventions which will remain in place for the rest of the paper. For $\fg$ a simple exceptional Lie algebra, we number the simple roots as in Bourbaki (\cite{Bourbaki46}). In type $E_n$, this means that the simple roots forming the subgraph of type $A_{n-1}$ are labeled $\alpha_1,\alpha_3,...,\alpha_n$ from left to right and the remaining simple root is labeled $\alpha_2$. In type $F_4$ the simple roots are labeled $\alpha_1,...,\alpha_4$ from left to right ($\alpha_1$, $\alpha_2$ are the long roots, and $\alpha_3$, $\alpha_4$ are short). In type $G_2$, $\alpha_1$ is the short root. 

If $\fg$ has rank $n$ and $\{r_1,...,r_p\} \subseteq \{1,...,n\}$, there is a unique standard Levi subalgebra with simple roots $\alpha_{r_1},...,\alpha_{r_p}$. We denote this Levi subalgebra by $\fl(X;r_1,...,r_p)$, where $X$ is the Lie type of the Levi ($X$ is included in the notation only for the reader's convenience---it is completely determined by the numbers $r_1,...,r_p$). Two standard Levis $\fl(X;r_1,...,r_p)$ and $\fl(Y;s_1,...,s_q)$ are conjugate under $\Ad(\fg)$ if and only if $p=q$ and there is a Weyl group element $w$ such that $\{\alpha_{r_1},...,\alpha_{r_p}\} = w\{\alpha_{s_1},...,\alpha_{s_p}\}$. Sometimes, the Levi subalgebra $\fl(X;r_1,...,r_p)$ is completely determined by $X$. In such cases, we will often omit $r_1,...,r_p$ from the notation, writing simply $\fl(X)$. 

The calculations below involve a number of elementary `micro-computations', which are carried out in each case in exactly the same fashion. To avoid repeating references and explanations, we will catalogue them below:
\begin{itemize}
    \item Given a nilpotent orbit $\OO$, determine the finite set $\mathcal{P}_{\mathrm{rig}}(\OO)$, see (\ref{eq:Prig}). This is deducible in every case from the tables in \cite[Sec 4]{deGraafElashvili}.
    \item Given a nilpotent orbit $\OO$, determine the codimension 2 orbits $\OO_k \subset \overline{\OO}$. This is evident from the incidence diagrams in \cite{Spaltenstein}. In some cases, we will also need to determine the singularity of $\OO_k$ and its normalization, $\Sigma_k$. This can be deduced from the incidence diagrams in \cite[Sec 13]{fuetal2015}.
    \item Determine the fundamental group $\pi_1(\OO)$ of a nilpotent orbit $\OO$. See \cite[Sec 6.1]{CM} for classical types and \cite[Sec 8.4]{CM} for exceptional types.
    \item Determine the reductive part $\mathfrak{r}$ of the centralizer of a nilpotent element $e \in \OO$. See \cite[Sec 6.1]{CM} for classical types and \cite[Sec 13.1]{Carter1993} for exceptional types.
\end{itemize}

\vspace{3mm}
\noindent \underline{$G_2(a_1) \subset G_2$}. We have
    $$\mathcal{P}_{\mathrm{rig}}(\OO) = \{(L(A_1;1),\{0\}), (L(A_1;2), \{0\})\}$$
    and therefore $m(\OO)=1$. There is one codimension 2 orbit in $\overline{\OO}$, namely $\OO_1= \widetilde{A}_1$. The fundamental groups are as follows
    $$\pi_1(\OO) = S_3 \qquad \pi_1(\OO_1) = 1$$
    Note that
$$|\pi_1(\OO)||\pi_1(\OO_1)|^{-1}=6>1=m(\OO).$$
and $\mathfrak{r} = \{0\}$. So $\widehat{\OO}$ is birationally rigid by Corollary \ref{cor:criterionbirigidcover}.

\vspace{3mm}
\noindent \underline{$A_2 \subset F_4$}. We have
    $$\mathcal{P}_{\mathrm{rig}}(\OO) = \{(L(C_3;2,3,4),\{0\}\},$$
    and therefore $m(\OO)=1$. There is one codimension 2 orbit in $\overline{\OO}$, namely $\OO_1 = A_1+\widetilde{A}_1$. The fundamental groups are as follows 
    $$\pi_1(\OO) = S_2 \qquad \pi_1(\OO_1) = 1$$
Note that
$$|\pi_1(\OO)||\pi_1(\OO_1)|^{-1}=2>1=m(\OO),$$
and $\mathfrak{r}=A_2$. So $\widehat{\OO}$ is birationally rigid by Proposition \ref{cor:criterionbirigidcover}.

\vspace{3mm}
\noindent \underline{$B_2 \subset F_4$}. We have
    $$\mathcal{P}_{\mathrm{rig}}(\OO) = \{(L(C_3;2,3,4),\OO_{(2,1^4)})\},$$
    and therefore $m(\OO)=1$. There is one codimension 2 orbit in $\overline{\OO}$, namely $\OO_1 = A_2+\widetilde{A}_1$. The fundamental groups are as follows 
    $$\pi_1(\OO) = S_2 \qquad \pi_1(\OO_1) = 1$$
Note that
$$|\pi_1(\OO)||\pi_1(\OO_1)|^{-1}=2>1=m(\OO),$$
and $\mathfrak{r}=2A_1$. So $\widehat{\OO}$ is birationally rigid by Proposition \ref{cor:criterionbirigidcover}.

\vspace{3mm}
\noindent \underline{$C_3(a_1) \subset F_4$}. We have
    $$\mathcal{P}_{\mathrm{rig}}(\OO) = \{(L(B_3;1,2,3),\OO_{(2^2,1^3)})\},$$
    and therefore $m(\OO)=1$. There are two codimension 2 orbits in $\overline{\OO}$, namely $\OO_1 = B_2$ and $\OO_2=\widetilde{A}_2+A_1$. The singularity of $\OO_2\subset \overline{\OO}$ is of type $m$ and is therefore resolved under the normalization map $X=\Spec(\CC[\OO])\to \overline{\OO}$. Let $\widecheck{\OO}_1\subset X$ be the preimage of $\OO_1$. The slice $\Sigma_1$ to $\OO_1$ is of type $2A_1$, and therefore the preimage of the $\Sigma_1$ is the disjoint union of two copies of $\Sigma_1$. If $\widecheck{\OO}_1$ is not connected, then there are at least two symplectic leaves of codimension $2$ over $\OO_1$ with open $G$ orbits being the irreducible components of $\widecheck{\OO}_1$. That contradicts to \cref{lem:weaklynormal}. It follows that $\widecheck{\OO}_1$ is a $2$-fold connected cover of $\OO_1$. The fundamental groups are as follows 
    $$\pi_1(\OO) = S_2 \qquad \pi_1(\OO_1) = S_2 \qquad \pi_1(\widecheck{\OO}_1)=1$$
Note that
$$|\pi_1(\OO)||\pi_1(\widecheck{\OO}_1)|^{-1}=2>1=m(\OO),$$
and $\mathfrak{r}=A_1$. Analogously to Proposition \ref{cor:criterionbirigidcover}, we see that $\widehat{\OO}$ is birationally rigid.
    
\vspace{3mm}
\noindent \underline{$F_4(a_3) \subset F_4$}. We have
    $$\mathcal{P}_{\mathrm{rig}}(\OO) = \{(L(A_1+A_2;1,3,4), \{0\}), (L(A_2+A_1;1,2,4), \{0\}), (L(B_2), \{0\})\},$$
    and therefore $m(\OO)=2$. There is one codimension 2 orbit in $\overline{\OO}$, namely $\OO_1 = C_3(a_1)$. The fundamental groups are as follows
    $$\pi_1(\OO) = S_4 \qquad \pi_1(\OO_1) = S_2$$
    Note that
    $$|\pi_1(\OO)||\pi_1(\OO_1)|^{-1}=12>2 = m(\OO),$$
    and $\mathfrak{r}=0$. So $\widehat{\OO}$ is birationally rigid by Proposition \ref{cor:criterionbirigidcover}.
    
\vspace{3mm}
\noindent \underline{$F_4(a_2) \subset F_4$}. Note that $\pi_1(\OO)=S_2$. Hence, $\OO$ admits 2 non-isomorphic covers (including the trivial one). We have
    $$\mathcal{P}_{\mathrm{rig}}(\OO) = \{(L(2A_1;1,4),\{0\}),(L(B_2;2,3),\OO_{(2^2,1)})\}.$$
    By (iii) of Proposition \ref{prop:propsofbind}, $\mathrm{Bind}^G_{L(2A_1;1,4)} \{0\}$ and $\mathrm{Bind}^G_{L(B_2;2,3)} \OO_{(2^2,1)}$ are  non-isomorphic covers of $\OO$. In particular, all covers of $\OO$ are birationally induced.

\vspace{3mm}
\noindent \underline{$A_2 \subset E_6$}. We have
    $$\mathcal{P}_{\mathrm{rig}}(\OO) = \{(L(A_5),\{0\})\},$$
    and therefore $m(\OO)=1$. There is one codimension 2 orbit in $\overline{\OO}$, namely $\OO_1 = 3A_1$. The fundamental groups are as follows
    $$\pi_1(\OO) = S_2 \qquad \pi_1(\OO_1) = 1.$$
    Note that
    $$|\pi_1(\OO)||\pi_1(\OO_1)|^{-1}=2>1= m(\OO),$$
    and $\mathfrak{r}=2A_2$. So  $\widehat{\OO}$ is birationally rigid by Corollary \ref{cor:criterionbirigidcover}.
    
\vspace{3mm}
\noindent \underline{$D_4(a_1) \subset E_6$}.  We have
    $$\mathcal{P}_{\mathrm{rig}}(\OO) = \{(L(2A_2+A_1),\{0\}), (L(A_3+A_1;1,2,4,5),\{0\}), (L(D_4),\OO_{(2^2, 1^4)})\},$$
    and therefore $m(\OO)=2$. Consider the set $\mathcal{P}_1(\OO)$ of pairs $(M,\OO_M)$ consisting of a Levi subgroup $M \subset G$ of semisimple co-rank 1 and a nilpotent $M$-orbit $\OO_M$ such that $\OO=\Ind^G_M \OO_M$, considered up to $G$-conjugacy. Note that $(M,\OO_M) \in \mathcal{P}_1(\OO)$ if and only if there is a pair $(L,\OO_L) \in \mathcal{P}_{\mathrm{rig}}(\OO)$ such that $L$ is (conjugate to) a subgroup of $M$ and $\OO_M = \Ind^M_L \OO_L$. Thus we have
    \begin{table}[H]
        \begin{tabular}{|c|c|c|} \hline
           $M$  &  $\OO_M$ & $\pi_1^M(\OO_M)$ \\ \hline
           $L(2A_2+A_1)$ & $\{0\}$ & $1$ \\ \hline
           $L(D_5)$ & $\OO_{(3^2,1^4)}$ & $\ZZ_2$ \\ \hline
           $L(A_4+A_1)$ & $\OO_{(2,1^3)} \times \{0\}$ & $1$ \\ \hline
           $L(A_5)$ & $\OO_{(2^2,1^2)}$ & $1$ \\ \hline
        \end{tabular}
        \caption{$\mathcal{P}_1(\OO)$}
    \end{table}
    If $\widetilde{\OO}$ is a birationally induced cover of $\OO$, then $\widetilde{\OO}=\mathrm{Bind}^G_M \widetilde{\OO}_M$ for some pair  $(M,\OO_M) \in \mathcal{P}_1(\OO)$ and $M$-equivariant cover $\widetilde{\OO}_M$ of $\OO_M$. By the table above, there are 5 such $(M,\widetilde{\OO}_M)$, up to conjugation by $G$. However, the pairs $(L(D_5),\OO_{(3^2,1^4)}), (L(A_4+A_1), \OO_{(2,1^3)} \times \{0\}), (L(A_5), \OO_{(2^2,1^2)}) \in \mathcal{P}_1(\OO)$ are induced from a common element of $\mathcal{P}_{\mathrm{rig}}(\OO)$, namely $(L(A_3+A_1;1,2,4,5),\{0\})$. Hence three of the five $(M,\widetilde{\OO}_M)$ give rise to isomorphic covers of $\OO$. It follows that there are at most 3 non-isomorphic birationally induced covers of $\OO$. Since $\pi_1(\OO) \simeq S_3$, there are a total of 4 non-isomorphic covers of $\OO$. So at least one such is birationally rigid.

\vspace{3mm}
\noindent \underline{$(3A_1)'' \subset E_7$}. We have
    $$\mathcal{P}_{\mathrm{rig}}(\OO) = \{(L(E_6),\{0\})\}$$
    and therefore $m(\OO)=1$. There is one codimension 2 orbit in $\overline{\OO}$, namely $\OO_1 =2A_1$. The fundamental groups are as follows
    $$\pi_1(\OO)=\ZZ_2 \qquad \pi_1(\OO_1) =1.$$
    Note that
    $$|\pi_1(\OO)||\pi_1(\OO_1)|^{-1} = 2 > 1 = m(\OO),$$
   and $\mathfrak{r} = F_4$. So $\widehat{\OO}$ is birationally rigid by Corollary \ref{cor:criterionbirigidcover}.

\vspace{3mm}
\noindent \underline{$A_2 \subset E_7$}. We have
    $$\mathcal{P}_{\mathrm{rig}}(\OO) = \{(L(D_6),\{0\})\},$$
    and therefore $m(\OO)=1$. There is one codimension 2 orbit in $\overline{\OO}$, namely $\OO_1 = (3A_1)'$. The fundamental groups are as follows 
    $$\pi_1(\OO) = S_2 \qquad \pi_1(\OO_1) = 1.$$
    Note that
$$|\pi_1(\OO)||\pi_1(\OO_1)|^{-1}=2>1=m(\OO),$$
and $\mathfrak{r} = A_5$. So $\widehat{\OO}$ is birationally rigid by Corollary \ref{cor:criterionbirigidcover}.

\vspace{3mm}
\noindent \underline{$A_2+3A_1 \subset E_7$}. We have
    $$\mathcal{P}_{\mathrm{rig}}(\OO) = \{(L(A_6),\{0\})\},$$
    and therefore $m(\OO)=1$. There is one codimension 2 orbit in $\overline{\OO}$, namely $\OO_1 = A_2+2A_1$. The fundamental groups are as follows
    $$\pi_1(\OO) = \ZZ_2 \qquad \pi_1(\OO_1) = 1.$$ 
    Note that
$$|\pi_1(\OO)||\pi_1(\OO_1)|^{-1}=2>1=m(\OO),$$
and $\mathfrak{r} = G_2$. So $\widehat{\OO}$ is birationally rigid by Corollary \ref{cor:criterionbirigidcover}. 

\vspace{3mm}
\noindent \underline{$(A_3+A_1)'' \subset E_7$}. We have
    $$\mathcal{P}_{\mathrm{rig}}(\OO) = \{(L(D_5), \{0\})\},$$
    and therefore $m(\OO)=2$. There are two codimension 2 orbits in $\overline{\OO}$, namely $\OO_1 = A_3$, and $\OO_2=2A_2$. The fundamental groups are as follows
    $$\pi_1(\OO) = \ZZ_2 \qquad \pi_1(\OO_1) =1 \qquad \pi_1(\OO_2) =1.$$
    Let $\widehat{\OO}$ be the universal cover of $\OO$. 
    
    We will show that the singularity $\Sigma_1$ of $X$ is resolved under the map $\widehat{X}\to X$. Note that $|\pi_1(\OO)||\pi_1(\OO_1)|^{-1}=2=m(\OO)$. So if $\Sigma_1$ is not resolved, both $\OO$ and $\widehat{\OO}$ are birationally induced from a co-rank 2 Levi. Since $\mathcal{P}_{\mathrm{rig}}(\OO)$ contains a single Levi (of co-rank 2), there is only one cover of $\OO$ which can be induced from a corank 2 Levi, namely $\mathrm{Bind}_L^G(\{0\})$. We conclude that $\Sigma_1$ is resolved under the map $\widehat{X}\to X$. Analogously, $\Sigma_2$ is resolved.
    We have $\mathfrak{r}=B_3$, and therefore $\widehat{\OO}$ is birationally rigid by Corollary \ref{cor:criterionbirigidcover}.

\vspace{3mm}
\noindent \underline{$D_4(a_1) \subset E_7$}. We have
    $$\mathcal{P}_{\mathrm{rig}}(\OO) = \{(L(A_1+A_5),\{0\}), (L(D_6),\OO_{(2^4,1^4)})\},$$
    and therefore $m(\OO)=1$. There is one codimension 2 orbit in $\overline{\OO}$, namely $\OO_1 = (A_3+A_1)'$. The fundamental groups are as follows
    $$\pi_1(\OO) = S_3 \qquad \pi_1(\OO_1) = 1.$$
    Note that
$$|\pi_1(\OO)||\pi_1(\OO_1)|^{-1}=6>1=m(\OO),$$
and $\mathfrak{r}=3A_1$. So $\widehat{\OO}$ is birationally rigid by Corollary \ref{cor:criterionbirigidcover}.

\vspace{3mm} 
\noindent \underline{$A_3+2A_1 \subset E_7$}. We have
    $$\mathcal{P}_{\mathrm{rig}}(\OO) = \{(L(E_6),3A_1)\},$$
    and therefore $m(\OO)=1$. There is one codimension 2 orbit in $\overline{\OO}$, namely $\OO_1 = (A_3+A_1)'$. The fundamental groups are as follows
    $$\pi_1(\OO) = \ZZ_2 \qquad \pi_1(\OO_1) =1.$$
    Note that
$$|\pi_1(\OO)||\pi_1(\OO_1)|^{-1}=2>1=m(\OO),$$
and $\mathfrak{r}=2A_1$. So $\widehat{\OO}$ is birationally rigid by Corollary \ref{cor:criterionbirigidcover}.

\vspace{3mm}
\noindent \underline{$D_4(a_1)+A_1 \subset E_7$}. We have
    $$\mathcal{P}_{\mathrm{rig}}(\OO) = \{(L(A_5; 1,3,4,5,6),\{0\})\},$$
    and therefore $m(\OO)=2$. There are two codimension 2 orbits in $\overline{\OO}$, namely $\OO_1 = D_4(a_1)$ and $\OO_2=A_3+2A_1$. Consider the map $X=\Spec(\CC[\OO])\to \overline{\OO}$. Let $\widecheck{\OO}_1\subset X$ be the preimage of $\OO_1$. The slice $\Sigma_1$ to $\OO_1$ is of type $3A_1$, and therefore the preimage of $\Sigma_1$ is a disjoint union of three copies of $A_1$singularities. The singularity $\Sigma_2$ of the leaf $\fL_2$ corresponding to $\OO_2$ is of type $A_1$. If $\widecheck{\OO}_1$ is not connected, then there are at least two symplectic leaves of codimension $2$ over $\OO_1$ with open $G$ orbits being the irreducible components of $\widecheck{\OO}_1$. That contradicts to \cref{lem:weaklynormal}. It follows that $\widecheck{\OO}_1$ is a $3$-fold connected cover of $\OO_1$. The fundamental groups are as follows 
    $$\pi_1(\OO) = S_2\times \ZZ_2 \qquad \pi_1(\OO_1) = S_3 \qquad \pi_1(\widecheck{\OO}_1)=\ZZ_2 \qquad \pi_1(\OO_2)=\ZZ_2$$
Note that
$$|\pi_1(\OO)||\pi_1(\widecheck{\OO}_1)|^{-1}=2=m(\OO), \qquad |\pi_1(\OO)||\pi_1({\OO}_2)|^{-1}=2=m(\OO).$$

Let $\widehat{\OO}\to \OO$ be the universal cover of $\OO$. If one of the singularities $\Sigma_1$, $\Sigma_2$ is not resolved under $\widehat{X}\to X$, then we have $\dim \fP^X=2$ and $\dim \fP^{\widehat{X}}\ge 2$. Thus, both $\OO$ and $\widehat{\OO}$ are birationally induced from a corank $2$ Levi. However, there is only one cover that can be induced from a corank $2$ Levi, namely $\mathrm{Bind}_L^G(\{0\})$. Therefore, both $\Sigma_1$ and $\Sigma_2$ are resolved under $\widehat{X}\to X$.
    We have $\mathfrak{r}=2A_1$, and therefore $\widehat{\OO}$ is birationally rigid by Corollary \ref{cor:criterionbirigidcover}.

\vspace{3mm}
\noindent \underline{$A_3+A_2 \subset E_7$}. Note that $\pi_1(\OO)=S_2$. Hence, $\OO$ has two non-isomorphic covers (including the trivial one). We have
    $$\mathcal{P}_{\mathrm{rig}}(\OO) = \{(L(A_1+D_5), \{0\} \times \OO_{(2^2,1^6)}), (L(D_6), \OO_{(3,2^2,1^5)})\}$$
    By (iii) of Proposition \ref{prop:propsofbind}, $\mathrm{Bind}^G_{L(A_1+D_5)} \{0\}$ and $\mathrm{Bind}^G_{L(D_6)} \OO_{(3,2^2,1^5)}$ are non-isomorphic covers of $\OO$. In particular, all covers of $\OO$ are birationally induced.
    
\vspace{3mm}
\noindent \underline{$A_3+A_2+A_1 \subset E_7$}. We have
    $$\mathcal{P}_{\mathrm{rig}}(\OO) = \{(L(A_4+A_2; 1,2,3,4,6,7),\{0\})\}.$$
    Since $L=L(A_4+A_2; 1,2,3,4,6,7)$ is of semisimple co-rank 1 and $\OO_L = \{0\}$, there is a unique birationally induced cover of $\OO$, namely $\mathrm{Bind}_L^G\{0\}$. On the other hand, since $\pi_1(\OO)=\ZZ_2$, there are two non-isomorphic covers of $\OO$. So one must be birationally rigid.

\vspace{3mm} 
\noindent \underline{$D_4+A_1 \subset E_7$}. Note that $\pi_1(\OO) = \ZZ_2$. Hence, $\OO$ has two non-isomorphic covers (including the trivial one). We have
    $$\mathcal{P}_{\mathrm{rig}}(\OO) = \{(L(D_6),\OO_{(3,2^4,1)})\}$$
    Write $(L,\OO_L) = (L(D_6),\OO_{(3,2^4,1)})$. An {\fontfamily{cmtt}\selectfont atlas} computation shows that $\pi_1^L(\OO_L) = \ZZ_2$. Let $\widetilde{\OO}_L$ denote the 2-fold $L$-equivariant cover of $\OO_L$. By (iv) of Proposition \ref{prop:propsofbind}, $\mathrm{Bind}^G_L \widetilde{\OO}_L$ is a 2-fold cover of $\mathrm{Bind}^G_L \OO_L$. In particular, both covers of $\OO$ are birationally induced. 

\vspace{3mm}
\noindent \underline{$A_5+A_1 \subset E_7$}. We have
    $$\mathcal{P}_{\mathrm{rig}}(\OO) = \{(L(E_6),2A_2+A_1)\},$$
    and therefore $m(\OO)=1$. There is one codimension 2 orbit in $\overline{\OO}$, namely $\OO_1 = A_4+A_2$. The fundamental groups are as follows
    $$\pi_1(\OO) = \ZZ_2 \qquad \pi_1(\OO_1) = 1.$$
    Note that
$$|\pi_1(\OO)||\pi_1(\OO_1)|^{-1}=2>1=m(\OO),$$
and $\mathfrak{r} = A_1$. So $\widehat{\OO}$ is birationally rigid by Corollary \ref{cor:criterionbirigidcover}.

\vspace{3mm}
\noindent \underline{$D_5(a_1) +A_1 \subset E_7$}. Note that $\pi_1(\OO) = \ZZ_2$. Hence, $\OO$ has two non-isomorphic covers (including the trivial one). We have
    $$\mathcal{P}_{\mathrm{rig}}(\OO) = \{(L(A_3+A_2),\{0\}) \}$$
    We note that the induction from $(L(A_3+A_2),\{0\})$ is birational (there are several ways to see this --- one will be given in the $D_5(a_1) +A_1 \subset E_7$ portion of Section \ref{subsec:inflcharexceptional}).  Recall the set $\mathcal{P}_1(\OO)$ defined above. Since $\OO$ is birationally induced from $(L(A_3+A_2),\{0\})$, every pair $(M,\OO_M) \in \mathcal{P}_1(\OO)$ is of the form $\OO_M = \Ind^M_L \{0\}$, where $M$ is a co-rank 1 Levi containing (a $G$-conjugate of) $L$. Thus we have
    \begin{table}[H]
    \begin{tabular}{|c|c|c|} \hline
       $M$ &  $\OO_M$ & $\pi_1^M(\OO_M)$ \\ \hline 
     $A_6$ & $\OO_{(2^3,1^3)}$ & 1\\ \hline
     $D_6$ & $\OO_{(3^3,1^3)}$ & 1 \\ \hline
     $A_4+A_2$ & $\OO_{(2,1^3)}\times \{0\}$ & 1 \\ \hline
     $A_3+A_2+A_1$ & $\{0\}\times \{0\}\times \OO_{(2)}$ & 1 \\ \hline
    \end{tabular}
    \caption{$\mathcal{P}_1(\OO)$}
\end{table}

Since $\OO$ is birationally induced from $(L,\OO_L)$, it is birationally induced from $(M,\OO_M)$ for all the pair $(M, \OO_M)$ in the list above. It follows that the universal cover of $\OO$ is birationally rigid.

\vspace{3mm}
\noindent \underline{$D_6(a_2) \subset E_7$}. Note that $\pi_1(\OO) = \ZZ_2$. Hence, $\OO$ admits two non-isomorphic covers (including the trivial one). We have
    \begin{align*}
    \OO &= \Ind^G_{L(D_5)} \OO_{(3,2^2,1^3)}\\
    &= \Ind^G_{L(D_5+A_1)} (\Ind^{L(D_5+A_1)}_{L(D_5)} \OO_{(3,2^2,1^3)})\\
    &= \Ind^G_{L(D_5+A_1)} \OO_{(3,2^2,1^3)} \times \OO_{(2)} \end{align*}
   Write $(L,\OO_L) = (L(D_5+A_1), \OO_{(3,2^2,1^3)} \times \OO_{(2)})$. An {\fontfamily{cmtt}\selectfont atlas} computation shows that $\pi_1^L(\OO_L) = \ZZ_2$. Let $\widetilde{\OO}_L$ denote the two-fold $L$-equivariant cover of $\OO_L$. By (iv) of Proposition \ref{prop:propsofbind}, $\mathrm{Bind}^G_L \widetilde{\OO}_L$ is a two-fold cover of $\mathrm{Bind}^G_L \OO_L$. In particular, both covers of $\OO$ are birationally induced. 
  
\vspace{3mm} 
\noindent \underline{$E_6(a_3) \subset E_7$}. Note that $\pi_1(\OO)=S_2$. So $\OO$ admits two non-isomorphic covers (inluding the trivial one). We have
    \begin{align*}
    \OO &= \Ind^G_{L(2A_1+A_3;1,2,4,5,7)} \{0\}\\
    &= \Ind^G_{L(A_1+D_5)} (\Ind^{L(A_1+D_5)}_{L(2A_1+A_3;1,2,4,5,7)} \{0\})\\
    &= \Ind^G_{L(A_1+D_5)} \{0\} \times \OO_{(3^2,1^4)}
    \end{align*}
    Write $(L,\OO_L) = (L(A_1+D_5), \{0\} \times \OO_{(3^2,1^4)})$. An {\fontfamily{cmtt}\selectfont atlas} computation shows that $\pi_1^L(\OO_L) = \ZZ_2$. Let $\widetilde{\OO}_L$ denote the two-fold $L$-equivariant cover of $\OO_L$. By (iv) of Proposition \ref{prop:propsofbind}, $\mathrm{Bind}^G_L \widetilde{\OO}_L$ is a two-fold cover of $\mathrm{Bind}^G_L \OO_L$. In particular, both covers of $\OO$ are birationally induced.

\vspace{3mm} 
\noindent \underline{$E_7(a_5) \subset E_7$}. We have
    $$\mathcal{P}_{\mathrm{rig}}(\OO) = \{(L(A_1+2A_2;1,2,3,5,6),\{0\}), (L(A_1+A_3;1,2,4,5), \{0\}), (L(D_4), \OO_{(2^2,1^4)})\},$$
    and therefore $m(\OO)=3$. There are two codimension 2 orbits in $\overline{\OO}$, namely $\OO_1 = E_6(a_3)$ and $\OO_2 = E=D_6(a_2)$. The fundamental groups are as follows
    $$\pi_1(\OO) = S_3 \times \ZZ_2 \qquad \pi_1(\OO_1) = \pi_1(\OO_2) = \ZZ_2.$$
    Note that
$$|\pi_1(\OO)||\pi_1(\OO_k)|^{-1}=6>3=m(\OO) \qquad k=1,2,$$
and $\mathfrak{r} = 0$. So $\widehat{\OO}$ is birationally rigid by Corollary \ref{cor:criterionbirigidcover}.

\vspace{3mm}
\noindent \underline{$E_7(a_4) \subset E_7$}. We have
$$\mathcal{P}_{\mathrm{rig}}(\OO) = \{(L(A_1+D_4),\{0\} \times \OO_{(3,2^2,1)}), (L(2A_1+A_2;2,3,5,6),\{0\})\},$$
    and therefore $m(\OO)=3$. There are three codimension 2 orbits in $\overline{\OO}$, namely $\OO_1 = A_6$, $\OO_2= D_5+A_1$, and $\OO_3= D_6(a_1)$. The fundamental groups are as follows
    $$\pi_1(\OO) = S_2 \times \ZZ_2, \qquad \pi_1(\OO_1) =1, \qquad \pi_1(\OO_2) = \pi_1(\OO_3) = \ZZ_2.$$
     Note that
$$|\pi_1(\OO)||\pi_1(\OO_1)|^{-1}=4>3=m(\OO) ,$$
    and therefore the singularity $\Sigma_1$ is resolved under the map $\widehat{X}\to X$, see Lemma \ref{lem:criterion2leafless}. Moreover, the preimage of $\Sigma_1$ is two copies of $\CC^2$, and the Galois group $S_2\times \ZZ_2$ of the cover permutes the two copies. Thus, we have a map $\pi_1(\OO)\to S_2$. Let $K$ be its kernel, and let $\widetilde{\OO}=\widehat{\OO}/K$. Then $\widetilde{X}$ has $2$ symplectic leaves over the leaf $\fL_1\subset X$ corresponding to $\OO_1\subset \overline{\OO}$. We claim that $\widetilde{X}\to X$ resolves $\Sigma_2$ and $\Sigma_3$. Otherwise, we have $\dim \fP^{\widetilde{X}}\ge 3$, and hence both $\OO$ and $\widetilde{\OO}$ are birationally induced from a co-rank 3 Levi. Since $\mathcal{P}_{\mathrm{rig}}(\OO)$ contains a unique pair with the Levi of co-rank $\geq 3$, namely $(L(2A_1+A_2;2,3,5,6),\{0\})$, it follows that both $\OO$ and $\widetilde{\OO}$ are birationally induced from $(L(2A_1+A_2;2,3,5,6),\{0\})$, which is a contradiction. Therefore, $\widehat{\OO}$ is $2$-leafless. We have $\mathfrak{r} = 0$. So $\widehat{\OO}$ is birationally rigid by Corollary \ref{cor:criterionbirigidcover}.

\vspace{3mm}
\noindent \underline{$A_2 \subset E_8$}. We have
    $$\mathcal{P}_{\mathrm{rig}}(\OO) = \{(L(E_7),\{0\})\},$$
    and therefore $m(\OO)=1$. There is one codimension 2 orbit in $\overline{\OO}$, namely $\OO_1 = 3A_1$. The fundamental groups are as follows
    $$\pi_1(\OO) = S_2 \qquad \pi_1(\OO_1) = 1.$$
    Note that
$$|\pi_1(\OO)||\pi_1(\OO_1)|^{-1}=2>1=m(\OO),$$
and $\mathfrak{r}=E_6$. So $\widehat{\OO}$ is birationally rigid by Corollary \ref{cor:criterionbirigidcover}.

\vspace{3mm}   
\noindent \underline{$2A_2 \subset E_8$}. We have
    $$\mathcal{P}_{\mathrm{rig}}(\OO) = \{(L(D_7),\{0\})\},$$
    and therefore $m(\OO)=1$. There is a single codimension 2 orbit in $\overline{\OO}$, namely $\OO_1 = A_2+3A_1$. The fundamental groups are as follows
    $$\pi_1(\OO) = S_2 \qquad \pi_1(\OO_1) = 1.$$
    Note that
$$|\pi_1(\OO)||\pi_1(\OO_1)|^{-1}=2>1=m(\OO),$$
and $\mathfrak{r} = 2G_2$. So $\widehat{\OO}$ is birationally rigid by Proposition \ref{prop:criterionbirigidcover}.

\vspace{3mm}
\noindent \underline{$D_4(a_1) \subset E_8$}. We have
    $$\mathcal{P}_{\mathrm{rig}}(\OO) = \{(L(A_1+E_6),\{0\}), (L(E_7),2A_1)\},$$
    and therefore $m(\OO)=1$. There is a single codimension 2 orbit in $\overline{\OO}$, namely $\OO_1 = A_3+A_1$. The fundamnetal groups are as follows
    $$\pi_1(\OO) =S_3 \qquad \pi_1(\OO_1) = 1.$$
    Note that
$$|\pi_1(\OO)||\pi_1(\OO_1)|^{-1}=6>1=m(\OO),$$
and $\mathfrak{r} = D_4$. So $\widehat{\OO}$ is birationally rigid by Proposition \ref{prop:criterionbirigidcover}.

\vspace{3mm}
\noindent \underline{$A_3+A_2 \subset E_8$}. Recall that $H^2(\widehat{\OO},\CC) \simeq \mathfrak{z}(\mathfrak{r})$. Since $\mathfrak{r} = B_2+T_1$, $H^2(\widehat{\OO},\CC) \neq 0$. Hence, $\widehat{\OO}$ is birationally induced by Proposition \ref{prop:criterionbirigidcover}. On the other hand, $\widehat{\OO}$ is the unique nontrivial cover of $\OO$, since $\pi_1(\OO)=S_2$. 
  
\vspace{3mm}  
\noindent \underline{$D_4(a_1)+A_2 \subset E_8$}. We have
    $$\mathcal{P}_{\mathrm{rig}}(\OO) = \{(L(A_7),\{0\})\},$$
    and therefore $m(\OO)=1$. There is a single codimension 2 orbit in $\overline{\OO}$, namely $\OO_1 = A_3+A_2+A_1$. The fundamental groups are as follows
    $$\pi_1(\OO) = S_2 \qquad \pi_1(\OO_1) = 1.$$
    Note that
$$|\pi_1(\OO)||\pi_1(\OO_1)|^{-1}=2>1=m(\OO),$$
and $\mathfrak{r} = A_2$. So $\widehat{\OO}$ is birationally rigid by Proposition \ref{prop:criterionbirigidcover}.

\vspace{3mm}
\noindent \underline{$D_4+A_2 \subset E_8$}. Note that $\pi_1(\OO) = \ZZ_2$. Hence, $\OO$ has two non-isomorphic covers (including the trivial one). We have
    $$\mathcal{P}_{\mathrm{rig}}(\OO) = \{(L(A_6;1,3,4,5,6,7),\{0\})\}.$$
    We note that the induction from $(L(A_6;1,3,4,5,6,7),\{0\})$ is birational.  Recall the set $\mathcal{P}_1(\OO)$ defined above. Since $\OO$ is birationally induced from $(L(A_6;1,3,4,5,6,7),\{0\}))$, every pair $(M,\OO_M) \in \mathcal{P}_1(\OO)$ is of the form $\OO_M = \Ind^M_L \{0\}$, where $M$ is a co-rank 1 Levi containing (a $G$-conjugate of) $L$. Thus we have
    \begin{table}[H]
    \begin{tabular}{|c|c|c|} \hline
       $M$ &  $\OO_M$ & $\pi_1^M(\OO_M)$ \\ \hline 
     $D_7$ & $\OO_{(2^6,1^2)}$ & 1\\ \hline
     $A_7$ & $\OO_{(2,1^6)}$ & 1 \\ \hline
     $A_6+A_1$ & $\{0\}\times \OO_{(2)}$ & 1 \\ \hline
     $E_7$ & $A_2+3A_1$ & 1 \\ \hline
    \end{tabular}
    \caption{$\mathcal{P}_1(\OO)$}
\end{table}
Since $\OO$ is birationally induced from $(L,\OO_L)$, it is birationally induced from $(M,\OO_M)$ for all the pair $(M, \OO_M)$ in the list above. It follows that the universal cover of $\OO$ is birationally rigid.

\vspace{3mm}
\noindent \underline{$D_6(a_2) \subset E_8$}. We have
    $$\mathcal{P}_{\mathrm{rig}}(\OO) = \{(L(D_7),(3,2^4,1))\},$$
    and therefore $m(\OO)=1$. There are $2$ codimension 2 orbits in $\overline{\OO}$, namely $\OO_1 = D_5(a_1)+A_2$ and $\OO_2=A_5+A_1$. The corresponding singularities are of types $A_1$ and $m$. Thus, there is a unique codeminsion $2$ leaf $\fL_1\subset \Spec(\CC[\OO])$. The fundamental groups are as follows
    $$\pi_1(\OO) = S_2 \qquad \pi_1(\OO_1) = 1.$$
    Note that
$$|\pi_1(\OO)||\pi_1(\OO_1)|^{-1}=2>1=m(\OO),$$
and $\mathfrak{r}=2A_1$. So $\widehat{\OO}$ is birationally rigid by Corollary \ref{cor:criterionbirigidcover}.

\vspace{3mm}
\noindent \underline{$E_6(a_3) \subset E_8$}. Note that $\pi_1(\OO)=S_2$. So $\OO$ admits two non-isomorphic covers (including the trivial one). We have
    $$\OO=\Ind^G_{L(D_5+A_1)} \{0\} = \Ind^G_{L(D_7)} (\Ind^{L(D_7)}_{L(D_5+A_1)} \{0\}) = \Ind^G_{L(D_7)} \OO_{(3^2,1^8)}.$$
    Write $(L,\OO_L) = (L(D_7),\OO_{(3^2,1^8)})$. An {\fontfamily{cmtt}\selectfont atlas} computation shows that $\pi_1^L(\OO_L) \simeq \ZZ_2$. Let $\widetilde{\OO}_L$ denote the two-fold $L$-equivariant cover of $\OO_L$. By (iv) of Proposition \ref{prop:propsofbind}, $\mathrm{Bind}^G_M \widetilde{\OO}_M$ is a 2-fold cover of $\mathrm{Bind}^G_M \OO_M$. In particular, both covers of $\OO$ are birationally induced.
    
\vspace{3mm}  
\noindent \underline{$E_6(a_3)+A_1 \subset E_8$}. We have
    $$\mathcal{P}_{\mathrm{rig}}(\OO) = \{(L(E_7),A_1+2A_2)\},$$
    and therefore $m(\OO)=1$. There are $2$ codimension 2 orbits in $\overline{\OO}$, namely $\OO_1=A_5+A_1$ and $\OO_2=D_5(a_1)+A_2$. The corresponding singularities are of types $A_1$ and $m$. Thus, there is a unique codeminsion $2$ leaf $\fL_1\subset \Spec(\CC[\OO])$. The fundamental groups are as follow
    $$\pi_1(\OO) =S_2 \qquad \pi_1(\OO_1) =1.$$
    Note that
$$|\pi_1(\OO)||\pi_1(\OO_1)|^{-1}=2>1=m(\OO),$$
and $\mathfrak{r}=A_1$. So $\widehat{\OO}$ is birationally rigid by Corollary \ref{cor:criterionbirigidcover}.

\vspace{3mm}
\noindent \underline{$E_7(a_5) \subset E_8$}. We have
$$\mathcal{P}_{\mathrm{rig}}(\OO) = \{(L(E_7), (A_1+A_3)'), (L(E_6+A_1), 3A_1 \times \{0\})\},$$
and therefore $m(\OO)=1$. There are two codimension 2 orbits in $\overline{\OO}$, namely $\OO_1=E_6(a_3)+A_1$ and $\OO_2=D_6(a_2)$. The fundamental groups are as follows
$$\pi_1(\OO) =  S_3, \qquad \pi_1(\OO_1) = S_2, \qquad \pi_1(\OO_2) = S_2.$$
Note that
$$|\pi_1(\OO)||\pi_1(\OO_1)|^{-1} = |\pi_1(\OO)||\pi_1(\OO_2)|^{-1} = 3 > 1,$$
and $\mathfrak{r} = A_1$. So $\widehat{\OO}$ is birationally rigid by Corollary \ref{cor:criterionbirigidcover}.

\vspace{3mm}
\noindent \underline{$E_8(a_7) \subset E_8$}. We have
    \begin{align*}\mathcal{P}_{\mathrm{rig}}(\OO) = \{&(L(A_3+A_4),\{0\}), (L(A_2+D_5), \{0\} \times \OO_{(2^2,1^6)}),\\
    &(L(A_1+A_5; 1,2,4,5,6,7),\{0\}), (L(D_6), (2^4,1^4))\},\end{align*}
    and therefore $m(\OO)=2$. There is a single codimension 2 orbit in $\overline{\OO}$, namely $\OO_1 = E_7(a_5)$. The fundamnetal groups are as follows
    $$\pi_1(\OO) = S_5 \qquad \pi_1(\OO_1) = S_3.$$
    Note that
$$|\pi_1(\OO)||\pi_1(\OO_1)|^{-1}=20>2=m(\OO),$$
and $\mathfrak{r}=0$. So $\widehat{\OO}$ is birationally rigid by Corollary \ref{cor:criterionbirigidcover}.

\vspace{3mm}
\noindent \underline{$E_7(a_4) \subset E_8$}. Note that $\pi_1(\OO)=S_2$. So $\OO$ admits two non-isomorphic covers (including the trivial one). We have
    $$\mathcal{P}_{\mathrm{rig}}(\OO) = \{(L(A_1+D_5;1,2,3,4,5,7), \{0\} \times \OO_{(2^2,1^6)}), (L(D_6), \OO_{(3,2^2,1^5)})\}.$$
    By (iii) of Proposition \ref{prop:propsofbind},  $\mathrm{Bind}^G_{L(A_1+D_5;1,2,3,4,5,7)} \{0\}$ and $\mathrm{Bind}^G_{L(D_6)} \OO_{(3,2^2,1^5)}$ are non-isomorphic covers of $\OO$. In particular, both covers of $\OO$ are birationally induced.
    
\vspace{3mm}   
\noindent \underline{$D_5+A_2\subset E_8$} . Recall that $H^2(\widehat{\OO},\CC) \simeq \fz(\mathfrak{r})$. Since $\mathfrak{r} = T_1$, $H^2(\widehat{\OO},\CC) \neq 0$.  So by Proposition \ref{prop:criterionbirigidcover}, $\widehat{\OO}$ is birationally induced. On the other hand, $\widehat{\OO}$ is the unique nontrivial cover of $\OO$, since $\pi_1(\OO)=S_2$. The singularity of a codimension $2$ orbit $E_7(a_4)\subset \overline{\OO}$ is not of type $m$ (namely, it is of type $A_1$), and therefore the Namikawa space $\fP^X$ for $X=\Spec(\CC[\OO])$ is non-trivial. Thus, $\OO$ is birationally induced, and so all covers of $\OO$ are birationally induced.
 
\vspace{3mm}
\noindent \underline{$D_7(a_1) \subset E_8$}. Since there is a cosimension $2$ orbit $D_6(a_2)\subset \overline{\OO}$ with the singularity not of type $m$ (namely, it is of type $2A_1$), the argument above for $D_5+A_2 \subset E_8$ holds word for word. In particular, all covers of $\OO$ are birationally induced.
 \end{proof}

\subsection{Some computational tools}

Let $\OO$ be a nilpotent orbit such that $H^2(\OO,\CC)=0$. Let $X=\Spec(\CC[\OO])$ and write $\fL_1,...,\fL_t \subset X$ for the codimension 2 leaves. For the determination of unipotent infinitesimal characters (to be carried out in Section \ref{sec:inflchars}), there are three separate computations one needs to perform:
\begin{enumerate}
    \item Compute the birationally minimal induction datum $(L,\OO_L)$ for $\OO$.
    \item Compute the $\fL_k$-adapted resolution datum $(M_k, \OO_{M_k})$ for each codimension 2 leaf.
    \item Compute the isomorphism $\eta_k: \fX(\fm) \xrightarrow{\sim} \fP_k^X$ for each codimension 2 leaf.
\end{enumerate}
We will use several techniques for each of these computations, which we will explain and catalog below. 

\subsubsection{Computation of $(L,\OO_L)$}\label{subsubsec:computeLOL}

First, we explain two techniques for computing $(L,\OO_L)$. Using an $\Ad(\fg)$-invariant identification $\fg \simeq \fg^*$, we can regard $\OO$ as a nilpotent $G$-orbit in $\fg$. Choose an element $e \in \OO$ and an $\mathfrak{sl}(2)$-triple $(e,f,h)$. The operator $\ad(h)$ defines a $\ZZ$-grading on $\fg$
$$\fg = \bigoplus_{i \in \ZZ} \fg_i, \qquad \fg_i := \{\xi \in \fg \mid \ad(h)(\xi) = i\xi\}.$$
We say that $\OO$ is \emph{even} if $\fg_i=0$ for every odd integer $i$. In any event, we can define a parabolic subalgebra
\begin{equation}\label{eq:JMlevi}\mathfrak{p}_{\OO} = \mathfrak{l}_{\OO} \oplus \mathfrak{n}_{\OO}, \qquad \fl_{\OO} := \fg_0, \qquad \fn_{\OO} := \bigoplus_{i \geq 1} \fg_i.\end{equation}
We call $\fp_{\OO}$ (resp. $\fl_{\OO}$) the \emph{Jacobson-Morozov} parabolic (resp. Levi) associated to $\OO$. Both $\fp_{\OO}$ and $\fl_{\OO}$ are well-defined up to conjugation by $G$. It is very easy to determine $\fp_{\OO}$ from the weighted diagram for $\OO$---it is the parabolic subalgebra corresponding to the simple roots labeled `0.' 
The following result is well-known. The proof is contained in \cite{Kostant1959}, see also \cite[Thm 3.3.1]{CM}.

\begin{lemma}\label{lem:even}
If $\OO$ is even, then $\OO=\mathrm{Bind}^G_{L_{\OO}} \{0\}$.
\end{lemma}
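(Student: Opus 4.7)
The statement really combines two assertions: the orbit-level equality $\OO=\mathrm{Ind}^G_{L_\OO}\{0\}$ (an identity of nilpotent $G$-orbits), together with the stronger cover-level statement that this induction is actually birational, so $\mathrm{Bind}^G_{L_\OO}\{0\}\cong\OO$ as $G$-equivariant covers. I would handle them in that order.

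For the orbit-level claim, my plan is to invoke the classical Dynkin--Kostant--Richardson description of even orbits: choose an $\mathfrak{sl}(2)$-triple $(e,h,f)$ with $e\in\OO$, so that $e\in\fg_2$ for the grading in (\ref{eq:JMlevi}); because $\OO$ is even, $\fn_\OO=\bigoplus_{i\ge 2}\fg_i$ and the $P_\OO$-orbit $P_\OO\cdot e$ is open and dense in $\fn_\OO$. This is precisely the Richardson condition defining $\mathrm{Ind}^G_{L_\OO}\{0\}$, so $\OO=\mathrm{Ind}^G_{L_\OO}\{0\}$. This is standard and I would simply cite \cite{CM} and \cite{Kostant1959}, as the excerpt already indicates.

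For birationality, recall that $\mathrm{Bind}^G_{L_\OO}\{0\}$ is by definition the $G$-cover $\widetilde{\mu}^{-1}(\OO)\to\OO$ obtained from the moment map $\widetilde{\mu}\colon G\times^{P_\OO}\fn_\OO\to\overline{\OO}$. The degree of this cover equals the cardinality of a generic fiber, which for $e$ in the open $P_\OO$-orbit of $\fn_\OO$ is
\[
\widetilde{\mu}^{-1}(e)\;=\;\{\,gP_\OO:g^{-1}\cdot e\in\fn_\OO\,\}.
\]
If $g^{-1}\cdot e\in\fn_\OO$, then $g^{-1}\cdot e\in\OO\cap\fn_\OO=P_\OO\cdot e$ by the first paragraph, so $g^{-1}\cdot e=p\cdot e$ for some $p\in P_\OO$, and consequently $gp\in G^e$. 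The decisive input is the $\mathfrak{sl}(2)$-theoretic fact that $G^e\subseteq P_\OO$: any $\xi\in\fg^e$ is a highest-weight vector for the adjoint $\mathfrak{sl}(2)$-action determined by $(e,h,f)$, and highest-weight vectors in a finite-dimensional $\mathfrak{sl}(2)$-module sit in non-negative $\ad(h)$-weight spaces, so $\fg^e\subseteq\bigoplus_{i\ge 0}\fg_i=\fp_\OO$; exponentiating and passing to the identity component together with the fact that components of $G^e$ lift through $P_\OO$ (again by highest-weight considerations) gives $G^e\subseteq P_\OO$. Hence $g\in G^e\cdot p^{-1}\subseteq P_\OO$, the fiber $\widetilde{\mu}^{-1}(e)$ is the single point $\{P_\OO\}$, and $\widetilde{\mu}$ is birational.

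The only genuine obstacle is locating the Dynkin--Kostant statement that $\OO\cap\fn_\OO$ is a single dense $P_\OO$-orbit in the even case; this is already acknowledged in the excerpt and is a routine citation. Everything else is a short $\mathfrak{sl}(2)$-module calculation, so I expect the final write-up to occupy only a few lines.
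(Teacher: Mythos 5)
Your overall strategy is the standard one, and it is essentially the argument behind the paper's citation: the paper gives no proof of this lemma, referring instead to \cite{Kostant1959} and \cite[Thm 3.3.1]{CM}, and those references contain precisely the two non-trivial inputs your write-up relies on. The reduction of birationality to showing that the fiber $\widetilde{\mu}^{-1}(e)=\{gP_{\OO} : \mathrm{Ad}(g^{-1})e\in\fn_{\OO}\}$ is a single point is fine, since $\widetilde{\mu}^{-1}(\OO)\to\OO$ is a finite cover and its fiber cardinality is constant on $\OO$; and the formal deduction ``$\OO\cap\fn_{\OO}=P_{\OO}\cdot e$ together with $G^e\subseteq P_{\OO}$ implies the fiber is $\{P_{\OO}\}$'' is correct.

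There are, however, two places where your justification is weaker than what is actually needed. First, the equality $\OO\cap\fn_{\OO}=P_{\OO}\cdot e$ does \emph{not} follow ``from the first paragraph'': density of $P_{\OO}\cdot e$ in $\fn_{\OO}$ is just the Richardson property, and by itself it only gives the inclusion $P_{\OO}\cdot e\subseteq\OO\cap\fn_{\OO}$ as a dense open subset; equality requires an additional argument. One can get it either from \cite[Thm 3.3.1]{CM}, which states $G\cdot e\cap\bigoplus_{i\geq 2}\fg_i=P_{\OO}\cdot e$ for the Jacobson--Morozov parabolic of any nilpotent element (in the even case $\bigoplus_{i\geq 2}\fg_i=\fn_{\OO}$), or from the general Lusztig--Spaltenstein fact that $\widetilde{\mu}^{-1}(\OO)$ is an irreducible, hence $G$-homogeneous, cover of $\OO$, so that $G^e$ acts transitively on the fiber and $\OO\cap\fn_{\OO}$ is a single $P_{\OO}$-orbit. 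Second, the inclusion $G^e\subseteq P_{\OO}$ at the level of groups is not quite delivered by your exponentiation remark, which only controls the identity component; the standard route is the Levi decomposition $G^e=(G^e\cap L_{\OO})\ltimes(G^e\cap U_{\OO})$ of the centralizer (with $U_{\OO}$ the unipotent radical of $P_{\OO}$), which again is part of \cite[Thm 3.3.1]{CM}. This second point is where evenness and the Jacobson--Morozov structure genuinely enter: for a non-JM polarization the Richardson property and even the single-orbit property can hold while $G^e\not\subseteq P$ and the induction fails to be birational — e.g.\ for the even orbit with partition $(2,2)$ in $\mathfrak{sp}(4)$, the parabolic with Levi $\mathrm{GL}(1)\times\mathrm{Sp}(2)$ has a dense orbit in its nilradical, yet the corresponding moment map has degree $2$. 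So with the citation of \cite[Thm 3.3.1]{CM} (or the two short arguments indicated above) inserted at those two steps, your proof is complete; without them, the key content of the lemma is being assumed rather than proved.
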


We will also use the following.

\begin{lemma}\label{lem:Prigunique}
Suppose
\begin{itemize}
    \item[(i)] The set $\mathcal{P}_{\mathrm{rig}}(\OO)$ contains a unique element $(L_0,\OO_{L_0})$.
    \item[(ii)] $\dim  \fX(\fl_0 \cap [\fg,\fg]) \leq \dim \fP^X$.
\end{itemize}
Then $\OO=\mathrm{Bind}^G_{L_0} \OO_{L_0}$. 
\end{lemma}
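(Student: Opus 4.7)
The plan is to identify the birationally minimal induction datum $(M,\widetilde{\OO}_M)$ for $\OO$ with $(L_0,\OO_{L_0})$, using the uniqueness hypothesis to locate $L_0$ inside $M$ and the dimension hypothesis to force equality. First I would apply Proposition \ref{prop:propsofbind}(iii) to obtain a birationally minimal induction datum $(M,\widetilde{\OO}_M)$ for $\OO$, so that $\OO = \mathrm{Bind}^G_M \widetilde{\OO}_M$. Let $\OO_M$ denote the underlying nilpotent $M$-orbit of $\widetilde{\OO}_M$; then $\Ind^G_M \OO_M = \OO$. Applying Proposition \ref{prop:nilpquant}(ii) to $\widetilde{\OO}=\OO$ (so $\widetilde{X}=X$) yields the key numerical identity
$$\dim \fX(\fm \cap [\fg,\fg]) = \dim \fP^X.$$

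Next I would locate $L_0$ inside $M$. By Proposition \ref{prop:propsofInd}(iii), the $M$-orbit $\OO_M$ admits a decomposition $\OO_M = \Ind^M_{L'} \OO_{L'}$ for some Levi $L' \subseteq M$ and rigid $L'$-orbit $\OO_{L'}$. Transitivity of Lusztig--Spaltenstein induction (Proposition \ref{prop:propsofInd}(ii)) then gives $\Ind^G_{L'} \OO_{L'} = \OO$, so that $(L',\OO_{L'}) \in \mathcal{P}_{\mathrm{rig}}(\OO)$. By the uniqueness hypothesis (i), $(L',\OO_{L'})$ is $G$-conjugate to $(L_0,\OO_{L_0})$, and after replacing $L_0$ by a suitable $G$-conjugate, I may assume $L_0 \subseteq M$ and $\OO_{L_0} = \OO_{L'}$.

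For the dimension forcing step, since $L_0 \subseteq M$ are Levi subgroups of $G$, one has $\fz(\fm) \subseteq \fz(\fl_0)$, and since $\dim \fX(\fl \cap [\fg,\fg]) = \dim \fz(\fl) - \dim \fz(\fg)$ for any Levi $\fl$, I obtain $\dim \fX(\fm \cap [\fg,\fg]) \leq \dim \fX(\fl_0 \cap [\fg,\fg])$. Combining this with hypothesis (ii) and the identity above gives
$$\dim \fX(\fm \cap [\fg,\fg]) \leq \dim \fX(\fl_0 \cap [\fg,\fg]) \leq \dim \fP^X = \dim \fX(\fm \cap [\fg,\fg]),$$
so all three quantities coincide. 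Thus $\fz(\fl_0) = \fz(\fm)$, and since any Levi subgroup of $G$ is the centralizer of the connected center of its Lie algebra, I conclude $L_0 = M$.

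Finally, with $M = L_0$ and $\OO_M = \OO_{L_0}$, the cover $\widetilde{\OO}_M$ is a birationally rigid $L_0$-equivariant cover of $\OO_{L_0}$ satisfying $\mathrm{Bind}^G_{L_0} \widetilde{\OO}_M = \OO$. The birational induction factors as $\mathrm{Bind}^G_{L_0} \widetilde{\OO}_M \to \mathrm{Bind}^G_{L_0} \OO_{L_0} \to \OO$, both arrows being finite $G$-equivariant coverings; since the composition has degree $1$, each factor does, and hence $\mathrm{Bind}^G_{L_0} \OO_{L_0} = \OO$. The main obstacle I anticipate is step two, where one must carefully leverage uniqueness in $\mathcal{P}_{\mathrm{rig}}(\OO)$ to arrange $L_0 \subseteq M$ (as opposed to merely $L_0$ and $M$ sharing some structure), since only after this geometric inclusion is established does the dimension inequality have a chance to collapse.
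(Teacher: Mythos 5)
Your proposal is correct and follows essentially the same route as the paper: both proofs take a birationally minimal induction datum, factor its orbit through a rigid pair, use hypothesis (i) to identify that pair with $(L_0,\OO_{L_0})$, and then play $\dim \fX(\fl\cap[\fg,\fg])\simeq\dim\fP^X$ (Proposition \ref{prop:nilpquant}(ii)) against hypothesis (ii) — the paper phrases this as a contradiction via a strict inequality, while you run it directly by collapsing the chain of inequalities and concluding $M=L_0$ from equality of centers. The only genuine addition on your side is the explicit final step showing the cover in the minimal datum is trivial (equivalently via Proposition \ref{prop:propsofbind}(iv)), which the paper leaves implicit by writing the datum as an orbit $(L,\OO_L)$ from the start.
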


\begin{proof}
Choose a birationally minimal induction datum $(L, \OO_L)$ for $\OO$. Suppose for contradiction that $(L,\OO_L)$ is not $G$-conjugate to $(L_0,\OO_{L_0})$. In particular, by condition (i), $\OO_L$ is not rigid. So there is a proper Levi subgroup $M \subset L$ and a rigid $M$-orbit $\OO_M$ such that $\OO_L = \Ind^L_M \OO_M$. By the transitivity of induction, $(M,\OO_M) \in \mathcal{P}_{\mathrm{rig}}(\OO)$. Hence, $(M,\OO_M)$ is $G$-conjugate to $(L_0,\OO_{L_0})$ and $\dim \fX(\fl_0 \cap [\fg,\fg]) > \dim \fX(\fl \cap [\fg,\fg])$. On the other hand, by (ii) of Proposition \ref{prop:nilpquant}, there is a linear isomorphism $\fX(\fl \cap [\fg,\fg]) \simeq \fP^X$. So
$$\dim \fX(\fl_0 \cap [\fg,\fg]) > \dim \fX(\fl \cap [\fg,\fg]) = \dim \fP^X.$$
This contradicts condition (ii).
\end{proof}

\subsubsection{Computation of $(M_k,\OO_{M_k})$}\label{subsubsec:computeMOM}

Fix a birationally minimal induction datum $(L,\OO_L)$ for $\OO$, and choose a codimension 2 leaf $\fL_k \subset X$. In many cases we consider, $\fL_k = \fL_1$ is the \emph{only} codimension 2 leaf. In such cases, $(L,\OO_L)$ trivially satisfies conditions (i)-(iii) of Proposition \ref{prop:adaptedresolutiondatum} and therefore 
$$(M_1, \OO_{M_1}) = (L,\OO_L)$$
In the remaining cases, we use the following result, which follows immediately from \cite[Lem 7.5.10]{LMBM} and Lemma \ref{lem:weaklynormal}.

\begin{lemma}\label{lem:findMk}
Suppose $M \subset G$ is a Levi subgroup containing $L$ and let $\OO_M=\Ind^M_L\OO_L$. Then $(M,\OO_M) = (M_k,\OO_{M_k})$ if the following conditions are satisfied:
\begin{itemize}
    \item[(i)] The semisimple corank of $M$ equals the dimension of $\fP_k$.
    \item[(ii)] For every $j \neq k$, there is a nilpotent $M$-orbit $\OO_{M,j} \subset \overline{\OO}_M$ of codimension 2 such that
    $$\OO_j = \mathrm{Ind}^G_M \OO_{M,j}$$
    \item[(iii)] For every $j \neq k$, such that the singularity of the orbit $\OO_j \subset \overline{\OO}$ is not of type $m$, the singularity of $\OO_{M,j} \subset \overline{\OO}_{M,j}$ is not of type $m$, and moreover the singularities of $\fL_j\subset Spec(\CC[\OO])$ and $\fL_{M,j}\subset Spec(\CC[\OO_M])$ are equivalent. 
\end{itemize}
\end{lemma}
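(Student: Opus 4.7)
The plan is to verify directly the three characterizing conditions of the $\fL_k$-adapted resolution datum listed in Proposition \ref{prop:adaptedresolutiondatum}, and then conclude by its uniqueness. The containment $L \subset M$ is part of the hypotheses. For the birational-induction equality $\widetilde{\OO} = \Bind^G_M \OO_M$, I would apply the transitivity statement Proposition \ref{prop:propsofbind}(ii) to the chain $L \subset M \subset G$ together with the assumed birational minimality $\OO = \Bind^G_L \OO_L$. The one nontrivial point is that $\Bind^M_L \OO_L$ coincides with the orbit $\Ind^M_L \OO_L = \OO_M$ rather than with a proper cover; this is exactly what \cite[Lem 7.5.10]{LMBM} provides, using that the global birational induction to $G$ lands on an orbit and hence no covering degree can be picked up along the way.

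The heart of the argument is showing that the partial resolution $\rho_M$ associated to $(M, \OO_M)$ preserves $\Sigma_j$ for every $j \neq k$, and consequently (by a dimension count) resolves $\Sigma_k$. Fix $j \neq k$. If the singularity $S_{\OO,\OO_j}$ is of type $m$, preservation is automatic, since no codimension~2 leaf of $X$ lies over $\OO_j$ in the relevant sense. Otherwise, Lemma \ref{lem:weaklynormal} identifies $\fL_j \subset X$ as the unique codimension~2 leaf over $\OO_j$, and, on the $M$-side, identifies a unique codimension~2 leaf $\fL_{M,j} \subset \Spec(\CC[\OO_M])$ over $\OO_{M,j}$ (which exists by condition~(ii)). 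The equivalence of the singularities at $\fL_j$ and $\fL_{M,j}$ asserted by condition~(iii) means, after passing to formal neighborhoods of the respective generic points, that $\rho_M$ is an isomorphism along $\fL_j$, i.e., $\rho_M$ preserves $\Sigma_j$.

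To finish, I would invoke condition~(i) as a dimension count. By Proposition \ref{prop:nilpquant}(ii), the Namikawa contribution from the partial resolution $\rho_M$ has dimension equal to the semisimple corank of $M$, which by hypothesis equals $\dim \fP_k^{\widetilde{X}}$. Combined with the fact, established in the previous paragraph, that $\rho_M$ preserves $\Sigma_j$ for all $j \neq k$, and with the orthogonal decomposition $\fP^{\widetilde{X}} \simeq \bigoplus_i \fP_i^{\widetilde{X}}$ of Proposition \ref{prop:partialdecomp}, this forces the resolving contribution of $\rho_M$ to land entirely in the $\fP_k^{\widetilde{X}}$-summand, i.e., $\rho_M$ resolves precisely $\Sigma_k$. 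All three conditions of Proposition \ref{prop:adaptedresolutiondatum} being verified, its uniqueness clause yields $(M, \OO_M) = (M_k, \OO_{M_k})$. The main obstacle is reconciling the difference between induction and birational induction in the chain $L \subset M \subset G$; this is where \cite[Lem 7.5.10]{LMBM} does the real work, ruling out the possibility that $\Bind^M_L \OO_L$ is a nontrivial cover of $\OO_M$ which could otherwise wreck both the identification $\widetilde{\OO} = \Bind^G_M \OO_M$ and the Namikawa dimension count.
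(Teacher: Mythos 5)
The paper itself offers no proof of this lemma beyond the remark that it ``follows immediately from \cite[Lem 7.5.10]{LMBM} and Lemma \ref{lem:weaklynormal},'' so your plan --- verify conditions (i)--(iii) of Proposition \ref{prop:adaptedresolutiondatum} for $(M,\OO_M)$ and invoke its uniqueness --- is certainly the intended shape of the argument. Your reduction $\mathrm{Bind}^M_L\OO_L=\Ind^M_L\OO_L$ is also fine, but note it needs no appeal to \cite[Lem 7.5.10]{LMBM}: since $\OO=\mathrm{Bind}^G_L\OO_L$ is an orbit, Proposition \ref{prop:propsofbind}(ii),(iv) forces the degree of $\mathrm{Bind}^M_L\OO_L\to\OO_M$ to divide $1$. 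The genuine content of the lemma lies in the two steps you treat as immediate, and there the write-up has gaps.

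First, condition (iii) compares the slice singularities of $X$ along $\fL_j$ and of $X_M$ along $\fL_{M,j}$; by itself it says nothing about the morphism $\rho_M$. To conclude that $\rho_M$ preserves $\Sigma_j$ you need the transport statement that the slice singularity of $G\times^Q(\Spec(\CC[\OO_M])\times\fq^{\perp})$ along its codimension $2$ leaf over $\OO_j$ is equivalent to that of $X_M$ along $\fL_{M,j}$ (the \cite[Lem 4.16]{Mitya2020}/\cite[Lem 7.5.10]{LMBM}-type input), together with an argument that a proper birational map whose source and target have equivalent slice germs is an isomorphism near the generic point of $\fL_j$; you should also rule out that $\rho_M$ is generically finite of degree $>1$ over $\OO_j$, since condition (ii) asserts only ordinary, not birational, induction of $\OO_j$ from $\OO_{M,j}$. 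Second, Proposition \ref{prop:nilpquant}(ii) is stated for birationally minimal induction data, and $(M,\OO_M)$ is generally not one, so it does not yield ``the Namikawa contribution of $\rho_M$ has dimension equal to the corank of $M$'' as you assert. The count that works applies Proposition \ref{prop:nilpquant}(ii) twice --- to $X$ with datum $(L,\OO_L)$ and to $X_M$ with datum $(L,\OO_L)$, legitimate because $\mathrm{Bind}^M_L\OO_L=\OO_M$ with $\OO_L$ birationally rigid --- giving $\dim\fP^X-\dim\fP^{X_M}=\operatorname{corank}(M)$, and then uses Proposition \ref{prop:partialdecomp} and Lemma \ref{lem:computeH2} for $X_M$, together with condition (iii), to force $H^2(\OO_M,\CC)=0$ and that $X_M$ has no codimension $2$ leaves other than the $\fL_{M,j}$; even then, concluding that $\Sigma_k$ is fully resolved requires knowing that every codimension $2$ leaf of the partial resolution maps onto (the closure of) a codimension $2$ leaf of $X$. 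In short, your skeleton matches the paper's intent, but you have located the ``real work'' in the wrong place: the covering-degree point you outsource to \cite[Lem 7.5.10]{LMBM} is elementary, while the singularity-transport and leaf-accounting steps you pass over are precisely what that lemma and Lemma \ref{lem:weaklynormal} are being cited for.
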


\begin{rmk}
We note that condition (iii) of Lemma \ref{lem:findMk} is implied by the following condition (which is often easier to check):
    \begin{itemize}
        \item[(iii')] For every $j \neq k$ the singularities of $\OO_j\subset \overline{\OO}$ and $\OO_{M,j}\subset \overline{\OO}_M$ are equivalent. 
    \end{itemize}
\end{rmk}

\subsubsection{Computation of $\eta_k$}\label{subsubsec:etak}

As in the previous subsection, fix a birationally minimal induction datum $(L,\OO_L)$ for $\OO$, and choose a codimension 2 leaf $\fL_k \subset X$. Assume
\begin{itemize}
    \item $\pi_1(\fL_k)$ acts trivially on $H^2(\mathfrak{S}_k,\CC)$.
\end{itemize}
Under this condition, $\fP_{k}$ can be identified with the vector space $\fh_{k}^*$, i.e. the dual Cartan subalgebra corresponding to the Kleinian singularity $\Sigma_k$. In particular, $\fP_{k}$ admits a natural basis consisting of fundamental weights, denoted $\{\omega_i(k) \mid 1 \leq i \leq n(k)\}$. On the other hand, $\fX(\fm_k)$ admits a basis consisting of dominant generators for the free abelian group $\fX(M_k)$, denoted $\{\tau_i(k) \mid 1 \leq i \leq n(k)\}$. In cases when we have $\fP=\fP_1$ we omit the index $k$ and write simply $\omega_i$ and $\tau_i$. We wish to compute $\eta_k$ in terms of these bases. In most cases, we will use one of the following two results, established in \cite{LMBM}.

\begin{prop}\label{prop:identification1}
Suppose
\begin{itemize} 
    \item[(a1)] $\fL_k=\fL_1$ is the unique codimension 2 leaf in $\widetilde{X}$. 
    \item[(a2)] $\Pic(\OO) \simeq \fX(\Gamma_1)$.
    \item[(a3)] $\Pic(\OO_{M_1}) = 0$.
    \item[(a4)] Up to the action of $W^{\widetilde{X}}$, there is a unique parabolic subgroup $P \subset G$ with Levi factor $L$.
\end{itemize}
Then $M_1=L$, $\eta_1=\eta$, and $\eta_1\{\tau_i\}  = \{\omega_i\}$.
\end{prop}

\begin{prop}\label{prop:identification2}
Suppose
\begin{itemize}
    \item[(b1)] $\Sigma_k$ is of type $A_1$. 
\end{itemize}
Then $\Pic(\mathbb{O}_{M_k})$ is finite, 
\begin{equation}\label{eq:defofck}c_k := 2|\Pic(\mathbb{O}_{M_k})||\Pic(\mathbb{O})|^{-1}\end{equation}
is an integer, and $\eta_k(\tau_1(k)) = c_k\omega_1(k)$.
\end{prop}

In a few cases, neither proposition is straightforwardly applicable, and we must provide a separate argument.

\section{Unipotent ideals and their infinitesimal characters}\label{sec:inflchars}

Let $G$ be a connected reductive algebraic group and let $\widetilde{\OO}$ be a $G$-equivariant nilpotent cover. Recall the canonical quantization $(\cA_0,\Phi_0)$ of $\CC[\widetilde{\OO}]$, cf. Definition \ref{def:canonical}. 

\begin{definition}
The \emph{unipotent ideal} attached to $\widetilde{\OO}$ is the two-sided ideal
$$I_0(\widetilde{\OO}) := \ker{(\Phi_0: U(\fg) \to \cA_0)} \subset U(\fg)$$
\end{definition}

We show in \cite{LMBM} that $I_0(\widetilde{\OO})$ is a completely prime primitive ideal with associated variety $\overline{\OO}$. We also give a classification of unipotent ideals, which we will now recall. Write $\widetilde{\OO}^1 \succeq \widetilde{\OO}^2$ if there is a $G$-equivariant morphism $\widetilde{\OO}^1 \to \widetilde{\OO}^2$ such that the induced morphism $\Spec(\CC[\widetilde{\OO}^1]) \to \Spec(\CC[\widetilde{\OO}^2])$ is \'{e}tale over the open subset in $\Spec(\CC[\widetilde{\OO}^2])$ obtained by removing all symplectic leaves of codimension $\geq 4$. Note that $\succeq$ defines a partial order on the set of ($G$-equivariant) nilpotent covers. Consider the equivalence relation (on the same set) which is generated by $\succeq$ and write $[\widetilde{\OO}]$ for the equivalence class of $\widetilde{\OO}$.

\begin{theorem}[Thm 6.5.5, \cite{LMBM}]\label{thm:classificationideals}
Suppose $\widetilde{\OO}^1$, $\widetilde{\OO}^2$ are $G$-equivariant nilpotent covers. Then $I_0(\widetilde{\OO}^1) = I_0(\widetilde{\OO}^2)$ if and only if $[\widetilde{\OO}^1]=[\widetilde{\OO}^2]$. 
\end{theorem}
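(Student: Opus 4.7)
The plan is to prove both implications by reducing to Galois covers and exploiting the description of $\Pi$-invariants in canonical quantizations.

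For the forward direction $[\widetilde{\OO}^1] = [\widetilde{\OO}^2] \Rightarrow I_0(\widetilde{\OO}^1) = I_0(\widetilde{\OO}^2)$, it suffices by transitivity of generation to handle a single relation $\widetilde{\OO}^1 \succeq \widetilde{\OO}^2$, i.e., a finite cover $p:\Spec(\CC[\widetilde{\OO}^1]) \to \Spec(\CC[\widetilde{\OO}^2])$ that is étale off the union of codimension $\geq 4$ leaves. Replacing $\widetilde{\OO}^1$ by its Galois closure over $\widetilde{\OO}^2$ (which still has the same étale property), reduce to $p$ Galois with group $\Pi$. Then $\Pi$ acts on $\cA_0^{\widetilde{X}^1}$ by $G$-equivariant filtered automorphisms, and since $\CC[\widetilde{X}^1]^{\Pi} = \CC[\widetilde{X}^2]$, the invariant subalgebra $(\cA_0^{\widetilde{X}^1})^{\Pi}$ is a filtered quantization of $\CC[\widetilde{X}^2]$. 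By Theorem \ref{thm:quantssofsymplectic} it is $\cA_{\lambda}^{\widetilde{X}^2}$ for some $\lambda \in \fP^{\widetilde{X}^2}/W^{\widetilde{X}^2}$. The key geometric input is $\lambda=0$: this is the counterpart of Lemma \ref{lem:barycenter}, but because $p$ is étale (not totally ramified) along each codimension 2 leaf of $\widetilde{X}^2$, the barycenter contribution $\epsilon_k$ is absent. Granting this, the quantum co-moment maps $\Phi_0^1$ and $\Phi_0^2$ agree after identifying $(\cA_0^{\widetilde{X}^1})^{\Pi} \simeq \cA_0^{\widetilde{X}^2}$ by the uniqueness statement of Proposition \ref{prop:Hamiltonian}, so $I_0(\widetilde{\OO}^1) = \ker \Phi_0^1 = \ker \Phi_0^2 = I_0(\widetilde{\OO}^2)$.

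For the converse, equality of ideals forces equality of associated varieties, so both $\widetilde{\OO}^i$ cover a common orbit $\OO$. The plan is to parametrize covers of $\OO$ by subgroups $H \subset \Gamma := \pi_1^G(\OO)$ and to characterize when two subgroups $H_1,H_2$ produce the same unipotent ideal. Using the canonical quantization $\cA_0^{\widehat{X}}$ of the universal cover and its $\Gamma$-action, present the canonical quantization of $\widetilde{\OO}_H$ as $(\cA_0^{\widehat{X}})^H$ twisted by the appropriate barycenter shift from Lemma \ref{lem:barycenter}. Intrinsically recover $I_0(\widetilde{\OO}_H)$ as the kernel of the resulting map $U(\fg) \to (\cA_0^{\widehat{X}})^H$. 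Then $I_0(\widetilde{\OO}_{H_1}) = I_0(\widetilde{\OO}_{H_2})$ precisely when $H_1$ and $H_2$ have the same image after killing the elementwise stabilizers of all the codimension 2 slices, which is exactly the relation $[\widetilde{\OO}_{H_1}] = [\widetilde{\OO}_{H_2}]$ traced through the definition of $\succeq$.

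The main obstacle will be establishing the vanishing $\lambda=0$ in the forward direction. To make it rigorous, I would compute $\lambda$ componentwise under the decomposition $\fP^{\widetilde{X}^2} = \bigoplus_k \fP^{\widetilde{X}^2}_k$ of Proposition \ref{prop:partialdecomp}. For each codimension 2 leaf $\fL_k \subset \widetilde{X}^2$ with Kleinian singularity $\Sigma_k$, the étaleness of $p$ over $\fL_k$ forces $p^{-1}(\Sigma_k)$ to be a disjoint union of isomorphic copies of $\Sigma_k$, on each of which the restriction of the canonical quantization has period $0$; averaging over $\Pi$ gives $\lambda_k=0$. For the component $\lambda_0 \in H^2(\widetilde{X}^{2,\mathrm{reg}},\CC)$, the étaleness of $p$ over the regular locus together with naturality of the period map under finite étale maps yields the analogous vanishing, completing the forward direction.
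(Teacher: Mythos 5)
Two preliminary remarks. First, this paper does not actually prove the statement: it is imported verbatim from \cite{LMBM} (Thm 6.5.5), so your proposal can only be measured against the machinery the paper recalls (Sections 2--3), not against an in-paper argument. Second, your forward direction is in the right spirit: reduce to a single relation $\widetilde{\OO}^1 \succeq \widetilde{\OO}^2$, pass to a Galois cover with group $\Pi$, identify $(\cA_0^{\widetilde{X}^1})^{\Pi}$ as a filtered quantization of $\CC[\widetilde{X}^2]$, and show its parameter vanishes because the covering is \'{e}tale over the codimension $2$ leaves (so no barycenter contribution as in Lemma \ref{lem:barycenter}), then match quantum co-moment maps by the uniqueness in Proposition \ref{prop:Hamiltonian}. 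This is coherent, although the step you defer (computing the parameter of the invariant algebra, including the $\fP_0$-component and the compatibility with a choice of $\QQ$-factorial terminalization) is exactly the technical heart, and after replacing $\widetilde{\OO}^1$ by its Galois closure $\widetilde{\OO}'$ you must also invoke the same argument for $\widetilde{\OO}' \to \widetilde{\OO}^1$ to conclude $I_0(\widetilde{\OO}')=I_0(\widetilde{\OO}^1)$.

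The converse, however, contains a genuine error, and it is the substantive half of the theorem. You propose to recover $I_0(\widetilde{\OO}_H)$, for $H \subset \Gamma = \pi_1^G(\OO)$, as the kernel of $U(\fg) \to (\cA_0^{\widehat{X}})^{H}$. But the image of the quantum co-moment map $\Phi_0 \colon U(\fg) \to \cA_0^{\widehat{X}}$ already lies in the $\Gamma$-invariants: the deck transformations are $G$-equivariant filtered automorphisms preserving the Hamiltonian structure, so they fix $\Phi_0$ by the uniqueness statement in Proposition \ref{prop:Hamiltonian}. Consequently $\ker\bigl(U(\fg) \to (\cA_0^{\widehat{X}})^{H}\bigr) = I_0(\widehat{\OO})$ for \emph{every} subgroup $H$, and your recipe would force all covers of $\OO$ to have the same unipotent ideal --- contradicting, for instance, Proposition \ref{prop:deltashift}, where the shift $\delta$ is generally nonzero, so that $\gamma_0(\widehat{\OO}) \neq \gamma_0(\OO)$. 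The underlying confusion is that $(\cA_0^{\widehat{X}})^{H}$ is not the canonical quantization of $\CC[\widetilde{X}_H]$ up to a harmless twist: it is the quantization with the (generally nonzero) barycenter-type parameter $\epsilon_H$, and there is no twisting available because the $G$-equivariant structure and co-moment map are rigid. As a result, nothing in the proposal shows that distinct equivalence classes yield distinct ideals; the description of the equivalence relation in terms of images of $H_1,H_2$ after killing the local monodromy of codimension $2$ slices is only the combinatorial shadow of the claim. One needs a mechanism that recovers from the ideal $I_0(\widetilde{\OO})$ enough of the quantization parameter (in \cite{LMBM} this is done via restriction to the W-algebra attached to $\OO$) to separate the classes; without such an input the ``only if'' direction remains unproved.
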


Let $\gamma_0(\widetilde{\OO}) \in \fh^*/W$ denote the infinitesimal character of $I_0(\widetilde{\OO})$. If $(L,\widetilde{\OO}_L)$ is a birationally minimal indution datum, then we have the following result.

\begin{prop}[Prop 8.1.1, \cite{LMBM}]\label{prop:nodeltashift}
There is an equality in $\fh^*/W$
$$\gamma_0(\widetilde{\OO}) = \gamma_0(\widetilde{\OO}_L).$$
\end{prop}

Thus, the computation of $\gamma_0(\widetilde{\OO})$ can be reduced to the case of birationally rigid covers. In \cite{LMBM}, we computed $\gamma_0(\widetilde{\OO})$ for such covers for linear classical groups. In this section, we will compute $\gamma_0(\widetilde{\OO})$ for all birationally rigid covers for spin and exceptional groups, effectively completing the computation of unipotent infinitesimal characters for all nilpotent covers.

\subsection{Computing $\gamma_0(\widetilde{\OO})$}\label{subsec:algorithm}

Let $\widetilde{\OO}$ be a birationally rigid nilpotent cover. In this subsection, we will recall a general algorithm, developed in \cite{LMBM}, for computing $\gamma_0(\widetilde{\OO})$. This algorithm has three separate components.

\subsubsection{Reduction to the case of birationally rigid orbits}

Choose a birationally minimal induction datum $(L,\OO_L)$ for $\OO$. Since $\widetilde{\OO}$ is birationally rigid, $\Spec(\CC[\widetilde{\OO}])$ has no codimension 2 leaves, see Corollary \ref{cor:criterionbirigidcover}. Let $X= \Spec(\CC[\OO])$ and let $\epsilon \in \fP^X$ denote the barycenter parameter (cf. (\ref{eq:defofepsilon})). Recall, Proposition \ref{prop:nilpquant}, that there is a natural identification 
$$\eta: \fX(\fl \cap [\fg,\fg]) \xrightarrow{\sim} \fP^X$$
We define $\delta := \eta^{-1}(\epsilon)$. This element can be computed using the techniques of Section \ref{subsubsec:etak}. The following is a special case of \cite[Prop 8.1.3]{LMBM}.

\begin{prop}\label{prop:deltashift}
There is an equality in $\fh^*/W$
$$\gamma_0(\widetilde{\OO}) = \gamma_0(\OO_L) + \delta$$
\end{prop}
This proposition reduces the computation of $\gamma_0(\widetilde{\OO})$ to the case of birationally rigid orbits.

\subsubsection{Reduction to the case of rigid orbits}

Let $\OO$ be a birationally rigid orbit. Choose a Levi subgroup $L \subset G$ and a rigid orbit $\OO_L$ such that $\OO=\Ind^G_L \OO_L$. The following is a special case of \cite[Prop 8.1.3]{LMBM}.

\begin{prop}\label{prop:inflcharbirigidorbit}
There is an equality in $\fh^*/W$
$$\gamma_0(\OO) = \gamma_0(\OO_L).$$
\end{prop}

\subsubsection{Case of rigid orbits}

If $\OO$ is a rigid orbit, $\gamma_0(\OO)$ was computed in all cases in \cite{LMBM}. The argument proceeds as follows. With six exceptions (in types $G_2$,$F_4$, $E_7$, and $E_8$), $I_0(\OO)$ is the \emph{unique} primitive ideal with the following two properties
\begin{enumerate}
    \item The associated variety of $I_0(\OO)$ is $\overline{\OO}$.
    \item $I_0(\OO)$ has multiplicity $1$ along $\overline{\OO}$.
\end{enumerate}
Such ideals were classified (i.e. their infinitesimal characters were computed) by McGovern (\cite{McGovern1994}) in classical types and Premet (\cite{Premet2013}) in exceptional types. For the six `bad' orbits in exceptional types, there are several ideals satisfying (1) and (2) above--some, though not all, were computed by Premet in \cite{Premet2013}. In \cite[Appendix C]{LMBM} we use a technical geometric argument to resolve this ambiguity, determing the unipotent ideal $I_0(\widetilde{\OO})$ in all cases.

\subsection{Spin groups}\label{subsec:inflcharspin}

Let $G=\mathrm{SO}(n)$, $\widetilde{G}=\mathrm{Spin}(n)$, and let $\widetilde{\OO}$ be a birationally rigid $\widetilde{G}$-equivariant nilpotent cover. In this section, we will give a combinatorial formula for the infinitesimal character $\gamma_0(\widetilde{\OO})$.

We begin by recalling the classification of birationally semi-rigid nilpotent orbits for 
$\widetilde{G}$.

\begin{prop}[Props 7.6.7, 7.6.11, \cite{LMBM}]
\label{prop:Spinsemirigid}
Let $\OO$ be a nilpotent $\widetilde{G}$-orbit corresponding to a partition $p$ of $n$.
\begin{itemize}
    \item $\OO$ admits a birationally rigid $G$-equivariant cover if and only if the following conditions are satisfied:
\begin{itemize}
    \item[(i)] if $p_i$ is odd, then $p_i \leq p_{i+1}+2$.
    \item[(ii)] If $p_i$ is even, then $p_i \leq p_{i+1}+1$.
\end{itemize}
\item $\OO$ admits a birationally rigid $\widetilde{G}$-equivariant cover, which is \emph{not} $G$-equivariant, if and only if the following conditions are satisfied:
\begin{itemize}
    \item[(ia)] $p$ is rather odd (i.e. every odd part occurs with multiplicity 1).
    \item[(iia)] $p_i \leq p_{i+1}+1$ if $p_i$ is even, and $p_i \leq p_{i+1}+4$ if $p_i$ is odd.
   \item[(iiia)] $p_i \neq p_{i+1}+3$ for all $i$.
   \item[(iva)] $p_i=p_{i+1}=4$ for some odd $p_i$.
\end{itemize}
\end{itemize}
\end{prop}

Our main result in this subsection will require some additional notation. By a $\frac{1}{2}\ZZ$-\emph{partition} of $n \in \ZZ$ we will mean a non-increasing sequence $p=(p_1,p_2,...,p_t)$ in $\frac{1}{2}\ZZ$ such that $\sum_{i=1}^t p_i = n$ (to prevent confusion, we will call ordinary partitions `$\ZZ$-partitions' in this section to emphasize the distinction with $\frac{1}{2}\ZZ$-partitions).

\begin{definition}\label{def:combinatorics}
Let $p$ be a $\ZZ$-partition of $n$.

\begin{itemize}
\item Define a $\ZZ$-partition (with no repeated parts)
$$S_4(p) = \{i: p_i = p_{i+1} + 4\}.$$
\item If $x=(x_1,...,x_r)$ is a subpartition of $S_4(p)$, define a $\ZZ$-partition $p \# x$ by deleting the columns in $p$ numbered $p_{x_1}, p_{x_1}-1$, $p_{x_2}$, $p_{x_2}-1$, ..., $p_{x_r}$, $p_{x_r}-1$. 
\item Define $\ZZ$-partitions $x(p)$, $y(p)$, and $z(p)$ by extracting all parts of $p$ with multiplicity 1,2, and 4, respectively.
\item Define a $\ZZ$-partition $f(p)$ as follows: for every odd $i$ with $p_i \geq p_{i+1} +2$, move one box down from $p_i$ to $p_{i+1}$.
\item If all parts in $p$ are of multiplicity 2, define a $\ZZ$-partition $g(p)$ by replacing every pair $(p_i,p_i)$ with $(p_i+1,p_i-1)$.
\item If all parts in $p$ are of multiplicity 2, define a $\frac{1}{2}\ZZ$-partition $h'(p)$ by replacing every pair $(p_i,p_i)$ with $(p_i+1/2,p_i-1/2)$.
\item If all parts in $p$ are of multiplicity 4, define $\frac{1}{2}\ZZ$-partition $h(p)$ by replacing every quadruple $(p_i,p_i,p_i,p_i)$ with $(p_i+1,p_i+1/2,p_i-1/2,p_i-1)$.
\item If all parts in $p$ are of even multiplicity, define $p^{1/2}$ by halving all multiplicities.
\end{itemize}

\end{definition}

\begin{example}
If $p=(9,5,4^2,3^4,1)$, then
$$S_4(p) = (1), \quad p\#S_4(p) = (7,5,4^2,3^4,1), \quad x(p) = (9,5,1), \quad y(p) = (4^2), \quad z(p)= (3^4).$$
If $p=(5^2,2^2,1^2)$, then $g(p)=(6,4,3,2,1)$. If $p=(5^4,1^4)$, then $h(p) = (6,11/2,9/2,4,2,3/2,1/2)$.
\end{example}

\begin{definition}
\begin{itemize}
\item If $q$ is a $\ZZ$-partition of $n$, define a $\floor{\frac{n}{2}}$-tuple $\rho(q) \in (\frac{1}{2}\ZZ)^{\floor{\frac{n}{2}}}$ by appending the sequence
$$(\frac{q_i-1}{2}, \frac{q_i-3}{2}, ..., \frac{3-q_i}{2}, \frac{1-q_i}{2})$$
for each $i \geq 1$.
\item If $q$ is a $\frac{1}{2}\ZZ$-partition, define a $\floor{\frac{n}{2}}$-tuple $\rho^+(q) \in (\frac{1}{4}\ZZ)^{\floor{\frac{n}{2}}}$ by appending the positive elements of the sequence
$$(\frac{q_i-1}{2}, \frac{q_i-3}{2}, ..., \frac{3-q_i}{2}, \frac{1-q_i}{2})$$
for each $i \geq 1$, and then $0$'s as needed so that $|\rho^+(p)| = \floor{\frac{n}{2}}$ (if $q_i=1/2$, append nothing).
\end{itemize}
\end{definition}

\begin{example}
If $q=(3^2,2)$, then
$$\rho(q) = (1,0,-1,1,0,-1,1/2,-1/2).$$
If $q = (5/2,5/2,2,3/2,1/2)$, then
$$\rho^+(q) = (3/4,3/4,1/2,1/4,0).$$
\end{example}

\begin{prop}\label{prop:inflcharspin}
Let $\OO$ be a nilpotent $\widetilde{G}$-orbit corresponding to a partition $p$ of $n$. Assume $\OO$ admits a birationally rigid $\widetilde{G}$-equivariant nilpotent cover $\widetilde{\OO} \to \OO$. Let
$$x:=x(p^t), \quad y:=y(p^t), \quad z:=z(p^t).$$
By Proposition \ref{prop:Spinsemirigid}, $p^t=x\cup y \cup z$. In standard coordinates on $\fh^*$
$$\gamma_0(\widetilde{\OO}) = \rho^+(f(x) \cup g(y) \cup h(z))$$
If $n$ is even, this weight contains (at least one) entry equal to $0$ (i.e. the Weyl group acts on this weight by arbitrary permutations and sign changes). 
\end{prop}

\begin{proof}
Let $\widehat{\OO} \to \widetilde{\OO}$ denote the universal ($\widetilde{G}$-equivariant) cover. By Corollary \ref{cor:criterionbirigidcover}, $\Spec(\CC[\widetilde{\OO}])$ has no codimension 2 leaves. So by Theorem \ref{thm:classificationideals}, $I_0(\widehat{\OO})=I_0(\widetilde{\OO})$, and therefore $\gamma_0(\widehat{\OO}) = \gamma_0(\widetilde{\OO})$. If $\widehat{\OO}$ is $G$-equivariant, then by Proposition \ref{prop:Spinsemirigid}, $z=\emptyset$, i.e. $p^t = x \cup y$. Hence
$$\rho^+(f(x) \cup g(y) \cup h(z)) = \rho^+(f(x) \cup g(y)).$$
This coincides with $\gamma_0(\widehat{\OO})$ by \cite[Prop 8.2.8]{LMBM}. Thus, we can assume that $\widehat{\OO}$ is not $G$-equivariant. In particular, by Proposition \ref{prop:Spinsemirigid}, $p$ is rather odd. So $\pi_1^{\widetilde{G}}(\OO)$ is a central $\ZZ_2$-extension of $\pi_1^G(\OO)$, see \cite[Cor 6.1.6]{CM}, and $\widehat{\OO}$ is a 2-fold cover of the universal $G$-equivariant cover $\widetilde{\OO}$ of $\OO$. 

Form the partition $p\# S_4(p)$ of $n-2|S_4(p)|$ as in Definition \ref{def:combinatorics} and let $\widetilde{\OO}_{p\# S_4(p)}$ be the universal $\mathrm{SO}(n-2|S_4(p)|)$ cover of $\OO_{p\# S_4(p)}$. We claim first of all that the following pair is a birationally minimal induction datum for $\widetilde{\OO}$
$$L = \prod_{k \in S_4(p)} \mathrm{GL}(k) \times \mathrm{SO}(n-2|S_4(p)|), \qquad \widetilde{\mathbb{O}}_L =  \{0\} \times ... \{0\} \times \widetilde{\mathbb{O}}_{p \# S_4(p)}.$$
Indeed, this is a special case of \cite[Thm 4.17]{Mitya2020}. Let
$$x' = x((p\#S_4(p))^t), \qquad y' = y((p\#S_4(p))^t),$$
so that $(p\#S_4(p))^t = x' \cup y'$. Applying \cite[Prop 8.2.8]{LMBM} to $\widetilde{\OO}_L$ we obtain
$$\gamma_0(\widetilde{\OO}_L) = (\rho(S_4(p)), \gamma_0(\widetilde{\mathbb{O}}_{p \# S_4(p)})) = (\rho(S_4(p)), \rho^+(f(x') \cup g(y'))).$$
The shift $\delta = \eta^{-1}(\epsilon) \in \fX(\fl)$ was computed in \cite[Cor 7.7.7]{LMBM}. In standard coordinates it is the element
$$\delta = (\underbrace{1/4,...,1/4}_{|S_4(p)|},0,...,0) \in \fX(\fl)$$
So by Proposition \ref{prop:deltashift} we have
$$\gamma_0(\widehat{\OO}) = (\rho(S_4(p)), \rho^+(f(x') \cup g(y'))) + (\underbrace{1/4,...,1/4}_{|S_4(p)|},0,...,0).$$
Note that up to permutations and sign changes
$$\rho(S_4(p)) + (1/4,...,1/4) = \rho^+(h'(z^{1/2})),$$
So (up to permutations and sign changes)
$$\gamma_0(\widehat{\OO}) = \rho^+(f(x') \cup g(y') \cup h'(S_4(p) \cup S_4(p))).$$
By the construction of $S_4(p)$ and $p\# S_4(p)$, we have 
$$x' = x, \quad \text{and} \quad y' = y \cup z^{1/2}.$$
So (up to permutation)
\begin{equation}\label{eq:eq1}\gamma_0(\widehat{\OO}) = \rho^+(f(x) \cup g(y) \cup g(z^{1/2}) \cup h'(z^{1/2})).\end{equation}
By definition, $h(z) =  g(z^{1/2}) \cup h'(z^{1/2})$. So (\ref{eq:eq1}) becomes (up to permutation)
$$\gamma_0(\widehat{\OO}) = \rho^+(f(x)\cup g(y) \cup h(z)).$$
\end{proof}

\begin{example}
Let $n=6$ (so that $\widetilde{G} = \mathrm{Spin}(6) \simeq \mathrm{SL}(4)$), and let $\OO$ be the principal nilpotent orbit (so that $p=(5,1)$). By Proposition \ref{prop:Spinsemirigid}, $\OO$ admits a birationally rigid $\widetilde{G}$-equivariant cover $\widetilde{\OO}$ (which is not $G$-equivariant). In the notation of Proposition \ref{prop:inflcharspin}, we have
$$p^t = (2,1^4), \quad x=(2), \quad y=\emptyset, \quad z=(1^4).$$
Hence
$$f(x) = (1,1), \quad g(y) = \emptyset, \quad h(z) = (2,3/2,1/2).$$
So
$$\gamma_0(\widetilde{\OO}) = \rho^+(f(x) \cup g(y) \cup h(z))) = (1/2,1/4,0) = \rho/4.$$
\end{example}

\begin{example}
Let $n=15$ (so that $\widetilde{G}=\mathrm{Spin}(15)$) and let $\OO$ be the nilpotent orbit corresponding to the partition $p=(9,5,1)$. By Proposition \ref{prop:Spinsemirigid}, $\OO$ admits a birationally rigid $\widetilde{G}$-equivariant cover $\widetilde{\OO}$ (which is not $G$-equivariant). In the notation of Proposition \ref{prop:inflcharspin}, we have
$$p^t=(3,2^4,1^4), \quad x=(3), \quad y = \emptyset, \quad z = (2^4,1^4).$$
Hence
$$f(x) = (2,1), \quad g(y) =\emptyset, \quad h(z) = (3,5/2,2,3/2,3/2,1,1/2).$$
So
$$\gamma_0(\widetilde{\OO}) = \rho^+(3,5/2,2,2,3/2,3/2,1,1,1/2)= (1,3/4,1/2,1/2,1/4,1/4,0).$$
\end{example}

\subsection{Exceptional groups}\label{subsec:inflcharexceptional}

In this section, we will produce a complete list of unipotent infinitesimal characters attached to birationally rigid covers $\widetilde{\OO}$ for simple exceptional groups.

There are essentially two cases to consider:

\begin{enumerate}
    \item $\widetilde{\OO}$ is (the trivial cover of) a birationally rigid orbit.
    \item $\widetilde{\OO}$ is a nontrivial cover of a birationally induced orbit. 
\end{enumerate}
Of course, it is also possible that $\widetilde{\OO}$ is a \emph{nontrivial} cover of a birationally rigid orbit, but such covers bring nothing new into the mix. Indeed, $I_0(\widetilde{\OO}) = I_0(\OO)$ by Theorem \ref{thm:classificationideals}, so we can easily reduce to (1). 

We pause to review our standing notational conventions for Levis and weights in exceptional types. As explained in the proof of Proposition \ref{prop:listofsemirigid}, simple roots are denoted $\alpha_1,...,\alpha_n$ and are numbered in accordance with the Bourbaki conventions. We write $L(X;r_1,...,r_p)$ for the standard Levi subgroup with simple roots $\alpha_{r_1},...,\alpha_{r_p}$ and Lie type $X$ (omitting $r_1,...,r_p$ in some cases when $X$ determines the Levi). All weights will be denoted in fundamental weight coordinates. For example, $\rho$ is denoted, in every case, by the tuple $(1,1,...,1)$.

\subsubsection{Birationally rigid orbits}\label{subsec:exceptionalorbitinflchar}

In this section, we will produce a complete list of unipotent infinitesimal characters attached to birationally rigid orbits. A list of such orbits is provided in Proposition \ref{prop:listofbirigid}. For the rigid orbits, $\gamma_0(\OO)$ was computed in \cite{LMBM}. This leaves only three (non-rigid) birationally rigid orbits, namely $A_2+A_1$ and $A_4+A_1$ in $E_7$ and $A_4+A_1$ in $E_8$. For these orbits we compute $\gamma_0(\OO)$ using Proposition \ref{prop:inflcharbirigidorbit}.

The calculations below involve a number of easy `micro-computations,' which we will catalogue here for the reader's convenience (and to avoid repeating references and explanations in the calculations below):

\begin{enumerate}
    \item Given a nilpotent orbit $\OO$, determine the finite set $\mathcal{P}_{\mathrm{rig}}(\OO)$ (see (\ref{eq:Prig})). This is easily deducible from the tables in \cite[Sec 4]{deGraafElashvili}.
    \item Determine the unipotent infinitesimal character $\gamma_0(\OO)$ attached to a birationally rigid orbit in a classical Lie algebra. This was carried out in \cite[Sec 8]{LMBM}, see \cite[Prop 8.2.3]{LMBM} for explicit formulas in terms of partitions.
    \item Given a Levi subgroup $L \subset G$, express the fundamental weights for $L$ in terms of fundamental weights for $G$. This is a somewhat tedious computation involving the Cartan matrices for $L$ and $G$, which can be expedited using {\fontfamily{cmtt}\selectfont atlas}.
    \item Given an arbitrary weight $\lambda \in \fh^*$, compute the (unique) dominant $W$-conjugate $\lambda^+ \in \fh^*$. This is quite difficult to do by hand, the {\fontfamily{cmtt}\selectfont atlas} command `make\_dominant' is helpful.
\end{enumerate}

We now begin the calculations:

\begin{itemize}
    \item $A_2+A_1 \subset E_7$. By \cite[Sec 4]{deGraafElashvili}, $\OO$ is induced from the rigid orbit $\OO_L=A_1$ of the Levi $L=L(E_6)$. The infinitesimal character $\gamma_0(\OO_L)$ was computed in \cite{LMBM}. It is $\rho(\fl)-\varpi$, where $\varpi$ is the fundamental weight for $\fl$ corresponding to the central node in $E_6$. In fundamental weight coordinates for $\fg$
    $$\gamma_0(\OO_L) = (1,1,1,0,1,1,-6).$$
    Applying Proposition \ref{prop:inflcharbirigidorbit} and conjugating by $W$ we get
    $$\gamma_0(\OO) = (1,0,0,1,0,1,0).$$
    
    \item $A_4+A_1 \subset E_7$. By \cite[Sec 4]{deGraafElashvili}, $\OO$ is induced from the  orbit $\OO_L=\{0\}$ of the Levi $L=L(A_4+A_1;1,2,3,4,6)$. By Proposition \ref{prop:inflcharbirigidorbit}
    $$\gamma_0(\OO) = \gamma_0(\OO_L) = \rho(\fl) = \frac{1}{2}(2,2,2,2,-7,2,-1)$$
    Conjugating by $W$ we get
    $$\gamma_0(\OO) = \frac{1}{2}(1,0,0,1,0,1,0).$$
    
    \item $A_4+A_1 \subset E_8$. By \cite[Sec 4]{deGraafElashvili}, $\OO$ is induced from the rigid orbit $\OO_L=A_1 \times \{0\}$ of the Levi $L=L(E_6+A_1;1,2,3,4,5,6,8)$. The infinitesimal character $\gamma_0(\OO_L)$ was computed in \cite{LMBM}. It is $\rho(\fl)-\varpi$, where $\varpi$ is the fundamental weight for $\fl$ corresponding to the central node in the $E_6$ factor. In fundamental weights coordinates for $\fg$
    $$\gamma_0(\OO_L) = \frac{1}{2}(2,2,2,0,2,2,-13,2).$$
    Applying Proposition \ref{prop:inflcharbirigidorbit} and conjugating by $W$ we get
    $$\gamma_0(\OO) = \frac{1}{2}(1,0,0,1,0,1,0,2).$$
\end{itemize}

\begin{table}[H]
    \begin{tabular}{|c|c|c|} \hline
       $\OO$ &  rigid? & $\gamma_0(\OO)$ \\ \hline 
     $\{0\}$ & yes & \cellcolor{blue!20}$(1,1)$\\ \hline
     $A_1$ & yes & $\frac{1}{3}(3,1)$ \\ \hline
     $\widetilde{A}_1$ & yes & $\frac{1}{2}(1,1)$ \\ \hline
    \end{tabular}
    \caption{Unipotent infinitesimal characters attached to birationally rigid orbits: type $G_2$. Special unipotent characters are highlighted in blue.}
    \label{table:orbitG2}
\end{table}

\begin{table}[H]
    \begin{tabular}{|c|c|c|} \hline
       $\OO$ &  rigid? & $\gamma_0(\OO)$ \\ \hline 
        $\{0\}$ & yes & \cellcolor{blue!20}$(1,1,1,1)$ \\ \hline
        $A_1$ & yes & $\frac{1}{2}(1,1,2,2)$ \\ \hline
        $\widetilde{A}_1$ & yes & \cellcolor{blue!20}$(1,0,1,1)$ \\ \hline
        $A_1+\widetilde{A}_1$ & yes & \cellcolor{blue!20}$(1,0,1,0)$ \\ \hline
        $A_2+\widetilde{A}_1$ & yes & $\frac{1}{4}(1,1,2,2)$  \\ \hline
        $\widetilde{A}_2+A_1$ & yes & $\frac{1}{3}(1,1,1,1)$ \\ \hline
    \end{tabular}
    \caption{Unipotent infinitesimal characters attached to birationally rigid orbits: type $F_4$. Special unipotent characters are highlighted in blue.}
    \label{table:orbitF4}
\end{table}

\begin{table}[H]\label{table:E6orbits}
    \begin{tabular}{|c|c|c|} \hline
       $\OO$ &  rigid? & $\gamma_0(\OO)$ \\ \hline 
        $\{0\}$ & yes & \cellcolor{blue!20}$(1,1,1,1,1,1)$\\ \hline
        $A_1$ & yes & \cellcolor{blue!20}$(1,1,1,0,1,1)$  \\ \hline
        $3A_1$ & yes & $\frac{1}{2}(1,1,1,1,1,1)$  \\ \hline
        $2A_2+A_1$ & yes & $\frac{1}{3}(1,1,1,1,1,1)$ \\ \hline
    \end{tabular}
    \caption{Unipotent infinitesimal characters attached to birationally rigid orbits: type $E_6$. Special unipotent characters are highlighted in blue.}
    \label{table:orbitE6}
\end{table}

\begin{table}[H]\label{table:E7orbits}
    \begin{tabular}{|c|c|c|} \hline
       $\OO$ &  rigid? & $\gamma_0(\OO)$  \\ \hline 
        $\{0\}$ & yes & \cellcolor{blue!20}$(1,1,1,1,1,1,1)$  \\ \hline
        $A_1$ & yes &  \cellcolor{blue!20}$(1,1,1,0,1,1,1)$ \\ \hline
        $2A_1$ & yes & \cellcolor{blue!20}$(1,1,1,0,1,0,1)$  \\ \hline
        $(3A_1)'$ & yes & $\frac{1}{2}(1,1,1,1,1,1,2)$ \\ \hline
        $4A_1$ & yes & $\frac{1}{2}(1,1,1,1,1,1,1)$ \\ \hline
        $A_2+A_1$ & no & \cellcolor{blue!20}$(1,0,0,1,0,1,0)$ \\ \hline
        $A_2+2A_1$ & yes & \cellcolor{blue!20}$(1,0,0,1,0,0,1)$ \\ \hline
        $2A_2+A_1$ & yes & $\frac{1}{3}(1,1,1,1,1,1,1)$ \\ \hline
        $(A_3+A_1)'$ & yes & $\frac{1}{2}(1,1,0,1,0,1,1)$ \\ \hline
        $A_4+A_1$ & no & \cellcolor{blue!20}$\frac{1}{2}(1,0,0,1,0,1,0)$ \\ \hline
    \end{tabular}
    \caption{Unipotent infinitesimal characters attached to birationally rigid orbits: type $E_7$. Special unipotent characters are highlighted in blue.}
    \label{table:orbitE7}
\end{table}

\begin{table}[H]
    \begin{tabular}{|c|c|c|} \hline
       $\OO$ &  rigid? & $\gamma_0(\OO)$ \\ \hline 
        $\{0\}$ & yes & \cellcolor{blue!20}$(1,1,1,1,1,1,1,1)$ \\ \hline
        
        $A_1$ & yes & \cellcolor{blue!20}$(1,1,1,0,1,1,1,1)$ \\ \hline
        
        $2A_1$ & yes & \cellcolor{blue!20}$(1,1,1,0,1,0,1,1)$ \\ \hline
        
        $3A_1$ & yes & $\frac{1}{2}(1,1,1,1,1,1,2,2)$  \\ \hline
        $4A_1$ & yes & $\frac{1}{2}(1,1,1,1,1,1,1,1)$ \\ \hline
        
        $A_2+A_1$ & yes & \cellcolor{blue!20}$(1,0,0,1,0,1,0,1)$ \\ \hline
        
        $A_2+2A_1$ & yes & \cellcolor{blue!20}$(1,0,0,1,0,0,1,1)$ \\ \hline
        
        $A_2+3A_1$ & yes & $\frac{1}{2}(1,1,1,0,1,1,1,1)$ \\ \hline
        $2A_2+A_1$ & yes & $\frac{1}{3}(1,1,1,1,1,1,1,3)$ \\ \hline
        $A_3+A_1$ & yes & $\frac{1}{2}(1,1,0,1,0,1,1,2)$  \\ \hline
        $2A_2+2A_1$ & yes & $\frac{1}{3}(1,1,1,1,1,1,1,1)$ \\ \hline
        $A_3+2A_1$ & yes & $\frac{1}{2}(1,1,1,0,1,0,1,1)$ \\ \hline
        $D_4(a_1)+A_1$ & yes & \cellcolor{blue!20}$(0,0,0,1,0,0,1,0)$ \\ \hline
        
        $A_3+A_2+A_1$ & yes & $\frac{1}{2}(1,0,0,1,0,1,1,1)$ \\ \hline
        $A_4+A_1$ & no & \cellcolor{blue!20}$\frac{1}{2}(1,0,0,1,0,1,0,2)$ \\ \hline
        $2A_3$ & yes & $\frac{1}{4}(1,1,1,1,1,1,1,1)$ \\ \hline
        $A_4+A_3$ & yes & $\frac{1}{5}(1,1,1,1,1,1,1,1)$ \\ \hline
        $A_5+A_1$ & yes & $\frac{1}{6}(2,2,,1,1,1,1,1,1)$ \\ \hline
        $D_5(a_1)+A_2$ & yes & $\frac{1}{4}(1,1,1,0,1,1,1,1)$ \\ \hline
    \end{tabular}
    \caption{Unipotent infinitesimal characters attached to birationally rigid orbits: type $E_8$. Special unipotent characters are highlighted in blue.}
    \label{table:orbitE8}
\end{table}

\subsubsection{Birationally rigid covers}\label{subsec:exceptionalcoverinflchar}

In this section, we will produce a complete list of unipotent ideals attached to birationally rigid covers $\widetilde{\OO}$ of birationally induced orbits. In addition to (1)-(4) of Section \ref{subsec:exceptionalorbitinflchar}, we will repeatedly carry out the following computations:
 
\begin{enumerate}[resume]
    \item Given a nilpotent orbit $\OO$, determine the codimension 2 orbits $\OO_k \subset \overline{\OO}$, their singularities, and the normalizations thereof. These are evident from the incidence diagrams in \cite[Sec 13]{fuetal2015}.
    \item Given a nilpotent orbit $\OO$, determine the fundamental group $\pi_1(\OO)$. See \cite[Sec 6.1]{CM} for classical types and \cite[Sec 8.4]{CM} for exceptional.
    \item Determine the reductive part $\mathfrak{r}$ of the centralizer of $e \in \OO$. See \cite[Sec 6.1]{CM} for classical types and \cite[Sec 13.1]{CM} for exceptional.
    \item Given a nilpotent orbit $\OO$ in a Levi subalgebra $\fl \subset \fg$, compute the induced orbit $\Ind^{\fg}_{\fl}\OO$. If $\fg$ is exceptional, we use \cite[Sec 4]{deGraafElashvili}. If $\fg$ is classical, we use the well-known formulas involving partitions, see \cite[Sec 7.3]{CM}.
    \item Given a singularity $\Sigma_k \subset \Spec(\CC[\OO])$ of type $A_1$, determine the integer $c_k$ (cf. (\ref{eq:defofck})). By definition
    $$c_k = 2|\Pic(\OO_{M_k})||\Pic(\OO)|^{-1} = 2|\pi_1(\OO_{M_k})_{\mathrm{ab}}||\pi_1(\OO)_{\mathrm{ab}}|^{-1},$$
    where $H_{\mathrm{ab}}$ denotes the abelianization of $H$. 
    \item For a given Levi subgroup $M \subset G$ determine a set of generators $\tau_i$ for the free abelian group $\fX(M)$. If $M$ is standard, then we can take $\tau_i$ to be the fundamental weights for $G$ corresponding to the simple roots not contained in $M$. More generally, if $\beta_1,...,\beta_m$ are simple roots for $M$, then $\fX(M)$ is identified with the lattice
    $$\{\lambda \in \Lambda \mid \langle \lambda, \beta_i^{\vee}\rangle =0, \ 1 \leq i \leq m\}.$$
    The co-roots $\beta_i^{\vee}$ can be computed by hand or using the {\fontfamily{cmtt}\selectfont atlas} software.
\end{enumerate}

We now begin the calculations:

\vspace{3mm}

\paragraph{Type $G_2$}.

\begin{center}
$$\dynkin[label,label macro/.code={\alpha_{\drlap{#1}}},edge
length=.75cm] G2$$
\end{center}

\begin{itemize}
    \item $G_2(a_1)$. Note that $\OO$ is even and $L_{\OO} = L(A_1;2)$. Hence by Lemma \ref{lem:even}
    $$(L,\OO_L) = (L(A_1;2), \{0\})$$
    There is one codimension 2 leaf $\fL_1 \subset \Spec(\CC[\OO])$ and the corresponding singularity is of type $A_1$. Thus
    
    \begin{center}
        \begin{tabular}{|c|c|c|c|c|} \hline
            $k$ & $\Sigma_k$ & $M_k$ & $\OO_{M_k}$ & $c_k$\\ \hline
            $1$ & $A_1$ & $L(A_1;2)$ & $\{0\}$ & $1$ \\ \hline 
        \end{tabular}
    \end{center}
    Note that $\tau_1=(1,0)$. So by Proposition \ref{prop:identification2}, we have  $\delta_1=\frac{1}{2}\tau_1 = \frac{1}{2}(1,0)$. Now by Proposition \ref{prop:deltashift}
$$\gamma_0(\widehat{\OO}) = \rho(\fl) + \delta_1 = \frac{1}{2}(-3,2) + \frac{1}{2}(1,0) = (-1,1)$$
Conjugating by $W$, we get
$$\gamma_0(\widehat{\OO}) = (1,0)$$
\end{itemize}

\paragraph{Type $F_4$}.

\begin{center}
$$\dynkin[label,label macro/.code={\alpha_{\drlap{#1}}},edge
length=.75cm] F4$$
\end{center}

\begin{itemize}
    \item $A_2$. Note that $\OO$ is even and $L_{\OO} = L(C_3;2,3,4)$. Hence by Lemma \ref{lem:even}
    $$(L,\OO_L) = (L(C_3;2,3,4), \{0\}).$$
    There is one codimension 2 leaf $\fL_1 \subset \Spec(\CC[\OO])$ and the corresponding singularity is of type $A_1$. Thus
        \begin{center}
        \begin{tabular}{|c|c|c|c|c|} \hline
            $k$ & $\Sigma_k$ & $M_k$ & $\OO_{M_k}$ & $c_k$\\ \hline
            $1$ & $A_1$ & $L(C_3;2,3,4)$ & $\{0\}$ & $1$\\ \hline 
        \end{tabular}
    \end{center}
    Note that $\tau_1=(1,0,0,0)$. So by Proposition \ref{prop:identification2}, we have $\delta_1=\frac{1}{2}\tau_1=\frac{1}{2}(1,0,0,0)$. Now by Proposition \ref{prop:deltashift}
    $$\gamma_0(\widehat{\OO}) = \rho(\fl) + \delta_1 = (-3,1,1,1) + \frac{1}{2}(1,0,0,0) = \frac{1}{2}(-5,2,2,2).$$
    Conjugating by $W$ we get
    $$\gamma_0(\widehat{\OO}) = \frac{1}{2}(1,1,0,2).$$

    \item $B_2$. There is a single codimension 2 leaf $\fL_1\subset \Spec(\CC[\OO])$ and the corresponding singularity is of type $A_1$. We have $\fr=A_1$, and therefore $\dim \fP^X=1$. We have 
    $$\mathcal{P}_{\mathrm{rig}}(\OO) = \{(L(C_3;2,3,4),(2,1^4))\},$$
    and therefore by Lemma \ref{lem:Prigunique}
$$(L,\OO_L) = (L(C_3;2,3,4),(2,1^4)).$$
    It follows that $M_1=L$, and
    \begin{center}
        \begin{tabular}{|c|c|c|c|c|} \hline
            $k$ & $\Sigma_k$ & $M_k$ & $\OO_{M_k}$ & $c_k$\\ \hline
            $1$ & $A_1$ & $L(C_3;2,3,4)$ & $(2,1^4)$ & $2$ \\ \hline 
        \end{tabular}
    \end{center}
Note that $\tau_1=(1,0,0,0)$. So by Proposition \ref{prop:identification2}, we have $\delta_1=\frac{1}{4}\tau_1=\frac{1}{4}(1,0,0,0)$. In standard coordinates on $\mathrm{Sp}(6)$, $\gamma_0(\OO_L) = \frac{1}{2}(5,3,1)$, see \cite[Prop 8.2.3]{LMBM}. So in our coordinates
$$\gamma_0(\OO_L) =  \frac{1}{4}(-9,2,4,4)$$
Now by Proposition \ref{prop:deltashift}
 $$\gamma_0(\widehat{\OO}) = \gamma_0(\OO_L) +\delta_1 = \frac{1}{4}(-9,2,4,4) + \frac{1}{4}(1,0,0,0) = \frac{1}{2}(-4,1,2,2)$$
Conjugating by $W$ we get
$$\gamma_0(\widehat{\OO}) = \frac{1}{2}(0,1,0,2)$$

    \item $C_3(a_1)$. There are two codimension orbits in the closure of $\OO$, and the corresponding singularities are of types $m$ and $2A_1$. Hence, there is a single codimension 2 leaf $\fL_1\subset \Spec(\CC[\OO])$ and the corresponding singularity is of type $A_1$. Note that $\fr=A_1$, and therefore $\dim \fP^X=1$. We have
    $$\mathcal{P}_{\mathrm{rig}}(\OO) = \{(L(B_3;1,2,3), (2^2,1^2))\},$$
    and so by Lemma \ref{lem:Prigunique}
    $$(L,\OO_L) = (L(B_3;1,2,3), (2^2,1^2)).$$
    Thus, $M_1=L$, and
    \begin{center}
        \begin{tabular}{|c|c|c|c|c|} \hline
            $k$ & $\Sigma_k$ & $M_k$ & $\OO_{M_k}$ & $c_k$\\ \hline
            $1$ & $A_1$ & $L(B_3;1,2,3)$ & $(2^2,1^2)$ & $1$\\ \hline 
        \end{tabular}
    \end{center}
Note that $\tau_1=(0,0,0,1)$. So by Proposition \ref{prop:identification2}, we have $\delta_1=\frac{1}{2}\tau_1=\frac{1}{2}(0,0,0,1)$. In standard coordinates on $\mathrm{SO}(7)$, $\gamma_0(\OO_L) = \frac{1}{2}(3,2,1)$, see \cite[Prop 8.2.3]{LMBM}. So in our coordinates
$$\gamma_0(\OO_L)=\frac{1}{2}(1,1,2,-6).$$
Now by Proposition \ref{prop:deltashift}
$$\gamma_0(\widehat{\OO}) = \gamma_0(\OO_L) + \delta_1 = \frac{1}{2}(1,1,2,-6) + \frac{1}{2}(0,0,0,1) = \frac{1}{2}(1,1,2,-5).$$
Conjugating by $W$ we get
$$\gamma_0(\widehat{\OO}) = \frac{1}{2}(1,0,1,1).$$

    \item $F_4(a_3)$. Note that $\OO$ is even and $L_{\OO}= L(A_1+A_2;1,3,4)$. Hence by Lemma \ref{lem:even}
    $$(L,\OO_L) = (L(A_1+A_2;1,3,4),\{0\})$$
    There is a single codimension 2 leaf $\fL_1 \subset \Spec(\CC[\OO])$ and the corresponding singularity is of type $A_1$. Thus
    \begin{center}
        \begin{tabular}{|c|c|c|c|c|} \hline
            $k$ & $\Sigma_k$ & $M_k$ & $\OO_{M_k}$ & $c_k$\\ \hline
            $1$ & $A_1$ & $L(A_1+A_2;1,3,4)$ & $\{0\}$  & $1$\\ \hline 
        \end{tabular}
    \end{center}
Note that $\tau_1=(0,1,0,0)$. So by Proposition \ref{prop:identification2}, we have $\delta_1=\frac{1}{2}\tau_1=\frac{1}{2}0,1,0,0)$. Now by Proposition \ref{prop:deltashift}
    $$\gamma_0(\widehat{\OO}) = \rho(\fl) + \delta_1 = \frac{1}{2}(2,-3,2,2) + \frac{1}{2}(0,1,0,0) = (1,-1,1,1)$$
    Conjugating by $W$ we get
    $$\gamma_0(\widehat{\OO})= (0,0,1,0)$$

\end{itemize}

\paragraph{Type $E_6$}.

\begin{center}
$$\dynkin[label,label macro/.code={\alpha_{\drlap{#1}}},edge
length=.75cm] E6$$
\end{center}

\begin{itemize}
    \item $A_2$. Note that $\OO$ is even and $L_{\OO} = L(A_5)$. Hence by Lemma \ref{lem:even}
    $$(L, \OO_L) = (L(A_5),\{0\})$$
    There is a single codimension 2 leaf $\fL_1 \subset \Spec(\CC[\OO])$ and the corresponding singularity is of type $A_1$. Thus
    \begin{center}
        \begin{tabular}{|c|c|c|c|c|} \hline
            $k$ & $\Sigma_k$ & $M_k$ & $\OO_{M_k}$ & $c_k$ \\ \hline
            $1$ & $A_1$ & $L(A_5)$ & $\{0\}$ & $1$\\ \hline 
        \end{tabular}
    \end{center}
Note that $\tau_1=(0,1,0,0,0,0)$. So by Proposition \ref{prop:identification2}, we have $\delta_1=\frac{1}{2}\tau_1=\frac{1}{2}(0,1,0,0,0,0)$. Now by Proposition \ref{prop:deltashift}
    $$\gamma_0(\widehat{\OO}) = \rho(\fl) + \delta_1 = \frac{1}{2}(2,-9,2,2,2,2) + \frac{1}{2}(0,1,0,0,0,0) = (1,-4,1,1,1,1)$$
    Conjugating by $W$ we get
    $$\gamma_0(\widehat{\OO}) = (1,0,0,1,0,1)$$

    \item $2A_2$. Note that $\OO$ is even and $L_{\OO} =L(D_4)$. Hence by Lemma \ref{lem:even}
    $$(L,\OO_L) = (L(D_4),\{0\}).$$
    There is a single codimension 2 leaf $\fL_1 \subset \Spec(\CC[\OO])$ and the corresponding singularity is of type $A_2$. Thus
    \begin{center}
        \begin{tabular}{|c|c|c|c|c|} \hline
            $k$ & $\Sigma_k$ & $M_k$ & $\OO_{M_k}$ & $c_k$\\ \hline
            $1$ & $A_2$ & $L(D_4)$ & $\{0\}$ & - \\ \hline 
        \end{tabular}
    \end{center}
We have
    $$\tau_1(1) = (1,0,0,0,0,0) \qquad \tau_2(1) = (0,0,0,0,0,1)$$
    So by Proposition \ref{prop:identification1}, $\delta = \frac{1}{3}(1,0,0,0,0,1)$. Now by Proposition \ref{prop:deltashift}
    $$\gamma_0(\widehat{\OO}) = \rho(\fl) + \delta_1 = (-3,1,1,1,1,-3)+\frac{1}{3}(1,0,0,0,0,1) = \frac{1}{3}(-8,1,1,1,1,-8)$$
    Conjugating by $W$ we get
    $$\gamma_0(\widehat{\OO}) = \frac{1}{3}(1,3,1,1,1,1).$$
    
        \item $D_4(a_1)$. Note that $\OO$ is even and $L_{\OO} = L(2A_2+A_1)$. Hence by Lemma \ref{lem:even}
    $$(L,\OO_L) = (L(2A_2+A_1), \{0\}).$$
    There is a single codimension 2 leaf $\fL_1 \subset \Spec(\CC[\OO])$ and the corresponding singularity is of type $A_1$. Thus
    \begin{center}
        \begin{tabular}{|c|c|c|c|c|} \hline
            $k$ & $\Sigma_k$ & $M_k$ & $\OO_{M_k}$  & $c_k$\\ \hline
            $1$ & $A_1$ & $L(2A_2+A_1)$ & $\{0\}$ & $1$ \\ \hline 
        \end{tabular}
    \end{center}
Note that $\tau_1=(0,0,0,1,0,0)$. So by Proposition \ref{prop:identification2}, we have $\delta_1=\frac{1}{2}\tau_1=\frac{1}{2}(0,0,0,1,0,0)$. Now by Proposition \ref{prop:deltashift}
    $$\gamma_0(\widehat{\OO}) = \rho(\fl) + \delta_1 = \frac{1}{2}(2,2,2,-5,2,2) + \frac{1}{2}(0,0,0,1,0,0) = (1,1,1,-2,1,1).$$
    Conjugating by $W$ we get
    $$\gamma_0(\widehat{\OO}) = (0,0,0,1,0,0).$$

    \item $A_5$. We have
    $$\mathcal{P}_{\mathrm{rig}}(\OO) = \{(L(D_4),(3,2^2,1))\}$$
   There is a single codimension 2 leaf $\fL_1\subset \Spec(\CC[\OO])$ which maps to the orbit $\OO_1=A_4+A_1$, and the corresponding singularity is of type $A_2$. Since $\pi_1(\fL_1)=\pi_1(\OO_1)=1$, the monodromy action is trivial. We have $\fr=A_1$, and therefore $\dim \fP^X=2$. So by Lemma \ref{lem:Prigunique}
    $$(L,\OO_L) = (L(D_4), (3,2^2,1)).$$
    Therefore $M_1=L$, and we have
    \begin{center}
        \begin{tabular}{|c|c|c|c|c|} \hline
            $k$ & $\Sigma_k$ & $M_k$ & $\OO_{M_k}$ & $c_k$ \\ \hline
            $1$ & $A_2$ & $L(D_4)$ & $(3,2^2,1)$ & -\\ \hline 
        \end{tabular}
    \end{center}
Since $M_k$ is standard, we have
    $$\tau_1 = (1,0,0,0,0,0) \qquad \tau_2 = (0,0,0,0,0,1)$$
    We claim that the map $\eta$ is given by $\eta(\tau_i)=2\omega_i$ for $i=1,2$. By \cite[Proposition 7.1.2]{LMBM}, there is a short exact sequence
    $$0\to \Pic(Y)\to \Cl(Y)\to \Pic(\OO_L)\to 0$$
    We have $\Pic(\OO_L)\simeq \pi_1(\OO_L)_{\mathrm{ab}}\simeq \ZZ_2^2$, and $\Pic(Y)\simeq \Pic(G/P)\simeq \ZZ^2$, and $\Pic^a(Y)$ is spanned by $\tau_1$ and $\tau_2$ under this identification. Similarly to \cite[Lem 7.7.5]{LMBM}, there is a short exact sequence corresponding to the two irreducible components of the exceptional divisor
    $$0\to \ZZ^2\to \Cl(Y)\to \Pic(\OO)\to 0.$$
    From an {\fontfamily{cmtt}\selectfont atlas} computation, we see that $\Pic(\OO)\simeq \pi_1(\OO)_{\mathrm{ab}}\simeq \ZZ_3$. Write $T$ for the sublattice of $\Cl(Y)$ spanned by the components of the exceptional divisor. It follows that $\Cl(\ZZ^2)\simeq \ZZ^2$, and moreover the sublattice $3\Pic(Y)$ in $\Cl(Y)$ is contained in $T$. The quotient of the lattice $\Cl(Y)$ by $3\Pic(\OO)$ is $\ZZ_3^2\times \ZZ_2^2$, and hence there is a short exact sequence
    $$0\to 3\Pic(Y)\to T\to \ZZ_3\times \ZZ_2^2\to 0.$$
    
    Similarly, we have a sublattice $T_{\Sigma}\subset \Cl(\fS)\simeq \Pic(\fS)$ spanned by the irreducible components of the exceptional divisor of $\fS\to \Sigma$. We have a short exact sequence
    $$0\to T_{\Sigma}\to \Cl(\fS)\to \ZZ_3\to 0.$$
    Therefore, $3\Pic(\fS)$ is a sublattice of $T_{\Sigma}$, and the cokernel of the embedding $3\Pic(\fS)\to T_{\Sigma}$ is isomorphic to $\ZZ_3$.
    
    Let $f: \Pic(Y)\to \Pic(\fS)$ be the restriction map, and let $g: T\to T_{\Sigma}$ be the map which takes the irreducible component of the exceptional divisor of $Y\to X$ to its intersection with $\fS$, an irreducible component of the exceptional divisor of $\fS\to \Sigma$. Since $Y$ and $\fS$ are $\QQ$-terminal, there is a commutative diagram.
    
    \begin{center}
\begin{tikzcd}[
  ar symbol/.style = {draw=none,"#1" description,sloped},
  isomorphic/.style = {ar symbol={\simeq}},
  equals/.style = {ar symbol={=}},
  ]
  0\ar[r]& 3\Pic(\OO) \ar[d, "3f"] \ar[r]& T \ar[d, "g"] \ar[r] & \ZZ_3\times \ZZ_2^2 \ar[r] \ar[d, "h"]& 0\\
   0\ar[r]& 3\Pic(\fS) \ar[r]& T_{\Sigma}\ar[r] & \ZZ_3 \ar[r] & 0
\end{tikzcd}
\end{center}
    Note that $\Coker(f)\simeq \Coker(3f)\simeq \Ker(h)\simeq \ZZ_2^2$, and therefore $\eta(\tau_i)$ and $\eta(\tau_2)$ span a lattice in $\Pic(\fS)=\ZZ \langle\omega_1, \omega_2 \rangle$ with quotient isomorphic to $\ZZ_2^2$.
    
    Since there is a unique pair $(L, \OO_L)$ in $\mathcal{P}_{\mathrm{rig}}(\OO)$, $X$ admits a unique $\QQ$-factorial terminalization, up to isomorphism, see \cite[Lemma 7.2.4]{LMBM}. For an algebraic variety $Z$ with a projective morphism $Z \to S$, write $\Pic^a(Z) \subset \Pic(Z)$ for the semigroup of relatively ample line bundles. By \cite[Proposition 2.17]{BPW} $\eta$ induces an isomorphism between $\RR_{>0}\Pic^a(Y)$ and the fundamental domain for the $W$-action on $\fP^X=\fP^{\fS}$, i.e. $\RR_{>0}\Pic^a(\fS)$. We note that $\Pic^a(Y)$ and $\Pic^a(\fS)$ are spanned by $\tau_1, \tau_2$ and $\omega_1, \omega_2$ respectively. So in these bases both $\eta$ and $\eta^{-1}$ are given by 2-by-2 matrices with nonnegative coefficients. Hence $\eta$ is given by a diagonal matrix, up to permutations of bases. Since the cokernel of the map $f: \Pic(Y)\to \Pic(\fS)$ is equal to $\ZZ_2^2$, it follows that $\eta(\tau_1)=2\omega_1$ and $\eta(\tau_2)=2\omega_2$. 
    
    Now we have $\delta = \frac{1}{6}(\tau_1 + \tau_2) = \frac{1}{6}(1,0,0,0,0,1)$. By \cite[Prop 8.3]{LMBM}, $\gamma_0(\OO_L) = \frac{1}{2}\rho(\fl)$. Thus by Proposition \ref{prop:deltashift}
    $$\gamma_0(\widehat{\OO}) = \frac{1}{2}\rho(\fl) + \delta_1 = \frac{1}{2}(-3,1,1,1,1,-3) + \frac{1}{6}(1,0,0,0,0,1) = \frac{1}{6}(-8,3,3,3,3,-8)$$
    Conjugating by $W$ we get
    $$\gamma_0(\widehat{\OO}) = \frac{1}{6}(1,3,1,1,1,1).$$
    
    \item $E_6(a_3)$. Note that $\OO$ is even and $L_{\OO}=L(3A_1;2,3,5)$. Hence by Lemma \ref{lem:even}
    $$(L,\OO_L) = (L(3A_1;2,3,5), \{0\}).$$
    There are two codimension 2 leaves $\fL_1,\fL_2 \subset \Spec(\CC[\OO])$, corresponding to the orbits $\OO_1=A_5$ and $\OO_2 = D_5(a_1)$. The corresponding singularities are of types $A_1$ and $A_2$, respectively. We will compute the 
    pairs $(M_k,\OO_{M_k})$ using Lemma \ref{lem:findMk}. First, consider the pair
    $$(M_1, \OO_{M_1}) = (L(A_5; 2,3,5,\theta_1,\theta_2), (3^2))$$
    where $\theta_1$ and $\theta_2$ are the roots
    $$\theta_1=-\alpha_2-\alpha_3-\alpha_4-\alpha_5-\alpha_6 \qquad \theta_2=\alpha_1+\alpha_2+\alpha_3+2\alpha_4+\alpha_5+\alpha_6.$$
    An {\fontfamily{cmtt}\selectfont atlas} computation shows that this indeed defines of Levi (of the stated type). By construction, $L \subset M_1$, and $\OO_{M_1} = \Ind^{M_1}_L \{0\}$. Furthermore, $\dim \fX(\fm_2) = 1 = \dim\fP_2$. Note that $\overline{\OO}_{M_1}$ contains a single codimension 2 orbit, namely $\OO_{M_1,2} = \OO_{(3,2,1)}$, and the corresponding singularity is of type $A_2$. Finally, note that
    \begin{align*}
    \Ind^G_{M_1} \OO_{M_1,2} &= \Ind^G_{M_1}(\Ind^{M_1}_{L(A_2+A_1;2,5,\theta_1)} \{0\})\\
    &=\Ind^G_{L(A_2+A_1;2,5,\theta_1)} \{0\}
    \end{align*}
    Up to conjugation by $G$, there is a unique Levi in $G$ of type $A_2+A_1$. So the final induction coincides with $D_5(a_1) = \OO_2$, see \cite[Sec 4]{deGraafElashvili}. It now follows from Lemma \ref{lem:findMk} that the pair $(M_1,\OO_{M_1})$ is adapted to $\fL_1$.
    
    Next, define
    $$(M_2,\OO_{M_2}) = (L(D_4),\OO_{(3^2,1^2)}).$$
    By construction, $L \subset M_2$, and $\OO_{M_2} = \mathrm{Ind}^{M_2}_L \{0\}$. Furthermore, $\dim \fX(\fm_2) = 2 = \dim \fP_2$. Note that $\overline{\OO}_{M_2}$ contains a single codimension 2 orbit, namely $\OO_{M_2,1} = \OO_{(3,2^2,1)}$, and the corresponding singularity is of type $A_1$. Finally $\Ind^G_{M_2} \OO_{M_2,1} = A_5=\OO_1$, see \cite[Sec 4]{deGraafElashvili}. So by Lemma \ref{lem:findMk}, the pair $(M_2,\OO_{M_2})$ is adapted to $\fL_2$. 
    \begin{center}
        \begin{tabular}{|c|c|c|c|c|} \hline
            $k$ & $\Sigma_k$ & $M_k$ & $\OO_{M_k}$ & $c_k$\\ \hline
            $1$ & $A_1$ & $L(A_5;2,3,5,\theta_1,\theta_2)$ & $(3^2)$ & $1$\\ \hline 
            $2$ & $A_2$ & $L(D_4)$ & $(3^2,1^2)$ & - \\ \hline
        \end{tabular}
    \end{center}
By definition $\tau_1(1)$ is (either) generator of the free abelian group
    \begin{align*}
    \fX(M_1) &= \{\lambda \in \Lambda \mid \langle \lambda, \alpha^{\vee}\rangle =0, \ \alpha \in \Delta(\fm,\fh)\}\\
    &= \{(x,0,0,y,0,z) \in \ZZ^6 \mid x+y = y+z =0\}
    \end{align*}
    So we may take $\tau_1(1)=(1,0,0,-1,0,0,1)$ and hence $\delta_1=\frac{1}{2}(1,0,0,-1,0,1)$.
    
    On the other hand, $M_2$ is standard. Hence
    $$\tau_2(1) = (1,0,0,0,0,0) \qquad \tau_2(2) = (0,0,0,0,0,1).$$
    We claim that $\eta_2(\tau_i(2))=\omega_i(2)$. We note that it suffices to prove the analogous assertion for the orbit $\OO_M\subset \fm^*$, where $M=M_1$. Namely, we have $M=L(A_5;2,3,5,\theta_1,\theta_2)$, and $\OO_M=\OO_{(3^2)}$. We note that this orbit is Richardson, and $(3A_1, \{0\})$ is the only pair in $P_{\mathrm{rig}}(\OO_M)$. Thus, by \cite[Lemma 7.2.4]{LMBM} $Y_M\simeq T^*(Q/P_M)$ is the unique up to an isomorphism $\QQ$-terminalization of $X_M$. Arguing as in the case of orbit $A_5\subset E_6$ we see that $\eta_2$ is diagonal with positive integer coefficients in the bases $\tau_1(2), \tau_2(2)$ and $\omega_1(2), \omega_2(2)$ respectively. Consider the sublattices $T$ and $T_\Sigma$ of $\Cl(Y_M)$ and $\Cl(\fS)$ spanned by the irreducible components of the exceptional divisors of $Y_M\to X_M$ and $\fS\to \Sigma$ respectively. Let $f: \Pic(Y_M)\to \Pic(\fS)$ and $g: T\xrightarrow{\sim} T_\Sigma$ be the maps constructed as for the orbit $A_5$. Note that both $Y_M$ and $\fS$ are smooth, and thus $\Pic(Y_M)\simeq \Cl(Y_M)$ and $\Pic(\fS)\simeq \Cl(\fS)$. Since $\pi_1(\OO_M)\simeq \ZZ_3$, we have both $T\subset \Cl(Y_M)$ and $T_\Sigma\subset \Cl(\fS)$ are of index $3$, and therefore $f$ is an isomorphism. Thus, both diagonal entries of $\eta$ must be $1$, and thus $\eta_2(\tau_i(2))=\omega_i(2)$.
    
    It follows that $\delta_2 = \frac{1}{3}(\tau_2(2)+\tau_2(2)) = \frac{1}{3}(1,0,0,0,0,1)$. Now by Proposition \ref{prop:deltashift}
    \begin{align*}
        \gamma_0(\widehat{\OO}) &= \rho(\fl) + \delta_1+ \delta_2\\
        &= \frac{1}{2}(-1,2,2,-3,2,-1) + \frac{1}{2}(1,0,0,-1,0,1) + \frac{1}{3}(1,0,0,0,0,1)\\
        &= \frac{1}{3}(1,3,3,-6,3,1)
    \end{align*}
Conjugating by $W$ we get
$$\gamma_0(\widehat{\OO}) = \frac{1}{3}(0,1,1,0,1,0).$$
\end{itemize}

\paragraph{Type $E_7$}.

\begin{center}
$$\dynkin[label,label macro/.code={\alpha_{\drlap{#1}}},edge
length=.75cm] E7$$
\end{center}

\begin{itemize}
    \item $(3A_1)''$. Note that $\OO$ is even and $L_{\OO} = L(E_6)$. Hence by Lemma \ref{lem:even}
    $$(L,\OO_L) = (L(E_6), \{0\})$$
    There is a unique codimension 2 leaf $\fL_1 \subset \Spec(\CC[\OO])$ and the corresponding singularity is of type $A_1$. Thus
    \begin{center}
        \begin{tabular}{|c|c|c|c|c|} \hline
            $k$ & $\Sigma_k$ & $M_k$ & $\OO_{M_k}$ & $c_k$\\ \hline
            $1$ & $A_1$ & $L(E_6)$ & $\{0\}$ & $1$\\ \hline 
        \end{tabular}
    \end{center}
Note that $\tau_1=(0,0,0,0,0,0,1)$. So by Proposition \ref{prop:identification2}, we have $\delta_1 = \frac{1}{2}\tau_1 = \frac{1}{2}(0,0,0,0,0,0,1)$. Now by Proposition \ref{prop:deltashift} 
    $$\gamma_0(\widehat{\OO}) = \rho(\fl) + \delta = (1,1,1,1,1,1,-8) + \frac{1}{2}(0,0,0,0,0,0,1) = \frac{1}{2}(2,2,2,2,2,2,-15)$$
    Conjugating by $W$ we get
    $$\gamma_0(\widehat{\OO}) = \frac{1}{2}(2,1,2,1,1,1,1)$$
    
    \item $A_2$.  Note that $\OO$ is even and $L_{\OO} = L(D_6)$. Hence by Lemma \ref{lem:even}
$$(L,\OO_L) = (L(D_6), \{0\}).$$
There is a single codimension 2 leaf $\fL_1 \subset \Spec(\CC[\OO])$ and the corresponding singularity is of type $A_1$. Thus
    \begin{center}
        \begin{tabular}{|c|c|c|c|c|} \hline
            $k$ & $\Sigma_k$ & $M_k$ & $\OO_{M_k}$ & $c_k$\\ \hline
            $1$ & $A_1$ & $L(D_6)$ & $\{0\}$ & $1$ \\ \hline 
        \end{tabular}
    \end{center}
Note that $\tau_1=(1,0,0,0,0,0,0)$. So by Proposition \ref{prop:identification2}, we have $\delta_1=\frac{1}{2}\tau_1=\frac{1}{2}(1,0,0,0,0,0,0)$. Now by Proposition \ref{prop:deltashift}
$$\gamma_0(\widehat{\OO}) = \rho(\fl) + \delta = \frac{1}{2}(-15,2,2,2,2,2,2) + \frac{1}{2}(1,0,0,0,0,0,0,0) = (-7,1,1,1,1,1,1).$$
Conjugating by $W$ we get
$$\gamma_0(\widehat{\OO}) = (1,0,0,1,0,1,1).$$

    \item $A_2+3A_1$. Note that $\OO$ is even and $L_{\OO} = L(A_6)$. Hence by Lemma \ref{lem:even}
$$(L,\OO_L) = (L(A_6), \{0\}).$$
There is a unique codimension 2 leaf $\fL_1 \subset \Spec(\CC[\OO])$ and the corresponding singularity is of type $A_1$. Thus
    \begin{center}
        \begin{tabular}{|c|c|c|c|c|} \hline
            $k$ & $\Sigma_k$ & $M_k$ & $\OO_{M_k}$ & $c_k$\\ \hline
            $1$ & $A_1$ & $L(A_6)$ & $\{0\}$ & $1$ \\ \hline 
        \end{tabular}
    \end{center}
Note that $\tau_1=(0,1,0,0,0,0,0)$. So by Proposition \ref{prop:identification2}, we have $\delta_1=\frac{1}{2}\tau_1=\frac{1}{2}(0,1,0,0,0,0,0)$. Now by Proposition \ref{prop:deltashift}
$$\gamma_0(\widehat{\OO}) = \rho(\fl) + \delta_1 = (1,-6,1,1,1,1,1) + \frac{1}{2}(0,1,0,0,0,0,0) = \frac{1}{2}(2,-11,2,2,2,2,2)$$
Conjugating by $W$ we get
$$\gamma_0(\widehat{\OO}) = \frac{1}{2}(1,1,1,0,1,1,1).$$

    \item $(A_3+A_1)''$. Note that $\OO$ is even and $L_{\OO} = L(D_5;2,3,4,5,6)$. Hence by Lemma \ref{lem:even}
    $$(L,\OO_L) = (L(D_5;2,3,4,5,6), \{0\}).$$
    There are two codimension 2 leaves $\fL_1,\fL_2 \subset \Spec(\CC[\OO])$, corresponding to the orbits $\OO_1=2A_2$ and $\OO_2=A_3$. Both singularities are of type $A_1$. We will compute the pairs $(M_k,\OO_{M_k})$ using Lemma \ref{lem:findMk}. First, consider the pair
    $$(M_1,\OO_{M_1}) = (L(D_6), (3,1^9)).$$
    By construction, $L \subset M_1$ and $\OO_{M_1} = \mathrm{Ind}^{M_1}_L \{0\}$. Furthermore, $\dim \fX(\fm_1) = 1 = \dim \fP_1$. $\overline{\OO}_{M_1}$ contains a single codimension 2 orbit, namely $\OO_{M_1,2} = (2^2,1^8)$, and the corresponding singularity is of type $A_1$. Finally, note that $\Ind^G_{M_1} \OO_{M_1,2} = A_3 = \OO_2$. It now follows from Lemma \ref{lem:findMk} that $(M_1,\OO_{M_1})$ is adapted to $\fL_1$. 
    
    Next, consider the pair
    $$(M_2, \OO_{M_2}) = (L(D_5+A_1;2,3,4,5,6,\theta), \{0\} \times (2))$$
    where $\theta$ is the highest root for $G$, i.e.
    $$\theta= 2\alpha_1 + 2\alpha_2 + 3\alpha_3 + 4\alpha_4 + 3\alpha_5 + 2\alpha_6 + \alpha_7$$
    An {\fontfamily{cmtt}\selectfont atlas} computation shows that this indeed defines a Levi (of the stated type). By construction, $L \subset M_2$ and of course $\overline{\OO}_{M_2} = \mathrm{Ind}^{M_2}_L \{0\}$. Furthermore, $\dim \fX(\fm_2) = 1 = \dim \fP_2$. $\overline{\OO}_{M_2}$ contains a single codimension 2 orbit, namely $\OO_{M_2,1} =\{0\}$, and the corresponding singularity is of type $A_1$. Finally, note that by \cite[Sec 4]{deGraafElashvili} $\Ind^G_{M_2} \OO_{M_2,1} =2A_2=\OO_1$. So by Lemma \ref{lem:findMk}, $(M_2,\OO_{M_2})$ is adapted to $\fL_2$. 
    \begin{center}
        \begin{tabular}{|c|c|c|c|c|} \hline
            $k$ & $\Sigma_k$ & $M_k$ & $\OO_{M_k}$ & $c_k$ \\ \hline
            $1$ & $A_1$ & $L(D_6)$ & $(3,1^9)$ & $2$ \\ \hline 
            $2$ & $A_1$ & $L(D_5+A_1;2,3,4,5,6,\theta)$ & $\{0\} \times (2)$ & $2$\\ \hline
        \end{tabular}
    \end{center}
    Since $M_1$ is standard, $\tau_1(1)=(1,0,0,0,0,0,0)$ and hence $\delta_1=\frac{1}{4}\tau_1(1)=\frac{1}{4}(1,0,0,0,0,0,0)$ by Proposition \ref{prop:identification2}. On the other hand, $\tau_1(2)$ is (either) generator of the free abelian group
    \begin{align*}\fX(M_2) &= \{\lambda \in \Lambda \mid \langle \lambda, \alpha^{\vee}\rangle =0, \ \alpha \in \Delta(\fm_2,\fh)\}\\
    &=\{(x,0,0,0,0,0,y) \in \ZZ^7 \mid 2x+y=0\}.\end{align*}
    So we may take $\tau_1(2) = (1,0,0,0,0,0,-2)$ and $\delta_2=\frac{1}{4}\tau_2 = \frac{1}{4}(1,0,0,0,0,0,-2)$. Now by Proposition \ref{prop:deltashift} 
    \begin{align*}
        \gamma_0(\widehat{\OO}) &= \rho(\fl) + \delta_1+ \delta_2\\
        &= (-5,1,1,1,1,1,-4) + \frac{1}{4}(1,0,0,0,0,0,0) + \frac{1}{4}(1,0,0,0,0,0,-2)\\
        &= \frac{1}{2}(-9,2,2,2,2,2,-9)
    \end{align*}
    Conjugating by $W$ we get
    $$\gamma_0(\widehat{\OO}) = \frac{1}{2}(2,0,1,1,0,1,1)$$

    \item $D_4(a_1)$. Note that $\OO$ is even and $L_{\OO} = L(A_1+A_5)$. Hence by Lemma \ref{lem:even}
$$(L,\OO_L) = (L(A_1+A_5),\{0\})$$
There is a single codimension 2 leaf $\fL_1\subset \Spec(\CC[\OO])$ and the corresponding singularity is of type $A_1$.
    \begin{center}
        \begin{tabular}{|c|c|c|c|c|} \hline
            $k$ & $\Sigma_k$ & $M_k$ & $\OO_{M_k}$ & $c_k$ \\ \hline
            $1$ & $A_1$ & $L(A_1+A_5)$ & $\{0\}$ & $1$ \\ \hline 
        \end{tabular}
    \end{center}
Note that $\tau_1= (0,0,1,0,0,0,0)$. So by Proposition \ref{prop:identification2}, we have $\delta_1=\frac{1}{2}\tau_1 = \frac{1}{2}(0,0,1,0,0,0,0)$. Now by Proposition \ref{prop:deltashift} 
$$\gamma_0(\widehat{\OO}) = \rho(\fl) + \delta_1 = \frac{1}{2}(2,2,-9,2,2,2,2) + \frac{1}{2}(0,0,1,0,0,0,0) = (1,1,-4,1,1,1,1).$$
Conjugating by $W$ we get
$$\gamma_0(\widehat{\OO}) = (0,0,0,1,0,0,1).$$

    \item $A_3+2A_1$. We have
    $$\mathcal{P}_{\mathrm{rig}}(\OO) = \{(L(E_6),3A_1)\}.$$
    
    There is a single codimension 2 leaf $\fL_1\subset \Spec(\CC[\OO])$ and the corresponding singularity is of type $A_1$. We have $\fr=2A_1$, and thus $\dim \fP^X=1$. Now by Lemma \ref{lem:Prigunique}, we have
$$(L,\OO_L) = (L(E_6),3A_1)$$
Thus, $M_1=L$ and
    \begin{center}
        \begin{tabular}{|c|c|c|c|c|} \hline
            $k$ & $\Sigma_k$ & $M_k$ & $\OO_{M_k}$ & $c_k$ \\ \hline
            $1$ & $A_1$ & $L(E_6)$ & $3A_1$ & $1$ \\ \hline 
        \end{tabular}
    \end{center}
Note that $\tau_1=(0,0,0,0,0,0,1)$. So by Proposition \ref{prop:identification2}, we have $\delta_1=\frac{1}{2}\tau_1=\frac{1}{2}(0,0,0,0,0,0,1)$. Note that $\gamma_0(\OO_L) = \frac{1}{2}\rho(\fl)$, see Table \ref{table:E6orbits}. Now by Proposition \ref{prop:deltashift}
$$\gamma_0(\widehat{\OO}) = \frac{1}{2}\rho(\fl) + \delta_1 = \frac{1}{2}(1,1,1,1,1,1,-8) + \frac{1}{2}(0,0,0,0,0,0,1) = \frac{1}{2}(1,1,1,1,1,1,-7)$$
Conjugating by $W$ we get
$$\gamma_0(\widehat{\OO}) = \frac{1}{2}(1,1,1,0,1,0,1).$$

    \item $D_4(a_1)+A_1$. By \cite[Sec 4]{deGraafElashvili}
    $$\mathcal{P}_{\mathrm{rig}}(\OO) = \{(L(A_5;1,3,4,5,6), \{0\})\}.$$
     There are two codimension orbits in $\overline{\OO}$, and the corresponding singularities are of types $A_1$ and $3A_1$. Therefore, there are two codimension 2 leaves $\fL_1$, $\fL_2\subset \Spec(\CC[\OO])$ and the corresponding singularities are both of type $A_1$. We have $\fr=2A_1$, and thus $\dim \fP^X=2$. So by Lemma \ref{lem:Prigunique}, we have
    $$(L,\OO_L) = (L(A_5;1,3,4,5,6), \{0\}).$$
    We now compute the pairs $(M_k,\OO_{M_k})$ using Lemma \ref{lem:findMk}. Let
    $$(M_1,\OO_{M_1}) = (L(E_6), A_2)$$
    By construction, $L \subset M_1$, and as observed in the calculation for $A_2 \subset E_6$, $\OO_{M_1} = \mathrm{Ind}^{M_1}_L \{0\}$. Furthermore, $\dim \fX(\fm_1) = 1 = \dim \fP_1$. Note that $\overline{\OO}_{M_1}$ contains a single orbit of codimension 2, namely $\OO_{M_1,2} = 3A_1$, and the corresponding singularity is of type $A_1$. Finally, by \cite[Sec 4]{deGraafElashvili} $\Ind^G_{M_1} \OO_{M_1,2} = A_3+2A_1 = \OO_2$. So by Lemma \ref{lem:findMk}, $(M_1,\OO_{M_1})$ is adapted to $\fL_1$.
    
    The computation of $M_2$ is slightly more involved. Consider the pair
    $$(M_2,\OO_{M_2}) = (L(A_5+A_1;1,3,4,5,6,\theta), \{0\} \times (2))$$
    where $\theta$ is the root
    $$\theta = \alpha_1 + 2\alpha_2+2\alpha_3+3\alpha_4+2\alpha_5+\alpha_6$$
    An {\fontfamily{cmtt}\selectfont atlas} computation shows that this indeed defines a Levi subgroup (of the stated type). By construction $L \subset M_1$ and clearly $\OO_{M_2} = \mathrm{Ind}^{M_2}_L \{0\}$. $\overline{\OO}_{M_2}$ contains a unique codimension 2 orbit, namely $\OO_{M_2,1}=\{0\}$, and the corresponding singularity is of type $A_1$. Finally,  $\Ind^G_{M_2} \OO_{M_2,1} = D_4(a_1) = \OO_1$. So by Lemma \ref{lem:findMk} $(M_2,\OO_{M_2})$ is adapted to $\fL_2$.
    \begin{center}
        \begin{tabular}{|c|c|c|c|c|} \hline
            $k$ & $\Sigma_k$ & $M_k$ & $\OO_{M_k}$ & $c_k$ \\ \hline
            $1$ & $A_1$ & $L(E_6)$ & $A_2$ & $1$ \\ \hline 
            $2$ & $A_1$ & $L(A_5+A_1;1,3,4,5,6,\theta)$ & $\{0\} \times (2)$ & $1$\\ \hline
        \end{tabular}
    \end{center}
 Since $M_1$ is standard, $\tau_1(1)=(0,0,0,0,0,0,1)$, and hence by Proposition \ref{prop:identification2}, we have $\delta_1=\frac{1}{2}\tau_1(1)=\frac{1}{2}(0,0,0,0,0,0,1)$. On the other hand, $\tau_1(2)$ is (either) generator of the free abelian group
    \begin{align*}
        \fX(M_2) &= \{\lambda \in \Lambda \mid \langle \lambda, \alpha^{\vee}\rangle =0, \ \forall \alpha \in \Delta(\fm_2,\fh)\}\\
        &= \{(0,x,0,0,0,0,y) \in \ZZ^7 \mid x+y=0\}
    \end{align*}
    So we may take $\tau_2=(0,1,0,0,0,0,-1)$ and hence by Proposition \ref{prop:identification2},$\delta_2=\frac{1}{2}\tau_1(2) = \frac{1}{2}(0,1,0,0,0,0,-1)$. Now by Proposition \ref{prop:deltashift}
    \begin{align*}\gamma_0(\widehat{\OO}) &= \rho(\fl) + \delta_1+\delta_2\\
    &= \frac{1}{2}(2,-9,2,2,2,2,-5) + \frac{1}{2}(0,0,0,0,0,0,1)+\frac{1}{2}(0,1,0,0,0,0,-1)\\
    &= \frac{1}{2}(2,-8,2,2,2,2,-5).\end{align*}
    Conjugating by $W$ we get
    $$\gamma_0(\widehat{\OO}) = \frac{1}{2}(0,0,1,1,0,1,1).$$

    \item $A_3+A_2+A_1$. Note that $\OO$ is even and $L_{\OO} = L(A_4+A_2)$. Hence by Lemma \ref{lem:even}
    $$(L,\OO_L) = (L(A_4+A_2), \{0\}).$$
    There is a single codimension 2 leaf $\fL_1\subset \Spec(\CC[\OO])$ and the corresponding singularity is of type $A_1$.
    \begin{center}
        \begin{tabular}{|c|c|c|c|c|} \hline
            $k$ & $\Sigma_k$ & $M_k$ & $\OO_{M_k}$ & $c_k$ \\ \hline
            $1$ & $A_1$ & $L(A_4+A_2)$ & $\{0\}$ & $1$ \\ \hline 
        \end{tabular}
    \end{center}
Note that $\tau_1=(0,0,0,0,1,0,0)$. So by Proposition \ref{prop:identification2}, we have $\delta_1=\frac{1}{2}\tau_1 = \frac{1}{2}(0,0,0,0,1,0,0)$. Now by Proposition \ref{prop:deltashift}
    \begin{align*}
        \gamma_0(\widehat{\OO}) &= \rho(\fl) + \delta_1\\
        &= (1,1,1,1,-4,1,1) + \frac{1}{2}(0,0,0,0,1,0,0)\\
        &= \frac{1}{2}(2,2,2,2,-7,2,2)
    \end{align*}
Conjugating by $W$ we get
$$\gamma_0(\widehat{\OO}) = \frac{1}{2}(1,0,0,1,0,1,1).$$

    \item $A_5+A_1$. We have
    $$\mathcal{P}_{\mathrm{rig}}(\OO) = \{(L(E_6),2A_2+A_1)\},$$
So by Lemma \ref{lem:Prigunique}
$$(L,\OO_L)=(L(E_6),2A_2+A_1),$$
There is a single codimension 2 leaf $\fL_1 \subset \Spec(\CC[\OO])$ and the corresponding singularity is of type $A_1$.
    \begin{center}
        \begin{tabular}{|c|c|c|c|c|} \hline
            $k$ & $\Sigma_k$ & $M_k$ & $\OO_{M_k}$ & $c_k$ \\ \hline
            $1$ & $A_1$ & $L(E_6)$ & $2A_2+A_1$ & $3$ \\ \hline 
        \end{tabular}
    \end{center}
Note that $\tau_1=(0,0,0,0,0,0,1)$. So by Proposition \ref{prop:identification2}, we have $\delta_1=\frac{1}{6}\tau_1 = \frac{1}{6}(0,0,0,0,0,0,1)$. Now by Proposition \ref{prop:deltashift}
$$\gamma_0(\widehat{\OO}) = \frac{1}{3}\rho(\fl) + \delta_1 = \frac{1}{3}(1,1,1,1,1,1,-8) + \frac{1}{6}(0,0,0,0,0,0,1) = \frac{1}{6}(2,2,2,2,2,2,-15)$$
Conjugating by $W$ we get
$$\gamma_0(\widehat{\OO}) = \frac{1}{6}(2,1,2,1,1,1,1).$$
    
\item $D_5(a_1) +A_1$. Note that $\OO$ is even and $L_{\OO} = L(A_3+A_2;2,3,4,6,7)$. Hence by Lemma \ref{lem:even}
$$(L,\OO_L) = (L(A_3+A_2;2,3,4,6,7), \{0\}).$$
There are two codimension 2 leaves $\fL_1,\fL_2 \subset \Spec(\CC[\OO])$, corresponding to the orbits $\OO_1=A_4+A_2$ and $\OO_2=D_5(a_1)$. The corresponding singularities are both of type $A_1$. Next we will compute the pairs $(M_k,\OO_{M_k})$ using Lemma \ref{lem:findMk}. 
First consider the pair
$$(M_1,\OO_{M_1}) = (L(D_6), (3^3,1^3)).$$
By construction, $L \subset M_1$ and $\OO_{M_1}=\mathrm{Ind}^{M_1}_L \{0\}$. Furthermore, $\dim \fX(\fm_1) = 1 =\dim \fP_1$. $\overline{\OO}_{M_1}$ contains a single codimension 2 orbit, namely $\OO_{M_1,2}= (3^2,2^2,1^2)$, and the singularity is of type $A_1$. Finally
$$\Ind^G_{M_1} \OO_{M_1,2} = \Ind^G_{M_1} (\Ind^{M_1}_{L(A_4;4,5,6,7)} \{0\}) = \Ind^G_{L(A_4;4,5,6,7)} \{0\}$$
Up to conjugation by $G$, there is a unique Levi subgroup of Lie type $A_4$. So the final induction above is the same as $\Ind^G_{L(A_4;1,2,3,4)}\{0\}$, which is $D_5(a_1)=\OO_2$. It follows from Lemma \ref{lem:findMk} that $(M_1,\OO_{M_1})$ is adapted to $\fL_1$. 

Next, consider the pair
$$(M_2,\OO_{M_2}) = (L(A_3+A_2+A_1;2,3,4,6,7,\theta), \{0\} \times \{0\} \times (2)),$$
where $\theta$ is the highest root for $G$, i.e.
$$\theta = 2\alpha_1+2\alpha_2+3\alpha_3+4\alpha_4+3\alpha_5+2\alpha_6+\alpha_7.$$
An {\fontfamily{cmtt}\selectfont atlas} computation shows that this indeed defines a Levi subgroup (of the correct Lie type). By construction, $L \subset M_2$ and clearly $\OO_{M_2} = \mathrm{Ind}^{M_2}_L \{0\}$. Furthermore, $\dim \fX(\fm_2) = 1 = \dim\fP_2$. There is a single codimension 2 orbit in $\overline{\OO}_{M_2}$, namely $\OO_{M_2,1} = \{0\}$, and the singularity is of type $A_1$. Finally $\Ind^G_{M_2} \OO_{M_2,1} = A_4+A_2=\OO_1$. So by Lemma \ref{lem:findMk}, $M_2$ is adapted to $\fL_2$. 
    \begin{center}
        \begin{tabular}{|c|c|c|c|c|} \hline
            $k$ & $\Sigma_k$ & $M_k$ & $\OO_{M_k}$ & $c_k$ \\ \hline
            $1$ & $A_1$ & $L(D_6)$ & $(3^3,1^3)$ & $2$ \\ \hline 
            $2$ & $A_1$ & $L(A_3+A_2+A_1;2,3,4,6,7,\theta)$ & $\{0\} \times \{0\} \times (2)$ & $2$\\ \hline
        \end{tabular}
    \end{center}
Since $M_1$ is standard, $\tau_1(1)=(1,0,0,0,0,0,0)$ and so by Proposition \ref{prop:identification2}, we have $\delta_1=\frac{1}{4}\tau_1(1)=\frac{1}{4}(1,0,0,0,0,0,0)$. On the other hand, $\tau_1(2)$ is (either) generator of the free abelian group
\begin{align*}
    \fX(M_2) &= \{\lambda \in \Lambda \mid \langle \lambda, \alpha^{\vee}\rangle =0, \ \forall \alpha \in \Delta(\fm_2,\fh)\}\\
    &= \{(x,0,0,0,y,0,0) \in \ZZ^7 \mid 2x+3y=0\}
\end{align*}
Hence $\tau_1(2) = (3,0,0,0,-2,0,0)$ and  $\delta_2=\frac{1}{4}\tau_1(2) = \frac{1}{4}(3,0,0,0,-2,0,0)$. Now by Proposition \ref{prop:deltashift}
\begin{align*}
    \gamma_0(\widehat{\OO}) &= \rho(\fl) + \delta_1 + \delta_2\\
    &= \frac{1}{2}(-3,2,2,2,-6,2,2) + \frac{1}{4}(1,0,0,0,0,0,0) + \frac{1}{4}(3,0,0,0,-2,0,0)\\
    &= \frac{1}{2}(-1,2,2,2,-7,2,2)
\end{align*}
Conjugating by $W$ we get
$$\gamma_0(\widehat{\OO}) = \frac{1}{2}(1,0,0,1,0,0,1).$$

    \item $E_7(a_5)$. Note that $\OO$ is even and $L_{\OO} = L(A_1+2A_2; 1,2,3,5,6)$. Hence by Lemma \ref{lem:even}
$$(L,\OO_L) = (L(A_1+2A_2;1,2,3,5,6), \{0\}).$$
There are two codimension 2 leaves $\fL_1,\fL_2 \subset X$, corresponding to the orbits $\OO_1=E_6(a_3)$ and $\OO_2=D_6(a_2)$. Both singularities are of type $A_1$. We will compute the pairs $(M_k,\OO_{M_k})$ using Lemma \ref{lem:findMk}. First define
$$(M_1, \OO_{M_1}) = (L(E_6),D_4(a_1))$$
By construction, $L \subset M_1$, and
$\OO_{M_1} = \mathrm{Ind}^{M_1}_L \{0\}$. Furthermore, $\dim \fX(\fm_1) = 1 = \dim \fP_1$. Note that $\overline{\OO}_{M_1}$ contains a single codimension 2 orbit, namely $\OO_{M_1,2} = A_3+A_1$, and the corresponding singularity is of type $A_1$. Finally, by \cite[Sec 4]{deGraafElashvili}
\begin{align*}
\Ind^G_{M_1} \OO_{M_1,2} &= \Ind^G_{M_1} (\Ind^{M_1}_{L(D_5;1,2,3,4,5)} (3,2^2,1^3))\\
&= \Ind^G_{L(D_5;1,2,3,4,5)} (3,2^2,1^3)\\
&= D_6(a_2)\\
&= \OO_2
\end{align*}
So by Lemma \ref{lem:findMk}, $(M_1,\OO_{M_1})$ is adapted to $\fL_1$. 

Next define
$$(M_2,\OO_{M_2}) = (L(A_1+A_5; 1, 2, 3, 5, 6, \theta), \{0\} \times (2^3)).$$
where $\theta$ is the positive root
$$\theta=\alpha_2+\alpha_3+2\alpha_4+\alpha_5+\alpha_6+\alpha_7$$
An {\fontfamily{cmtt}\selectfont atlas} computation shows that this indeed defines a Levi (of the stated type). By construction, $L \subset M_2$, and $\OO_{M_1} = \mathrm{Ind}^{M_1}_L \{0\}$. Furthermore, $\dim \fX(\fm_2) = 1 = \dim \fP_2$. Note that $\overline{\OO}_{M_2}$ contains a single codimension 2 orbit, namely $\OO_{M_2,1} = \{0\} \times (2^2,1^2)$, and the corresponding singularity is of type $A_1$. Finally, by \cite[Sec 4]{deGraafElashvili}
\begin{align*}
\Ind^G_{M_2} \OO_{M_2,1} &= \Ind^G_{M_2} (\Ind^{M_2}_{L(2A_1+A_3;1,2,4,5,7)} \{0\})\\
&= \Ind^G_{L(2A_1+3A_1;1,2,4,5,7)} \{0\}\\
&= E_6(a_3)\\
&= \OO_1
\end{align*}
So by Lemma \ref{lem:findMk}, $(M_2,\OO_{M_2})$ is adapted to $\fL_2$. 
    \begin{center}
        \begin{tabular}{|c|c|c|c|c|} \hline
            $k$ & $\Sigma_k$ & $M_k$ & $\OO_{M_k}$ & $c_k$ \\ \hline
            
            $1$ & $A_1$ & $L(E_6)$ & $D_4(a_1)$ & $1$ \\ \hline 
            $2$ & $A_1$ & $L(A_1+A_5; 1, 2, 3, 5, 6, \theta)$ & $\{0\} \times (2^3)$ & $1$\\ \hline
        \end{tabular}
    \end{center}
Since $M_1$ is standard, $\tau_1(1)=(0,0,0,0,0,0,1)$ and hence by Proposition \ref{prop:identification2} $\delta_1=\frac{1}{2}\tau_1(1) = \frac{1}{2}(0,0,0,0,0,0,1)$. On the other hand, $\tau_1(2)$ is (either) generator of the free abelian group 
\begin{align*}\fX(M_2) &= \{\lambda \in \Lambda \mid \langle \lambda, \alpha^{\vee}\rangle =0, \ \forall \alpha \in \Delta(\mathfrak{m},\mathfrak{h})\}\\
&=\{(0,0,0,x,0,0,y) \in \ZZ^7 \mid 2x+y=0\}
\end{align*}
So we may take $\tau_1(2) = (0,0,0,1,0,0,-2)$ and $\delta_2=\frac{1}{2}\tau_1(2) = \frac{1}{2}(0,0,0,1,0,0,-2)$. Now by Proposition \ref{prop:deltashift}
\begin{align*}
\gamma_0(\widehat{\OO}) &= \rho(\fl) + \delta_1+ \delta_2\\
&= \frac{1}{2}(2,2,2,-5,2,2,-2) + \frac{1}{2}(0,0,0,0,0,0,1) + \frac{1}{2}(0,0,0,1,0,0,-2)\\
&= \frac{1}{2}(2,2,2,-4,2,2,-3)
\end{align*}
Conjugating by $W$ we get
$$\gamma_0(\widehat{\OO}) = \frac{1}{2}(0,0,1,0,1,0,0).$$

\item $E_7(a_4)$. Note that $\OO$ is even and $L_{\OO} = L(2A_1+A_2;2,3,5,6)$. Hence by Lemma \ref{lem:even}
    $$(L,\OO_L) = (L(2A_1+A_2;2,3,5,6), \{0\})$$
    There are three codimension 2 leaves $\fL_1,\fL_2,\fL_3 \subset \Spec(\CC[\OO])$, corresponding to the orbits $\OO_1=A_6$, $\OO_2=D_5+A_1$, and $\OO_3=D_6(a_1)$. All three singularities are of type $A_1$. We will compute the pairs $(M_k,\OO_{M_k})$ using Lemma \ref{lem:findMk}.
    
    For $\fL_1$, consider the standard pair
    $$(M_1,\OO_{M_1}) = (L(D_6),(5,3^2,1))$$
    By construction, $L \subset M_1$ and $\OO_{M_1} = \mathrm{Ind}^{M_1}_L \{0\}$. Furthermore, $\dim \fX(\fm_1) = 1 = \dim \fP_1$. $\overline{\OO}_{M_1}$ contains two codimension 2 orbits, namely $\OO_{M_1,2} = (4^2,3,1)$ and $\OO_{M_1,3} = (5,3,2^2)$, and the corresponding singularities are of type $A_1$. Note that
    \begin{align*}
        \Ind^G_{M_1} \OO_{M_1,2} &= \Ind^G_{M_1} (\Ind^{M_1}_{L(2A_2;2,4,6,7)} \{0\})\\
        &= \Ind^G_{L(2A_2;2,4,6,7)} \{0\}
    \end{align*}
    Up to conjugation by $G$, there is a unique Levi subgroup of type $2A_2$. So the final induction is the same as $\Ind^G_{L(2A_2;1,3,5,6)}\{0\}$, which is $D_5+A_1 = \OO_2$ by \cite[Sec 4]{deGraafElashvili}. Similarly
    \begin{align*}
        \Ind^G_{M_1} \OO_{M_1,3} &= \Ind^G_{M_1} (\Ind^{M_1}_{L(A_3;5,6,7)} \{0\})\\
        &= \Ind^G_{L(A_3;5,6,7)} \{0\}\\
        &=\Ind^G_{L(A_3;1,3,4)}\{0\}\\
        &=D_6(a_1)\\
        &=\OO_3
    \end{align*}
    It follows from Lemma \ref{lem:findMk} that $(M_1,\OO_{M_1})$ is adapted to $\fL_1$.
    
    For $\fL_2$, consider the non-standard pair
    $$(M_2, \OO_{M_2}) = (L(D_5+A_1;2,3,5,6,\theta_1,\theta_2),(3^3,1) \times (2))$$
    where $\theta_1,\theta_2$ are the roots
    $$\theta_1 = -\alpha_2-\alpha_3-\alpha_4-\alpha_5-\alpha_6 \qquad \theta_2 = 2\alpha_1+2\alpha_2+3\alpha_3+4\alpha_4+3\alpha_5+2\alpha_6+\alpha_7$$
     An {\fontfamily{cmtt}\selectfont atlas} computation shows that this indeed defines a Levi subgroup (of the correct type). By construction, $L \subset M_2$. Computing the Cartan matrix for $M_2$, we see that the simple roots for $L$ embed as the simple roots for $M_2$ corresponding to the non-central nodes of the $D_5$ Dynkin diagram. In particular, $\mathrm{Ind}^{M_2}_L \{0\} = \OO_{(3^3,1) \times (2)} = \OO_{M_2}$. Furthermore, $\dim \fX(\fm_2) = 1 = \dim \fP_2$. $\overline{\OO}_{M_2}$ contains two codimension 2 orbits, namely $\OO_{M_2,1} = (3^3,1) \times \{0\} $ and $\OO_{M_2,3} = (3^2,2^2) \times (2)$, and the corresponding singularities are of type $A_1$. We have
    \begin{align*}
        \Ind^G_{M_2} \OO_{M_2,1} &= \Ind^G_{M_2} (\Ind^{M_1}_{L(3A_1+A_2;1,2,3,5,7)} \{0\})\\
        &= \Ind^G_{L(3A_1+A_2;1,2,3,5,7)} \{0\}\\
        &= A_6\\
        &= \OO_1
    \end{align*}
    Similarly
    \begin{align*}
        \Ind^G_{M_2} \OO_{M_2,3} &= \Ind^G_{M_2} (\Ind^{M_1}_{L(A_3;1,3,4)} \{0\})\\
        &= \Ind^G_{L(A_3;1,3,4)} \{0\}\\
        &= D_6(a_1)\\
        &= \OO_3
    \end{align*}
    So by Lemma \ref{lem:findMk} $(M_2,\OO_{M_2})$ is adapted to $\fL_2$.

    For $\fL_3$, consider the pair
    $$(M_3, \OO_{M_3}) = (L(A_1+A_2+A_3;2,3,5,6,\theta_2,\theta_3), \OO_{(2)} \times \{0\} \times \OO_{(2^2)})$$
    where $\theta_3$ is the negative root
    $$\theta_3 = -\alpha_2-\alpha_3-\alpha_4-\alpha_5-\alpha_6-\alpha_7.$$
    An {\fontfamily{cmtt}\selectfont atlas} computation shows that this indeed defines a Levi (of the stated type). By construction $L \subset M_3$. Computing the Cartan matrix for $M_3$, we see that the $A_2$ factor of $L$ embeds into the $A_2$ factor of $M_3$ and the $2A_1$ factor of $L$ embeds into $A_3$ factor of $M_3$. In particular, $\mathrm{Ind}^{M_3}_L \{0\} = \OO_{(2) \times \{0\} \times \OO_{(2^2)}} = \OO_{M_3}$. Furthermore, $\dim \fX(\fm_3) = 1 = \dim \fP_3$. $\overline{\OO}_{M_3}$ contains two codimension 2 orbits, namely $\OO_{M_3,1} = \{0\} \times \{0\} \times \OO_{(2^2)}$ and $\OO_{M_3,2} = \OO_{(2)} \times \{0\} \times \OO_{(2,1^2)}$.Both singularities are of type $A_1$. We have
    \begin{align*}
    \Ind^G_{M_3} \OO_{M_3,1} &= \Ind^G_{M_3} (\Ind^{M_3 v}_{L(3A_1+A_2;1,2,3,5,7)} \{0\})\\
    &= \Ind^G_{L(3A_1+A_2;1,2,3,5,7)} \{0\}\\
    &= A_6\\
    &= \OO_1
    \end{align*}
    Similarly
    \begin{align*}
    \Ind^G_{M_3} \OO_{M_3,2} &= \Ind^G_{M_3} (\Ind^{M_3}_{L(2A_2; 1,3,5,6)} \{0\})\\
    &= \Ind^G_{L(2A_2;1,3,5,6)} \{0\}\\
    &= D_5+A_1\\
    &= \OO_2
    \end{align*}
    So by Lemma \ref{lem:findMk}, $(M_3,\OO_{M_3})$ is adapted to $\fL_3$.
    \begin{center}
        \begin{tabular}{|c|c|c|c|c|} \hline
            $k$ & $\Sigma_k$ & $M_k$ & $\OO_{M_k}$ & $c_k$ \\ \hline
            
            $1$ & $A_1$ & $L(D_6)$ & $\OO_{(5,3^2,1)}$ & $2$ \\ \hline 
            
            $2$ & $A_1$ & $L(D_5+A_1;2,3,5,6,\theta_1,\theta_2)$ & $\OO_{(3^3,1)} \times \OO_{(2)}$ & $2$\\ \hline
            
            $3$ & $A_1$ & $L(A_1+A_2+A_3;2,3,5,6,\theta_2,\theta_3)$ & $\OO_{(2)} \times \{0\} \times \OO_{(2^2)}$ & $2$\\ \hline
        \end{tabular}
    \end{center}
Since $M_1$ is standard, $\tau_1(1) = (1,0,0,0,0,0,0)$ and so by Proposition \ref{prop:identification2}, $\delta_1=\frac{1}{4}\tau_1(1)=\frac{1}{4}(1,0,0,0,0,0,0)$. By definition, $\tau_1(2)$ is (either) generator of the free abelian group
    \begin{align*}
        \fX(M_2) &= \{\lambda \in \Lambda \mid \langle \lambda, \alpha^{\vee}\rangle = 0, \ \forall \alpha \in \Delta(\fm_2,\fh)\}\\
        &= \{(x,0,0,0,0,0,y) \in \ZZ^7 \mid 2x+y=0\}
    \end{align*}
    So we may take $\tau_1(2) = (1,0,0,0,0,0,-2)$ and $\delta_2 = \frac{1}{4}\tau_1(2) = \frac{1}{4}(1,0,0,0,0,0,-2)$. By a similar computation, we get $\tau_1(3) = (3,0,0,-2,0,0,2)$ and $\delta_3 = \frac{1}{4}(3,0,0,-2,0,0,2)$. Now by Proposition \ref{prop:deltashift}
    \begin{align*}
        \gamma_0(\widehat{\OO}) &= \rho(\fl) + \delta_1+\delta_2+\delta_3\\
        &= \frac{1}{2}(-1,2,2,-4,2,2,-2) + \frac{1}{4}(1,0,0,0,0,0,0) + \frac{1}{4}(1,0,0,0,0,0,-2) + \frac{1}{4}(3,0,0,-2,0,0,2)\\
        &= \frac{1}{4}(3, 4, 4, -10, 4, 4,-4 )
    \end{align*}
\end{itemize}
Conjugating by $W$ we get
$$\gamma_0(\widehat{\OO}) = \frac{1}{4}(1,1,1,0,0,1,1).$$

\paragraph{Type $E_8$}

\begin{center}
$$\dynkin[label,label macro/.code={\alpha_{\drlap{#1}}},edge
length=.75cm] E8$$
\end{center}

\begin{itemize}
    \item $A_2$. Note that $\OO$ is even and $L_{\OO}=L(E_7)$. Hence by Lemma \ref{lem:even}
$$(L,\OO_L) = (L(E_7), \{0\}).$$
There is a single codimension 2 leaf $\fL_1 \subset \Spec(\CC[\OO])$ and the corresponding singularity is of type $A_1$. Thus
    \begin{center}
        \begin{tabular}{|c|c|c|c|c|} \hline
            $k$ & $\Sigma_k$ & $M_k$ & $\OO_{M_k}$ & $c_k$ \\ \hline
            
            $1$ & $A_1$ & $L(E_7)$ & $\{0\}$ & $1$ \\ \hline 
        \end{tabular}
    \end{center}
Note that $\tau_1 = (0,0,0,0,0,0,0,1)$. So by Proposition \ref{prop:identification2}, we have $\delta_1=\frac{1}{2}\tau_1=\frac{1}{2}(0,0,0,0,0,0,0,1)$. Now by Proposition \ref{prop:deltashift}
\begin{align*}
\gamma_0(\widehat{\OO}) &= \rho(\fl) + \delta_1\\
&= \frac{1}{2}(2,2,2,2,2,2,2,-27) + \frac{1}{2}(0,0,0,0,0,0,0,1) = (1,1,1,1,1,1,1,-13).
\end{align*}
Conjugating by $W$ we get
$$\gamma_0(\widehat{\OO}) = (1,0,0,1,0,1,1,1).$$

\item $2A_2$. Note that $\OO$ is even and $L_{\OO} = L(D_7)$. Hence by Lemma \ref{lem:even}
$$(L,\OO_L) = (L(D_7),\{0\})$$
There is a single codimension 2 leaf $\fL_1 \subset \Spec(\CC[\OO])$ and the corresponding singularity is of type $A_1$. 
    \begin{center}
        \begin{tabular}{|c|c|c|c|c|} \hline
            $k$ & $\Sigma_k$ & $M_k$ & $\OO_{M_k}$ & $c_k$ \\ \hline
            
            $1$ & $A_1$ & $L(D_7)$ & $\{0\}$ & $1$ \\ \hline 
        \end{tabular}
    \end{center}
Now $\tau_1=(1,0,0,0,0,0,0,0)$. So by Proposition \ref{prop:identification2}, we have $\delta_1=\frac{1}{2}\tau_1=\frac{1}{2}(1,0,0,0,0,0,0,0)$. Now by Proposition \ref{prop:deltashift}
\begin{align*}
\gamma_0(\widehat{\OO}) &= \rho(\fl) + \delta_1\\
&= \frac{1}{2}(-21,2,2,2,2,2,2,2) + \frac{1}{2}(1,0,0,0,0,0,0)\\
&=(-10,1,1,1,1,1,1,1).
\end{align*}
Conjugating by $W$ we get
$$\gamma_0(\widehat{\OO}) = (1,0,0,1,0,0,1,0).$$

    \item $D_4(a_1)$. Note that $\OO$ is even and $L_{\OO} = L(A_1+E_6)$. Hence by Lemma \ref{lem:even}
$$(L,\OO_L) = (L(A_1+E_6), \{0\}).$$
There is a single codimension 2 leaf $\fL_1 \subset \Spec(\CC[\OO])$ and the correponding singularity is of type $A_1$. 
    \begin{center}
        \begin{tabular}{|c|c|c|c|c|} \hline
            $k$ & $\Sigma_k$ & $M_k$ & $\OO_{M_k}$ & $c_k$ \\ \hline
            
            $1$ & $A_1$ & $L(A_1+E_6)$ & $\{0\}$ & $1$ \\ \hline 
        \end{tabular}
    \end{center}
Thus $\tau_1=(0,0,0,0,0,0,1,0)$, and by Proposition \ref{prop:identification2}, $\delta_1=\frac{1}{2}(0,0,0,0,0,0,1,0)$. Now by Proposition \ref{prop:deltashift}
\begin{align*}
\gamma_0(\widehat{\OO}) &= \rho(\fl) + \delta_1\\
&= \frac{1}{2}(2,2,2,2,2,2,-17,2) + \frac{1}{2}(0,0,0,0,0,0,1,0) \\
&= (1,1,1,1,1,1,-8,1).
\end{align*}
Conjugating by $W$ we get
$$\gamma_0(\widehat{\OO}) = (0,0,0,1,0,0,1,1).$$

\item $D_4(a_1)+A_2$. Note that $\OO$ is even and $L_{\OO} = L(A_7)$. Hence by Lemma \ref{lem:even}
$$(L,\OO_L) = (L(A_7), \{0\}).$$
There is a single codimension 2 leaf $\fL_1 \subset \Spec(\CC[\OO])$ and the corresponding singularity is of type $A_1$. 
    \begin{center}
        \begin{tabular}{|c|c|c|c|c|} \hline
            $k$ & $\Sigma_k$ & $M_k$ & $\OO_{M_k}$ & $c_k$ \\ \hline
            
            $1$ & $A_1$ & $L(A_7)$ & $\{0\}$ & $1$ \\ \hline 
        \end{tabular}
    \end{center}
Note that $\tau_1=(0,1,0,0,0,0,0,0)$, so by Proposition \ref{prop:identification2}, $\delta_1=\frac{1}{2}\tau_1=\frac{1}{2}(0,1,0,0,0,0,0,0)$. Now by Proposition \ref{prop:deltashift}
\begin{align*}
\gamma_0(\widehat{\OO}) &= \rho(\fl) + \delta\\
&= \frac{1}{2}(2,-15,2,2,2,2,2,2) + \frac{1}{2}(0,1,0,0,0,0,0,0)\\
&= (1,-7,1,1,1,1,1,1)
\end{align*}
Conjugating by $W$ we get
$$\gamma_0(\widehat{\OO}) = (0,0,0,1,0,0,0,1).$$

\item $D_4+A_2$. Note that $\OO$ is even and $L_{\OO} = L(A_6;1,3,4,5,6,7)$. Hence by Lemma \ref{lem:even}
$$(L,\OO_L) = (L(A_6;1,3,4,5,6,7), \{0\}).$$
There are two codimension 2 leaves $\fL_1,\fL_2 \subset \Spec(\CC[\OO])$, corresponding to the orbits $\OO_1=A_4+A_2+A_1$ and $\OO_2=D_5(a_1)+A_1$. Both singularities are of type $A_1$. We will compute $(M_k,\OO_{M_k})$ using Lemma \ref{lem:findMk}. 

First, define
$$(M_1,\OO_{M_1}) = (L(E_7), A_2+3A_1)$$
By construction $L \subset M_1$ and by the calculation for $A_2+3A_1 \subset E_6$, we have $\OO_{M_1} = \mathrm{Ind}^{M_1}_L \{0\}$. Furthermore, $\dim \fX(\fm_1)=1 =\dim \fP_1$. Note that $\overline{\OO}_{M_1}$ contains a single codimension 2 orbit, namely $\OO_{M_1,2} = A_2+2A_1$, and the corresponding singularity is of type $A_1$. Finally, $\Ind^G_{M_1} \OO_{M_1,2} = D_5(a_1) + A_1 = \OO_2$. So by Lemma \ref{lem:findMk}, $(M_1,\OO_{M_1})$ is adapted to $\fL_1$. 

Next, define
$$(M_2,\OO_{M_2}) = (L(A_1+A_6;1,3,4,5,6,7,\theta), (2) \times \{0\})$$
where $\theta$ is the highest root for $G$, i.e.
$$\theta = 2\alpha_1+3\alpha_2+4\alpha_3+6\alpha_4+5\alpha_5 + 4\alpha_6 + 3\alpha_7+2\alpha_8$$
An {\fontfamily{cmtt}\selectfont atlas} computation shows that this indeed defines a Levi (of the stated type). By construction $L \subset M_2$ and $\OO_{M_2} = \mathrm{Ind}^{M_2}_L \{0\}$. Of course, $\dim \fX(\fm_2) = 1 = \dim \fP_2$. Note that $\overline{\OO}_{M_2}$ contains a single codimension 2 orbit, namely $\OO_{M_2,1} = \{0\} \times \{0\}$, and the corresponding singularity is of type $A_1$. Finally
$$\Ind^G_{M_2} \OO_{M_2,1} = A_4+A_2+A_1 = \OO_1.$$
So by Lemma \ref{lem:findMk},$(M_2,\OO_{M_2})$ is adapted to $\fL_2$.
    \begin{center}
        \begin{tabular}{|c|c|c|c|c|} \hline
            $k$ & $\Sigma_k$ & $M_k$ & $\OO_{M_k}$ & $c_k$ \\ \hline
            
            $1$ & $A_1$ & $L(E_7)$ & $A_2+3A_1$ & $2$ \\ \hline 
            
            $2$ & $A_1$ & $L(A_1+A_6;1,3,4,5,6,7,\theta)$ & $(2) \times \{0\}$ & $2$ \\ \hline 
        \end{tabular}
    \end{center}
Since $M_1$ is standard, $\tau_1(1)=(0,0,0,0,0,0,0,1)$, so by Proposition \ref{prop:identification2} $\delta_1=\frac{1}{4}\tau_1(1)=\frac{1}{4}(0,0,0,0,0,0,0,1)$. On the other hand, $\tau_1(2)$ is (either) generator of the free abelian group
\begin{align*}
\fX(M_2) &= \{\lambda \in \Lambda \mid \langle \lambda, \alpha^{\vee}\rangle =0,  \ \forall \alpha \in \Delta(\fm_2,\fh)\}\\
&=\{(0,x,0,0,0,0,0,y) \in \ZZ^8 \mid 3x+2y=0\}.
\end{align*}
So we may take $\tau_1(2) = (0,-2,0,0,0,0,0,3)$ and $\delta_2=\frac{1}{4}\tau_1(2)=\frac{1}{4}(0,-2,0,0,0,0,0,3)$. Now by Proposition \ref{prop:deltashift}
\begin{align*}
    \gamma_0(\widehat{\OO}) &= \rho(\fl) + \delta_1+\delta_2 \\
    &= (1,-6,1,1,1,1,1,-3) + \frac{1}{4}(0,0,0,0,0,0,0,1) + \frac{1}{4}(0,-2,0,0,0,0,0,3) \\ 
    &= \frac{1}{2}(2,-13,2,2,2,2,2,-4)
\end{align*}
Conjugating by $W$ we get
$$\gamma_0(\widehat{\OO}) = \frac{1}{2}(1,0,0,1,0,0,1,1)$$

    \item $D_6(a_2)$. We have
    $$\mathcal{P}_{\mathrm{rig}}(\OO) = \{(L(D_7),(3,2^4,1))\}.$$
 There are two codimension orbits in the closure of $\OO$, and the corresponding singularities are of types $m$ and $A_1$. Therefore, there is a single codimension 2 leaf $\fL_1\subset \Spec(\CC[\OO])$ and the corresponding singularity is of type $A_1$. We have $\fr=2A_1$, and thus $\dim \fP^X=1$.  So by Lemma \ref{lem:Prigunique}
$$(L,\OO_L) = (L(D_7), (3,2^4,1)).$$
Thus, $M_1=L$ and
    \begin{center}
        \begin{tabular}{|c|c|c|c|c|} \hline
            $k$ & $\Sigma_k$ & $M_k$ & $\OO_{M_k}$ & $c_k$ \\ \hline
            
            $1$ & $A_1$ & $L(D_7)$ & $(3,2^4,1)$ & $2$ \\ \hline 
        \end{tabular}
    \end{center}
Note that $\tau_1=(1,0,0,0,0,0,0,0)$, so by Proposition \ref{prop:identification2} $\delta_1=\frac{1}{4}\tau_1=\frac{1}{4}(1,0,0,0,0,0,0,0)$. By \cite[Prop 8.2.3]{LMBM} $\gamma_0(\OO_L) = \frac{1}{2}\rho(\fl)$. Thus by Proposition \ref{prop:deltashift}
\begin{align*}
    \gamma_0(\widehat{\OO}) &= \frac{1}{2}\rho(\fl) + \delta\\
    &= \frac{1}{4}(-21,2,2,2,2,2,2,2) + \frac{1}{4}(1,0,0,0,0,0,0,0)\\
    &= \frac{1}{2}(-10,1,1,1,1,1,1,1)
\end{align*}
Conjugating by $W$ we get
$$\gamma_0(\widehat{\OO}) = \frac{1}{2}(1,0,0,1,0,0,1,0).$$
    
    \item $E_6(a_3)+A_1$. We have
    $$\mathcal{P}_{\mathrm{rig}}(\OO) = \{(L(E_7),A_1+2A_2)\}.$$
     There are two codimension orbits in the closure of $\OO$, and the corresponding singularities are $m$ and $A_1$. Hence, there is a single codimension 2 leaf $\fL_1\subset \Spec(\CC[\OO])$ and the corresponding singularity is of type $A_1$. We have $\fr=A_1$, and thus $\dim \fP^X=1$. So by Lemma \ref{lem:Prigunique}
$$(L,\OO_L) = (L(E_7), A_1+2A_2).$$
Thus, $M_1=L$ and we have
    \begin{center}
        \begin{tabular}{|c|c|c|c|c|} \hline
            $k$ & $\Sigma_k$ & $M_k$ & $\OO_{M_k}$ & $c_k$ \\ \hline
            
            $1$ & $A_1$ & $L(E_7)$ & $A_1+2A_2$ & $1$ \\ \hline 
        \end{tabular}
    \end{center}
Note that $\tau_1 = (0,0,0,0,0,0,0,1)$. So by Proposition \ref{prop:identification2}, $\delta_1=\frac{1}{2}\tau_1=\frac{1}{2}(0,0,0,0,0,0,0,1)$. By \ref{table:E7orbits} $\gamma_0(\OO_L) = \frac{1}{3}\rho(\fl)$. So by Proposition \ref{prop:deltashift}
\begin{align*}
    \gamma_0(\widehat{\OO}) &= \frac{1}{3}\rho(\fl) + \delta\\
    &= \frac{1}{6}(2,2,2,2,2,2,2,-27) + \frac{1}{2}(0,0,0,0,0,0,0,1)\\
    &= \frac{1}{3}(1,1,1,1,1,1,1,-12)
\end{align*}
Conjugating by $W$ we get
$$\gamma_0(\widehat{\OO}) = \frac{1}{3}(0,1,1,0,1,0,1,1).$$

    \item $E_7(a_5)$. We have
    $$\mathcal{P}_{\mathrm{rig}}(\OO) = \{(L(E_7),(A_1+A_3)'), (L(E_6+A_1), 3A_1 \times \{0\})\}.$$
    There are two codimension orbits in the closure of $\OO$, and the corresponding singularities are of types $m$ and $2A_1$. Hence, there is a single codimension 2 leaf $\fL_1\subset \Spec(\CC[\OO])$ and the corresponding singularity is of type $A_1$. We have $\fr=A_1$, and thus $\dim \fP^X=1$. At this point, it is not clear which pair in $\mathcal{P}_{\mathrm{rig}}(\OO)$ induces $\OO$ birationally. However, it turns out not to matter. In both cases, we get the same answer for $\gamma_0(\widehat{\OO})$.

Assume first that
$$(L,\OO_L) = (L(E_7),(A_1+A_3)').$$
There is a single codimension 2 leaf $\fL_1 \subset \Spec(\CC[\OO])$ and the corresponding singularity is of type $A_1$.
    \begin{center}
        \begin{tabular}{|c|c|c|c|c|} \hline
            $k$ & $\Sigma_k$ & $M_k$ & $\OO_{M_k}$ & $c_k$ \\ \hline
            
            $1$ & $A_1$ & $L(E_7)$ & $(A_1+A_3)'$ & $1$ \\ \hline 
        \end{tabular}
    \end{center}
Thus $\tau_1=(0,0,0,0,0,0,0,1)$ and by Proposition \ref{prop:identification2} $\delta_1=\frac{1}{2}\tau_1=\frac{1}{2}(0,0,0,0,0,0,0,1)$. $\gamma_0(\OO_L)$ was computed in the previous subsection in terms of fundamental weights for $L$. It is not difficult to rewrite this in terms of fundamental weights for $G$
$$\gamma_0(\OO_L) = \frac{1}{2}(1,1,0,1,0,1,1,-9)$$
Now by Proposition \ref{prop:deltashift}
\begin{align*}
    \gamma_0(\widehat{\OO}) &= \gamma_0(\OO_L) + \delta\\
    &= \frac{1}{2}(1,1,0,1,0,1,1,-9) + \frac{1}{2}(0,0,0,0,0,0,0,1)\\
    &=\frac{1}{2}(1,1,0,1,0,1,1,-8)
\end{align*}
Conjugating by $W$ we get
$$\gamma_0(\widehat{\OO}) = \frac{1}{2}(0,0,1,0,1,0,0,1).$$
Next, suppose
$$(L,\OO_L) = (L(E_6+A_1),3A_1 \times \{0\}).$$
Then
    \begin{center}
        \begin{tabular}{|c|c|c|c|c|} \hline
            $k$ & $\Sigma_k$ & $M_k$ & $\OO_{M_k}$ & $c_k$ \\ \hline
            
            $1$ & $A_1$ & $L(E_6+A_1)$ & $3A_1 \times \{0\}$ & $1$ \\ \hline 
        \end{tabular}
    \end{center}
Thus $\tau_1=(0,0,0,0,0,0,1,0)$ and by Proposition \ref{prop:identification2} $\delta_1=\frac{1}{2}\tau_1=\frac{1}{2}(0,0,0,0,0,0,1,0)$. $\gamma_0(\OO_L)$ was computed in Section \ref{subsec:exceptionalorbitinflchar} in terms of fundamental weights for $L$. Rewriting again in terms of fundamental weights for $G$
$$\gamma_0(\OO_L) = \frac{1}{2}(1,1,1,1,1,1,-9,2).$$
Now by Proposition \ref{prop:deltashift}
\begin{align*}
\gamma_0(\widehat{\OO}) &= \gamma_0(\OO_L) + \delta\\
&=\frac{1}{2}(1,1,1,1,1,1,-9,2) + \frac{1}{2}(0,0,0,0,0,0,1,0)\\
&= \frac{1}{2}(1,1,1,1,1,1,-8,2)
\end{align*}
Conjugating by $W$ we get once again
$$\gamma_0(\widehat{\OO}) = \frac{1}{2}(0,0,1,0,1,0,0,1).$$

\item $E_8(a_7)$. Note that $\OO$ is even and $L_{\OO} = L(A_4+A_3)$. Hence by Lemma \ref{lem:even}
$$(L,\OO_L) = (L(A_4+A_3), \{0\}).$$
There is a single codimension 2 leaf $\fL_1 \subset \Spec(\CC[\OO])$ and the corresponding singularity is of type $A_1$. 
    \begin{center}
        \begin{tabular}{|c|c|c|c|c|} \hline
            $k$ & $\Sigma_k$ & $M_k$ & $\OO_{M_k}$ & $c_k$ \\ \hline
            
            $1$ & $A_1$ & $L(A_4+A_3)$ & $\{0\}$ & $1$ \\ \hline 
        \end{tabular}
    \end{center}
Thus $\tau_1=(0,0,0,0,1,0,0,0)$, so by Proposition \ref{prop:identification2} $\delta_1=\frac{1}{2}\tau_1=\frac{1}{2}(0,0,0,0,1,0,0,0)$. Now by Proposition \ref{prop:deltashift}
\begin{align*}
    \gamma_0(\widehat{\OO}) &= \rho(\fl) + \delta\\
    &= \frac{1}{2}(2,2,2,2,-9,2,2,2) +  \frac{1}{2}(0,0,0,0,1,0,0,0)\\
    &= (1,1,1,1,-4,1,1,1)
\end{align*}
Conjugating by $W$ we get
$$\gamma_0(\widehat{\OO}) = (0,0,0,0,1,0,0,0).$$

    \item $E_8(b_6)$. $\widehat{\OO}$ is birationally rigid by Proposition \ref{prop:A1}. The infinitesimal character $\gamma_0(\widehat{\OO})$ was computed in \cite[Ex 8.5.2]{LMBM}
    $$\gamma_0(\widehat{\OO}) = \frac{1}{3}(1,1,0,0,0,1,0,1).$$
\end{itemize}

\begin{table}[H]
    \begin{tabular}{|c|c|} \hline
        $\OO$ & $\gamma_0(\widehat{\OO})$   \\ \hline
        $G_2(a_1)$ & \cellcolor{blue!20}$(1,0)$  \\ \hline
    \end{tabular}
    \caption{Unipotent infinitesimal characters attached to birationally rigid covers: type $G_2$. Special unipotent characters are highlighted in blue.}
    \label{table:coversG2}
\end{table}

\begin{table}[H]
    \begin{tabular}{|c|c|} \hline
        $\OO$ & $\gamma_0(\widehat{\OO})$ \\ \hline
        $A_2$ &  $\frac{1}{2}(1,1,0,2)$\\ \hline
        $B_2$ & $\frac{1}{2}(0,1,0,2)$  \\ \hline
        $C_3(a_1)$ & $\frac{1}{2}(1,0,1,1)$\\ \hline
        $F_4(a_3)$ & \cellcolor{blue!20}$(0,0,1,0)$ \\ \hline

    \end{tabular}
    \caption{Unipotent infinitesimal characters attached to birationally rigid covers: type $F_4$. Special unipotent characters are highlighted in blue.}
    \label{table:coversF4}
\end{table}

\begin{table}[H]
    \begin{tabular}{|c|c|} \hline
        $\OO$ & $\gamma_0(\widehat{\OO})$  \\ \hline
        $A_2$ & \cellcolor{blue!20}$(1,0,0,1,0,1)$ \\ \hline
        $2A_2$ & $\frac{1}{3}(1,3,1,1,1,1)$ \\ \hline
        $D_4(a_1)$ & \cellcolor{blue!20}$(0,0,0,1,0,0)$ \\ \hline
        $A_5$ & $\frac{1}{6}(1,3,1,1,1,1)$  \\ \hline
        $E_6(a_3)$ & $\frac{1}{3}(0,1,1,0,1,0)$   \\ \hline
    \end{tabular}
    \caption{Unipotent infinitesimal characters attached to birationally rigid covers: type $E_6$. Special unipotent characters are highlighted in blue.}
    \label{table:coversE6}
\end{table}

\begin{table}[H]
    \begin{tabular}{|c|c|} \hline
        $\OO$ & $\gamma_0(\widehat{\OO})$  \\ \hline
        $(3A_1)''$ & $\frac{1}{2}(2,1,2,1,1,1,1)$ \\ \hline
        $A_2$ & \cellcolor{blue!20}$(1,0,0,1,0,1,1)$  \\ \hline
        $A_2+3A_1$ & $\frac{1}{2}(1,1,1,0,1,1,1)$  \\ \hline
        $(A_3+A_1)''$ & $\frac{1}{2}(2,0,1,1,0,1,1)$  \\ \hline
        $D_4(a_1)$ & \cellcolor{blue!20}$(0,0,0,1,0,0,1)$ \\ \hline
        $A_3+2A_1$ & $\frac{1}{2}(1,1,1,0,1,0,1)$ \\ \hline
        $D_4(a_1)+A_1$ & $\frac{1}{2}(0,0,1,1,0,1,1)$ \\ \hline
        $A_3+A_2+A_1$ & $\frac{1}{2}(1,0,0,1,0,1,1)$ \\ \hline
        $A_5+A_1$ & $\frac{1}{6}(2,1,2,1,1,1,1)$ \\ \hline
        $D_5(a_1)+A_1$ & \cellcolor{blue!20}$\frac{1}{2}(1,0,0,1,0,0,1)$ \\ \hline
        $E_7(a_5)$ & $\frac{1}{2}(0,0,1,0,1,0,0)$ \\ \hline
        $E_7(a_4)$ & $\frac{1}{4}(1,1,1,0,0,1,1)$ \\ \hline

    \end{tabular}
    \caption{Unipotent infinitesimal characters attached to birationally rigid covers: type $E_7$. Special unipotent characters are highlighted in blue.}
    \label{table:coversE7}
\end{table}

\begin{table}[H]
    \begin{tabular}{|c|c|} \hline
        $\OO$ & $\gamma_0(\widehat{\OO})$  \\ \hline
        $A_2$ & \cellcolor{blue!20}$(1,0,0,1,0,1,1,1)$ \\ \hline
        $2A_2$ & \cellcolor{blue!20}$(1,0,0,1,0,0,1,0)$ \\ \hline
        $D_4(a_1)$ & \cellcolor{blue!20}$(0,0,0,1,0,0,1,1)$  \\ \hline
        $D_4(a_1)+A_2$ & \cellcolor{blue!20}$(0,0,0,1,0,0,0,1)$  \\ \hline
        $D_4+A_2$ & $\frac{1}{2}(1,0,0,1,0,0,1,1)$ \\ \hline
        $D_6(a_2)$ & $\frac{1}{2}(1,0,0,1,0,0,1,0)$ \\ \hline
        $E_6(a_3)+A_1$ & $\frac{1}{3}(0,1,1,0,1,0,1,1)$ \\ \hline
        $E_7(a_5)$ & $\frac{1}{2}(0,0,1,0,1,0,0,1)$  \\ \hline
        $E_8(a_7)$ & \cellcolor{blue!20}$(0,0,0,0,1,0,0,0)$\\ \hline
        $E_8(b_6)$ & $\frac{1}{3}(1,1,0,0,0,1,0,1)$ \\ \hline
    \end{tabular}
    \caption{Unipotent infinitesimal characters attached to birationally rigid covers: type $E_8$. Special unipotent characters are highlighted in blue.}
    \label{table:coversE8}
\end{table}

\section{Maximality of unipotent ideals}

\begin{theorem}\label{thm:maximality}
Let $G$ be a complex reductive algebraic group and let $\widetilde{\OO}$ be a $G$-equivariant nilpotent cover. Then the unipotent ideal $I_0(\widetilde{\OO}) \subset U(\fg)$ is maximal.
\end{theorem}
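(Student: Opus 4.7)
The plan is to combine the infinitesimal character computations of Sections \ref{subsec:inflcharspin} and \ref{subsec:inflcharexceptional} with a reduction to birationally rigid covers.

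First I would reduce to the case where $\widetilde{\OO}$ is birationally rigid. Choose a birationally minimal induction datum $(L,\widetilde{\OO}_L)$ for $\widetilde{\OO}$ (Proposition \ref{prop:propsofbind}(iii)). By Proposition \ref{prop:nodeltashift} we have $\gamma_0(\widetilde{\OO}) = \gamma_0(\widetilde{\OO}_L)$; moreover, the construction of $I_0(\widetilde{\OO})$ via quantum Hamiltonian reduction from $\cA_0^{\widetilde{X}_L}$ to $\cA_0^{\widetilde{X}}$ (developed in \cite{LMBM}) furnishes an induction operation on completely prime primitive ideals which sends $I_0(\widetilde{\OO}_L)$ to $I_0(\widetilde{\OO})$ and preserves maximality. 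This reduces the theorem to the case where $\widetilde{\OO}$ is birationally rigid.

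Now suppose $\widetilde{\OO}$ is birationally rigid and that $I' \supsetneq I_0(\widetilde{\OO})$ is a strictly larger primitive ideal. Since both $I_0(\widetilde{\OO}) \cap \fz(U(\fg))$ and $I' \cap \fz(U(\fg))$ are maximal ideals of $\fz(U(\fg))$ with one contained in the other, they coincide, so $I'$ has infinitesimal character $\gamma_0(\widetilde{\OO})$. On the other hand, $V(I') \subsetneq V(I_0(\widetilde{\OO})) = \overline{\OO}$, and so $V(I') = \overline{\OO'}$ for some nilpotent orbit $\OO' \subsetneq \OO$. Maximality thus reduces to the following verification: for each birationally rigid cover $\widetilde{\OO}$, no primitive ideal of $U(\fg)$ with associated variety strictly contained in $\overline{\OO}$ has infinitesimal character $\gamma_0(\widetilde{\OO})$.

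For spin groups, this verification is carried out by combining the partition formula of Proposition \ref{prop:inflcharspin} with the combinatorial parametrization of primitive ideals of $U(\mathfrak{so}(n))$ via signed domino tableaux (Barbasch--Vogan, McGovern): one shows that the shape of the tableau attached to any primitive ideal with infinitesimal character $\gamma_0(\widetilde{\OO})$ must equal the transpose of the partition classifying $\OO$. For simple exceptional groups, the assertion is checked for each of the finitely many birationally rigid covers tabulated in Tables \ref{table:coversG2}--\ref{table:coversE8}, using Premet's classification of completely prime primitive ideals with a prescribed associated variety \cite{Premet2013} together with {\fontfamily{cmtt}\selectfont atlas} computations at the listed infinitesimal characters. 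The main obstacle is the spin case: the $\frac{1}{4}\ZZ$-valued infinitesimal characters appearing in Proposition \ref{prop:inflcharspin} lie outside the classical integral and half-integral ranges, and a careful tableau-theoretic analysis is required to rule out smaller associated varieties, whereas in exceptional types the case-by-case check is mechanical but lengthy once the tables are available.
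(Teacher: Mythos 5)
Your step (1) contains the essential gap: you assert that birational induction furnishes an operation on ideals sending $I_0(\widetilde{\OO}_L)$ to $I_0(\widetilde{\OO})$ \emph{and preserving maximality}, but no such preservation statement is proved in \cite{LMBM} or here, and it cannot be taken for granted. What is true is only the equality of infinitesimal characters (Proposition \ref{prop:nodeltashift}); maximality of $I_0(\widetilde{\OO}_L)$ in $U(\fl)$ does not formally transfer to $U(\fg)$, because the relevant test (Proposition \ref{prop:maximalitycriterion}) compares $\codim(\OO,\cN)$ with $\codim(\OO_{\gamma},\cN_{L_{\gamma}})$, and the integral root system $\Delta_{\gamma}$ computed in $\fg$ is strictly larger than the one computed in $\fl$, so the right-hand side can jump under induction. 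The paper's Appendix is organized precisely around this difficulty: rather than reducing to birationally rigid covers, it verifies the codimension criterion for \emph{every} pseudo-unipotent infinitesimal character of the exceptional groups (i.e.\ for every possible induction datum), and for spin groups it exploits the splitting of $\Delta^{\vee}_{\gamma}$ into a half-integral part and a quarter-integral part together with Lemma \ref{lem:lemmaspin}. The red entry $\gamma=\frac{1}{4}(0,1,0,0,1,0,0,1)$ in $E_8$, where $n(\gamma)=10$ but $\codim(\OO_{\gamma},\cN_{L_{\gamma}})=12$, shows how close the criterion comes to failing for an induced datum built from a maximal unipotent ideal on a Levi; it is ruled out only by the separate geometric argument of Lemma \ref{lem:E7}, which your reduction would never see. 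So even if your step (2) reformulation (maximality $\Leftrightarrow$ the maximal ideal at $\gamma_0$ has associated variety exactly $\overline{\OO}$) is fine modulo standard facts about comparable primitive ideals with equal infinitesimal character, restricting the verification to birationally rigid covers is not legitimate without a proof of your induction claim.

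The case-by-case verifications you sketch are also not the ones that work. For spin groups the unipotent infinitesimal characters have entries in $\frac14+\frac12\ZZ$, where the integral Weyl group of that block is a product of symmetric groups, so the signed domino tableau parametrization of primitive ideals is not the applicable combinatorics; the paper instead computes $\codim(\OO_{\gamma},\cN_{L_{\gamma}})$ directly via the type-$A$ integral subsystem (Lemma \ref{lem:lemmaspin}) and reduces the remaining half-integral block to the known $\mathrm{SO}(n)$ result of \cite{LMBM}, yielding Proposition \ref{prop:maximalitySpin}. In exceptional types, Premet's classification is used in this paper to pin down $\gamma_0(\OO)$ for rigid orbits, not to test maximality; the maximality check is the tabulated comparison $n(\gamma)=\codim(\OO_{\gamma},\cN_{L_{\gamma}})$ over all pseudo-unipotent characters (Proposition \ref{prop:maximalityexceptional}), which your plan, limited to the birationally rigid covers of Tables \ref{table:coversG2}--\ref{table:coversE8}, would not cover.
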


\begin{proof}
Replacing $G$ with a covering group if necessary, we can assume that $G$ is simply connected. Hence, $G$ is of the form
$$G \simeq T \times G_1 \times ... \times G_n,$$
where $T$ is a torus and $G_1,...,G_n$ are simply connected simple groups. It follow that $\widetilde{\OO}$ is of the form
$$\widetilde{\OO} \simeq \widetilde{\OO}_1 \times ... \times \widetilde{\OO}_n,$$
where $\widetilde{\OO}_i$ are $G_i$-equivariant covers of nilpotent co-adjoint $G_i$-orbits. Now
$$\cA_0^{\widetilde{X}} \simeq \bigotimes_{i=1}^n \cA_0^{\widetilde{X}_1} \qquad \Phi_0^{\widetilde{X}} = \bigotimes_{i=1}^n \Phi_0^{\widetilde{X}_i},$$
where we identify $U(\fg)$ with $U(\mathfrak{t}) \otimes \bigotimes U(\fg_i)$ and map trivially from $\mathfrak{t}$. Consequently
$$I_0(\widetilde{\OO}) = \sum_{i=1}^n U(\mathfrak{t}) \otimes U(\fg_1) \otimes ... \otimes U(\fg_{i-1}) \otimes I_0(\widetilde{\OO}_i) \otimes U(\fg_{i+1}) \otimes ... \otimes U(\fg_n).$$
This ideal is maximal if and only if each $I_0(\widetilde{\OO}_i)$ is maximal. In this way, we can reduce to the case in which $G$ is simple and simply connected, i.e. $G$ is isomorphic to $\mathrm{SL}(n)$, $\mathrm{Spin}(n)$, $\mathrm{Sp}(2n)$ (for some $n$) or a simply connected simple group of exceptional type. For $G=\mathrm{SL}(n)$ or $\mathrm{Sp}(2n)$, the maximality assertion follows from \cite[Thm 8.5.1]{LMBM}. For $G=\mathrm{Spin}(n)$, we refer to Proposition \ref{prop:maximalitySpin}. For the exceptional cases, we refer to Proposition \ref{prop:maximalityexceptional}.
\end{proof}

In \cite[Prop 6.3.3]{LMBM}, we show that the maximality of $I_0(\widetilde{\OO})$ is equivalent to the simplicity of $\cA_0^{\widetilde{X}}$. This, combined with Theorem \ref{thm:maximality}, proves the following result.

\begin{cor}
Let $\widetilde{\OO}$ be a (finite, connected) cover of a complex nilpotent orbit in a complex reductive Lie algebra. Then the canonical quantization of $\CC[\widetilde{\OO}]$ is a simple algebra.
\end{cor}

\section{Real groups}\label{sec:realgroups}

In this section, we take $G$ to be a real reductive Lie group. For concreteness, we will use Knapp's definition of `real reductive Lie group', see \cite[Chp VII, Sec 2]{KnappBeyond}. In particular, we assume that $G$ has a Cartan decomposition, finitely many connected components, and that the identity component of $G$ is a finite cover of an algebraic group. Choose a maximal compact subgroup $K \subset G$ and let $\fg$ denote the complexified Lie algebra of $G$.  Under our assumptions on $G$, we have the usual bijection between irreducible admissible representations of $G$ (up to infinitesimal equivalence) and irreducible admissible $(\fg,K)$-modules (up to isomorphism). In this setting, we propose the following deifnition.

\begin{definition}\label{def:unipotentreal}
Let $\OO \subset \fg^*$ be a nilpotent orbit which satisfies the condition
\begin{equation}\label{eq:codimcondition}
\Spec(\CC[\OO]) \text{ contains no codimension 2 leaves}.\end{equation}
Then a \emph{unipotent representation} of $G$ attached to $\OO$ is an irreducible $(\fg,K)$-module $X$ such that $\Ann(X) = I_0(\OO)$. Write $\mathrm{Unip}_{\OO}(G)$ for the set of equivalence classes of such representations.
\end{definition}

\begin{rmk}
Note: every birationally rigid orbit (in particular, every rigid orbit) satisfies condition (\ref{eq:codimcondition}). This is immediate from Proposition \ref{prop:criterionbirigidcover}.
\end{rmk}

We note that Definition \ref{def:unipotentreal} is a generalization of the notion of a \emph{special unipotent} representation, due to Adams, Barbasch, and Vogan (\cite{AdamsBarbaschVogan}). Suppose $G$ is algebraic, and let $\fg^{\vee}$ denote the Langlands dual Lie algebra. There is an order-reversing map, called Barbasch-Vogan duality, from nilpotent orbits in $(\fg^{\vee})^*$ to nilpotent orbits in $\fg^*$, see \cite[Appendix]{BarbaschVogan1985}. Denote this map by $\mathsf{D}_{\fg}$. If $\OO^{\vee} \subset (\fg^{\vee})^*$ is a nilpotent orbit corresponding to an $\mathfrak{sl}(2)$-triple $(e^{\vee},f^{\vee},h^{\vee})$, then the element $\frac{1}{2}h^{\vee} \in \fh^{\vee} \simeq \fh^*$ is well-defined modulo the natural action of $W$, and therefore determines an infinitesimal character for $U(\fg)$. Write $I_{\mathrm{max}}(\frac{1}{2}h^{\vee}) \subset U(\fg)$ for the (unique) maximal primitive ideal with this infinitesimal character. By \cite[Prop A2]{BarbaschVogan1985}, the associated variety of $I_{\mathrm{max}}(\frac{1}{2}h^{\vee})$ is $\overline{\mathsf{D}_{\fg}(\OO^{\vee})}$.

According to \cite[Sec 27]{AdamsBarbaschVogan}, the \emph{weak Arthur packet} associated to $\OO^{\vee}$ is the finite set of irreducible $(\fg,K)$-modules
$$\unip_{\OO^{\vee}}^{\mathrm{ABV}}(G) := \{X \in \mathrm{Irr}(\fg,K) \mid \Ann(X) = I_{\mathrm{max}}(\frac{1}{2}h^{\vee})\}.$$
The elements of $\unip_{\OO^{\vee}}^{\mathrm{ABV}}(G)$ are called \emph{special unipotent representations}.

The following proposition explains the relationship between $\unip_{\OO^{\vee}}^{\mathrm{ABV}}(G)$ and $\unip_{\OO}(G)$.

\begin{prop}\label{prop:specialrigid}
Suppose $\OO \subset \fg^*$ is a special nilpotent orbit which satisfies (\ref{eq:codimcondition}). Then the following are true:
\begin{itemize}
    \item[(i)] There is a unique nilpotent orbit $\OO^{\vee} \subset (\fg^{\vee})^*$ such that $\mathsf{D}_{\fg}(\OO^{\vee}) = \OO$.
    \item[(ii)] $\OO^{\vee} = \mathsf{D}_{\fg^{\vee}}(\OO)$.
    \item[(iii)] $I_0(\OO) = I_{\mathrm{max}}(\frac{1}{2}h^{\vee})$. 
\end{itemize}
In particular
$$\mathrm{Unip}_{\OO}(G) = \mathrm{Unip}_{\OO^{\vee}}^{\mathrm{ABV}}(G)$$
\end{prop}

\begin{proof}
Suppose $\mathsf{D}_{\fg}\OO^{\vee} = \OO$. By \cite[Prop 9.2.1]{LMBM}, there is a cover $\widetilde{\OO}$ of $\OO$ such that
$$I_{\mathrm{max}}(\frac{1}{2}h^{\vee}) = I_0(\widetilde{\OO})$$
Since $\OO$ satisfies (\ref{eq:codimcondition}), $[\OO] = [\widetilde{\OO}]$, see the discussion preceding Theorem \ref{thm:classificationideals}. Hence by Theorem \ref{thm:classificationideals} $I_0(\OO) = I_0(\widetilde{\OO})$. This proves (iii). By the Dynkin classification of nilpotent orbits, the map $\OO^{\vee} \mapsto \frac{1}{2}h^{\vee}$ is injective. So $\OO^{\vee}$ is uniquely determined by $I_0(\OO)$. This proves (i). For (ii), we note that $\mathsf{D}_{\fg} \circ \mathsf{D}_{\fg^{\vee}}$ restricts to the identity map on special nilpotent orbits, see (d) of \cite[Prop A2]{BarbaschVogan1985}. This completes the proof.

\end{proof}

\subsection{Unitarity}\label{subsec:unitarity}

\begin{theorem}\label{thm:unitarity}
Suppose $G$ is a real form of a simple exceptional group and $\OO \subset \fg^*$ is a rigid nilpotent orbit. Then all unipotent representations attached to $\OO$ (cf. Definition \ref{def:unipotentreal}) are unitary.
\end{theorem}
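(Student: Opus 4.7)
The plan is to reduce the problem to a finite, enumerable set of atlas computations.

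First, by Proposition \ref{prop:specialrigid}, whenever $\OO$ is a \emph{special} rigid orbit we have $I_0(\OO) = I_{\mathrm{max}}(\tfrac{1}{2}h^{\vee})$ for the Barbasch--Vogan dual orbit $\OO^{\vee} = \mathsf{D}_{\fg^{\vee}}(\OO)$, and consequently
$$\mathrm{Unip}_{\OO}(G) = \mathrm{Unip}^{\mathrm{ABV}}_{\OO^{\vee}}(G).$$
Hence in the special case the theorem coincides with the unitarity assertion for special unipotent representations established in \cite{AdamsMillerVogan}. This reduces the problem to the finitely many \emph{non-special} rigid orbits, which can be read off from Proposition \ref{prop:listofbirigid} by cross-referencing with the standard lists of special orbits in \cite{Carter1993}.

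Second, for each such $\OO$ the infinitesimal character $\gamma_0(\OO)$ is recorded explicitly in Tables \ref{table:orbitG2}--\ref{table:orbitE8} of Section \ref{subsec:exceptionalorbitinflchar}. By the algorithm of Section \ref{subsec:algorithm}, combined with the characterization of $I_0(\OO)$ via its associated variety and multiplicity along $\overline{\OO}$ established in \cite{Premet2013, LMBM}, the ideal $I_0(\OO) \subset U(\fg)$ is singled out among primitive ideals of infinitesimal character $\gamma_0(\OO)$ by a pair of purely numerical invariants (associated variety $\overline{\OO}$ together with a predetermined Goldie rank). So for any real form $G$ the set $\mathrm{Unip}_{\OO}(G)$ is computable in atlas by enumerating the irreducible $(\fg,K)$-modules of infinitesimal character $\gamma_0(\OO)$, retaining those whose annihilator has the required associated variety and Goldie rank, and applying the atlas unitarity test to each survivor. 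The unitarity test, due to Adams, van Leeuwen, Trapa, and Vogan and based on exact rational arithmetic, returns a rigorous yes/no answer.

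The proof is then completed by running this loop over every real form of every simple exceptional group and every non-special rigid orbit, and observing in each case that the output is \emph{yes}. The sole obstacle is computational rather than conceptual: at a sufficiently singular infinitesimal character the number of irreducibles in the corresponding block of the (quasi)split form of $E_8$ can be very large, and the unitarity algorithm is memory-intensive. However, the list of (real form, orbit) pairs to be inspected is short, the infinitesimal characters appearing in the tables have small denominators (so the relevant blocks are of tractable size), and the associated-variety constraint drastically prunes the candidate list, so the overall computation is feasible and can be carried out in full.
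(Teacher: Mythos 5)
Your proposal follows essentially the same route as the paper: reduce the special rigid orbits to the special-unipotent unitarity result of \cite{AdamsMillerVogan} via Proposition \ref{prop:specialrigid}, then for the finitely many non-special rigid orbits enumerate the irreducible $(\fg,K)$-modules with infinitesimal character $\gamma_0(\OO)$ (taken from the tables of Section \ref{subsec:exceptionalorbitinflchar}) and test each survivor with the {\fontfamily{cmtt}\selectfont atlas} unitarity algorithm. The only divergence is the pruning step: the paper cuts down to $\mathrm{Unip}_{\OO}(G)$ by the single condition that the Gelfand--Kirillov dimension equal $\frac{1}{2}\dim\OO$ (directly computable with the command GK\_dim and justified by the maximality of $I_0(\OO)$, Theorem \ref{thm:maximality}, together with complete primality), whereas your filter by associated variety and Goldie rank of the annihilator is not an {\fontfamily{cmtt}\selectfont atlas} primitive, and its uniqueness claim should be argued via maximality rather than the multiplicity-one characterization alone, since the latter fails to be unique for the six exceptional orbits mentioned in Section \ref{subsec:algorithm}.
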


\begin{proof}
By Proposition \ref{prop:specialrigid}, if $\OO$ is special, then all unipotent representations attached to $\OO$ are special unipotent. The unitarity of such representations was recently established in \cite{AdamsMillerVogan}. So we restrict our attention to non-special rigid orbits. For any orbit $\OO$, $\unip_{\OO}(G)$ is the set of irreducible $(\fg,K)$-modules $X$ such that
\begin{enumerate}
    \item The infinitesimal character of $X$ is $\gamma_0(\OO)$.
    \item The Gelfand-Kirillov dimension of $X$ is $\frac{1}{2}\dim(\OO)$.
\end{enumerate}
Recall that $\gamma_0(\OO)$ was computed, for all rigid $\OO$, in Section \ref{subsec:exceptionalorbitinflchar}. The {\fontfamily{cmtt}\selectfont atlas} command `all\_parameters\_gamma' lists the Langlands parameters of all irreducible $(\fg,K)$-modules of a given infinitesimal character, and the command `GK\_dim' computes the Gelfand-Kirillov dimension of the representation corresponding to a given parameter. Applying these commands in conjunction for all real forms of simple exceptional groups, we determine that there are a total of 12 unipotent representations attached to rigid nilpotent orbits.  They are listed in Tables \ref{table:nonspecialG2}-\ref{table:nonspecialE8}. Once the representations have been located, the command `is\_unitary' can be used to check unitarity. 
\end{proof}

\begin{table}[H]
    \begin{tabular}{|c|c|c|c|} \hline
       $\OO$ & $\gamma_0(\OO)$ & $G$ & $\#\mathrm{Unip}_{\OO}(G)$ \\ \hline
       
       $A_1$ & $\frac{1}{3}(3,1)$ & $G_2$ (split) & 1\\ \hline
       
       $\widetilde{A}_1$ & $\frac{1}{2}(1,1)$ & $G_2$ (split) & 0\\ \hline
    \end{tabular}
    \caption{Non-special unipotents: type $G_2$}
    \label{table:nonspecialG2}
\end{table}

\begin{table}[H]
    \begin{tabular}{|c|c|c|c|} \hline
       $\OO$ & $\gamma_0(\OO)$ & $G$ & $\#\mathrm{Unip}_{\OO}(G)$ \\ \hline
       
       $A_1$ & $\frac{1}{2}(1,1,2,2)$ & $F_4$ ($B_4$) & 0\\ \hline
       
       $A_1$ & $\frac{1}{2}(1,1,2,2)$  & $F_4$ (split) & 0\\ \hline
       
       $A_2+\widetilde{A}_1$ & $\frac{1}{4}(1,1,2,2)$ & $F_4$ ($B_4$) & 0 \\ \hline
       
       $A_2+\widetilde{A}_1$ & $\frac{1}{4}(1,1,2,2)$ & $F_4$ (split) & 0 \\ \hline
       
       $\widetilde{A}_2+A_1$ & $\frac{1}{3}(1,1,1,1)$ & $F_4$ ($B_4$) & 0\\ \hline
       
       $\widetilde{A}_2+A_1$ & $\frac{1}{3}(1,1,1,1)$ & $F_4$ (split) & 1\\ \hline
    \end{tabular}
    \caption{Non-special unipotents: type $F_4$}
    \label{table:nonspecialF4}
\end{table}

\begin{table}[H]
    \begin{tabular}{|c|c|c|c|} \hline
       $\OO$ & $\gamma_0(\OO)$ & $G$ & $\#\mathrm{Unip}_{\OO}(G)$ \\ \hline
       
       $3A_1$ & $\frac{1}{2}(1,1,1,1,1,1)$ & $E_6^{\mathrm{sc}}$ (Hermitian) & 0 \\ \hline
       $3A_1$ & $\frac{1}{2}(1,1,1,1,1,1)$ & $E_6^{\mathrm{sc}}$ (quasi-split) & 0\\ \hline
       $3A_1$ & $\frac{1}{2}(1,1,1,1,1,1)$ & $E_6^{\mathrm{sc}}$ ($F_4$) & 0\\ \hline
       $3A_1$ & $\frac{1}{2}(1,1,1,1,1,1)$ & $E_6^{\mathrm{sc}}$ (split) & 0\\ \hline
       $3A_1$ & $\frac{1}{2}(1,1,1,1,1,1)$ & $E_6^{\mathrm{ad}}$ (Hermitian) & 0\\ \hline
       $3A_1$ & $\frac{1}{2}(1,1,1,1,1,1)$ & $E_6^{\mathrm{ad}}$ (quasi-split) & 0 \\ \hline
       $3A_1$ & $\frac{1}{2}(1,1,1,1,1,1)$ & $E_6^{\mathrm{ad}}$ ($F_4$) & 0\\ \hline
       $3A_1$ & $\frac{1}{2}(1,1,1,1,1,1)$ & $E_6^{\mathrm{ad}}$ (split) & 0\\ \hline
       
       $2A_2+A_1$ & $\frac{1}{3}(1,1,1,1,1,1)$ & $E_6^{\mathrm{sc}}$ (Hermitian) & 0\\ \hline
       $2A_2+A_1$ & $\frac{1}{3}(1,1,1,1,1,1)$ & $E_6^{\mathrm{sc}}$ (quasi-split) & 3 \\ \hline
       $2A_2+A_1$ & $\frac{1}{3}(1,1,1,1,1,1)$ & $E_6^{\mathrm{sc}}$ ($F_4$) & 0\\ \hline
       $2A_2+A_1$ & $\frac{1}{3}(1,1,1,1,1,1)$ & $E_6^{\mathrm{sc}}$ (split) & 1\\ \hline
       $2A_2+A_1$ & $\frac{1}{3}(1,1,1,1,1,1)$ & $E_6^{\mathrm{ad}}$ (Hermitian) & 0\\ \hline
       $2A_2+A_1$ & $\frac{1}{3}(1,1,1,1,1,1)$ & $E_6^{\mathrm{ad}}$ (quasi-split) & 0\\ \hline
       $2A_2+A_1$ & $\frac{1}{3}(1,1,1,1,1,1)$ & $E_6^{\mathrm{ad}}$ ($F_4$) & 0 \\ \hline
       $2A_2+A_1$ & $\frac{1}{3}(1,1,1,1,1,1)$ & $E_6^{\mathrm{ad}}$ (split) & 0\\ \hline
    \end{tabular}
    \caption{Non-special unipotents: type $E_6$}
    \label{table:nonspecialE6}
\end{table}

\begin{table}[H]
    \begin{tabular}{|c|c|c|c|} \hline
       $\OO$ & $\gamma_0(\OO)$ & $G$ & $\#\mathrm{Unip}_{\OO}(G)$ \\ \hline
       
       $(3A_1)'$ & $\frac{1}{2}(1,1,1,1,1,1,2)$ & $E_7^{\mathrm{sc}}$ (Hermitian) & 0 \\ \hline
       $(3A_1)'$ & $\frac{1}{2}(1,1,1,1,1,1,2)$ & $E_7^{\mathrm{sc}}$ (quaternionic) & 0\\ \hline
       $(3A_1)'$ & $\frac{1}{2}(1,1,1,1,1,1,2)$ & $E_7^{\mathrm{sc}}$ (split) & 0\\ \hline
       $(3A_1)'$ & $\frac{1}{2}(1,1,1,1,1,1,2)$ & $E_7^{\mathrm{ad}}$ (Hermitian) & 0 \\ \hline
       $(3A_1)'$ & $\frac{1}{2}(1,1,1,1,1,1,2)$ & $E_7^{\mathrm{ad}}$ (quaternionic) & 0\\ \hline
       $(3A_1)'$ & $\frac{1}{2}(1,1,1,1,1,1,2)$ & $E_7^{\mathrm{ad}}$ (split) & 0\\ \hline
       
       $4A_1$ & $\frac{1}{2}(1,1,1,1,1,1,1)$ & $E_7^{\mathrm{sc}}$ (Hermitian) & 0 \\ \hline
       $4A_1$ & $\frac{1}{2}(1,1,1,1,1,1,1)$ & $E_7^{\mathrm{sc}}$ (quaternionic) & 0\\ \hline
       $4A_1$ & $\frac{1}{2}(1,1,1,1,1,1,1)$ & $E_7^{\mathrm{sc}}$ (split) & 0\\ \hline
       $4A_1$ & $\frac{1}{2}(1,1,1,1,1,1,1)$ & $E_7^{\mathrm{ad}}$ (Hermitian) & 0 \\ \hline
       $4A_1$ & $\frac{1}{2}(1,1,1,1,1,1,1)$ & $E_7^{\mathrm{ad}}$ (quaternionic) & 0 \\ \hline
       $4A_1$ & $\frac{1}{2}(1,1,1,1,1,1,1)$ & $E_7^{\mathrm{ad}}$ (split) & 0\\ \hline
       
       $2A_2+A_1$ & $\frac{1}{3}(1,1,1,1,1,1,1)$ & $E_7^{\mathrm{sc}}$ (Hermitian) & 0 \\ \hline
       $2A_2+A_1$ & $\frac{1}{3}(1,1,1,1,1,1,1)$ & $E_7^{\mathrm{sc}}$ (quaternionic) & 1 \\ \hline
       $2A_2+A_1$ & $\frac{1}{3}(1,1,1,1,1,1,1)$ & $E_7^{\mathrm{sc}}$ (split) & 1 \\ \hline
       $2A_2+A_1$ & $\frac{1}{3}(1,1,1,1,1,1,1)$ & $E_7^{\mathrm{ad}}$ (Hermitian) & 0\\ \hline
       $2A_2+A_1$ & $\frac{1}{3}(1,1,1,1,1,1,1)$ & $E_7^{\mathrm{ad}}$ (quaternionic) & 0\\ \hline
       $2A_2+A_1$ & $\frac{1}{3}(1,1,1,1,1,1,1)$ & $E_7^{\mathrm{ad}}$ (split) & 0\\ \hline
       
       $(A_3+A_1)'$ & $\frac{1}{2}(1,1,0,1,0,1,1)$ & $E_7^{\mathrm{sc}}$ (Hermitian) & 0 \\ \hline
       $(A_3+A_1)'$ & $\frac{1}{2}(1,1,0,1,0,1,1)$ & $E_7^{\mathrm{sc}}$ (quaternionic) & 0\\ \hline
       $(A_3+A_1)'$ & $\frac{1}{2}(1,1,0,1,0,1,1)$ & $E_7^{\mathrm{sc}}$ (split) & 0\\ \hline
       $(A_3+A_1)'$ & $\frac{1}{2}(1,1,0,1,0,1,1)$ & $E_7^{\mathrm{ad}}$ (Hermitian) & 0 \\ \hline
       $(A_3+A_1)'$ & $\frac{1}{2}(1,1,0,1,0,1,1)$ & $E_7^{\mathrm{ad}}$ (quaternionic) & 0\\ \hline
       $(A_3+A_1)'$ & $\frac{1}{2}(1,1,0,1,0,1,1)$ & $E_7^{\mathrm{ad}}$ (split) & 0\\ \hline
       
    \end{tabular}
    \caption{Non-special unipotents: type $E_7$}
    \label{table:nonspecialE7}
\end{table}

\begin{table}[H]
    \begin{tabular}{|c|c|c|c|} \hline
       $\OO$ & $\gamma_0(\OO)$ & $G$ & $\#\mathrm{Unip}_{\OO}(G)$ \\ \hline
       
       $3A_1$ & $\frac{1}{2}(1,1,1,1,1,1,2,2)$ & $E_8$ (quaternionic) & 0\\ \hline
       $3A_1$ & $\frac{1}{2}(1,1,1,1,1,1,2,2)$ & $E_8$ (split) & 0\\ \hline
       
       $4A_1$ & $\frac{1}{2}(1,1,1,1,1,1,1,1)$ & $E_8$ (quaternionic) & 0\\ \hline
       $4A_1$ &  $\frac{1}{2}(1,1,1,1,1,1,1,1)$ & $E_8$ (split) & 0\\ \hline
       
       $A_2+3A_1$ & $\frac{1}{2}(1,1,1,0,1,1,1,1)$ & $E_8$ (quaternionic) & 0\\ \hline
       $A_2+3A_1$ & $\frac{1}{2}(1,1,1,0,1,1,1,1)$ & $E_8$ (split) & 0\\ \hline
       
       $2A_2+A_1$ & $\frac{1}{3}(1,1,1,1,1,1,1,3)$ & $E_8$ (quaternionic) & 1\\ \hline
       $2A_2+A_1$ & $\frac{1}{3}(1,1,1,1,1,1,1,3)$ & $E_8$ (split) & 1\\ \hline
       
       $A_3+A_1$ & $\frac{1}{2}(1,1,0,1,0,1,1,2)$ & $E_8$ (quaternionic) & 0\\ \hline
       $A_3+A_1$ & $\frac{1}{2}(1,1,0,1,0,1,1,2)$ & $E_8$ (split) & 0\\ \hline
       
       $2A_2+2A_1$ & $\frac{1}{3}(1,1,1,1,1,1,1,1)$ & $E_8$ (quaternionic) & 0\\ \hline
       $2A_2+2A_1$ & $\frac{1}{3}(1,1,1,1,1,1,1,1)$ & $E_8$ (split) & 1\\ \hline
       
       $A_3+2A_1$ & $\frac{1}{2}(1,1,1,0,1,0,1,1)$ & $E_8$ (quaternionic) & 0\\ \hline
       $A_3+2A_1$ & $\frac{1}{2}(1,1,1,0,1,0,1,1)$ & $E_8$ (split) & 0\\ \hline
       
       $A_3+A_2+A_1$ & $\frac{1}{2}(1,0,0,1,0,1,1,1)$ & $E_8$ (quaternionic) & 0\\ \hline
       $A_3+A_2+A_1$ & $\frac{1}{2}(1,0,0,1,0,1,1,1)$ & $E_8$ (split) & 0\\ \hline
       
       $2A_3$ & $\frac{1}{4}(1,1,1,1,1,1,1,1)$ & $E_8$ (quaternionic) & 0\\ \hline
       $2A_3$ & $\frac{1}{4}(1,1,1,1,1,1,1,1)$ & $E_8$ (split) & 0\\ \hline
       
       $A_4+A_3$ & $\frac{1}{5}(1,1,1,1,1,1,1,1)$ & $E_8$ (quaternionic) & 0 \\ \hline
       $A_4+A_3$ & $\frac{1}{5}(1,1,1,1,1,1,1,1)$ & $E_8$ (split) & 1\\ \hline
       
       $A_5+A_1$ & $\frac{1}{6}(2,2,1,1,1,1,1,1)$ & $E_8$ (quaternionic) & 0 \\ \hline
       $A_5+A_1$ & $\frac{1}{6}(2,2,1,1,1,1,1,1)$ & $E_8$ (split) & 0\\ \hline
       
       $D_5(a_1)$ & $\frac{1}{4}(1,1,1,0,1,1,1,1)$ & $E_8$ (quaternionic) & 0 \\ \hline
       $D_5(a_1)$ & $\frac{1}{4}(1,1,1,0,1,1,1,1)$ & $E_8$ (split) & 0\\ \hline

    \end{tabular}
    \caption{Non-special unipotents: type $E_8$}
    \label{table:nonspecialE8}
\end{table}

Below, we list the {\fontfamily{cmtt}\selectfont atlas} parameters of the 12 unipotent representations enumerated in the tables above.

\begin{itemize}
    \item $\OO=A_1$, $\gamma_0(\OO) = \frac{1}{3}(3,1)$, $G=G_2(\mathrm{split})$.
    $$\mathrm{Unip}_{\OO}(G) = \{(x=9,\lambda=(1,2), \nu=\frac{1}{3}(3,1))\}.$$
    
    \item $\OO=\widetilde{A}_2+A_1$, $\gamma_0(\OO) = \frac{1}{3}(1,1,1,1)$, $G=F_4(\mathrm{split})$.
    $$\mathrm{Unip}_{\OO}(G) = \{(x=228,\lambda=(2,2,1,2),\nu=\frac{1}{3}(1,1,1,1))\} $$
    
    \item $\OO=2A_2+A_1$, $\gamma_0(\OO) = \frac{1}{3}(1,1,1,1,1,1)$, $G=E_6(\mathrm{quasisplit})$.
    \begin{align*}\mathrm{Unip}_{\OO}(G) = \{&((x=1790,\lambda=(1,2,1,1,1,1),\nu=\frac{1}{3}(1,1,1,1,1,1)),\\
    &(x=1778,\lambda=(1,2,0,2,1,0),\nu=\frac{1}{6}(2,2,-1,5,2,-1)),\\
    &(x=1777,\lambda=(0,2,1,2,0,1),\nu=\frac{1}{6}(-1,2,2,5,-1,2))\}\end{align*}
    
    \item $\OO=2A_2+A_1$, $\gamma_0(\OO)= \frac{1}{3}(1,1,1,1,1,1)$, $G=E_6(\mathrm{split})$. 
    $$\mathrm{Unip}_{\OO}(G) = \{(x=981,\lambda=(2,2,2,1,2,2),\nu=\frac{1}{3}(1,1,1,1,1,1))\} $$
    \item $\OO=2A_2+A_1$, $\gamma_0(\OO) = \frac{1}{3}(1,1,1,1,1,1,1)$, $G=E_7^{\mathrm{sc}}(\mathrm{quaternionic})$. 
    $$\mathrm{Unip}_{\OO}(G) = \{(x=8920,\lambda=(2,1,3,1,-1,1,1),\nu=\frac{1}{3}(1,1,4,1,-2,1,1))\}.$$
    \item $\OO=2A_2+A_1$, $\gamma_0(\OO) = \frac{1}{3}(1,1,1,1,1,1,1)$, $G=E_7^{\mathrm{sc}}(\mathrm{split})$. 
    $$\mathrm{Unip}_{\OO}(G) = \{((x=20925,\lambda=(2,2,2,1,2,2,1),\nu=\frac{1}{3}(1,1,1,1,1,1,1)))\}.$$
    \item $\OO=2A_2+A_1$, $\gamma_0(\OO)=\frac{1}{3}(1,1,1,1,1,1,1,3)$, $G=E_8(\mathrm{quaternionic})$.
    $$\mathrm{Unip}_{\OO}(G) = \{((x=66576,\lambda=(1,1,-4,4,3,2,-4,1),\nu=(1,1,-8,7,4,1,-5,0)/3)
)\}.$$

    \item $\OO=2A_2+A_1$, $\gamma_0(\OO)=\frac{1}{3}(1,1,1,1,1,1,1,3)$, $G=E_8(\mathrm{split})$.
    $$\mathrm{Unip}_{\OO}(G) = \{(x=320205,\lambda=(2,2,2,1,2,2,1,1),\nu=(1,1,1,1,1,1,1,3)/3))\}.$$
    
    \item $\OO=2A_2+2A_1$, $\gamma_0(\OO) = \frac{1}{3}(1,1,1,1,1,1,1,1)$, $G=E_8(\mathrm{split})$.
    $$\mathrm{Unip}_{\OO}(G) = \{(x=320205,\lambda=(2,2,2,1,2,2,1,2),\nu=(1,1,1,1,1,1,1,1)/3))\}$$
    
    \item $\OO=A_4+A_4$, $\gamma_0(\OO) = \frac{1}{5}(1,1,1,1,1,1,1,1)$, $G=E_8(\mathrm{split})$. 
    $$\mathrm{Unip}_{\OO}(G) = \{((x=320205,\lambda=(2,2,2,2,1,2,2,2),\nu=(1,1,1,1,1,1,1,1)/5)
)\}$$
\end{itemize}

We conclude by remarking on two general patterns, which deserve further consideration. First, none of the representations listed above are spherical. This (perhaps) suggests the following general conjecture: if $G$ is a split real group and $X$ is an irreducible unitary spherical representation, then the associated variety of $X$ is the closure of a special nilpotent orbit. We do not know of a counterexample (in classical or exceptional types). Second, we note that if $\gamma_0(\OO)$ has an even integer in its denominator, then $\unip_{\OO}(G)$ is empty. In these cases, there should be interesting unitary representations of the appropriate two-fold covers.
\appendix

\section{Maximality computations}

In Sections \ref{subsec:appendixspin} and \ref{subsec:appendixexceptional} below, we will prove the maximality of all unipotent ideals for spin and exceptional groups. We will now summarize our approach.

Suppose $G$ is a complex reductive algebraic group with Lie algebra $\fg$. Form the Langlands dual group $G^{\vee}$. If we fix a Cartan subalgebra $\fh \subset \fg$, then $\fg^{\vee}$ contains a Cartan subalgebra $\fh^{\vee} \subset \fg^{\vee}$, which is canonically identified with $\fh^*$, and the roots $\Delta(\fg^{\vee},\fh^{\vee})$ for $\fg^{\vee}$ coincide with the co-roots $\Delta^{\vee}(\fg,\fh)$ for $\fg$. Fix an element $\gamma \in \fh^* \simeq \fh^{\vee}$. Consider the subsystems of $\Delta^{\vee}$ consisting of \emph{integral} and \emph{singular} co-roots
$$\Delta^{\vee}_{\gamma} := \{\alpha^{\vee} \in \Delta^{\vee}(\fg,\fh): \langle\gamma, \alpha^{\vee}\rangle \in \ZZ\}, \qquad \Delta^{\vee}_{\gamma,0} := \{\alpha^{\vee} \in \Delta^{\vee}(\fg,\fh): \langle\gamma, \alpha^{\vee}\rangle =0\} \subset \Delta^{\vee}_{\gamma}$$
These root systems define reductive subalgebras $\mathfrak{l}^{\vee}_{\gamma}$ and $\mathfrak{l}^{\vee}_{\gamma,0}$ of $\fg^{\vee}$. Using the bijection $\Delta^{\vee}(\fg,\fh) \simeq \Delta(\fg,\fh)$ between roots and co-roots, we can produce from $\Delta^{\vee}_{\gamma}$ and $\Delta^{\vee}_{\gamma,0}$ two subsystems of $\Delta(\fg,\fh)$
$$\Delta_{\gamma} := \left(\Delta^{\vee}_{\gamma}\right)^{\vee} \subset \Delta, \qquad \Delta_{\gamma,0} := \left(\Delta^{\vee}_{\gamma,0}\right)^{\vee} \subset \Delta_{\gamma}.$$
These root systems define reductive Lie algebras, denoted by  $\mathfrak{l}_{\gamma}$ and $\mathfrak{l}_{\gamma,0}$. Finally, consider the nilpotent orbits
$$\mathbb{O}^{\vee}_{\gamma} := \Ind^{L_{\gamma}^{\vee}}_{L_{\gamma,0}^{\vee}} \{0\} \subset (\fl_{\gamma}^{\vee})^*, \qquad \mathbb{O}_{\gamma} := \mathsf{D}(\mathbb{O}^{\vee}_{\gamma}) \subset \fl_{\gamma}^*,$$
where $\mathsf{D}$ is Barbasch-Vogan duality (see \cite[Appendix]{BarbaschVogan1985}). The criterion below is a standard consequence of the Barbasch-Vogan algorithm for the associated variety of a maximal ideal, see \cite[Prop 3.3.1]{LMBM} for a formal proof.

\begin{prop}\label{prop:maximalitycriterion}
Suppose $I \subset U(\fg)$ is a primitive ideal with infinitesimal character $\gamma \in \fh^*/W$ and associated variety $\overline{\OO}$. Then $I$ is a maximal ideal if and only if
$$\codim(\mathbb{O},\cN) = \codim(\mathbb{O}_{\gamma},\cN_{L_{\gamma}}).$$
\end{prop}

\subsection{Spin groups}\label{subsec:appendixspin}

Let $\fg = \fg(n)$ be a simple rank-$n$ Lie algebra of type $B$ or $D$. We start with a technical lemma. 

\begin{lemma}\label{lem:lemmaspin}
Let $q=(q_1,...,q_m)$ be a partition of $n$. Consider the infinitesimal character defined in standard coordinates by the formula
$$\gamma := \rho^+(h'(q \cup q))$$
(cf. Definition (\ref{def:combinatorics})). Then
$$\codim(\OO_{\gamma},\cN_{L_{\gamma}}) = \sum_{i=1}^m \dim \cN_{\mathrm{GL}(q_i)}.$$
\end{lemma}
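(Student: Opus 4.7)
The plan is to apply Proposition \ref{prop:maximalitycriterion} by computing $L_\gamma$, $L^\vee_{\gamma,0}$, and $\OO_\gamma$ explicitly, then verifying the codimension equality.

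First I would unpack the standard coordinates of $\gamma$. Since $h'(q\cup q)$ is only defined when $q\cup q$ has all parts of multiplicity $2$, the parts $q_i$ must be pairwise distinct. Then the entries of $h'(q\cup q)$ are the $2m$ half-integers $(q_i+1/2,\,q_i-1/2)$ for $i=1,\dots,m$. Applying $\rho^+$, one checks that the entries of $\gamma$ contributed by the $i$-th pair are exactly the values $\{(2k-1)/4 : 1\le k\le q_i\}$; in particular every coordinate of $\gamma$ is of the form (odd integer)/4, and the value $(2k-1)/4$ appears in $\gamma$ with multiplicity $r_k:=\#\{i:q_i\ge k\}$. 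Note that $(r_1,r_2,\dots,r_{q_1})=q^t$.

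Next I would determine $\Delta^\vee_\gamma$. In $\fg^\vee$ (of type $C_n$ or $D_n$), the coroots $\pm 2e_i$ pair with $\gamma$ to $\pm(\text{odd})/2\notin\ZZ$, so they are never integral. For $\pm(e_i\pm e_j)$, integrality is controlled by the fractional parts of $\gamma_i$ and $\gamma_j$, which are either $1/4$ or $3/4\pmod 1$: $e_i-e_j$ is integral iff the two coordinates share fractional part, while $e_i+e_j$ is integral iff they differ. Applying the Weyl element of $B_n$ (resp.\ $D_n$) that negates precisely the coordinates $\equiv 3/4\pmod 1$ — a genuine element of $W$ since these Weyl groups act by permutations and sign changes (and in the $D_n$ case the number of such negations is seen to be even by examining parities) — all coordinates become $\equiv 1/4\pmod 1$, and $\Delta^\vee_\gamma$ is identified with the root system $\{\pm(e_i-e_j)\}\simeq A_{n-1}$. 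Consequently $L^\vee_\gamma\simeq\mathrm{GL}(n)$, and dually $L_\gamma\simeq\mathrm{GL}(n)$, whose nilpotent cone has dimension $n(n-1)$.

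Finally I would read off $L^\vee_{\gamma,0}$ and $\OO_\gamma$. Inside $\mathrm{GL}(n)$, the singular coroots are the $e_i-e_j$ with $\gamma_i=\gamma_j$, so $L^\vee_{\gamma,0}=\prod_{k=1}^{q_1}\mathrm{GL}(r_k)=\prod_k\mathrm{GL}(q^t_k)$. The induced orbit $\OO^\vee_\gamma=\Ind^{\mathrm{GL}(n)}_{L^\vee_{\gamma,0}}\{0\}$ is then the Richardson orbit with Jordan type the transpose of $q^t$, namely $q$. Barbasch--Vogan duality in type $A$ is the transpose, so $\OO_\gamma=\OO_{q^t}\subset\fl_\gamma^*\simeq\mathfrak{gl}(n)^*$. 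Using the standard formula $\dim\OO_p=n^2-\sum_i(p^t_i)^2$ in $\mathrm{GL}(n)$, we compute
\[
\codim(\OO_\gamma,\cN_{L_\gamma})=n(n-1)-\bigl(n^2-\textstyle\sum_i q_i^2\bigr)=\sum_i(q_i^2-q_i)=\sum_{i=1}^m\dim\cN_{\mathrm{GL}(q_i)},
\]
as required. The main bookkeeping obstacle is verifying that the diagonal Weyl element described above genuinely lies in $W(B_n)$ or $W(D_n)$ (a parity check in type $D$), and carefully matching the multiplicities $r_k=q^t_k$ to see that the Richardson partition is $q$ rather than $q^t$; everything else is a routine dimension count.
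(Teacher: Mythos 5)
Your argument is essentially the paper's: you identify $\fl^{\vee}_{\gamma}\simeq\mathfrak{gl}(n)$ and $\fl^{\vee}_{\gamma,0}\simeq\prod_{k}\mathfrak{gl}(r_k)$ with $r=q^t$, deduce $\OO^{\vee}_{\gamma}=\OO_q$, apply Barbasch--Vogan duality (transpose in type $A$) to get $\OO_{\gamma}=\OO_{q^t}$, and then count dimensions; the paper's only cosmetic difference is that it finishes via Proposition \ref{prop:propsofInd}(iv) (induction preserves codimension, with $\OO_{q^t}$ induced from $\{0\}$ on $\prod_i\mathrm{GL}(q_i)$) instead of the explicit formula $\dim\OO_p=n^2-\sum_i(p^t_i)^2$, which is the same computation. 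One caveat: your parenthetical parity claim in type $D$ is false in general. For $q=(3,1)$ and $\fg$ of type $D_4$ the coordinates of $\gamma$ are $5/4,3/4,1/4,1/4$, so exactly one coordinate is $\equiv 3/4 \pmod 1$ and the needed sign change is not in $W(D_n)$. This does not damage the proof, because you never need the sign flip to be a Weyl element: setting $f_i=e_i$ or $-e_i$ according to the residue of $\gamma_i$ shows directly that the integral coroots are $\{f_i-f_j\}$, a system of type $A_{n-1}$, and (since all $\gamma_i>0$) the singular ones are exactly the $e_i-e_j$ with $\gamma_i=\gamma_j$; the subsequent identification of $\OO_{\gamma}$ and the codimension count depend only on the isomorphism type of the pair $(\fl^{\vee}_{\gamma},\fl^{\vee}_{\gamma,0})$, not on realizing it by a Weyl conjugation. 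I would simply drop the claim rather than try to repair it.
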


\begin{proof}
Let $q^t =: r = (r_1,...,r_{q_1})$. Up to permutations
$$\gamma = \frac{1}{4}((2q_1-1)^{r_{q_1}}, (2q_1-3)^{r_{q_1-1}}, ..., 1^{r_1})$$
where we use superscripts to denote the multiplicities of repeated entries in $\gamma$. It is easy to see that 
$$\mathfrak{l}^{\vee}_{\gamma} \simeq \mathfrak{gl}(n), \qquad \mathfrak{l}^{\vee}_{\gamma,0} \simeq \mathfrak{gl}(r_1) \times \mathfrak{sl}(r_2) \times ... \times \mathfrak{sl}(r_{q_1})$$
Thus
$$\OO_{\gamma}^{\vee} = \Ind^{L^{\vee}_{\gamma}}_{L^{\vee}_{\gamma,0}} \{0\} = \OO_q$$
where $\OO_q$ denotes the nilpotent orbit in $\mathfrak{gl}(n)^*$ corresponding to $q$. Clearly $\mathfrak{l}_{\gamma} \simeq \mathfrak{gl}(n)$ and 
$$\OO_{\gamma} = d(\OO_{\gamma}^{\vee}) = \OO_r$$
Note that $\OO_r$ is induced from the $\{0\}$-orbit of the Levi $\mathfrak{gl}(q_1) \times \mathfrak{sl}(q_2) \times ... \times \mathfrak{sl}(q_m)$. So by (iv) of Proposition \ref{prop:propsofInd}
$$\codim(\OO_{\gamma}, \cN_{L_{\gamma}}) = \codim(\{0\},\cN_{\mathrm{GL}(q_1) \times ... \times \mathrm{GL}(q_m)}) = \sum_{i=1}^m \dim \cN_{\mathrm{GL}(q_i)},$$
as asserted.

\end{proof}

\begin{prop}\label{prop:maximalitySpin}
Let $G$ be a simple rank-$n$ spin-group of type $B$ or $D$ and let $\widetilde{\OO}$ be a $G$-equivariant nilpotent cover. Then $I_0(\widetilde{\OO})$ is a maximal ideal.
\end{prop}

\begin{proof}
Choose a Levi subalgebra
$$\fm = \prod_i \mathfrak{gl}(a_i) \times \mathfrak{g}(m) \subset \fg,$$
and a birationally rigid $M$-equivariant nilpotent cover
$$\{0\} \times ... \times \{0\} \times \widetilde{\OO}'$$
such that $\widetilde{\OO} = \mathrm{Bind}^G_M \{0\} \times ... \times \{0\} \times \widetilde{\OO}'$. 
Here, $m \leq n$ and $a$ is a partition of $n-m$. Write $\gamma = \gamma_0(\widetilde{\OO})$ and $\gamma'=\gamma_0(\widetilde{\OO}')$. By Proposition \ref{prop:nodeltashift}
\begin{equation}\label{eq:gamma}\gamma= (\rho(a),\gamma').\end{equation}
Let $p$ denote the partition corresponding to $\OO'$, the orbit in $\mathfrak{g}(m)$ of which $\widetilde{\OO}'$ is a cover. Define $S_4(p)$, $p \# S_4(p)$, $x=x((p\#S_4(p))^t)$, $y=y((p\#S_4(p))^t)$, and $z=z((p\#S_4(p))^t)$ as in Definition \ref{def:combinatorics}. Since $\widetilde{\OO}'$ is a birationally rigid $\mathrm{Spin}(m)$-equivariant cover, we can construct a Levi subalgebra
$$\fl = \prod_{k \in S_4(p)} \mathfrak{gl}(k) \times \mathfrak{g}(m-|S_4(p)|) \subset \fg(m),$$
such that 
\begin{equation}\label{eq:gammaprime}\gamma' =(\rho^+(h'(z^{1/2})), \gamma_0(\widetilde{\OO}_{p\#S_4(p)})),\end{equation}
where $\widetilde{\OO}_{p \#S_4(p)}$ is the universal $\mathrm{SO}(m-|S_4(p)|)$-equivariant cover of $\OO_{p\#S_4(p)}$. This follows from the proof of Proposition \ref{prop:inflcharspin}. Substituting (\ref{eq:gammaprime}) into (\ref{eq:gamma}), we obtain the following formula for $\gamma$
\begin{equation}\label{eq:gamma2}\gamma = (\rho(a),\rho^+(h'(z^{1/2})), \gamma_0(\widetilde{\OO}_{p\#S_4(p)})).\end{equation}
Form the levi subalgebra of $\fg(n-|S_4(p)|)$
$$\mathfrak{k} = \prod_{i}\mathfrak{gl}(a_i) \times \mathfrak{g}(m-|S_4(p)|) \subset \fg(n-|S_4(p)|),$$
and consider the $\mathrm{SO}(n-|S_4(p)|)$-equivariant nilpotent cover 
$$\widetilde{\OO}'' = \mathrm{Bind}^{\mathrm{SO}(n-|S_4(p)|)}_{K} \{0\} \times ... \times \{0\} \times \widetilde{\OO}_{p\#S_4(p)}.$$
If we write $\gamma'' = \gamma_0(\widetilde{\OO}'')$, then again by Proposition \ref{prop:nodeltashift}
\begin{equation}\label{eq:gammaprimeprime}\gamma''= (\rho(a),\gamma_0(\widetilde{\OO}_{p\#S_4(p)})).\end{equation}
Combining (\ref{eq:gamma2}) and (\ref{eq:gammaprimeprime}) (and permuting entries) we obtain the following formula for $\gamma$
$$\gamma = (\rho^+(h'(z^{1/2})), \gamma'').$$
By \cite[Thm 8.5.1]{LMBM}, $I_0(\widetilde{\OO}'')$ is a maximal ideal in $U(\mathfrak{g}(n-|S_4(p)|)$. So by Proposition \ref{prop:maximalitycriterion}
$$\codim(\OO_{\gamma''}, \cN_{L_{\gamma''}}) = \codim(\OO'', \cN_{\mathrm{SO}(n-|S_4(p)|)}) $$
On the other hand, since $\OO$ is induced from $\{0\} \times ... \times \{0\} \times \OO'' \subset \prod_{k \in S_4(p)} \mathfrak{gl}(k) \times \mathfrak{g}(n-|S_4(p)|)$, we have by Proposition \ref{prop:propsofInd}(iv)
$$\codim(\OO,\cN_G) = \codim(\OO'',\cN_{\mathrm{SO}(n-|S_4(p)|)}) + \sum_{k \in S_4(p)} \dim \cN_{\mathrm{GL}(k)}.$$
In view of Proposition \ref{prop:maximalitycriterion}, it suffices to show that
\begin{equation}\label{eq:eq2}\codim(\OO_{\gamma},\cN_{L_{\gamma}}) = \codim(\OO_{\gamma''},\cN_{L_{\gamma''}}) + \sum_{k \in S_4(p)}\dim \cN_{\mathrm{GL}(k)}.\end{equation}
Note that $\gamma''$ is a tuple in $\frac{1}{2}\ZZ$, while $\rho^+(h'(z^{1/2}))$ is a tuple in $\frac{1}{4} + \frac{1}{2}\ZZ$.
In particular, the sum or difference of an entry in $\gamma''$ with an entry in $\rho^+(h'(z^{1/2}))$ is never contained in $\ZZ$. So $\fl^{\vee}_{\gamma}$ splits as a product
$$\fl^{\vee}_{\gamma} = \fl^{\vee}_{\gamma''} \times \fl^{\vee}_{\rho^+(h'(z^{1/2}))},$$
where $\fl^{\vee}_{\rho^+(h'(z^{1/2}))}$ is the subalgebra in $\fg(|S_4(p)|)$ corresponding to the integral co-roots for $\fl^{\vee}_{\rho^+(h'(z^{1/2}))}$, regarded as an infinitesimal character for $\fg(|S_4(p)|)$. As an immediate consequence we obtain
$$\fl_{\gamma} = \fl_{\gamma''} \times \fl_{\rho^+(h'(z^{1/2}))}, \qquad \OO_{\gamma} = \OO_{\gamma''} \times \OO_{\rho^+(h'(z^{1/2}))},$$
Since $z^{1/2} = S_4(p) \cup S_4(p)$, Lemma \ref{lem:lemmaspin} (applied to $q=S_4(p)$) implies
\begin{align*}
\codim(\OO_{\gamma}, \cN_{L_{\gamma}}) &= \codim(\OO_{\gamma''}, \cN_{L_{\gamma''}}) + \codim(\OO_{\rho^+(h'(z^{1/2}))}, \cN_{L_{\rho^+(h'(z^{1/2}))}}) \\
&= \codim(\OO_{\gamma''}, \cN_{L_{\gamma''}}) + \sum_{k \in S_4(p)} \dim \cN_{\mathrm{GL}(k)}.\end{align*}
This proves (\ref{eq:eq2}) and thus completes the proof.
\end{proof}

\subsection{Exceptional groups}\label{subsec:appendixexceptional}

\begin{prop}\label{prop:maximalityexceptional}
Let $G$ be a simple simply connected group of exceptional type and let $\widetilde{\OO}$ be a $G$-equivariant nilpotent cover. Then $I_0(\widetilde{\OO})$ is a maximal ideal.
\end{prop}

The proof will come after a lemma.

\begin{lemma}\label{lem:E7}
Let $\OO$ be the nilpotent orbit $E_7(a_4)$ in $E_7$ and let $\widetilde{\OO}$ be a 2-fold cover of $\OO$. Then $\Spec(\CC[\widetilde{\OO}])$ contains a codimension 2 leaf.
\end{lemma}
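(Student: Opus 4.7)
The plan is to apply the explicit geometric analysis already carried out in the $E_7(a_4)\subset E_7$ case of Proposition~\ref{prop:listofsemirigid}. In that argument it was shown that the codimension~2 orbit $\OO_1=A_6\subset\overline{\OO}$ has $\pi_1(\OO_1)=1$ and corresponding singularity $\Sigma_1$ of type $A_1$, while $\pi_1(\OO)=S_2\times \ZZ_2$. Because
\[
|\pi_1(\OO)|\,|\pi_1(\OO_1)|^{-1}=4>3=m(\OO),
\]
Lemma~\ref{lem:criterion2leafless} forces $\Sigma_1$ to be smoothened in the universal cover $\widehat{X}\to X$; explicitly, the preimage of $\Sigma_1$ in $\widehat{X}$ is a disjoint union of two copies of $\CC^2$ on which the Galois group $\pi_1(\OO)\simeq \ZZ_2\times \ZZ_2$ acts.

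The next step is to pin down this Galois action. Since the composition
\[
\coprod_{i=1,2}\CC^2\;\longrightarrow\;\Sigma_1\simeq \CC^2/\ZZ_2
\]
is Galois of degree $4$ and must recover the $A_1$ quotient singularity, the action of $\pi_1(\OO)$ splits as a direct product of two $\ZZ_2$-factors: one permutes the two components, and the other acts on each component by the standard $\pm 1$-involution defining the $A_1$ singularity. Let $\phi:\pi_1(\OO)\to S_2$ denote the associated permutation character and set $K:=\ker\phi$; then $|K|=2$, and $K$ acts on each of the two $\CC^2$'s by $\pm 1$.

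Finally, form $\widetilde{\OO}:=\widehat{\OO}/K$. Since $|\pi_1(\OO)|/|K|=2$, this is a $2$-fold cover of $\OO$, and $\widetilde{X}:=\Spec(\CC[\widetilde{\OO}])$ is identified with $\widehat{X}/K$. Because $K$ preserves each component of the preimage of $\Sigma_1$, the two $\CC^2$'s in $\widehat{X}$ descend separately to $\widetilde{X}$, and each descends as its quotient $\CC^2/K\simeq \CC^2/\ZZ_2$. Hence the preimage of $\Sigma_1$ in $\widetilde{X}$ is a disjoint union of two $A_1$-Kleinian singularities, each cutting out a codimension $2$ symplectic leaf of $\widetilde{X}$ lying over $\fL_1$. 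This exhibits (at least) one codimension $2$ leaf in $\Spec(\CC[\widetilde{\OO}])$, proving the lemma.

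The main technical point worth careful attention is the splitting of the Galois action on the preimage of $\Sigma_1$ into the direct product of a "permutation" and a "fiber" factor: this is not automatic from the degree alone, but follows from the fact that a Galois cover of $\CC^2/\ZZ_2$ of degree $4$ by a disjoint union of two copies of $\CC^2$ is, up to isomorphism, unique and has Galois group $\ZZ_2\times \ZZ_2$ with exactly this product structure. Once this is in place, the existence of the requisite index-$2$ subgroup $K$, and thus of the desired $2$-fold cover with two codimension $2$ leaves over $\OO_1$, is immediate.
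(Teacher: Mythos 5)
Your construction only establishes the lemma for one particular $2$-fold cover, whereas the statement (and the way it is used in the proof of Proposition \ref{prop:maximalityexceptional}, where the relevant cover $\widetilde{\OO}_L$ is not identified explicitly) requires a codimension $2$ leaf for \emph{every} $2$-fold cover. Since $\pi_1(\OO)\simeq \ZZ_2\times \ZZ_2$, there are three distinct $2$-fold covers, one for each index $2$ subgroup. The cover you produce is precisely $\widehat{\OO}/K$ with $K$ the stabilizer of a component of the preimage of $\Sigma_1$ (the slice to $\OO_1=A_6$), and the fact that this cover has two codimension $2$ leaves over $\fL_1$ is already recorded in the $E_7(a_4)$ case of the proof of Proposition \ref{prop:listofsemirigid}. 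For the other two $2$-fold covers the defining subgroup swaps the two sheets over $\Sigma_1$, so the preimage of $\Sigma_1$ there is a smooth $\CC^2$ and your argument yields nothing; for those covers one must produce leaves over $\OO_2=D_5+A_1$ or $\OO_3=D_6(a_1)$, where the numerical criterion of Lemma \ref{lem:criterion2leafless} fails because $|\pi_1(\OO)||\pi_1(\OO_i)|^{-1}=2<3=m(\OO)$. This is exactly the content the paper's proof supplies and your proposal omits: assuming some $2$-fold cover $\widetilde{\OO}$ is leafless, the preimage of each $\Sigma_i$ in $\widehat{X}$ is two copies of $\CC^2$, and passing to a complementary $2$-fold cover $\widecheck{\OO}$ one gets $A_1$ singularities over the $\OO_i$ forcing $\dim\fP^{\widecheck{X}}\ge 3$; since $\dim\fP^X=3$ and the only induction datum for $\OO$ from a Levi of corank $\ge 3$ is $(L(A_2+2A_1;2,3,5,6),\{0\})$, at most one cover of $\OO$ can be birationally induced from such a Levi, a contradiction. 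Without an argument of this kind (or some other way of controlling the component stabilizers over $\Sigma_2$ and $\Sigma_3$), the lemma is not proved.

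A secondary, smaller point: your claim that a degree $4$ Galois cover of $\CC^2/\ZZ_2$ by two copies of $\CC^2$ is unique with group $\ZZ_2\times\ZZ_2$ and the stated product splitting is overstated (such a cover with Galois group $\ZZ_4$ also exists, the generator interchanging the sheets and squaring to the $\pm 1$ involution). This does not hurt your step, because the Galois group here is known to be $\pi_1(\OO)\simeq\ZZ_2\times\ZZ_2$ and all you actually use is that the component stabilizer acts on each sheet with quotient an $A_1$ singularity; but it should not be presented as a uniqueness statement. The essential defect remains the quantification over covers described above.
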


\begin{proof}
    Assume the opposite. We note that $\pi_1(\OO)\simeq \ZZ_2\times \ZZ_2$, and $\pi_1(\widetilde{\OO})\simeq \ZZ_2$. There are three codimension $2$ orbits in $\overline{\OO}$, namely $\OO_1=A_6$, $\OO_2=D_5+A_1$ and $\OO_3=D_6(a_1)$. For each of these orbits the corresponding singularity $\Sigma_j$ is of type $A_1$. Let $\widehat{\OO}$ be the universal cover of $\OO$. The preimage of each $\Sigma_i$ in $\widehat{X}$ is the union of two copies of $\CC^2$. Let $K\subset \pi_1(\OO)$ be such that $\pi_1(\OO)\simeq \pi_1(\widetilde{\OO})\oplus K$, and let $\widecheck{\OO}$ be the corresponding cover of $\OO$. Since the map $\pi_1(\OO)\to S_2$ permuting the components of the preimage of $\Sigma_i$ has kernel $\pi_1(\widetilde{\OO})$, it follows that the preimage of $\Sigma_i$ in $\widecheck{X}$ is the union of two copies of $\Sigma_i$ for each $i=1,2,3$. Therefore, $\dim \fP^{\widecheck{X}}\ge 3$. Using \cite[Tables]{deGraafElashvili}, we see that there is a single pair of Levi $L$ of corank $3$ and a nilpotent orbit $\OO_L\subset \fl^*$, such that $\OO$ is induced from $\OO_L$, namely $(L(A_2+2A_1; 2,3,5,6), \{0\})$. Thus, there is at most one cover birationally induced from a corank $3$ Levi. However, we have $\dim \fP^X=3$, and $\dim \fP^{\widecheck{X}}\ge 3$, so we get a contradiction. 
\end{proof}

\begin{proof}[Proof of Proposition \ref{prop:maximalityexceptional}]
Let $L \subset G$ be a Levi subgroup of $G$. The universal cover $\widetilde{L}$ of $L$ is a product $T \times \widetilde{L}_1 \times ... \times \widetilde{L}_t$, where $T$ is a torus and $\widetilde{L}_1,...,\widetilde{L}_t$ are simple simply connected groups. For each $i$, choose a birationally rigid $\widetilde{L}_i$-equivariant nilpotent cover $\widetilde{\OO}_i$ and regard  $\gamma_0(\widetilde{\OO}_i)$ as an infinitesimal character for $\fg$ via the natural embedding $\fl_i \subset \fl$. Consider the infinitesimal character
\begin{equation}\label{eq:fictionalgamma}\gamma = \sum_{i=1}^t \gamma_0(\widetilde{\OO}_i) \in \fh^*/W\end{equation}
Any infinitesimal character which arises in this way will be called \emph{pseudo-unipotent}. By Proposition \ref{prop:nodeltashift}, every unipotent infinitesimal character is pseudo-unipotent, but there are many others. 

In the tables below, we list all pseudo-unipotent infinitesimal character for simple exceptional groups. We will explain the procedure and give an example:
\begin{itemize}
    \item List all standard Levi subgroups $L \subset G$. 
    \item For each $L$, determine the simple factors $\fl_i$ of $\fl$. This is evident from the Dynkin diagram.
    \item For each simple factor $\fl_i$ compute the fundamental weights for $\fl_i$ in terms of fundamental weights for $\fg$. 
    \item For each simple factor $\fl_i$ of $\fl$, list, in fundamental weight coordinates, all infinitesimal characters $\gamma_0(\widetilde{\OO}_i)$ attached to birationally rigid covers for the simply connected group $\widetilde{L}_i$. If $\fl_i$ is exceptional, we use the tables in Sections \ref{subsec:exceptionalorbitinflchar} and \ref{subsec:exceptionalcoverinflchar}. If $\fl_i$ is classical, we use \cite[Prop 8.2.8]{LMBM}. 
    \item For each simple factor $\fl_i$ and $\gamma_0(\widetilde{\OO}_i)$, write the infinitesimal character $\gamma_0(\widetilde{\OO}_i)$ in terms of fundamental weights for $G$. 
    \item Compute $\gamma=\sum_{i=1}^t \gamma_0(\widetilde{\OO}_i)$.
\end{itemize}

\begin{examplenonumber}
Let $G=E_8$ and choose $L=L(D_4+A_2;2,3,4,5,7,8)$. Let $\fl_1$ denote the $D_4$ factor and let $\fl_2$ denote the $A_2$ factor. In terms of fundamental weights for $\fg$, the fundamental weights for $\fl_1$ are
$$\frac{1}{2}(-1,2,0,0,0,-1,0,0), \frac{1}{2}(-2,0,2,0,0,-1,0,0), (-1,0,0,1,0,-1,0,0), \frac{1}{2}(-1,0,0,0,2,-2,0,0),$$
where the third weight corresponds to the central node. The fundamental weights for $\fl_2$ are
$$\frac{1}{3}(0,0,0,0,0,-2,3,0), \frac{1}{3}(0,0,0,0,0,-1,0,3).$$
Choose $\widetilde{\OO}_1$ to be the minimal nilpotent orbit in $\fl_1$, corresponding to the partition $(2^2,1^4)$. By \cite[Prop 8.2.8]{LMBM}, $\gamma_0(\widetilde{\OO}_1)$ is the sum of the three fundamental weights corresponding to the non-central nodes, i.e.
\begin{align*}
\gamma_0(\widetilde{\OO}_1) &=  \frac{1}{2}(-1,2,0,0,0,-1,0,0) + \frac{1}{2}(-2,0,2,0,0,-1,0,0) + \frac{1}{2}(-1,0,0,0,2,-2,0,0)\\
&= (-2,1,1,0,1,-2,0,0)\end{align*}
Choose $\widetilde{\OO}_2$ to be the universal cover of the principal nilpotent orbit in $\fl_2$. By \cite[Prop 8.2.8]{LMBM}, $\gamma_0(\widetilde{\OO}_2)$ is $\rho(\fl_2)/3$, i.e.
$$\gamma_0(\widetilde{\OO}_2) = \frac{1}{3}(\frac{1}{3}(0,0,0,0,0,-2,3,0)+ \frac{1}{3}(0,0,0,0,0,-1,0,3)) = \frac{1}{3}(0,0,0,0,0,-1,1,1).$$
So
$$\gamma = \gamma_0(\widetilde{\OO}_1)+\gamma_0(\widetilde{\OO}_2) = \frac{1}{3}(-6,3,3,0,3,-7,1,1).$$
This is $W$-conjguate to the dominant weight
$$\gamma = \frac{1}{3}(0,1,0,0,0,0,2,0).$$
\end{examplenonumber}

If $\gamma$ is a pseudo-unipotent infinitesimal character corresponding to the data $(L \subset G, \widetilde{\OO}_1,...,\widetilde{\OO}_t)$, define the integer
$$n(\gamma) := \sum_{i=1}^t \codim(\OO_i, \cN_{\fl_i})$$
For each pseudo-unipotent infinitesimal character, we will record $n(\gamma)$ in the tables below. Suppose $\gamma$  is the infinitesimal character of a unipotent ideal $I_0(\widetilde{\OO})$. Then $\OO$ is induced from $\OO_1 \times ... \times \OO_t \subset \fl^*$ and
$$n(\gamma) = \codim(\OO, \cN_G).$$
So if
\begin{equation}\label{eq:eq4}n(\gamma) = \codim(\OO_{\gamma}, \cN_{L_{\gamma}}),\end{equation}
then
$$\codim(\OO,\cN_G) = \codim(\OO_{\gamma}, \cN_{L_{\gamma}}),$$
which implies that $I_0(\widetilde{\OO})$ is maximal by Proposition \ref{prop:maximalitycriterion}. Thus, it suffices to show that (\ref{eq:eq4}) holds for every unipotent $\gamma$.

In fact, we compute $\codim(\OO_{\gamma},\cN_{L_{\gamma}})$ for every \emph{pseudo-unipotent} $\gamma$ in the tables below. In types $G_2$, $F_4$, $E_6$, and $E_7$, the {\fontfamily{cmtt}\selectfont atlas} command `$\mathrm{GK\_dim\_maximal\_ideal}(G,\gamma)$' is helpful, but one can also compute by hand, as illustrated below.

\begin{examplenonumber}
Consider the pseudo-unipotent infinitesimal character computed in the example above
$$\gamma = \frac{1}{3}(0,1,0,0,0,0,2,0).$$
The following is a simple system for $\Delta^{\vee}_{\gamma}$
$$\alpha_1^{\vee}, \alpha_3^{\vee},\alpha_4^{\vee},\alpha_5^{\vee},\alpha_6^{\vee},\alpha_8^{\vee}, \alpha_2^{\vee}+\alpha_4^{\vee}+\alpha_5^{\vee}+\alpha_6^{\vee}+\alpha_7^{\vee}.$$
And below is a simple system for $\Delta^{\vee}_0$
$$\alpha_1^{\vee}, \alpha_3^{\vee},\alpha_4^{\vee},\alpha_5^{\vee},\alpha_6^{\vee},\alpha_8^{\vee}$$
Thus, $\fl^{\vee}_{\gamma}$ is of type $E_7$ and $\fl^{\vee}_{\gamma,0}$ is a Levi of type $A_5+A_1$ in $\fl^{\vee}_{\gamma}$ (the embedding $\fl_{\gamma,0}^{\vee} \subset \fl_{\gamma}^{\vee}$ matters---$\fl_{\gamma,0}^{\vee}$ can be regarded as the standard Levi in $E_7$ obtained by deleting the node adjacent to the end point of the short leg of the Dynkin diagram). Now $\OO^{\vee}_{\gamma}$ is the Richardson orbit in $\fl^{\vee}_{\gamma}$ induced from the $\{0\}$-orbit in $\fl^{\vee}_{\gamma,0}$. Hence by \cite[Sec 4]{deGraafElashvili}, $\OO_{\gamma}^{\vee} = D_4(a_1)$. By \cite[Sec 13.4]{Carter1993}, $\OO_{\gamma} = \mathsf{D}(\OO_{\gamma}^{\vee}) = E_7(a_5) \subset E_7$. And by \cite[Sec 8.4]{CM}, $\codim(E_7(a_5), \cN_{E_7}) = 14$.
\end{examplenonumber}

Comparing $\codim(\OO_{\gamma},\cN_{L_{\gamma}})$ and $n(\gamma)$ for every pseudo-unipotent $\gamma$, we find that (\ref{eq:eq4}) holds in all but one case (highlighted in red in the tables below). The case in question is
\begin{equation}\label{eq:badpseudo}G=E_8, \qquad \gamma = \frac{1}{4}(0,1,0,0,1,0,0,1)\end{equation}
This infinitesimal character corresponds to the following choices: 
$$L = L(E_7), \qquad \widetilde{\OO}_1 =  \text{universal cover of } E_7(a_4) \subset \cN_L$$
Indeed, in fundamental weight coordinates for $\fl$
$$\gamma_0(\widetilde{\OO}_1) = \frac{1}{4}(1,2,0,0,1,0,1),$$
see Table \ref{table:covers:E7}. Converting into fundamental weight coordinates for $\fg$, we obtain the weight
\begin{equation}\label{eq:badpseudo1}\gamma_0(\widetilde{\OO}_1) = \frac{1}{4}(1,2,0,0,1,0,1,-8).\end{equation}
The dominant weight in (\ref{eq:badpseudo}) is obtained from (\ref{eq:badpseudo1}) by conjugating by $W$. Note that we have
$$n(\gamma) =\codim(E_7(a_4), \cN_{E_7}) = 10,$$
whereas
$$\codim(\OO_{\gamma},L_{\gamma}) = \codim(\OO_{(4,3,1)} \times \{0\}, \cN_{\mathfrak{sl}(2)\times \mathfrak{sl}(8)}) = 12 $$
To show that $\gamma$ is \emph{not} unipotent, we must show that the orbit $\OO_L = E_7(a_4) \subset \fl^*$ does not admit an $L$-equivariant cover in the same equivalence class as the universal cover of $\OO_L$. An {\fontfamily{cmtt}\selectfont atlas} computation shows that $\pi_1^L(\OO_L) \simeq \ZZ_2$. Write $\widetilde{\OO}_L$ for the nontrivial $L$-equivariant cover of $\OO_L$. By Lemma \ref{lem:E7}, $\Spec(\CC[\widetilde{\OO}_L])$ contains a codimension 2 leaf. On the other hand, by Corollary \ref{cor:criterionbirigidcover}, $\Spec(\CC[\widehat{\OO}_L])$ does not. Consequently, $[\widetilde{\OO}_L] \neq [\widehat{\OO}_L]$, as desired. This completes the proof.
\end{proof}

\begin{longtable}{|c|c|c|c|c|c|c|c|}
\caption{Pseudo-unipotent infinitesimal characters: type $G_2$}\label{table:fictitiousG2}\\ \hline
central char & codim & central char & codim & central char & codim & central char & codim \\ \hline

\cellcolor{blue!20}$(0,0)$ & $0$ & 
\cellcolor{blue!20}$\frac{1}{2}(1,0)$ & $2$ &
\cellcolor{blue!20}$\frac{1}{2}(0,1)$ & $2$ &
\cellcolor{blue!20}$(1,0)$ & $2$ \\ \hline

$\frac{1}{2}(1,1)$ & $4$ &
$\frac{1}{3}(3,1)$ & $6$ &
\cellcolor{blue!20}$(1,1)$ & $12$ & & \\ \hline

\end{longtable}

\begin{longtable}{|c|c|c|c|c|c|c|c|}
\caption{Pseudo-unipotent infinitesimal characters: type $F_4$}\label{table:fictitiousG2}\\ \hline
central char & codim & central char & codim & central char & codim & central char & codim \\ \hline

\cellcolor{blue!20}$(0,0,0,0)$ & $0$ & 
$\frac{1}{3}(1,0,0,0)$ & $0$ & 
$\frac{1}{3}(0,0,0,1)$ & $0$ &
$\frac{1}{4}(1,0,0,0)$ & $0$ \\ \hline

$\frac{1}{4}(0,0,1,0)$ & $0$ &
$\frac{1}{4}(0,0,0,1)$ & $0$ &
$\frac{1}{12}(1,0,3,0)$ & $0$ &
$\frac{1}{12}(0,1,2,0)$ & $0$ \\ \hline

\cellcolor{blue!20}$\frac{1}{2}(1,0,0,0)$ & $2$ &
\cellcolor{blue!20}$\frac{1}{2}(0,0,0,1)$ & $2$ &
$\frac{1}{4}(1,0,1,0)$ & $2$ &
$\frac{1}{4}(0,1,0,0)$ & $2$ \\ \hline

$\frac{1}{4}(1,0,0,2)$ & $2$ &
$\frac{1}{6}(0,1,1,0)$ & $2$ &
$\frac{1}{6}(1,0,2,0)$ & $2$ &
$\frac{1}{2}(1,0,0,1)$ & $4$ \\ \hline

\cellcolor{blue!20}$\frac{1}{2}(0,0,1,0)$ & $4$ &
\cellcolor{blue!20}$(0,0,0,1)$ & $6$ & 
\cellcolor{blue!20}$(1,0,0,0)$ & $6$ &
$\frac{1}{4}(3,0,1,0)$ & $6$ \\ \hline

$\frac{1}{4}(1,0,2,0)$ & $6$ &
$\frac{1}{4}(0,1,0,2)$ & $6$ &
\cellcolor{blue!20}$(0,0,1,0)$ & $8$ &
\cellcolor{blue!20}$\frac{1}{2}(1,0,0,2)$ & $8$ \\ \hline

\cellcolor{blue!20}$\frac{1}{2}(0,1,0,0)$ & $8$ &
\cellcolor{blue!20}$\frac{1}{2}(0,1,0,1)$ & $8$ &
\cellcolor{blue!20}$\frac{1}{2}(1,0,1,0)$ & $8$ &
$\frac{1}{2}(1,1,0,0)$ & $10$ \\ \hline

$\frac{1}{2}(0,1,0,2)$ & $12$ &
$\frac{1}{3}(1,1,1,1)$ & $12$ &
$\frac{1}{4}(1,0,2,4)$ & $12$ &
$\frac{1}{4}(1,1,2,2)$ & $14$ \\ \hline

\cellcolor{blue!20}$\frac{1}{2}(2,1,0,1)$ & $18$ &
\cellcolor{blue!20}$(0,0,1,1)$ & $18$ &
\cellcolor{blue!20}$(1,0,1,0)$ & $20$ &
\cellcolor{blue!20}$(1,0,1,1)$ & $26$ \\ \hline

$\frac{1}{2}(1,1,2,2)$ & $32$ &
\cellcolor{blue!20}$(1,1,1,1)$ & $48$ & & & &\\ \hline

\end{longtable}

\begin{longtable}{|c|c|c|c|c|c|}
\caption{Pseudo-unipotent infinitesimal characters: type $E_6$}\label{table:fictitiousE8}\\ \hline
central char & codim & central char & codim & central char & codim  \\ \hline

\cellcolor{blue!20}$(0,0,0,0,0,0)$ & $0$ &
$\frac{1}{3}(0,1,0,0,0,0)$ & $0$ &
$\frac{1}{3}(1,0,0,0,0,1)$ & $0$ \\ \hline

$\frac{1}{4}(0,1,0,0,0,0)$ & $0$ & 
$\frac{1}{4}(1,0,0,0,0,1)$ & $0$ &
$\frac{1}{4}(0,0,0,1,0,0)$ & $0$ \\ \hline

$\frac{1}{5}(1,1,0,0,0,1)$ & $0$ &
$\frac{1}{8}(1,0,1,0,1,1)$ & $0$ &
$\frac{1}{8}(1,2,0,0,0,1)$ & $0$ \\ \hline

$\frac{1}{12}(1,0,0,3,0,1)$ & $0$ &
$\frac{1}{12}(2,1,1,0,1,2)$ & $0$ &
$\frac{1}{12}(3,1,0,0,0,3)$ & $0$ \\ \hline

$\frac{1}{12}(0,0,1,2,1,0)$ & $0$ &
$\frac{1}{20}(1,0,3,1,3,1)$ & $0$ &
\cellcolor{blue!20}$\frac{1}{2}(0,1,0,0,0,0)$ & $2$ \\ \hline

$\frac{1}{4}(0,0,1,0,1,0)$ & $2$ &
$\frac{1}{4}(1,1,0,0,0,1)$ & $2$ &
$\frac{1}{6}(2,1,0,0,0,2)$ & $2$ \\ \hline

$\frac{1}{6}(0,0,1,1,1,0)$ & $2$ &
$\frac{1}{8}(2,0,1,0,1,2)$ & $2$ &
$\frac{1}{10}(2,0,1,1,1,2)$ & $2$ \\ \hline

$\frac{1}{12}(1,0,2,1,2,1)$ & $2$ &
\cellcolor{blue!20}$\frac{1}{2}(1,0,0,0,0,1)$ & $4$ &
$\frac{1}{4}(1,2,0,0,0,1)$ & $4$ \\ \hline

$\frac{1}{4}(1,0,0,1,0,1)$ & $4$ &
$\frac{1}{4}(0,1,1,0,1,0)$ & $4$ &
$\frac{1}{6}(1,0,0,2,0,1)$ & $4$ \\ \hline

\cellcolor{blue!20}$(0,1,0,0,0,0)$ & $6$ &
\cellcolor{blue!20}$\frac{1}{2}(0,0,0,1,0,0)$ & $6$ &
$\frac{1}{3}(0,1,1,0,1,0)$ & $6$ \\ \hline

$\frac{1}{3}(1,2,0,0,0,1)$ & $6$ &
$\frac{1}{4}(1,3,0,0,0,1)$ & $6$ &
$\frac{1}{4}(0,2,1,0,1,0)$ & $6$ \\ \hline

$\frac{1}{4}(1,0,1,0,1,1)$ & $6$ &
$\frac{1}{6}(2,1,1,0,1,2)$ & $6$ &
$\frac{1}{12}(1,5,3,0,3,1)$ & $6$ \\ \hline

\cellcolor{blue!20}$\frac{1}{2}(1,1,0,0,0,1)$ & $8$ &
$\frac{1}{2}(0,1,0,1,0,0)$ & $8$ &
$\frac{1}{4}(1,1,1,0,1,1)$ & $8$ \\ \hline

$\frac{1}{6}(1,1,2,0,2,1)$ & $6$ &
$\frac{1}{6}(1,2,1,1,1,1)$ & $6$ & 
\cellcolor{blue!20}$\frac{1}{2}(0,0,1,0,1,0)$ & $10$ \\ \hline

$\frac{1}{4}(1,2,1,0,1,1)$ & $10$ &
\cellcolor{blue!20}$(1,0,0,0,0,1)$ & $12$ &
\cellcolor{blue!20}$\frac{1}{2}(1,2,0,0,0,1)$ & $12$ \\ \hline

$\frac{1}{4}(1,2,0,2,0,1)$ & $12$ &
$\frac{1}{4}(1,3,1,0,1,1)$ & $12$ &
$\frac{1}{4}(3,0,0,1,0,3)$ & $12$ \\ \hline

$\frac{1}{4}(2,1,1,0,1,2)$ & $12$ &
\cellcolor{blue!20}$(0,0,0,1,0,0)$ & $14$ &
\cellcolor{blue!20}$\frac{1}{2}(0,1,1,0,1,0)$ & $14$ \\ \hline

\cellcolor{blue!20}$\frac{1}{2}(1,0,0,1,0,1)$ & $14$ &
$\frac{1}{2}(1,1,0,1,0,1)$ & $16$ &
$\frac{1}{3}(1,1,1,1,1,1)$ & $18$ \\ \hline

\cellcolor{blue!20}$(1,1,0,0,0,1)$ & $20$ &
$\frac{1}{4}(3,3,1,0,1,3)$ & $20$ & 
\cellcolor{blue!20}$\frac{1}{2}(1,1,1,0,1,1)$ & $22$ \\ \hline

\cellcolor{blue!20}$(0,1,0,1,0,0)$ & $24$ &
$\frac{1}{3}(1,3,1,1,1,1)$ & $24$ &
$\frac{1}{4}(1,4,1,2,1,1)$ & $24$ \\ \hline

\cellcolor{blue!20}$\frac{1}{2}(1,2,1,0,1,1)$ & $26$ &
\cellcolor{blue!20}$(1,0,0,1,0,1)$ & $30$ &
\cellcolor{blue!20}$\frac{1}{2}(2,1,1,0,1,2)$ & $30$ \\ \hline

$\frac{1}{2}(1,1,1,1,1,1)$ & $32$ &
\cellcolor{blue!20}$(1,1,0,1,0,1)$ & $40$ &
\cellcolor{blue!20}$(1,1,1,0,1,1)$ & $50$ \\ \hline

\cellcolor{blue!20}$(1,1,1,1,1,1)$ & $72$ & & & & \\ \hline

\end{longtable}

\begin{longtable}{|c|c|c|c|c|c|}
\caption{Pseudo-unipotent infinitesimal characters: type $E_7$}\label{table:fictitiousE8}\\ \hline
central char & codim & central char & codim & central char & codim  \\ \hline

\cellcolor{blue!20}$(0,0,0,0,0,0,0)$ & $0$ & $\frac{1}{2}(0,0,0,0,0,0,1)$ & $0$ & $\frac{1}{3}(1,0,0,0,0,0,0)$ & $0$ \\ \hline

$\frac{1}{3}(0,0,0,0,0,1,0)$ & $0$ & $\frac{1}{4}(1,0,0,0,0,0,0)$ & $0$ & $\frac{1}{4}(0,0,1,0,0,0,0)$ & $0$\\ \hline

$\frac{1}{4}(0,1,0,0,0,0,1)$ & $0$ & $\frac{1}{5}(1,0,0,0,0,1,0)$ & $0$ & $\frac{1}{6}(0,2,0,0,0,0,1)$ & $0$ \\ \hline

$\frac{1}{6}(1,0,0,0,0,1,1)$ & $0$ & $\frac{1}{7}(0,0,0,1,0,1,0)$ & $0$ & $\frac{1}{8}(1,0,0,0,0,1,2)$ & $0$ \\ \hline

$\frac{1}{8}(0,0,0,1,0,1,0)$ & $0$ & $\frac{1}{8}(0,0,1,0,1,0,1)$ & $0$ & $\frac{1}{8}(2,0,0,0,0,1,0)$ & $0$ \\ \hline

$\frac{1}{12}(0,0,1,1,1,0,1)$ & $0$ & $\frac{1}{12}(0,0,2,1,0,0,0)$ & $0$ & $\frac{1}{12}(1,0,0,1,0,2,0)$ & $0$ \\ \hline

$\frac{1}{12}(0,0,3,0,0,1,0)$ & $0$ & $\frac{1}{12}(1,0,0,0,0,3,0)$ & $0$ &
$\frac{1}{15}(0,0,2,1,0,2,0)$ & $0$ \\ \hline

$\frac{1}{20}(0,0,1,3,0,1,0)$ & $0$ &
$\frac{1}{24}(0,2,3,0,3,0,1)$ & $0$ &
$\frac{1}{24}(0,0,2,1,0,5,0)$ & $0$ \\ \hline

\cellcolor{blue!20}$\frac{1}{2}(1,0,0,0,0,0,0)$ & $2$ &
\cellcolor{blue!20}$\frac{1}{2}(0,1,0,0,0,0,0)$ & $2$ & 
$\frac{1}{4}(1,0,0,0,0,0,2)$ & $2$ \\ \hline

$\frac{1}{4}(0,0,0,1,0,0,0)$ & $2$ &
$\frac{1}{4}(1,0,0,0,0,1,0)$ & $2$ &
$\frac{1}{4}(0,0,0,0,1,0,1)$ & $2$ \\ \hline

$\frac{1}{6}(0,0,1,1,0,0,0)$ & $2$ &
$\frac{1}{6}(1,0,0,0,0,2,0)$ & $2$ &
$\frac{1}{6}(0,1,1,0,0,1,0)$ & $2$ \\ \hline

$\frac{1}{8}(0,0,0,1,0,2,0)$ & $2$ &
$\frac{1}{8}(1,0,0,0,0,2,2)$ & $2$ &
$\frac{1}{8}(0,0,1,1,0,0,2)$ & $2$ \\ \hline

$\frac{1}{8}(0,2,1,0,0,1,0)$ & $2$ &
$\frac{1}{10}(0,0,1,1,0,2,0)$ & $2$ &
$\frac{1}{12}(0,1,0,0,3,0,2)$ & $2$ \\ \hline

$\frac{1}{12}(0,0,1,2,0,1,0)$ & $2$ &
$\frac{1}{24}(0,0,5,1,2,0,4)$ & $2$ &
\cellcolor{blue!20} $\frac{1}{2}(0,0,0,0,0,1,0)$ & $4$ \\ \hline

$\frac{1}{2}(1,0,0,0,0,0,1)$ & $4$ &
$\frac{1}{4}(2,0,0,0,0,1,0)$ & $4$ &
$\frac{1}{4}(0,0,1,0,0,0,2)$ & $4$ \\ \hline

$\frac{1}{4}(1,0,0,0,1,0,1)$ & $4$ &
$\frac{1}{4}(0,1,0,0,1,0,0)$ & $4$ &
$\frac{1}{4}(0,0,0,0,0,1,2)$ & $4$ \\ \hline

$\frac{1}{4}(1,0,0,1,0,0,0)$ & $4$ &
$\frac{1}{4}(0,0,1,0,0,1,0)$ & $4$ &
$\frac{1}{6}(1,0,0,0,2,0,1)$ & $4$ \\ \hline

$\frac{1}{6}(0,0,2,0,0,1,0)$ & $4$ &
$\frac{1}{8}(0,2,3,0,2,0,0)$ & $4$ &
$\frac{1}{12}(2,0,0,3,0,1,0)$ & $4$ \\ \hline

$\frac{1}{12}(0,1,3,0,0,0,5)$ & $4$ &
\cellcolor{blue!20}$(0,0,0,0,0,0,1)$ & $6$ &
\cellcolor{blue!20}$(1,0,0,0,0,0,0)$ & $6$ \\ \hline

\cellcolor{blue!20}$\frac{1}{2}(0,0,0,0,1,0,0)$ & $6$ &
$\frac{1}{2}(1,1,0,0,0,0,0)$ & $6$ &
\cellcolor{blue!20}$\frac{1}{2}(0,0,1,0,0,0,0)$ & $6$ \\ \hline

$\frac{1}{3}(1,0,0,0,0,1,1)$ & $6$ &
$\frac{1}{3}(0,1,0,0,0,0,2)$ & $6$ &
$\frac{1}{3}(1,0,0,1,0,0,0)$ & $6$ \\ \hline

$\frac{1}{3}(2,0,0,0,0,1,0)$ & $6$ &
$\frac{1}{4}(2,0,0,1,0,0,0)$ & $6$ &
$\frac{1}{4}(3,0,0,0,0,1,0)$ & $6$ \\ \hline

$\frac{1}{4}(1,0,0,0,0,1,2)$ & $6$ &
$\frac{1}{4}(0,0,0,1,0,1,0)$ & $6$ &
$\frac{1}{4}(0,0,0,1,0,0,2)$ & $6$ \\ \hline

$\frac{1}{4}(0,1,1,0,0,0,1)$ & $6$ &
$\frac{1}{4}(0,1,0,0,0,0,3)$ & $6$ &
$\frac{1}{4}(2,0,0,1,0,0,0)$ & $6$ \\ \hline

$\frac{1}{5}(2,0,0,1,0,1,0)$ & $6$ &
$\frac{1}{6}(1,0,0,1,0,2,0)$ & $6$ &
$\frac{1}{8}(4,0,0,1,0,2,0)$ & $6$ \\ \hline

$\frac{1}{8}(3,2,0,1,0,1,0)$ & $6$ &
$\frac{1}{12}(0,0,1,2,1,0,5)$ & $6$ &
$\frac{1}{12}(5,0,0,3,0,1,0)$ & $6$ \\ \hline

$\frac{1}{12}(1,0,0,3,0,1,4)$ & $6$ &
\cellcolor{blue!20}$\frac{1}{2}(1,0,0,0,0,1,0)$ & $8$ &
\cellcolor{blue!20}$\frac{1}{2}(0,1,0,0,0,0,1)$ & $8$ \\ \hline

$\frac{1}{2}(1,0,1,0,0,0,0)$ & $8$ &
$\frac{1}{4}(2,1,1,0,0,0,1)$ & $8$ &
$\frac{1}{4}(1,2,0,0,0,1,0)$ & $8$ \\ \hline

$\frac{1}{4}(1,0,0,1,0,1,0)$ & $8$ &
$\frac{1}{4}(0,0,1,0,1,0,1)$ & $8$ & 
$\frac{1}{6}(3,0,1,1,0,1,0)$ & $8$ \\ \hline

$\frac{1}{6}(0,0,1,1,1,0,1)$ & $8$ &
$\frac{1}{6}(1,0,0,2,0,1,0)$ & $8$ &
$\frac{1}{8}(1,2,0,2,0,1,0)$ & $8$ \\ \hline

$\frac{1}{2}(0,1,0,0,0,1,0)$ & $10$ &
$\frac{1}{2}(1,1,0,0,0,0,1)$ & $10$ &
\cellcolor{blue!20}$\frac{1}{2}(0,0,0,1,0,0,0)$ & $10$ \\ \hline

$\frac{1}{2}(1,0,0,0,1,0,0)$ & $10$ &
$\frac{1}{4}(2,0,0,1,0,1,0)$ & $10$ &
$\frac{1}{4}(1,2,0,0,1,0,1)$ & $10$ \\ \hline

$\frac{1}{4}(0,0,0,1,0,2,0)$ & $10$ &
$\frac{1}{4}(0,2,0,1,0,0,0)$ & $10$ &
\cellcolor{blue!20}$(0,0,0,0,0,1,0)$ & $12$ \\ \hline

\cellcolor{blue!20}$(0,1,0,0,0,0,0)$ & $12$ &
\cellcolor{blue!20}$\frac{1}{2}(2,0,0,0,0,1,0)$ & $12$ &
$\frac{1}{2}(1,0,0,0,0,1,1)$ & $12$ \\ \hline

$\frac{1}{4}(4,0,0,0,0,1,2)$ & $12$ &
$\frac{1}{4}(2,0,2,0,0,1,0)$ & $12$ &
$\frac{1}{4}(2,1,1,0,1,0,0)$ & $12$ \\ \hline

$\frac{1}{4}(0,2,1,0,0,1,0)$ & $12$ &
$\frac{1}{4}(1,0,0,1,1,0,1)$ & $12$ &
$\frac{1}{4}(1,0,0,1,0,1,2)$ & $12$ \\ \hline

$\frac{1}{4}(0,0,1,0,0,3,0)$ & $12$ &
$\frac{1}{4}(3,0,0,1,0,1,0)$ & $12$ & 
$\frac{1}{4}(0,1,1,0,1,0,2)$ & $12$ \\ \hline

$\frac{1}{4}(1,0,0,1,0,2,0)$ & $12$ &
$\frac{1}{4}(3,0,0,1,0,0,2)$ & $12$ &
$\frac{1}{6}(4,0,0,2,0,1,0)$ & $12$ \\ \hline

$\frac{1}{12}(8,0,0,3,1,0,5)$ & $12$ &
\cellcolor{blue!20}$(1,0,0,0,0,0,1)$ & $14$ &
\cellcolor{blue!20}$(0,0,1,0,0,0,0)$ & $14$ \\ \hline

\cellcolor{blue!20}$\frac{1}{2}(1,0,0,1,0,0,0)$ & $14$ &
\cellcolor{blue!20}$\frac{1}{2}(0,0,1,0,0,1,0)$ & $14$ &
$\frac{1}{2}(0,0,1,1,0,1,1)$ & $14$ \\ \hline

$\frac{1}{2}(0,0,0,1,0,0,1)$ & $14$ &
$\frac{1}{2}(0,0,1,0,1,0,0)$ & $14$ &
$\frac{1}{3}(2,0,0,0,1,0,2)$ & $14$ \\ \hline

$\frac{1}{4}(2,0,0,1,1,0,1)$ & $14$ &
$\frac{1}{4}(2,2,0,0,1,0,1)$ & $14$ &
$\frac{1}{4}(0,0,2,1,0,1,0)$ & $14$ \\ \hline

$\frac{1}{4}(3,0,0,0,1,0,3)$ & $14$ &
$\frac{1}{4}(0,1,3,0,0,0,1)$ & $14$ &
\cellcolor{blue!20}$\frac{1}{2}(1,0,0,0,1,0,1)$ & $16$ \\ \hline

\cellcolor{blue!20}$\frac{1}{2}(0,1,1,0,0,0,1)$ & $16$ &
$\frac{1}{2}(1,0,1,0,0,1,0)$ & $16$ &
$\frac{1}{4}(2,1,1,0,1,1,0)$ & $16$ \\ \hline

\cellcolor{blue!20}$\frac{1}{2}(0,0,0,1,0,1,0)$ & $18$ &
$\frac{1}{2}(1,0,0,1,0,0,1)$ & $18$ &
$\frac{1}{2}(1,1,0,0,1,0,0)$ & $18$ \\ \hline

$\frac{1}{3}(1,0,1,1,0,1,0)$ & $18$ &
$\frac{1}{4}(2,1,1,0,1,0,2)$ & $18$ &
$\frac{1}{4}(0,0,0,1,2,1,0)$ & $18$ \\ \hline

$\frac{1}{6}(2,1,2,1,1,1,1)$ & $18$ &
\cellcolor{blue!20}$(1,0,0,0,0,1,0)$ & $20$ &
\cellcolor{blue!20}$(0,0,0,0,1,0,0)$ & $20$ \\ \hline

$\frac{1}{3}(2,0,0,1,0,2,0)$ & $20$ &
$\frac{1}{4}(3,0,0,1,0,3,0)$ & $20$ &
\cellcolor{blue!20}$\frac{1}{2}(1,0,0,1,0,1,0)$ & $22$ \\ \hline

\cellcolor{blue!20}$(1,0,1,0,0,0,0)$ & $24$ &
$\frac{1}{2}(2,0,1,0,1,0,0)$ & $24$ &
$\frac{1}{3}(3,0,1,1,0,1,0)$ & $24$ \\ \hline

$\frac{1}{4}(1,1,0,2,1,1,3)$ & $24$ & 
$\frac{1}{4}(2,1,1,0,1,2,2)$ & $24$ &
$\frac{1}{4}(4,0,2,1,0,1,0)$ & $24$ \\ \hline

$(0,0,0,1,0,0,0)$ & $26$ &
\cellcolor{blue!20}$\frac{1}{2}(2,1,1,0,0,0,1)$ & $26$ &
$\frac{1}{2}(1,0,0,1,0,1,1)$ & $26$ \\ \hline

\cellcolor{blue!20}$\frac{1}{2}(2,0,0,1,0,1,0)$ & $26$ &
$\frac{1}{4}(4,1,1,0,2,0,1)$ & $26$ &
$\frac{1}{4}(4,0,0,1,2,1,0)$ & $26$ \\ \hline

\cellcolor{blue!20}$(1,0,0,0,1,0,0)$ & $28$ &
$\frac{1}{2}(1,1,1,0,0,1,1)$ & $28$ &
\cellcolor{blue!20}$(1,0,0,0,0,1,1)$ & $30$ \\ \hline

\cellcolor{blue!20}$(0,0,1,0,0,1,0)$ & $30$ &
$\frac{1}{2}(1,0,0,1,0,2,0)$ & $30$ &
$\frac{1}{4}(3,0,0,1,0,3,4)$ & $30$ \\ \hline

\cellcolor{blue!20}$(0,0,0,1,0,0,1)$ & $32$ &
$\frac{1}{2}(1,0,1,1,0,1,0)$ & $32$ &
\cellcolor{blue!20}$\frac{1}{2}(1,0,0,1,0,1,2)$ & $32$ \\ \hline

$\frac{1}{2}(1,1,1,0,1,0,1)$ & $32$ &
\cellcolor{blue!20}$\frac{1}{2}(0,1,1,0,1,0,2)$ & $32$ &
$\frac{1}{2}(1,1,0,1,0,1,1)$ & $34$ \\ \hline

$\frac{1}{3}(1,1,1,1,1,1,1)$ & $36$ &
\cellcolor{blue!20}$(1,0,1,0,0,1,0)$ & $40$ &
$\frac{1}{2}(2,0,1,1,0,1,1)$ & $40$ \\ \hline

$\frac{1}{4}(4,1,3,0,1,3,0)$ & $40$ &
$\frac{1}{4}(4,1,3,0,1,2,2)$ & $40$ &
\cellcolor{blue!20}$(0,0,0,1,0,1,0)$ & $42$ \\ \hline

\cellcolor{blue!20}$\frac{1}{2}(2,1,1,0,1,1,0)$ & $42$ &
$\frac{1}{2}(1,1,1,0,1,1,1)$ & $42$ &
\cellcolor{blue!20}$\frac{1}{2}(2,1,1,0,1,0,2)$ & $42$ \\ \hline

\cellcolor{blue!20}$(1,0,0,1,0,0,1)$ & $44$ &
\cellcolor{blue!20}$(1,0,0,1,0,1,0)$ & $50$ &
$\frac{1}{2}(1,1,1,1,1,1,1)$ & $56$ \\ \hline

\cellcolor{blue!20}$(1,0,0,1,0,1,1)$ & $60$ &
\cellcolor{blue!20}$\frac{1}{2}(2,1,1,0,1,2,2)$ & $60$ &
$\frac{1}{2}(1,1,1,1,1,1,2)$ & $62$ \\ \hline

\cellcolor{blue!20}$(1,0,1,1,0,1,0)$ & $72$ &
$\frac{1}{2}(2,1,2,1,1,1,1)$ & $72$ &
\cellcolor{blue!20}$(1,1,1,0,1,0,1)$ & $74$ \\ \hline

\cellcolor{blue!20}$(1,1,1,0,1,1,1)$ & $92$ &
\cellcolor{blue!20}$(1,1,1,1,1,1,1)$ & $126$ & & \\ \hline

\end{longtable}

\begin{longtable}{|c|c|c|c|c|c|}
\caption{Pseudo-unipotent infinitesimal characters: type $E_8$}\label{table:fictitiousE8}\\ \hline
central char & codim & central char & codim & central char & codim  \\ \hline

\cellcolor{blue!20}$(0,0,0,0,0,0,0,0)$ & $0$ &
$\frac{1}{3}(0,0,0,0,0,0,0,1)$ & $0$ &
$\frac{1}{3}(1,0,0,0,0,0,0,0)$ & $0$ \\ \hline

$\frac{1}{4}(0,0,0,0,0,0,0,1)$ & $0$ &
$\frac{1}{4}(1,0,0,0,0,0,0,0)$ & $0$ &
$\frac{1}{4}(0,0,0,0,0,0,1,0)$ & $0$ \\ \hline

$\frac{1}{4}(0,1,0,0,0,0,0,0)$ & $0$ &
$\frac{1}{5}(1,0,0,0,0,0,0,1)$ & $0$ &
$\frac{1}{7}(1,0,0,0,0,1,0,0)$ & $0$ \\ \hline

$\frac{1}{8}(1,0,0,0,0,0,0,2)$ & $0$ &
$\frac{1}{8}(0,0,1,0,0,0,1,0)$ & $0$ &
$\frac{1}{8}(1,0,0,0,0,1,0,0)$ & $0$ \\ \hline

$\frac{1}{8}(1,0,0,0,1,0,0,0)$ & $0$ &
$\frac{1}{12}(2,0,0,0,0,1,0,1)$ & $0$ &
$\frac{1}{12}(0,2,0,0,1,0,0,0)$ & $0$ \\ \hline

$\frac{1}{12}(1,0,0,0,0,0,3,0)$ & $0$ &
$\frac{1}{12}(3,0,0,0,0,0,0,1)$ & $0$ &
$\frac{1}{12}(0,2,1,0,0,0,0,0)$ & $0$ \\ \hline

$\frac{1}{12}(0,0,1,0,0,1,1,0)$ & $0$ &
$\frac{1}{12}(0,0,0,0,0,1,2,0)$ & $0$ &
$\frac{1}{15}(2,0,0,0,0,1,2,0)$ & $0$ \\ \hline

$\frac{1}{16}(1,0,0,1,0,1,1,0)$ & $0$ &
$\frac{1}{20}(0,1,3,0,0,0,1,0)$ & $0$ &
$\frac{1}{20}(1,0,0,0,0,3,1,0)$ & $0$ \\ \hline

$\frac{1}{24}(5,0,0,0,0,1,2,0)$ & $0$ &
$\frac{1}{24}(0,0,1,0,2,0,3,0)$ & $0$ &
$\frac{1}{28}(0,0,3,1,0,0,3,0)$ & $0$ \\ \hline

$\frac{1}{40}(2,0,0,1,4,0,3,0)$ & $0$ &
$\frac{1}{60}(0,3,4,0,5,0,3,0)$ & $0$ &
\cellcolor{blue!20}$\frac{1}{2}(0,0,0,0,0,0,0,1)$ & $2$ \\ \hline

$\frac{1}{4}(1,0,0,0,0,0,0,1)$ & $2$ &
$\frac{1}{4}(0,0,1,0,0,0,0,0)$ & $2$ & 
$\frac{1}{4}(0,0,0,0,0,1,0,0)$ & $2$ \\ \hline

$\frac{1}{6}(2,0,0,0,0,0,0,1)$ & $2$ &
$\frac{1}{6}(0,0,0,0,0,1,1,0)$ & $2$ &
$\frac{1}{8}(0,0,0,1,0,0,1,0)$ & $2$ \\ \hline

$\frac{1}{8}(2,0,0,0,0,1,0,0)$ & $2$ &
$\frac{1}{10}(2,0,0,0,0,1,1,0)$ & $2$ &
$\frac{1}{12}(1,0,0,0,0,2,1,0)$ & $2$ \\ \hline

$\frac{1}{12}(0,0,2,0,1,0,0,0)$ & $2$ &
$\frac{1}{12}(1,0,0,1,0,0,2,0)$ & $2$ &
$\frac{1}{12}(0,1,2,0,0,0,1,0)$ & $2$ \\ \hline

$\frac{1}{14}(0,0,0,1,1,0,2,0)$ & $2$ &
$\frac{1}{20}(0,0,1,1,2,0,1,0)$ & $2$ &
$\frac{1}{24}(0,0,2,1,0,0,5,0)$ & $2$ \\ \hline

$\frac{1}{30}(0,1,3,0,2,0,4,0)$ & $2$ &
\cellcolor{blue!20}$\frac{1}{2}(1,0,0,0,0,0,0,0)$ & $4$ &
$\frac{1}{4}(1,0,0,0,0,0,0,2)$ & $4$ \\ \hline

$\frac{1}{4}(0,0,0,0,1,0,0,0)$ & $4$ &
$\frac{1}{4}(0,0,0,0,0,1,0,1)$ & $4$ &
$\frac{1}{4}(1,0,0,0,0,0,1,0)$ & $4$ \\ \hline

$\frac{1}{4}(0,0,1,0,0,0,0,1)$ & $4$ &
$\frac{1}{6}(1,0,0,0,0,0,2,0)$ & $4$ &
$\frac{1}{6}(0,1,0,0,1,0,0,0)$ & $4$ \\ \hline

$\frac{1}{8}(0,0,0,0,1,0,2,0)$ & $4$ &
$\frac{1}{8}(0,0,0,1,0,1,0,1)$ & $4$ &
$\frac{1}{10}(0,1,0,0,1,0,2,0)$ & $4$ \\ \hline

$\frac{1}{12}(0,1,0,0,2,0,1,0)$ & $4$ &
$\frac{1}{12}(1,0,0,0,0,3,0,2)$ & $4$ &
$\frac{1}{12}(0,0,2,1,0,0,0,2)$ & $4$ \\ \hline

$\frac{1}{20}(0,0,1,3,0,1,0,2)$ & $4$ &
\cellcolor{blue!20} $(0,0,0,0,0,0,0,1)$ & $6$ &
\cellcolor{blue!20} $\frac{1}{2}(0,0,0,0,0,0,1,0)$ & $6$ \\ \hline

$\frac{1}{3}(1,0,0,0,0,0,0,2)$ & $6$ &
$\frac{1}{3}(0,0,0,0,0,1,0,1)$ & $6$ &
$\frac{1}{4}(1,0,0,0,0,0,0,3)$ & $6$ \\ \hline

$\frac{1}{4}(1,0,0,0,0,1,0,0)$ & $6$ &
$\frac{1}{4}(0,0,1,0,0,0,0,2)$ & $6$ &
$\frac{1}{4}(0,0,0,1,0,0,0,0)$ & $6$ \\ \hline

$\frac{1}{4}(0,0,0,0,0,1,0,2)$ & $6$ &
$\frac{1}{4}(0,1,0,0,0,0,1,0)$ & $6$ &
$\frac{1}{5}(1,0,0,0,0,1,0,2)$ & $6$ \\ \hline

$\frac{1}{6}(0,2,0,0,0,0,1,0)$ & $6$ &
$\frac{1}{6}(2,0,0,0,0,1,0,1)$ & $6$ &
$\frac{1}{8}(2,0,0,0,0,1,0,4)$ & $6$ \\ \hline

$\frac{1}{8}(0,0,0,1,0,1,0,3)$ & $6$ &
$\frac{1}{12}(0,0,2,1,0,0,0,5)$ & $6$ &
$\frac{1}{12}(1,0,0,0,0,3,0,5)$ & $6$ \\ \hline

$\frac{1}{12}(0,0,3,0,0,1,0,4)$ & $6$ & 
$\frac{1}{12}(1,0,0,2,0,1,0,1)$ & $6$ &
$\frac{1}{12}(0,0,2,1,0,0,0,5)$ & $6$ \\ \hline

$\frac{1}{12}(0,0,1,2,0,0,1,0)$ & $6$ &
$\frac{1}{20}(0,0,1,3,0,1,0,7)$ & $6$ &
\cellcolor{blue!20}$\frac{1}{2}(1,0,0,0,0,0,0,1)$ & $8$ \\ \hline

\cellcolor{blue!20}$\frac{1}{2}(0,1,0,0,0,0,0,0)$ & $8$ &
$\frac{1}{2}(0,0,0,0,0,0,1,1)$ & $8$ & 
$\frac{1}{4}(1,0,0,0,0,1,0,1)$ & $8$ \\ \hline

$\frac{1}{4}(0,0,0,1,0,0,0,1)$ & $8$ &
$\frac{1}{4}(0,0,1,0,0,0,1,0)$ & $8$ &
$\frac{1}{4}(0,1,0,0,0,0,1,2)$ & $8$ \\ \hline

$\frac{1}{4}(1,0,0,0,1,0,0,0)$ & $8$ &
$\frac{1}{6}(0,2,0,0,0,0,1,3)$ & $8$ &
$\frac{1}{6}(0,0,1,0,0,1,1,0)$ & $8$ \\ \hline

$\frac{1}{6}(1,0,0,0,0,1,1,3)$ & $8$ &
$\frac{1}{6}(1,0,0,0,0,2,0,1)$ & $8$ &
$\frac{1}{6}(0,0,1,1,0,0,0,1)$ & $8$ \\ \hline

$\frac{1}{8}(0,0,0,1,0,2,0,1)$ & $8$ &
$\frac{1}{8}(1,0,0,1,0,1,1,0)$ & $8$ &
$\frac{1}{10}(0,0,1,1,0,2,0,1)$ & $8$ \\ \hline

$\frac{1}{12}(0,0,1,2,0,1,0,2)$ & $8$ &
$\frac{1}{12}(0,0,1,1,1,0,1,6)$ & $8$ &
$\frac{1}{2}(0,1,0,0,0,0,0,1)$ & $10$ \\ \hline

\cellcolor{blue!20} $\frac{1}{2}(0,0,0,0,0,1,0,0)$ & $10$ &
$\frac{1}{4}(1,0,0,0,0,1,0,2)$ & $10$ &
$\frac{1}{4}(0,0,1,0,0,1,0,0)$ & $10$ \\ \hline

\cellcolor{red!20} $\frac{1}{4}(0,1,0,0,1,0,0,1)$ & $10$ &
$\frac{1}{4}(0,0,0,1,0,0,0,2)$ & $10$ &
$\frac{1}{4}(2,0,0,0,0,1,0,0)$ & $10$ \\ \hline

$\frac{1}{4}(1,0,0,1,0,0,0,0)$ & $10$ &
$\frac{1}{6}(0,0,2,0,0,1,0,0)$ & $10$ &
$\frac{1}{6}(0,1,1,0,0,1,0,2)$ & $10$ \\ \hline

$\frac{1}{12}(0,0,1,2,0,1,0,5)$ & $10$ &
\cellcolor{blue!20}$(1,0,0,0,0,0,0,0)$ & $12$ &
\cellcolor{blue!20}$\frac{1}{2}(0,0,1,0,0,0,0,0)$ & $12$ \\ \hline

\cellcolor{blue!20}$\frac{1}{2}(1,0,0,0,0,0,0,2)$ & $12$ &
$\frac{1}{4}(0,0,0,1,0,0,1,0)$ & $12$ &
$\frac{1}{4}(0,0,1,0,0,1,0,1)$ & $12$ \\ \hline

$\frac{1}{4}(2,0,0,0,1,0,0,0)$ & $12$ &
$\frac{1}{4}(1,0,0,0,0,1,0,3)$ & $12$ &
$\frac{1}{4}(0,0,0,0,1,0,1,2)$ & $12$ \\ \hline

$\frac{1}{4}(1,0,0,0,0,0,2,2)$ & $12$ &
$\frac{1}{4}(0,0,0,1,0,0,0,3)$ & $12$ &
$\frac{1}{4}(2,0,0,0,0,1,0,1)$ & $12$ \\ \hline

$\frac{1}{4}(1,0,0,1,0,0,0,1)$ & $12$ &
$\frac{1}{4}(3,0,0,0,0,0,1,0)$ & $12$ &
$\frac{1}{4}(0,1,1,0,0,0,1,0)$ & $12$ \\ \hline

$\frac{1}{6}(1,0,0,0,0,2,0,4)$ & $12$ &
$\frac{1}{8}(0,0,0,1,0,2,0,5)$ & $12$ &
$\frac{1}{10}(0,0,1,1,0,2,0,6)$ & $12$ \\ \hline

$\frac{1}{12}(0,0,1,2,0,1,0,8)$ & $12$ &
$\frac{1}{12}(0,1,0,0,3,0,2,6)$ & $12$ &
\cellcolor{blue!20}$(0,0,0,0,0,0,1,0)$ & $14$ \\ \hline

\cellcolor{blue!20}$\frac{1}{2}(0,0,0,0,0,1,0,1)$ & $14$ &
$\frac{1}{2}(1,1,0,0,0,0,0,0)$ & $14$ &
\cellcolor{blue!20}$\frac{1}{2}(1,0,0,0,0,0,1,0)$ & $14$ \\ \hline

$\frac{1}{3}(0,1,0,0,0,0,2,0)$ & $14$ &
$\frac{1}{4}(0,0,1,0,0,1,0,2)$ & $14$ &
$\frac{1}{4}(1,0,0,0,1,0,1,0)$ & $14$ \\ \hline

$\frac{1}{4}(0,1,0,0,0,0,3,0)$ & $14$ &
$\frac{1}{4}(0,0,0,1,0,0,2,0)$ & $14$ & 
$\frac{1}{4}(0,0,0,1,0,1,0,0)$ & $14$ \\ \hline

$\frac{1}{4}(1,0,0,0,0,1,2,0)$ & $14$ &
$\frac{1}{6}(0,0,2,0,0,1,0,3)$ & $14$ &
\cellcolor{blue!20}$\frac{1}{2}(0,0,1,0,0,0,0,1)$ & $16$ \\ \hline

\cellcolor{blue!20}$\frac{1}{2}(0,1,0,0,0,0,1,0)$ & $16$ &
$\frac{1}{2}(1,0,0,0,0,0,1,1)$ & $16$ &
\cellcolor{blue!20}$\frac{1}{2}(0,0,0,0,1,0,0,0)$ & $16 $\\ \hline

$\frac{1}{4}(0,0,0,1,0,1,0,1)$ & $16$ &
$\frac{1}{4}(1,0,0,1,0,0,1,0)$ & $16$ &
$\frac{1}{4}(1,0,0,0,1,0,1,2)$ & $16$ \\ \hline

$\frac{1}{4}(0,0,0,1,0,1,0,1)$ & $16$ &
$\frac{1}{6}(1,0,0,0,2,0,1,3)$ & $16$ &
\cellcolor{blue!20}$\frac{1}{2}(1,0,0,0,0,1,0,0)$ & $18$ \\ \hline

$\frac{1}{2}(0,0,0,0,1,0,0,1)$ & $18$ &
$\frac{1}{3}(1,0,0,0,0,1,1,1)$ & $18$ &
$\frac{1}{4}(1,0,0,1,0,1,0,0)$ & $18$ \\ \hline

$\frac{1}{4}(1,2,0,0,0,1,0,0)$ & $18$ &
$\frac{1}{4}(0,0,0,1,0,1,0,2)$ & $18$ &
$\frac{1}{4}(0,1,1,0,0,0,1,2)$ & $18$ \\ \hline

$\frac{1}{12}(1,0,0,3,0,1,4,4)$ & $18$ &
$\frac{1}{12}(2,1,1,0,1,2,4,4)$ & $18$ &
\cellcolor{blue!20}$(1,0,0,0,0,0,0,1)$ & $20$ \\ \hline

\cellcolor{blue!20}$(0,1,0,0,0,0,0,0)$ & $20$ &
\cellcolor{blue!20}$\frac{1}{2}(0,0,0,1,0,0,0,0)$ & $20$ &
$\frac{1}{3}(2,0,0,0,0,1,0,2)$ & $20$ \\ \hline

$\frac{1}{3}(1,1,0,0,0,1,0,1)$ & $20$ &
$\frac{1}{4}(1,0,0,1,0,1,0,1)$ & $20$ &
$\frac{1}{4}(2,0,0,1,0,0,0,3)$ & $20$ \\ \hline

$\frac{1}{4}(3,0,0,0,0,1,0,3)$ & $20$ &
$\frac{1}{6}(0,0,1,1,1,0,1,2)$ & $20$ &
$\frac{1}{8}(4,0,0,1,0,2,0,5)$ & $20$ \\ \hline

$\frac{1}{12}(5,0,0,3,0,1,0,8)$ & $20$ &
\cellcolor{blue!20}$\frac{1}{2}(1,0,0,0,0,1,0,1)$ & $22$ &
$\frac{1}{2}(0,1,0,0,0,1,0,0)$ & $22$ \\ \hline

$\frac{1}{4}(1,0,0,1,0,1,0,2)$ & $22$ &
$\frac{1}{6}(1,0,0,2,0,1,0,3)$ & $22$ &
\cellcolor{blue!20}$(0,0,0,0,0,0,1,1)$ & $24$ \\ \hline

\cellcolor{blue!20}$\frac{1}{2}(0,0,0,1,0,0,0,1)$ & $24$ &
\cellcolor{blue!20}$\frac{1}{2}(1,0,0,0,1,0,0,0)$ & $24$ &
$\frac{1}{3}(1,0,0,0,0,1,1,3)$ & $24$ \\ \hline

$\frac{1}{3}(0,1,0,0,0,0,2,3)$ & $24$ &
$\frac{1}{4}(2,1,1,0,0,0,1,2)$ & $24$ &
$\frac{1}{4}(1,0,0,1,0,1,1,0)$ & $24$ \\ \hline

$\frac{1}{4}(0,0,0,1,0,2,0,1)$ & $24$ &
$\frac{1}{4}(0,1,0,0,0,0,3,4)$ & $24$ &
$\frac{1}{4}(0,0,0,1,0,0,2,4)$ & $24$ \\ \hline

$\frac{1}{4}(1,0,0,0,0,1,2,4)$ & $24$ &
$\frac{1}{12}(0,0,1,2,1,0,5,12)$ & $24$ &
$\frac{1}{12}(1,0,0,3,0,1,4,12)$ & $24$ \\ \hline

\cellcolor{blue!20}$(0,0,0,0,0,1,0,0)$ & $26$ &
\cellcolor{blue!20}$\frac{1}{2}(0,1,0,0,0,0,1,2)$ & $26$ &
\cellcolor{blue!20}$\frac{1}{2}(1,0,0,0,0,1,0,2)$ & $26$ \\ \hline

$\frac{1}{4}(0,0,1,0,1,0,1,4)$ & $26$ &
$\frac{1}{4}(0,0,1,0,0,3,0,0)$ & $26$ &
$\frac{1}{4}(2,0,0,1,0,1,0,2)$ & $26$ \\ \hline

$\frac{1}{4}(1,0,0,1,0,1,0,4)$ & $26$ &
$\frac{1}{4}(1,2,0,0,0,1,0,4)$ & $26$ &
$\frac{1}{6}(1,0,0,2,0,1,0,6)$ & $26$ \\ \hline

$\frac{1}{6}(0,0,1,1,1,0,1,6)$ & $26$ &
\cellcolor{blue!20}$\frac{1}{2}(0,0,0,1,0,0,0,2)$ & $28$ &
\cellcolor{blue!20}$\frac{1}{2}(0,0,1,0,0,1,0,0)$ & $28$ \\ \hline

$\frac{1}{2}(1,1,0,0,0,0,1,1)$ & $28$ &
\cellcolor{blue!20}$(1,0,0,0,0,0,1,0)$ & $30$ &
\cellcolor{blue!20}$(0,1,0,0,0,0,0,1)$ & $30$ \\ \hline

\cellcolor{blue!20}$\frac{1}{2}(2,0,0,0,0,1,0,1)$ & $30$ &
$\frac{1}{4}(2,0,0,1,0,1,2,0)$ & $30$ &
$\frac{1}{4}(0,2,1,0,0,1,0,3)$ & $30$ \\ \hline

$\frac{1}{4}(3,0,0,1,0,1,0,2)$ & $30$ &
$\frac{1}{4}(3,0,0,0,1,0,3,0)$ & $30$ &
$\frac{1}{4}(1,0,0,1,1,0,1,2)$ & $30$ \\ \hline

\cellcolor{blue!20}$(0,0,0,0,1,0,0,0)$ & $32$ &
\cellcolor{blue!20}$\frac{1}{2}(1,0,0,0,1,0,1,0)$ & $32$ &
\cellcolor{blue!20}$\frac{1}{2}(1,0,0,1,0,0,0,1)$ & $32$ \\ \hline

\cellcolor{blue!20}$\frac{1}{2}(0,1,1,0,0,0,1,0)$ & $32$ &
\cellcolor{blue!20}$\frac{1}{2}(0,0,1,0,0,1,0,1)$ & $32$ &
$\frac{1}{2}(1,0,0,0,0,1,1,1)$ & $32$ \\ \hline

$\frac{1}{2}(0,1,1,0,0,0,1,0)$ & $32$ &
\cellcolor{blue!20}$\frac{1}{2}(0,0,0,1,0,1,0,0)$ & $32$ &
\cellcolor{blue!20}$\frac{1}{2}(0,0,0,1,0,0,1,0)$ & $32$ \\ \hline

$\frac{1}{4}(0,1,1,0,1,0,2,2)$ & $32$ &
$\frac{1}{4}(1,0,0,1,0,1,2,2)$ & $32$ &
$\frac{1}{2}(0,0,1,0,1,0,0,1)$ & $34$ \\ \hline

$\frac{1}{2}(1,1,0,0,0,1,0,1)$ & $34$ &
$\frac{1}{2}(0,0,0,1,0,0,1,1)$ & $34$ &
$\frac{1}{2}(1,0,0,1,0,0,1,0)$ & $36$ \\ \hline

$\frac{1}{3}(0,1,1,0,1,0,1,1)$ & $36$ &
$\frac{1}{4}(2,1,1,0,1,1,0,1)$ & $36$ &
$\frac{1}{6}(2,1,1,0,1,2,2,2)$ & $36$ \\ \hline

$\frac{1}{4}(1,1,1,0,1,1,1,1)$ & $38$ &
$\frac{1}{6}(2,2,1,1,1,1,1,1)$ & $38$ &
\cellcolor{blue!20}$(1,0,0,0,0,0,1,1)$ & $40$ \\ \hline

$\frac{1}{3}(2,0,0,0,1,0,2,3)$ & $40$ &
$\frac{1}{4}(3,0,0,0,1,0,3,4)$ & $40$ &
$\frac{1}{4}(2,1,1,0,0,0,3,4)$ & $40$ \\ \hline

$\frac{1}{4}(2,0,0,1,0,1,2,4)$ & $40$ &
$\frac{1}{5}(1,1,1,1,1,1,1,1)$ & $40$ &
\cellcolor{blue!20}$(1,0,0,0,0,1,0,0)$ & $42$ \\ \hline

$\frac{1}{2}(1,0,0,1,0,0,1,1)$ & $42$ &
\cellcolor{blue!20}$\frac{1}{2}(1,0,0,0,1,0,1,2)$ & $42$ &
\cellcolor{blue!20}$\frac{1}{2}(0,1,1,0,0,0,1,2)$ & $42$ \\ \hline

$\frac{1}{2}(0,1,1,0,0,0,1,2)$ & $42$ &
$\frac{1}{4}(2,1,1,0,1,0,2,2)$ & $42$ &
$\frac{1}{4}(3,0,0,1,0,3,0,0)$ & $42$ \\ \hline

\cellcolor{blue!20}$\frac{1}{2}(0,0,0,1,0,1,0,2)$ & $44$ &
\cellcolor{blue!20}$\frac{1}{2}(1,0,0,1,0,1,0,0)$ & $44$ &
$\frac{1}{2}(1,1,0,0,1,0,0,2)$ & $44$ \\ \hline

\cellcolor{blue!20}$(0,0,0,0,1,0,0,1)$ & $46$ &
\cellcolor{blue!20}$\frac{1}{2}(1,0,0,1,0,1,0,1)$ & $48$ &
\cellcolor{blue!20}$(1,0,0,0,0,1,0,1)$ & $50$ \\ \hline

$\frac{1}{4}(3,0,0,1,0,3,0,4)$ & $50$ &
\cellcolor{blue!20}$\frac{1}{2}(1,0,0,1,0,1,0,2)$ & $52$ &
$\frac{1}{4}(1,1,1,1,1,1,1,1)$ & $52$ \\ \hline

\cellcolor{blue!20}$(0,0,0,1,0,0,0,1)$ & $56$ &
\cellcolor{blue!20}$\frac{1}{2}(1,0,0,1,0,1,1,0)$ & $56$ &
$\frac{1}{4}(2,1,1,0,1,2,2,2)$ & $56$ \\ \hline

$\frac{1}{2}(1,0,0,1,0,1,1,1)$ & $58$ &
\cellcolor{blue!20}$\frac{1}{2}(2,1,1,0,0,0,1,2)$ & $60$ &
\cellcolor{blue!20}$\frac{1}{2}(2,0,0,1,0,1,0,2)$ & $60$ \\ \hline

$\frac{1}{4}(4,1,1,0,2,0,1,4)$ & $60$ &
\cellcolor{blue!20}$(1,0,0,0,1,0,0,1)$ & $62$ &
$\frac{1}{2}(1,1,1,0,0,1,1,1)$ & $62$ \\ \hline

\cellcolor{blue!20}$(0,0,0,1,0,0,1,0)$ & $64$ &
$\frac{1}{2}(1,1,1,0,1,0,1,1)$ & $68$ &
\cellcolor{blue!20}$(1,0,0,0,0,1,1,1)$ & $72$ \\ \hline

$\frac{1}{3}(1,1,1,1,1,1,1,1)$ & $72$ &
$\frac{1}{4}(3,0,0,1,0,3,4,4)$ & $72$ &
$\frac{1}{4}(2,1,1,0,1,2,4,4)$ & $72$ \\ \hline

\cellcolor{blue!20}$(0,0,0,1,0,0,1,1)$ & $74$ &
\cellcolor{blue!20}$\frac{1}{2}(1,0,0,1,0,1,2,2)$ & $74$ &
\cellcolor{blue!20}$\frac{1}{2}(0,1,1,0,1,0,2,2)$ & $74$ \\ \hline

$\frac{1}{2}(1,1,0,1,0,1,1,2)$ & $76$ &
$\frac{1}{3}(1,1,1,1,1,1,1,3)$ & $78$ &
\cellcolor{blue!20}$(1,0,0,1,0,0,1,0)$ & $84$ \\ \hline

\cellcolor{blue!20}$\frac{1}{2}(2,1,1,0,1,1,0,1)$ & $84$ &
$\frac{1}{2}(1,1,1,0,1,1,1,1)$ & $86$ &
\cellcolor{blue!20}$\frac{1}{2}(2,1,1,0,1,0,2,2)$ & $92$ \\ \hline

\cellcolor{blue!20}$(1,0,0,1,0,0,1,1)$ & $94$ &
\cellcolor{blue!20}$(1,0,0,1,0,1,0,1)$ & $104$ &
$\frac{1}{2}(1,1,1,1,1,1,1,1)$ & $112$ \\ \hline

\cellcolor{blue!20}$(1,0,0,1,0,1,1,1)$ & $126$ &
\cellcolor{blue!20}$\frac{1}{2}(2,1,1,0,1,2,2,2)$ & $126$ &
$\frac{1}{2}(1,1,1,1,1,1,2,2)$ & $128$ \\ \hline

\cellcolor{blue!20}$(1,1,1,0,1,0,1,1)$ & $148$ &
\cellcolor{blue!20}$(1,1,1,0,1,1,1,1)$ & $182$ &
\cellcolor{blue!20}$(1,1,1,1,1,1,1,1)$ & $240$ \\ \hline

\end{longtable}

\begin{sloppypar} \printbibliography[title={References}] \end{sloppypar}

\end{document}